\newtheorem{thm}{Theorem}[section]
\newtheorem{prop}[thm]{Proposition}
\newtheorem{lemma}[thm]{Lemma}
\newtheorem{cor}[thm]{Corollary}
\newtheorem{defn}[thm]{Definition}%[section]
\newtheorem{example}[thm]{Example}
\newtheorem{remark}[thm]{Remark}
\newcommand{\circline}{\!\;\!\!\circ\!\!\;\!\!-\!\!\!-\!}
\newcommand{\bulletline}{\bullet\!\!\;\!\!-\!\!\!-\!}
\newcommand{\typeb}{ \circ\!\!\;\!\!-\!\!\!-\!\circ\!\!\!\Rightarrow\!\!=\!\!\!\circ}
\newcommand{\typebbb}{\!\;\!\!\circ\!\!\!\Rightarrow\!\!=\!\!\!\circ}
\newcommand{\typec}{\noindent\circ\!\!\;\!\!-\!\!\!-\!\circ\!\!\!=\!\!\Leftarrow\!\!\!\circ}
\newcommand{\typeccc}{\!\;\!\!\circ\!\!\!=\!\!\Leftarrow\!\!\!\circ}
\numberwithin{equation}{section}
\begin{document}{\allowdisplaybreaks[3]

\title[Functorial relationships  between $QH^*(G/B)$ and $QH^*(G/P)$, (II)]{Functorial relationships \\ between $QH^*(G/B)$ and $QH^*(G/P)$, (II)}

%    Information for first author
 \author{Changzheng Li}
\address{Kavli Institute for the Physics and Mathematics of the Universe (WPI),
Todai Institutes for Advanced Study, The University of Tokyo,
5-1-5 Kashiwa-no-Ha,Kashiwa City, Chiba 277-8583, Japan}
\email{changzheng.li@ipmu.jp}

\date{%January 1, 2001 and, in revised form, June 22, 2001.
      }

%\dedicatory{This paper is dedicated to our advisors.}

%\keywords{Lie superalgebra, spin  representations}

%\subtitle{}

\begin{abstract}

We show a canonical injective   morphism from the quantum cohomology ring $QH^*(G/P)$ to   the associated graded algebra of   $QH^*(G/B)$, which is with respect to a nice filtration on $QH^*(G/B)$ introduced by Leung and the author. This tells us the vanishing of a lot of genus zero, three-pointed Gromov-Witten invariants of flag varieties $G/P$.
  %%We prove a conjecture    of   Leung and the author, who introduced a nice filtration $\mathcal{F}$ on $QH^*(G/B)$ respecting   the quantum product structure. We show that the Peterson-Woodward comparison formula induces a canonical injective   morphism from  $QH^*(G/P)$ to   the associated graded algebra   $Gr^{\mathcal{F}}(QH^*(G/B))$.
\end{abstract}

\maketitle

 %\tableofcontents
\section{Introduction}

%%A nice class of smooth projective varieties are given by flag varieties $G/P$, the quotient of  a
%%simply-connected complex simple Lie group $G$ of rank $n$ by   a parabolic subgroup $P$ of $G$.
 The (small) quantum cohomology ring $QH^*(G/P)$   of a flag variety $G/P$   is a deformation of the ring structure on  the classical cohomology $H^*(G/P)$ by incorporating three-pointed, genus zero Gromov-Witten invariants of $G/P$. Here  $G$ denotes    a
simply-connected complex simple Lie group, and $P$ denotes a    parabolic subgroup   of $G$.
 There has been a lot of intense studies on  $QH^*(G/P)$  (see e.g.  the survey \cite{fu11} and references therein). In particular,  there was an insight on $QH^*(G/P)$ in the unpublished work \cite{peterson} of D. Peterson, which, for instance, describes a surprising connection  between $QH^*(G/P)$ and the so-called Peterson subvariety.
    When $P=B$ is the Borel subgroup of $G$, Lam and Shimozono  \cite{lamshi}  proved that $QH^*(G/B)$ is isomorphic to the homology of the group of the based loops in   a maximal compact Lie subgroup of $G$ with the ring structure given by the Pontryagin product,  after equivariant extension and localization  (see also \cite{peterson}, \cite{leungli22}).
   Woodward proved a comparison formula \cite{wo} of Peterson that  all genus zero, three-pointed    Gromov-Witten invariants of $G/P$ are contained in those of $G/B$. As a consequence, we can define  a canonical (injective) map   $QH^*(G/P)\hookrightarrow QH^*(G/B)$ as vector spaces.
  In \cite{leungli33}, Leung and the author constructed  a natural filtration $\mathcal{F}$ on $QH^*(G/B)$ which comes from a quantum analog of the Leray-Serre spectral sequence for the natural fibration $P/B\rightarrow G/B\longrightarrow G/P$. The next theorem is our main result in the present paper,  precise descriptions of which
      will be given in    Theorem \ref{thmfirstmainthm}. %As we will see there, the injective morphism is in fact induced from the Peterson-Woodward comparison formula..
  \bigskip

%%Let $\Delta$ denote a base of $n$ simple roots of $G$.  It is a general fact that  parabolic subgroups $P$ (containing $B$) are in one-to-one correspondence with   subsets $\Delta_P$ of $\Delta$.With a certain ordering among the simple roots in $\Delta_P$, we may say $\Delta_P=\{\alpha_1, \cdots, \alpha_r\}$.

 \noindent{\textbf{Main Theorem.}}
   {\itshape   There is a  canonical injective morphism of   algebras from the quantum cohomology ring $QH^*(G/P)$ to the associated graded algebra of  $QH^*(G/B)$ with respect to the filtration $\mathcal{F}$. }
%%%
\bigskip

 \noindent
 The above statement was proved by Leung and the author
under an additional assumption on $P/B$.
   %% Leung and the author further showed that  the above map of vector spaces   %from the Peterson-Woodward comparison formula
  %%induces an injective morphism  of algebras from
Here we do not require any constraint on $P/B$. That is, we  prove
  Conjecture 5.3 of \cite{leungli33}.
  Combining the   main results therein with the above theorem, we can tell a complete story as follows.
\begin{thm}\label{thmgenthm11} Let $r$  denote the   semisimple rank of the Levi subgroup of $P$   containing a maximal torus $T\subset B$.
\begin{enumerate}
  \item  There exists a   $\mathbb{Z}^{r+1}$-filtration $\mathcal{F}$ on $QH^*(G/B)$,   %an $S$-filtered algebra structure  with filtration $\mathcal{F}$, which naturally extends to
             % giving a  $\mathbb{Z}^{r+1}$-filtered algebra structure on         $QH^*(G/B)$.
             respecting the quantum product structure.
             %\end{enumerate}
    %\begin{enumerate}
   \item  There exist an ideal  $\mathcal{I}$ %(which is explicitly defined)
             of $QH^*(G/B)$ and  a canonical algebra isomorphism
      $$ QH^*(G/B)/\mathcal{I}  \overset{\simeq}{\longrightarrow}QH^*(P/B).$$

   \item   There exists a subalgebra $\mathcal{A}$ of $QH^*(G/B)$ together with an ideal $\mathcal{J}$ of $\mathcal{A}$, %(which are explicitly defined as well,)
       such that               $QH^*(G/P)$ is canonically isomorphic to $\mathcal{A}/\mathcal{J}$  as algebras.

  \item   There exists a canonical
                     injective  morphism of graded algebras
            $$\Psi_{r+1}:\,\,  \,\, QH^*(G/P)\hookrightarrow Gr^{\mathcal{F}}_{(r+1)}\subset Gr^{\mathcal{F}}(QH^*(G/B)) $$
       together with an isomorphism of graded algebras after localization
    $$Gr^{\mathcal{F}}(QH^*(G/B))\cong\big(\bigotimes_{j=1}^{r}QH^*(P_{j}/P_{j-1})\big)\bigotimes Gr^{\mathcal{F}}_{(r+1)},$$
   where $P_j$'s are parabolic subgroups constructed in  a canonical way, forming   a     chain  $B:=P_0\subsetneq P_1\subsetneq\cdots\subsetneq P_{r-1}\subsetneq P_r=P\subsetneq G$.
     Furthermore,  $\Psi_{r+1}$ is    an isomorphism if and only if the next hypothesis
       {\upshape (Hypo1)} holds: \qquad  $P_{j}/P_{j-1}$ is a  projective space for any $1\leq j\leq r$.
        %either             {\upshape (a) } $\Delta^{(k)}$'s are all  of $A$-type or {\upshape (b) } the only exception $\Delta^{(m)}$ is of $B_2$-type.
\end{enumerate}
\end{thm}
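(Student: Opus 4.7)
My plan is to leverage the work of \cite{leungli33}, where parts (1), (2), (3), together with the tensor product decomposition and the ``isomorphism iff'' criterion in (4), are all established; the genuinely new content is the unconditional existence and injectivity of the algebra morphism $\Psi_{r+1}:QH^*(G/P)\hookrightarrow Gr^{\mathcal{F}}_{(r+1)}$, which is the Main Theorem (restated precisely as Theorem \ref{thmfirstmainthm}) and which settles Conjecture 5.3 of \cite{leungli33}. Thus the whole proof of Theorem \ref{thmgenthm11} reduces to constructing $\Psi_{r+1}$ in full generality.

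For the construction, the underlying additive map $\psi:QH^*(G/P)\to QH^*(G/B)$ is the one supplied by Peterson--Woodward: to each basis element $\sigma^w q^{\lambda_P}$ of $QH^*(G/P)$ with $w$ a minimal length coset representative in $W^P$ and $\lambda_P$ effective, one associates $\sigma^{w\cdot w_P(\lambda_P)}q^{\lambda_B}\in QH^*(G/B)$, where $\lambda_B$ is the canonical Peterson--Woodward lift of $\lambda_P$ and $w_P(\lambda_P)$ is the distinguished Weyl group element from Woodward's comparison formula. A direct degree count against the chain $B=P_0\subsetneq\cdots\subsetneq P_r=P$ shows that the image of $\psi$ lies in the top filtration piece $\mathcal{F}_{(r+1)}$, and composing with the projection to the associated graded yields a candidate for $\Psi_{r+1}$.

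The central obstacle, and where I expect all the real work to lie, is showing that this candidate is multiplicative, i.e.\ that $\psi(a)\star^{G/B}\psi(b)$ agrees with $\psi(a\star^{G/P}b)$ modulo strictly lower filtration pieces. Expanding $\psi(a)\star^{G/B}\psi(b)$ produces ``expected'' terms that recombine, via Woodward's formula, into $\psi(a\star^{G/P}b)$, plus ``spurious'' terms of two kinds: Schubert classes $\sigma^{u'}$ with $u'\notin W^P$, and degree contributions $q^\mu$ that are not Peterson--Woodward lifts. The goal is to show that every spurious term lies strictly below $\mathcal{F}_{(r+1)}$ and hence dies in $Gr^{\mathcal{F}}_{(r+1)}$. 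My approach is to induct on the length $r$ of the chain of parabolics, reducing to the case of a single intermediate parabolic, and then combine the quantum Chevalley formula, positivity of Gromov--Witten invariants, and a careful analysis of how the $\mathbb{Z}^{r+1}$-grading interacts with the minimal length coset representatives and with the canonical degree lifts. This is precisely the technical point that forced the additional hypothesis in \cite{leungli33}, and overcoming it is the crux of the present paper. Once multiplicativity is established, injectivity of $\Psi_{r+1}$ is immediate from leading-term considerations, since distinct basis elements $\sigma^w q^{\lambda_P}$ land in distinct Schubert monomials of $QH^*(G/B)$ that survive in $Gr^{\mathcal{F}}_{(r+1)}$.
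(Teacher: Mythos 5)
Your reduction of Theorem \ref{thmgenthm11} to the unconditional construction of $\Psi_{r+1}$ does match the paper's overall strategy, and your injectivity remark is fine (it follows in one line from the injectivity of $\psi_{\Delta,\Delta_P}$ and the compatibility of the monomial basis with the filtration). The genuine gap is at exactly the point you yourself call the crux: multiplicativity. The tools you propose---induction on the chain of parabolics, quantum Chevalley, positivity of Gromov--Witten invariants, and grading bookkeeping---only yield the filtration inequality $gr(w,\lambda)\le gr(u,0)+gr(v,0)$, i.e.\ part (1); these were already the tools of \cite{leungli33} and are precisely what is insufficient once $\Delta_P$ is not of type $A$. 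To kill the ``spurious'' terms that sit in the top graded piece one needs vanishing of specific invariants $N_{u,v}^{w,\lambda}$ with $gr_{[1,r]}(q_\lambda\sigma^w)=\mathbf 0$ but $q_\lambda\sigma^w\notin\psi_{\Delta,\Delta_P}(QH^*(G/P))$; the paper gets this from the quantum-to-classical theorem of \cite{leungliQtoC} (Proposition \ref{propQtoCthm}), an analysis of Peterson--Woodward lifts and virtual null coroots (Tables \ref{tabvirtualnullcoroot} and \ref{tabNONvirtualnullcoroot}), and a case-by-case treatment of the configurations in Table \ref{tabrelativeposi}. Moreover, multiplicativity on the quantum parameters is not a formal consequence of the Schubert-class case: $\psi_{\Delta,\Delta_P}(q_{\lambda_P})=q_{\lambda_B}\sigma^{w_Pw_{P'}}$ carries a nontrivial Schubert factor, and one must show these classes multiply in $Gr^{\mathcal F}_{(r+1)}$ like the finite group $(Q^\vee/Q^\vee_P)/\mathcal L$, which forces explicit computations that certain invariants equal $1$ (Proposition \ref{propuuforC1BC2} and section \ref{subsecAPPprop33uu}), relying for instance on $\sigma^{\tilde u}\cup\sigma^{\tilde u}=0$ in $H^*(P/B)$ and Lemma \ref{lemmalongestWforBD}. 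Your plan names none of these mechanisms, so as written it would not close.

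Two smaller inaccuracies in the reduction itself. First, well-definedness is not ``a direct degree count'': the identity $gr_{[1,r]}(\psi_{\Delta,\Delta_P}(q_{\lambda_P}\sigma^w))=\mathbf 0$ is Proposition \ref{propwelldefinedgrading}, proved either case by case or via the length-zero elements $\tau_i=w_P^{P'}t_{-\omega_{i,P}^\vee}$ of the extended affine Weyl group. Second, the ``isomorphism iff (Hypo1)'' criterion in part (4) is not in \cite{leungli33}, which only gave the sufficient condition (Hypo2); the necessary-and-sufficient statement comes from the surjectivity analysis of this paper (Proposition \ref{propinjectionofvectsp}), and passing from connected $\Delta_P$ to general $\Delta_P$ additionally uses the component-wise comparison $\pi_k\circ gr=\pi_k\circ gr_{(k)}$ and the coincidence of the two grading maps (Theorem \ref{thmdefnscoincide}), which your proposal does not address.
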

\noindent All the relevant ideals, subalgebras and morphisms  above will be described precisely  in Theorem \ref{thmgenthm22}.   To get a clearer idea of them here, we
use  the same toy example of the variety of complete flags in $\mathbb{C}^3$   as in \cite{leungli33}.
  \begin{example}\label{exampfiltrforA2}
Let $G=SL(3, \mathbb{C})$ and   $B\subsetneq P\subsetneq G$. Then we have
          $G/B=\{V_1\leqslant    V_2\leqslant \mathbb{C}^{3}~|~ \dim_\mathbb{C}V_1=1, \dim_\mathbb{C}V_2=2\}$, and the  natural projection  $\pi: G/B{\longrightarrow} G/P$ is given  by forgetting the vector subspace $V_1$ in the complete flag $V_1\leqslant V_2\leqslant\mathbb{C}^3$.
          In particular,
$P/B\cong\mathbb{P}^1$,   $ G/P\cong\mathbb{P}^2$, and the semisimple rank $r$ of the Levi subgroup of $P$ containing a maximal torus $T\subset B$ equals 1.
In this case, the quantum cohomology ring $QH^*(G/B)=(H^*(G/B)\otimes\mathbb{Q}[q_1, q_2], \star)$ has a $\mathbb{Q}$-basis $\sigma^wq_1^aq_2^b$, indexed by $(w, (a, b))\in W\times \mathbb{Z}_{\geq 0}^2$, and we define a grading map $gr(\sigma^wq_1^aq_2^b):=(2a-b, 3b)+gr(\sigma^w)\in\mathbb{Z}^2$.
Here  $W :=\{1, s_1, s_2, s_1s_2, s_2s_1, s_1s_2s_1\}$ is the Weyl group (isomorphic to the permutation group $S_3$). The grading $gr(\sigma^w)$ is the usual one from the Leray-Serre spectral sequence,     respectively given by $(0, 0)$, $(1, 0)$, $(0, 1)$, $(0, 2)$, $(1, 1)$, $(1, 2)$. Using the above gradings together with the lexicographical order on $\mathbb{Z}^2$ (i.e.,
     $(x_1,   x_2)<(y_1, y_2)$   if and only if either     $x_1<y_1$ or {\upshape $(x_{1}=y_{1}$} and {\upshape $x_2<y_2)$}), we have the following conclusions.

    \begin{enumerate}
    \item   There is a
$\mathbb{Z}^2$-filtration $\mathcal{F}=\{F_{\mathbf{x}}\}_{\mathbf{x}\in\mathbb{Z}^2}$ on $QH^*(G/B)$, defined by  $F_{\mathbf{x}}:=\bigoplus\limits_{gr(\sigma^wq_1^aq_2^b)\leq \mathbf{x}}\mathbb{Q}\sigma^wq_1^aq_2^b\subset QH^*(G/B)$.
  Furthermore, $\mathcal{F}$ respects   the quantum product structure. That is, $F_{\mathbf{x}}\star F_{\mathbf{y}}\subset F_{\mathbf{x}+\mathbf{y}}$.

   \item $\mathcal{I}:=\!\!\!\!\bigoplus\limits_{gr(\sigma^wq_1^aq_2^b)\in\mathbb{Z}\times \mathbb{Z}^+}\!\!\!\!\mathbb{Q}\sigma^wq_1^aq_2^b$ is an ideal of $QH^*(G/B)$.
                    We take the standard ring presentation $QH^*(\mathbb{P}^1)=\mathbb{Q}[x, q]/\langle x^2-q\rangle$. Note $P/B\cong \mathbb{P}^1$.  Then  $\sigma^{s_1}+\mathcal{I}\mapsto x$ and $q_1+\mathcal{I}\mapsto q$ define an isomorphism of algebras from  $QH^*(G/B)/\mathcal{I}$ to  $QH^*(P/B)$.
    \item $\mathcal{A}:=\sum_{k\in \mathbb{Z}} F_{(0, k)}$ is a subalgebra of $QH^*(G/B)$, and  $\mathcal{J}:=F_{(0, -1)}$ is an ideal of $\mathcal{A}$.
     Write $QH^*(\mathbb{P}^2)=\mathbb{Q}[z, t]/\langle z^3-t\rangle$. Note $G/P\cong \mathbb{P}^2$. Then $z\mapsto \sigma^{s_2}+\mathcal{J}, z^2\mapsto \sigma^{s_1s_2}+\mathcal{J}$ and
     $t\mapsto \sigma^{s_1}q_2+\mathcal{J}$ define an isomorphism of algebras from $QH^*(G/P)$ to     $\mathcal{A}/\mathcal{J}$.
     \item    $Gr_{(2)}^{\mathcal{F}}:=\bigoplus_{k\in \mathbb{Z}} F_{(0, k)}\big/\sum_{\mathbf{x}<(0, k)}F_{\mathbf{x}}$ is a graded subalgebra of $Gr^{\mathcal{F}}(QH^*(G/B))$, and it is
                       canonically isomorphic to  $\mathcal{A}/\mathcal{J}$ as algebras. Combining it with (3), we have  an isomorphism of (graded) algebras
              $\pi_{\mathbf{q}}^*: QH^*(G/P)\overset{\cong}{\longrightarrow} Gr_{(2)}^{\mathcal{F}}$ (which, in general, is an injective morphism only).
 \end{enumerate}
 In addition,  by taking the classical limit, $\mathcal{F}|_{\mathbf{q}=\mathbf{0}}$ gives the usual $\mathbb{Z}^2$-filtration on $H^*(G/B)$ from the Leray-Serre spectral sequence.
    The classical limit of $\pi_{\mathbf{q}}^*$  also coincides with the induced morphism $\pi^*: H^*(G/P)\hookrightarrow H^*(G/B)$ of algebras.

\end{example}

In the present paper, we will prove Theorem \ref{thmgenthm11} in a combinatorial way. It will be very interesting to explore a conceptual explanation of the theorem. Such an explanation may involve the notion of vertical quantum cohomology in \cite{assa}. As an evidence, part (2) of Theorem \ref{thmgenthm11} turns out to coincide with equation (2.17) of \cite{assa} in the special case when $G=SL(n+1, \mathbb{C})$. In a future project, we plan to investigate the relation between our results and those from \cite{assa}.
We would like to remind that a sufficient condition  {\upshape (Hypo2)}  for  $\Psi_{r+1}$ to be an isomorphism  was provided in \cite{leungli33}, which says that $P/B$ is isomorphic to a product of complete flag varieties of type $A$. It is not a strong constraint,  satisfied for  all flag varieties $G/P$ of type $A$, $G_2$ as well as  for most of flag varieties $G/P$ of each remaining Lie type.
The  necessary and sufficient condition in the above theorem is slightly more general. For instance for $G$ of type $F_4$, there are $16$ flag varieties $G/P$ in total (up to isomorphism together with the  two extremal cases $G/B$, $\{\mbox{pt}\}$  being   counted). Among them,  there are 13 flag varieties  satisfying both hypotheses  {\upshape (Hypo1)} and  {\upshape (Hypo2)}, while   one more flag variety satisfies      {\upshape (Hypo1)}. Precisely, for $G$ of type $F_4$, {\upshape (Hypo1)}  holds for all $G/P$ except for the two (co)adjoint Grassmannians that respectively correspond to (the complement of) the two ending nodes of the Dynkin diagram of type $F_4$.

The notion of quantum cohomology was introduced by the physicists in 1990s, and it can be defined over a smooth projective variety $X$.
   It is a quite challenging problem to study the quantum cohomology ring $QH^*(X)$, partially because of the lack of functorial property. Namely, in general, a reasonable morphism between two smooth projective varieties does not induce a morphism on the level of quantum cohomogy.   However,
 Theorem \ref{thmgenthm11}   tells us a beautiful story on the ``functoriality" among the special case of the quantum cohomology of flag varieties. We may even expect nice applications of it in future research.
Despite lots of interesting studies of $QH^*(G/P)$, they are mostly   for the varieties of
    partial flags of subspaces of  $\mathbb{C}^{n+1}$, i.e., when   $G=SL(n+1, \mathbb{C})$.
        For $G$ of general Lie type, ring presentations of the quantum cohomology are better understood for either complete flag varieties $G/B$ \cite{kim} or  most of Grassmannians, i.e., when $P$ is maximal (cf. \cite{BKT2}, \cite{cmn} and references therein).
        The special case of the functorial property   \cite{leungli33} when $P/B\cong \mathbb{P}^1$ has led to  nice applications on the ``quantum to classical" principle \cite{leungliQtoC},
     as further applications of which Leung and the author obtained certain quantum Pieri rules \cite{leungli44} as well as alternative proofs of the main results of \cite{BKT1}.
  On the other hand, our main result could also be treated as a kind of application of the ``quantum to classical" principle.
 As we can see later, the proof requires knowledge on the vanishing of a lot of Gromov-Witten invariants as well as explicit calculations of certain non-vanishing Gromov-Witten invariants that all turn out to be   equal to 1. Although Leung and the author have showed an explicit combinatorial formula for those Gromov-Witten invariants (with sign cancelation involved) in \cite{leungli22}, it would exceed  the capacity of a   computer in some cases if we use the formula directly. Instead, we will apply the ``quantum to classical" principle developed in \cite{leungliQtoC}.

 The paper is organized as follows. In section 2, we introduce  a (non-recursively defined) grading map $gr$   and state the main results of the present paper.   The whole of section 3 is devoted to a proof of Main Theorem when the Dynkin diagram of the Levi subgroup of  $P$ containing a maximal torus $T\subset B$ is connected, the outline of which is given at the beginning the section.    The proofs of some propositions in section 3 require arguments case by case. Details for all those cases not covered in the section are given in section 5.
 In section 4, we describe  Theorem \ref{thmgenthm11} in details and provide a sketch proof of it therein, in which there is no constraint on $P/B$. We also
 greatly clarify the grading map defined recursively in \cite{leungli33}, by showing the coincidence between it and
  the map $gr$ defined in section 2.  Both the definition of $gr$ and the conjecture of the coincidence between the two grading maps were due to  the anonymous referee of \cite{leungli33}. It is quite worth to prove the coincidence,   because the grading map was used to establish a nice filtration on $QH^*(G/B)$, which is the heart of  the whole story of the functoriality.

\section{Main results}\label{mainresults}

\subsection{Notations}\label{subsecnotations}
We will follow   most of the notations   used in \cite{leungli33}, which are repeated here  for the sake of completeness. Our readers can refer to   \cite{humalg} and \cite{fupa} for more details.

Let $G$ be a simply-connected  complex simple Lie group of rank $n$, $B$ be a
Borel subgroup, $T\subset B$ be a maximal complex torus with Lie algebra    $\mathfrak{h}=\mbox{Lie}(T)$, and  $P\supsetneq B$ be
   a proper  parabolic subgroup  of $G$.
Let $\Delta=\{\alpha_1, \cdots, \alpha_n\}\subset \mathfrak{h}^*$ be a basis of simple roots and $\{\alpha_1^\vee, \cdots, \alpha_n^\vee\}\subset\mathfrak{h}$ be
 the  simple coroots.
 %, where $\mathfrak{h}$ is the Cartan subalgebra with respect to $(G, B)$.
   Each parabolic subgroup $\tilde P\supset B$ is  in one-to-one correspondence with a subset $\Delta_{\tilde P}\subset \Delta$. Conversely,  by $P_{\tilde \Delta}$ we mean the parabolic subgroup containing $B$ that corresponds to a given subset $\tilde\Delta\subset \Delta$. Here $B$ contains the one-parameter unipotent subgroups $U_{\alpha}$, $\alpha\in \tilde \Delta$.
Clearly,
       $P_\Delta=G$, $\Delta_B=\emptyset$ and $\Delta_P\subsetneq \Delta$.
  Let    $\{\omega_1, \cdots, \omega_n\}$ (resp. $\{\omega_1^\vee, \cdots, \omega_n^\vee\}$) denote   the fundamental (co)weights, and $\langle\cdot, \cdot\rangle
                    :\mathfrak{h}^*\times\mathfrak{h}\rightarrow \mathbb{C}$ denote the natural pairing.
Let $\rho:=\sum_{i=1}^n\omega_i$.

 The Weyl group $W$ is generated by $\{s_{\alpha_i}~|~ \alpha_i\in \Delta\}$, where each
                     simple reflection $s_i:=s_{\alpha_i}$ maps  $\lambda\in\mathfrak{h}$ and $\beta\in\mathfrak{h}^*$ to
        $s_{i}(\lambda)=\lambda-\langle \alpha_i, \lambda\rangle\alpha_i^\vee$ and $s_{i}(\beta)=\beta-\langle \beta, \alpha_i^\vee\rangle\alpha_i$ respectively. Let $\ell: W\rightarrow \mathbb{Z}_{\geq 0}$ denote the standard length function.
  Given a parabolic subgroup $\tilde P\supset B$,  we denote by   $W_{\tilde P}$
     the subgroup of $W$ generated by $\{s_\alpha~|~ \alpha \in \Delta_{\tilde P}\}$, in which
              there is a unique element of maximum length,
   say $w_{\tilde P}$.  Given another parabolic subgroup $\bar P$ with $B\subset \bar P\subset
      \tilde P$, we have $\Delta_{\bar P}\subset \Delta_{\tilde P}$.
       Each coset in $W_{\tilde P}/W_{\bar P}$ has
        a unique minimal length  representative. The set of all these minimal length  representatives is denoted by  $W_{\tilde P}^{\bar P}(\subset W_{\tilde P}\subset W)$.
        Note that $W_B=\{\mbox{id}\}$,   $W_{\tilde P}^{B}=W_{\tilde P}$ and  $W_{G}=W$. We simply denote  $w_0:=w_G$ and $W^{\bar P}:=W_{G}^{\bar P}$.

    The root system is given by $R=W\cdot\Delta=R^+\sqcup (-R^+)$, where
                       $R^+=R\cap \bigoplus_{i=1}^n{\mathbb{Z}_{\geq 0}}\alpha_i$ is the set of positive roots. It is a well-known fact that $\ell(w)=|\mbox{Inv}(w)|$ where   $\mbox{Inv}(w)$ is the \textit{inversion set} of $w\in W$ given by
          $$\mbox{Inv}(w):=\{\beta\in R^+  ~|~ w(\beta)\in -R^+\}.$$
 Given  $\gamma=w(\alpha_i)\in R$, we
                have the coroot $\gamma^\vee:=w(\alpha_i^\vee)$ in the coroot lattice   $Q^\vee:=\bigoplus_{i=1}^n\mathbb{Z}\alpha_i^\vee$  and the reflection $s_\gamma:=ws_iw^{-1}\in W$, which are independent of the expressions of
           $\gamma$.
  For the given $P$, we denote   by $R_P=R^+_P\sqcup (-R^+_P)$ the root subsystem, where
      $R_{P}^+=R^+\cap \bigoplus_{\alpha\in \Delta_P}\mathbb{Z}\alpha$, and denote              $Q^\vee_P:=\bigoplus_{\alpha_i\in \Delta_P}\mathbb{Z}\alpha_i^\vee$.

   The (co)homology of the flag variety $G/P$ has an additive basis of Schubert (co)homology classes $\sigma_u$  (resp. $\sigma^u$) indexed by $W^P$.
   In particular,
       we can identify    $H_2(G/P,\mathbb{Z})=\bigoplus_{\alpha\in \Delta\setminus\Delta_P}
         \mathbb{Z}\sigma_{s_{\alpha}}$
               with $Q^\vee/Q^\vee_P$ canonically,
            by mapping $\sum_{\alpha_j\in \Delta\setminus\Delta_P}a_j\sigma_{s_{\alpha_j}}$ to $\lambda_P=\sum_{\alpha_j\in \Delta\setminus\Delta_P}a_j\alpha_j^\vee+Q^\vee_P$.
        For each $\alpha_j\in \Delta\setminus\Delta_P$, we introduce a formal variable $q_{\alpha_j^\vee+Q^\vee_P}$.
          For such $\lambda_P$, we denote $q_{\lambda_P}=\prod_{\alpha_j\in \Delta\setminus\Delta_P}q_{\alpha_j^\vee+Q^\vee_P}^{a_j}$.
The (\textbf{small}) \textbf{quantum cohomology ring}   $QH^*(G/P)=(H^*(G/P)\otimes\mathbb{Q}[\mathbf{q}],  \star_P)$ of $G/P$ also has
       a natural $\mathbb{Q}[\mathbf{q}]$-basis of Schubert classes $\sigma^u=\sigma^u\otimes 1$, for which
                $$\sigma^u\star_P \sigma^v = \sum_{w\in W^P, \lambda_P\in Q^\vee/Q^\vee_P}    N_{u,v}^{w, \lambda_P}q_{\lambda_P}\sigma^w.$$
 The quantum product $\star_P$ is associative and commutative.
  The  \textit{quantum Schubert structure constants}  $N_{u, v}^{w, \lambda_P}$  are all \textit{non-negative}, given by
      genus zero, 3-pointed Gromov-Witten invariants of $G/P$.
       When $P=B$, we have $Q^\vee_P=0$, $W_P=\{1\}$ and $W^P=W$. In this case, we simply denote
     $\star=\star_P$,    $\lambda=\lambda_P$ and $q_j=q_{\alpha_j^\vee}$.

   \subsection{Main results}\label{subsecMainResult}
     We will assume the Dynkin diagram $Dyn(\Delta_P)$ to be connected throughout the paper except in section \ref{sectiongeneralcase}.  Denote $r:=|\Delta_P|$. Note $1\leq r<n$.

   Recall that a natural $\mathbb{Q}$-basis of $QH^*(G/B)[q_1^{-1}, \cdots, q_n^{-1}]$ is given by  $q_\lambda\sigma^{w}$ labelled by $(w, \lambda)\in W\times Q^\vee$.     Note that
   $q_\lambda \sigma^w\in QH^*(G/B)$ if and only if $q_\lambda\in \mathbb{Q}[\mathbf{q}]$ is a polynomial.
   In \cite{leungli33}, Leung and the author introduced a grading map
              $$gr: W\times Q^\vee \longrightarrow \mathbb{Z}^{r+1}=\bigoplus\nolimits_{i=1}^{r+1}\mathbb{Z}\mathbf{e}_{i}.$$
Due to Lemma 2.12 of \cite{leungli33}, the following subset
          $$ S :=\{gr(w, {\lambda})~|~ q_{\lambda} \sigma^{w}\in QH^*(G/B)\}  \footnote{$(w, \lambda)$ is simply denoted as $wq_\lambda$ in \cite{leungli33}.}$$
    is a totally-ordered sub-semigroup of $\mathbb{Z}^{r+1}$.
Here we are using  the  \textbf{lexicographical order} on elements $\mathbf{a}=(a_1, \cdots, a_{r+1})=\sum_{i=1}^{r+1}a_i\mathbf{e}_{i}$ in  $\mathbb{Z}^{r+1}$. Namely      $\mathbf{a}<\mathbf{b}$  % where  $\mathbf{a}=(a_1, \cdots, a_p)$ and $\mathbf{b}=(b_1, \cdots, b_p)$ are both in $\mathbb{Z}^p$,
   if and only if there exists $1\leq j\leq r+1$ such that   $a_j<b_j$ and $a_{i}=b_{i}$ for all $1\leq i< j$.
    We can define
 a family    $\mathcal{F}=\{F_{\mathbf{a}}\}_{\mathbf{a}\in S}$ of subspaces of $QH^*(G/B)$, in which
   $$        F_{\mathbf{a}}:= \bigoplus_{gr(w, {\lambda})\leq \mathbf{a}}\mathbb{Q}q_{\lambda}\sigma^w\subset QH^*(G/B).$$
It is one of the main theorems in \cite{leungli33} that

\begin{prop}[Theorem 1.2 of \cite{leungli33}]\label{propmainthm}
   $QH^*(G/B)$ is an $S$-filtered algebra with filtration $\mathcal{F}$. Furthermore, this  $S$-filtered algebra structure
   is naturally   extended to a $\mathbb{Z}^{r+1}$-filtered algebra structure on $QH^*(G/B)$.
\end{prop}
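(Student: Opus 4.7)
The plan is to reduce the compatibility $F_{\mathbf{a}}\star F_{\mathbf{b}}\subset F_{\mathbf{a}+\mathbf{b}}$ to a single pointwise inequality on the grading map. Since each $F_{\mathbf{a}}$ is spanned by monomials $q_{\lambda}\sigma^{w}$ with $gr(w,\lambda)\le\mathbf{a}$, and one expects (and first verifies directly from the explicit definition of $gr$ in Section~\ref{mainresults}) that $gr(w,\lambda+\mu)=gr(w,\lambda)+gr(\mathrm{id},\mu)$ for $\lambda,\mu$ in the effective cone, the compatibility is equivalent to the single inequality
\[
gr(u,\eta)\ \le\ gr(w,0)+gr(v,0)\qquad\text{whenever } N_{w,v}^{u,\eta}>0.
\]
Thus my first task is the routine additivity check for $gr$ on $\{\mathrm{id}\}\times Q^\vee_{\ge 0}$, which immediately reduces everything to this key inequality.

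Next I would exploit the Siebert--Tian/Kim generation theorem: $QH^*(G/B)$ is generated as a $\mathbb{Q}[\mathbf{q}]$-algebra by the divisor Schubert classes $\sigma^{s_1},\ldots,\sigma^{s_n}$. Hence, by induction on $\ell(v)$, it suffices to treat the Chevalley case $v=s_i$, where the Peterson--Fulton--Woodward quantum Chevalley formula
\[
\sigma^{w}\star\sigma^{s_i} \;=\!\!\!\!\!\sum_{\substack{\beta\in R^+ \\ \ell(ws_\beta)=\ell(w)+1}}\!\!\!\!\!\langle\omega_i,\beta^\vee\rangle\,\sigma^{ws_\beta}\;+\!\!\!\!\!\!\!\sum_{\substack{\beta\in R^+ \\ \ell(ws_\beta)=\ell(w)+1-2\langle\rho,\beta^\vee\rangle}}\!\!\!\!\!\!\!\langle\omega_i,\beta^\vee\rangle\,q_{\beta^\vee}\,\sigma^{ws_\beta}
\]
provides explicit candidates to test. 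I would split the verification into four cases according to whether $\beta$ lies in $R_P^+$ and whether the contribution is classical or quantum; the classical subcases are essentially the Leray--Serre filtration statement for the fibration $P/B\hookrightarrow G/B\twoheadrightarrow G/P$ and reduce to coset manipulations involving the factorization $w=u^Pu_P$ with $u^P\in W^P$ and $u_P\in W_P$.

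The main obstacle will be the quantum terms with $\beta\notin R_P^+$, because there $q_{\beta^\vee}$ shifts both coordinates of the grading in opposite directions (it modifies the ``Levi'' coordinate through the $Q_P^\vee$-component of $\beta^\vee$ and raises the ``base'' coordinate by the image of $\beta^\vee$ in $Q^\vee/Q_P^\vee$), and these shifts must be balanced against the length drop $\ell(ws_\beta)-\ell(w)=1-2\langle\rho,\beta^\vee\rangle$. Establishing the inequality in this range amounts to controlling how the coset factorization transforms under $w\mapsto ws_\beta$, coupled with the Peterson--Woodward lifting used to pull effective classes back from $Q^\vee/Q_P^\vee$ to $Q^\vee$. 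This is where Deodhar's length formulas, the inversion-set description of $\ell(w)$ recalled in Section~\ref{subsecnotations}, and in certain Lie types an unavoidable case-by-case analysis, all have to be deployed simultaneously.

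Once the Chevalley case is in hand, the inductive step follows from associativity: writing $v=s_iv'$ with $\ell(v)=\ell(v')+1$, the product $\sigma^{s_i}\star\sigma^{v'}$ equals $\sigma^v$ modulo terms of strictly smaller grading (by applying the Chevalley case with $w=\mathrm{id}$), so multiplying by $\sigma^w\star -$ and invoking the inductive hypothesis term by term yields the general inequality. Finally, the extension from an $S$-filtration to a $\mathbb{Z}^{r+1}$-filtration is formal: set $F_{\mathbf{x}}:=F_{\mathbf{a}(\mathbf{x})}$, where $\mathbf{a}(\mathbf{x})\in S$ is the largest element not exceeding $\mathbf{x}$ (well defined because the first coordinate of $gr$ is bounded by $\ell(w_P)$ plus a linear function of $\lambda$). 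Since $\mathbf{a}(\mathbf{x})+\mathbf{a}(\mathbf{y})\le\mathbf{a}(\mathbf{x}+\mathbf{y})$ in $S$, the $S$-filtration compatibility transports directly.
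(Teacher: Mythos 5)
Your plan follows the same route as the paper's source for this statement (it is imported as Theorem 1.2 of \cite{leungli33}, and the strategy is re-sketched in the proof of Theorem \ref{thmgenthm22}): additivity of $gr$ in the quantum variables, reduction via generation by divisor classes and the quantum Chevalley formula, a hard Chevalley-case inequality, and a formal passage from $S$ to $\mathbb{Z}^{r+1}$. However, your inductive step contains a genuine gap. The claim that $\sigma^{s_i}\star\sigma^{v'}$ equals $\sigma^{v}$ modulo terms of \emph{strictly smaller} grading is false: already in Example \ref{exampfiltrforA2} one has $\sigma^{s_1}\star\sigma^{s_2}=\sigma^{s_1s_2}+\sigma^{s_2s_1}$ with $gr(\sigma^{s_2s_1})=(1,1)>(0,2)=gr(\sigma^{s_1s_2})$. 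More seriously, $gr(\cdot,0)$ is not additive along reduced words (only the total degree $|gr|$ is), so $gr(v,0)$ can be strictly smaller than $gr(v',0)+gr(s_i,0)$; consequently, even granting the Chevalley case, positivity, and control of the correction terms, your induction only bounds the terms of $\sigma^w\star\sigma^v$ by $gr(w,0)+gr(s_i,0)+gr(v',0)$, which can strictly exceed the required bound $gr(w,0)+gr(v,0)$ (in the example above it would give $gr(w,0)+(1,1)$ for $\sigma^w\star\sigma^{s_1s_2}$ instead of $gr(w,0)+(0,2)$, and since $s_1s_2$ has a unique reduced word there is no better choice of decomposition). This is exactly the point where \cite{leungli33} has to work harder: the positivity of the Gromov--Witten invariants is combined with inductions adapted to the factorization $w=w_{r+1}\cdots w_1$ with $w_j\in W_{P_j}^{P_{j-1}}$, for which $gr$ \emph{is} additive, together with leading-term statements such as Proposition 3.23 there (extended here as Proposition \ref{propfor22uv}); peeling off one simple reflection at a time cannot give the sharp bound.

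In addition, the crux of the whole statement --- the Chevalley-case inequality $gr(us_\gamma,\gamma^\vee)\le gr(u,0)+gr(s_i,0)$ (and its classical counterpart), i.e.\ the Key Lemma of \cite{leungli33} --- is only identified as an obstacle in your sketch, not proved; as the present paper remarks, this is the most complicated part of \cite{leungli33} and is carried out by a substantial case-by-case analysis using virtual null coroots, so naming the Peterson--Woodward lifting and length formulas does not yet constitute an argument. Finally, your extension from $S$ to $\mathbb{Z}^{r+1}$ via ``the largest $\mathbf{a}(\mathbf{x})\in S$ not exceeding $\mathbf{x}$'' is not well defined: in Example \ref{exampfiltrforA2} the gradings $gr(q_1^aq_2^{2a+1})=(-1,6a+3)$, $a\ge 0$, all lie below $(0,0)$ and form a strictly increasing chain in the lexicographic order, so no maximum exists. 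The extension is nevertheless immediate by defining $F_{\mathbf{x}}:=\bigoplus_{gr(w,\lambda)\le\mathbf{x}}\mathbb{Q}q_\lambda\sigma^w$ for every $\mathbf{x}\in\mathbb{Z}^{r+1}$, so this last point is a repairable slip, unlike the two gaps above.
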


As a consequence,
          we obtain %the following
             the associated $\mathbb{Z}^{r+1}$-graded algebra
       $$Gr^{\mathcal{F}}(QH^*(G/B)):=\bigoplus_{\mathbf{a}\in \mathbb{Z}^{r+1}} Gr_\mathbf{a}^{\mathcal{F}}, \mbox{ where }
                Gr_{\mathbf{a}}^{\mathcal{F}}:=F_{\mathbf{a}}\big/\sum_{\mathbf{b}<\mathbf{a}}F_{\mathbf{b}}.$$
 In particular, we have a  graded subalgebra
    $$Gr^{\mathcal{F}}_{(r+1)} :=\bigoplus_{i\in \mathbb{Z}}Gr_{i\mathbf{e}_{r+1}}^{\mathcal{F}}. $$
    %  of    $Gr^{\mathcal{F}}(QH^*(G/B))$.

  Recall the next Peterson-Woodward comparison formula  \cite{wo} (see also \cite{lamshi}).

  \begin{prop}\label{propcomparison}
     For any $\lambda_P\in Q^\vee/Q_P^\vee$,  there exists a unique $\lambda_B\in Q^\vee$ such that $\lambda_P=\lambda_B+Q_P^\vee$ and
                $\langle \alpha, \lambda_B\rangle  \in \{0, -1\}$ for all $\alpha\in R^+_P$.
                Furthermore for every $u, v, w\in W^P$, we have  $$N_{u,v}^{w, \lambda_P }=N_{u, v}^{ww_Pw_{P'},   \lambda_B},$$
               where  %$w'=w_{P'}$ with
               $\Delta_{P'}=\{\alpha\in \Delta_P~|~\langle  \alpha, \lambda_B\rangle =0\}$.
               % (resp. $w_P$) denotes the longest element in the Weyl group   $W_{P'}$  (resp. $W_P$),                where $W_{P'}$ is the subgroup generated by $\{s_j~|~ \alpha_j\in \Delta_{\lambda_P}\}$.
    \end{prop}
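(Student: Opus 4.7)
The plan is to address the two clauses of the proposition separately: the existence and uniqueness of the special lift $\lambda_B$, and then the identification of Gromov-Witten invariants.

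For uniqueness in the first clause, suppose two elements $\lambda_B, \lambda_B' \in Q^\vee$ both project to $\lambda_P$ and satisfy the pairing condition. Then $\mu := \lambda_B - \lambda_B'$ lies in $Q_P^\vee$ and has $\langle \alpha, \mu \rangle \in \{-1,0,1\}$ for every $\alpha \in R_P^+$; pairing with the highest root of each irreducible component of $R_P^+$ and running a short case check forces $\mu = 0$. For existence, I would start from an arbitrary lift $\tilde\lambda \in Q^\vee$ of $\lambda_P$ and adjust by elements of $Q_P^\vee$ so as to drive the pairings with simple roots of $\Delta_P$ into $\{0,-1\}$. The conditions for non-simple positive roots would then follow by writing each $\alpha \in R_P^+$ as a non-negative integer combination of simple roots, together with a small dominance / alcove argument to rule out the intermediate integer values.

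For the identification of Gromov-Witten invariants, my approach would be to use the natural fibration $\pi\colon G/B \to G/P$ with fiber $P/B$ to set up a correspondence between three-pointed genus zero stable maps to $G/P$ of class $\lambda_P$ and those to $G/B$ of class $\lambda_B$. The minimality of $\lambda_B$ in its coset should ensure that the push-forward map on the relevant moduli components is birational, and that the Schubert incidence conditions at the three marked points transform in a precise and controlled way. The expected form of this transformation is that the first two indices $u,v \in W^P$ persist unchanged, while the third is shifted from $w$ to $w w_P w_{P'}$; the element $w_P w_{P'}$ should encode the constraint that the horizontal lift of the curve is forced into the Schubert stratum of $G/B$ determined by the sub-root system $\Delta_{P'}$, i.e., the positive roots on which $\lambda_B$ vanishes.

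The main obstacle is making the lifting correspondence precise and verifying that it has degree one on the relevant components of the moduli of stable maps. This is exactly the content of Woodward's proof \cite{wo}, which proceeds by an equivariant Gromov-Witten analysis of the evaluation morphisms. An alternative and more algebraic route via the affine nilHecke algebra is due to Lam--Shimozono \cite{lamshi}, going back to unpublished work of Peterson, in which the identity becomes a compatibility statement for the Peterson presentations of $QH^*(G/B)$ and $QH^*(G/P)$. Since the present paper uses this comparison formula only as an input, I would cite these established proofs rather than reprove them here.
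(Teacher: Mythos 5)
Your proposal is fine: the paper does not prove this proposition at all, but simply recalls it as the Peterson--Woodward comparison formula with references to \cite{wo} (see also \cite{lamshi}), so deferring to those established proofs is exactly the paper's own approach. One small caveat on your existence sketch: driving the pairings with the \emph{simple} roots of $\Delta_P$ into $\{0,-1\}$ does not by itself yield $\langle\alpha,\lambda_B\rangle\in\{0,-1\}$ for non-simple $\alpha\in R_P^+$ (two simple pairings equal to $-1$ already violate it for their sum), so the alcove argument you allude to must carry the full weight there --- but since you ultimately cite the known proofs, this does not affect the outcome.
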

The above formula, comparing Gromov-Witten invariants for $G/P$ and for $G/B$,
  induces   an injective map
             \begin{align*}\psi_{\Delta, \Delta_P}:\,\, &\,\, QH^*(G/P)\longrightarrow QH^*(G/B);\\
                               &\!\!\!\sum a_{w, \lambda_P}q_{\lambda_P}\sigma^{w}\mapsto
       \sum a_{w, \lambda_P}q_{\lambda_B}\sigma^{ww_Pw_{P'}},
     \end{align*}
and we call $\lambda_B$ the \textit{Peterson-Woodward lifting} of $\lambda_P$.
The next proposition is another one of the main theorems in \cite{leungli33} (see Proposition 3.24 and Theorem 1.4 therein).
\begin{prop}\label{propanothermainthm}
   Suppose that  $\Delta_P$ is of $A$-type. Then   the following map
     \begin{align*}\Psi_{r+1}:\,\, &\,\, QH^*(G/P)\hookrightarrow Gr^{\mathcal{F}}_{(r+1)}; \\
                               &\,\quad\quad q_{\lambda_P}\sigma^w\mapsto \overline{\psi_{\Delta, \Delta_P}(q_{\lambda_P}\sigma^w)}
     \end{align*}
   is well-defined, and it is    an isomorphism of  (graded)  algebras.
\end{prop}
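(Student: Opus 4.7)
The plan is to establish three separate properties of the candidate map $\Psi_{r+1}$: well-definedness (the image of every basis vector lies in $Gr^{\mathcal{F}}_{(r+1)}$, not just in the bigger associated graded algebra), multiplicativity, and bijectivity onto $Gr^{\mathcal{F}}_{(r+1)}$. The Peterson-Woodward formula already tells us what $\Psi_{r+1}$ does on the $\mathbb{Q}[\mathbf{q}]$-basis of Schubert classes, so the content is entirely about compatibility with the grading map $gr$ and with the quantum product.

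First, for well-definedness, I would compute $gr(ww_Pw_{P'},\lambda_B)$ directly on the Peterson-Woodward lift. The key inputs are the constraint $\langle\alpha,\lambda_B\rangle\in\{0,-1\}$ for every $\alpha\in R_P^+$ together with the fact that the element $w_Pw_{P'}\in W_P$ is the longest representative for $W_P/W_{P'}$ and therefore interacts cleanly with the inversion set of $w\in W^P$. Since $\Delta_P$ is of type $A$, the Levi factor is a product of $SL$'s; I would use this to give explicit reduced expressions for $w_Pw_{P'}$ and to read off the coroot expansion of $\lambda_B$ relative to $\Delta_P$ and $\Delta\setminus\Delta_P$. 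The goal is to show the first $r$ coordinates of the grading vanish, so that the image lands in $\mathbb{Z}\mathbf{e}_{r+1}$.

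For multiplicativity, I would expand $\sigma^u\star_P\sigma^v$ and apply $\psi_{\Delta,\Delta_P}$ term-by-term, obtaining
\[\psi_{\Delta,\Delta_P}(\sigma^u\star_P\sigma^v)=\sum_{w\in W^P,\lambda_P}N_{u,v}^{w,\lambda_P}q_{\lambda_B}\sigma^{ww_Pw_{P'}}=\sum N_{u,v}^{ww_Pw_{P'},\lambda_B}q_{\lambda_B}\sigma^{ww_Pw_{P'}},\]
using the Peterson-Woodward identity $N_{u,v}^{w,\lambda_P}=N_{u,v}^{ww_Pw_{P'},\lambda_B}$. On the other hand, $\psi_{\Delta,\Delta_P}(\sigma^u)\star\psi_{\Delta,\Delta_P}(\sigma^v)=\sigma^u\star\sigma^v$ in $QH^*(G/B)$ produces extra Schubert/quantum terms that do not come from such a Peterson-Woodward lift. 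The task is to prove that every such extra term has grading strictly smaller, in the lexicographic order on $\mathbb{Z}^{r+1}$, than the grading of the Peterson-Woodward terms being computed; equivalence modulo $\sum_{\mathbf{b}<k\mathbf{e}_{r+1}}F_{\mathbf{b}}$ then yields the identity in $Gr^{\mathcal{F}}_{(r+1)}$.

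For bijectivity, Peterson-Woodward already gives an injection on the level of indexing data, so what remains is to show that every $(w',\lambda)$ with $gr(w',\lambda)\in\mathbb{Z}\mathbf{e}_{r+1}$ factors uniquely as $w'=ww_Pw_{P'}$ with $w\in W^P$ and $\lambda=\lambda_B$ for some $\lambda_P\in Q^\vee/Q^\vee_P$. The main obstacle I expect is the grading-control step in the multiplicativity argument: proving that every quantum correction $q_\mu\sigma^{w'}$ in $\sigma^u\star\sigma^v$ either lies in the image of the Peterson-Woodward lifting or has strictly smaller $gr$. This is where the $A$-type hypothesis on $\Delta_P$ really earns its keep, because it forces $Q^\vee_P$ to be a type-$A$ coroot lattice (with the explicit description of Weyl group elements and coroots that goes with it), and this should be exactly what makes any $\mu\notin\lambda_B+Q^\vee_P$ contribute nontrivially (and negatively) to one of the first $r$ coordinates of $gr$, thereby pushing it into the lower part of $\mathcal{F}$. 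I would verify this by reducing to the case of a single type-$A$ Levi block and carrying out a careful inversion-set/coroot bookkeeping argument, possibly combined with a ``quantum to classical'' reduction as suggested in the introduction.
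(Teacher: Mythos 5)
This statement is quoted by the paper from \cite{leungli33} (Proposition 3.24 and Theorem 1.4 there) rather than re-proved, but its proof has exactly the same architecture as the non-$A$-type argument carried out in Section 3 of the present paper, and measured against that architecture your proposal has a genuine gap. Your three steps (vanishing of $gr_{[1,r]}$ on Peterson--Woodward lifts, multiplicativity, bijectivity) are the right skeleton for the first and third items, and your dichotomy ``every correction term in $\sigma^u\star\sigma^v$ is either itself a Peterson--Woodward lift or has strictly smaller grading'' is the right shape for the Schubert-times-Schubert part of multiplicativity. But that is the only multiplicativity you address, and it is not enough. The map $\Psi_{r+1}$ is only $\mathbb{Q}$-linear on the basis $q_{\lambda_P}\sigma^w$; the quantum parameters are \emph{not} sent to quantum parameters but to classes $\overline{q_{\lambda_B}\sigma^{w_Pw_{P'}}}$ carrying a nontrivial Schubert factor $w_Pw_{P'}\in W_P$. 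To conclude that $\Psi_{r+1}$ is a ring homomorphism you must also prove
$\Psi_{r+1}(q_{\lambda_P}\star_P\sigma^{v})=\Psi_{r+1}(q_{\lambda_P})\star\Psi_{r+1}(\sigma^{v})$ and
$\Psi_{r+1}(q_{\lambda_P}\star_P q_{\mu_P})=\Psi_{r+1}(q_{\lambda_P})\star\Psi_{r+1}(q_{\mu_P})$,
i.e.\ that the lifted quantum parameters behave like monomials in $Gr^{\mathcal{F}}_{(r+1)}$. The Peterson--Woodward comparison formula says nothing directly about products of the mixed monomials $q_{\lambda_B}\sigma^{uw_Pw_{P'}}$, and you cannot bootstrap from the Schubert-times-Schubert case: reducing a general product to products of divisor classes already forces you to multiply elements of the form $q_\lambda\sigma^w$ by Schubert classes, which is precisely the missing statement. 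Nor does linear bijectivity rescue this --- a linear isomorphism that is multiplicative on Schubert classes need not be multiplicative on the whole ring.

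This is exactly where both \cite{leungli33} (in type $A$) and the present paper (in the remaining cases) spend most of their effort: one needs a leading-term statement of the form $\sigma^v\star\sigma^u=\sigma^{vu}+(\text{strictly lower terms})$ for $v\in W^P$, $u\in W_P$ (Proposition 3.23 of \cite{leungli33}, extended here as Proposition \ref{propfor22uv}), the virtual null coroot analysis guaranteeing additivity of Peterson--Woodward lifts (Lemmas \ref{lemmanullcoroot} and \ref{lemmagrading33cases}), and, for the ``$q\star q$'' identities, the explicit evaluation of certain Gromov--Witten invariants equal to $1$ (the analogue of Proposition \ref{propuuforC1BC2}), typically via the quantum-to-classical principle (Proposition \ref{propQtoCthm}). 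Even in the simplest type-$A$ example ($G=SL(4)$, $\Delta_P=\{\alpha_1,\alpha_2\}$) one has $\psi_{\Delta,\Delta_P}(q_{\lambda_P})=q_3\sigma^{w_Pw_{P'}}$ with $\ell(w_Pw_{P'})=2$, and verifying $\overline{q_3\sigma^{w_Pw_{P'}}}\star\overline{q_3\sigma^{w_Pw_{P'}}}=\overline{\psi_{\Delta,\Delta_P}(q_{2\lambda_P})}$ already requires a nontrivial Gromov--Witten computation, not just grading bookkeeping. Your proposal as written does not contain, or even flag, this part of the argument, so it does not yet yield the proposition; the well-definedness and surjectivity sketches, while vaguer than the paper's uniform arguments (cf.\ Proposition \ref{propwelldefinedgrading} and Lemma \ref{lemmauniquegradinginPB}), are at least aimed in the right direction.
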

Conjecture 5.3 of \cite{leungli33} tells us the counterpart of the above proposition, and it is the  main result  of the present paper that such a conjecture does hold. Namely

 \begin{thm}\label{thmfirstmainthm}
     Suppose that  $\Delta_P$ is not of $A$-type. Then   the  map $\Psi_{r+1}$ given in Proposition \ref{propanothermainthm} is
        well-defined, and it is    an injective morphism  of (graded) algebras. Furthermore, $\Psi_{r+1}$ becomes an isomorphism if and only if
         $r=2$ together with either case {\upshape \mbox{C1}$B)$} or case {\upshape \mbox{C9)}} of Table \ref{tabrelativeposi} occurring.
    \end{thm}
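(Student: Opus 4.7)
The central tool will be the Peterson--Woodward comparison formula (Proposition~\ref{propcomparison}) together with a careful analysis of the grading map $gr$ built in \cite{leungli33}. My first step is to check that $\Psi_{r+1}$ is well-defined, i.e.\ that for every $(w,\lambda_P)\in W^P\times Q^\vee/Q_P^\vee$ the grading $gr(ww_Pw_{P'},\lambda_B)$ lies in the pure direction $\mathbb{Z}\mathbf{e}_{r+1}$ inside $\mathbb{Z}^{r+1}$. This is the content of showing that Peterson--Woodward lifts are ``top graded''. I expect it to be a combinatorial property intrinsic to the lift $\lambda_B$ of $\lambda_P$ and to the twist $w_Pw_{P'}$, and to be derivable from the very definition of $gr$ independent of whether $\Delta_P$ is of $A$-type; the corresponding fact in the $A$-type case was one of the inputs of Proposition~\ref{propanothermainthm}.

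The next step is multiplicativity. For $u,v\in W^P$ Peterson--Woodward gives
$$\psi_{\Delta,\Delta_P}(\sigma^u\star_P\sigma^v)=\sum_{w,\lambda_P} N_{u,v}^{w,\lambda_P}\, q_{\lambda_B}\sigma^{ww_Pw_{P'}},$$
while the genuine quantum product in $QH^*(G/B)$ is $\sigma^u\star\sigma^v=\sum_{w',\mu}N_{u,v}^{w',\mu}q_\mu\sigma^{w'}$, a sum with many more terms. I will argue that the ``extra'' contributions, where $(w',\mu)$ is not a Peterson--Woodward lift, all have grading strictly below the top-component bound $k\mathbf{e}_{r+1}:=(gr(u)+gr(v))_{r+1}\mathbf{e}_{r+1}$, so they vanish after passing to $Gr^{\mathcal{F}}_{(r+1)}$. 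Here I use the multiplicativity of the filtration in Proposition~\ref{propmainthm} to bound $gr(w',\mu)\le gr(u)+gr(v)$, and the characterization of equality in Step~1 to see that equality forces $(w',\mu)$ to be a Peterson--Woodward lift. Injectivity of $\Psi_{r+1}$ then comes for free: distinct $(w,\lambda_P)$ produce distinct pairs $(ww_Pw_{P'},\lambda_B)$ in $W\times Q^\vee$, yielding linearly independent graded Schubert classes in $Gr^{\mathcal{F}}_{(r+1)}$.

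The main obstacle, and the bulk of the work, will be the ``if and only if'' for surjectivity. This amounts to characterizing when every $(w',\mu)\in W\times Q^\vee_{\ge 0}$ with $gr(w',\mu)\in\mathbb{Z}\mathbf{e}_{r+1}$ arises as a Peterson--Woodward lift. I plan to proceed case-by-case according to how $\Delta_P$ sits in $\Delta$, using the classification recorded in Table~\ref{tabrelativeposi}. For the failure direction, whenever $r\ge 3$ or whenever $r=2$ and the configuration is not one of \mbox{C1$B$} or \mbox{C9}, I will exhibit an explicit element $(w',\mu)$ that is top-graded but not of Peterson--Woodward form, typically by choosing $\mu=\alpha^\vee$ for a non-lift coroot in $Q_P^\vee$ and computing $gr$ via deletion of a node from $\Delta_P$ and comparison with the grading of the corresponding reflection $s_\alpha$ or related Weyl elements. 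For the positive direction in the two exceptional $r=2$ cases, I will check by direct inspection of the (small) root systems of types $B_2{=}C_2$ and $G_2$ embedded as prescribed that every such $(w',\mu)$ is already a Peterson--Woodward lift, thereby promoting the injection to an isomorphism.
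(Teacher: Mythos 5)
Your Step 2 contains the decisive gap. You argue that the filtration bound of Proposition \ref{propmainthm} controls all terms, and that ``equality forces $(w',\mu)$ to be a Peterson--Woodward lift.'' That last claim is false, and it is contradicted by your own surjectivity analysis: the very reason $\Psi_{r+1}$ fails to be surjective outside the two exceptional $r=2$ cases is that there exist basis elements $q_\mu\sigma^{w'}$ with $gr_{[1,r]}(q_\mu\sigma^{w'})=\mathbf{0}$ (i.e.\ top-graded) which are \emph{not} of the form $\psi_{\Delta,\Delta_P}(q_{\lambda_P}\sigma^{w})$. The filtration inequality therefore does not eliminate their contributions; the real content of multiplicativity is the vanishing of the Gromov--Witten invariants $N_{u,v}^{w',\mu}$ attached precisely to those top-graded non-lift classes, and this is where the paper spends almost all of Section 3: one passes to the intermediate fibration $G/B\to G/\tilde P$ with $\Delta_{\tilde P}=\{\alpha_1,\dots,\alpha_{r-1}\}$ of type $A$ (so the earlier results apply there), and then kills the remaining coefficients by the ``quantum to classical'' principle (Proposition \ref{propQtoCthm}) combined with a case-by-case grading estimate (Lemma \ref{lemmacoealpharrrnegative} and the appendix). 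Without an argument of this kind, your multiplicativity proof only works in the two cases where $\Psi_{r+1}$ is already a vector-space isomorphism, i.e.\ exactly where there is nothing to prove.

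A second omission: since $\Psi_{r+1}$ sends a quantum monomial $q_{\lambda_P}$ to the class of $q_{\lambda_B}\sigma^{w_Pw_{P'}}$, which is not a monomial, being an algebra morphism also requires $\Psi_{r+1}(q_{\lambda_P}\star_P\sigma^{v})=\Psi_{r+1}(q_{\lambda_P})\star\Psi_{r+1}(\sigma^{v})$ and $\Psi_{r+1}(q_{\lambda_P}\star_P q_{\mu_P})=\Psi_{r+1}(q_{\lambda_P})\star\Psi_{r+1}(q_{\mu_P})$; these are parts (2) and (3) of Proposition \ref{propmorphismofalgebras} and need, beyond vanishing statements, the explicit evaluation of certain Gromov--Witten invariants (all equal to $1$), e.g.\ $N_{u,u}^{\mathrm{id},\,\mu_B-2\lambda}=1$ in Proposition \ref{propuuforC1BC2}, obtained via the quantum-to-classical principle together with the classical vanishing $\sigma^{\tilde u}\cup\sigma^{\tilde u}=0$ in $H^*(P/B)$. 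Your plan does not address these products at all. Finally, a smaller point on the surjectivity direction: in both exceptional cases ($r=2$ with C1$B$) or C9)) the subsystem $\Delta_P$ is of type $B_2$, sitting inside $B_n$ (arbitrary $n$) resp.\ $F_4$; type $G_2$ never occurs as $\Delta_P$, so the proposed ``inspection of the small root systems $B_2$ and $G_2$'' is not the relevant check, and in case C1$B$) the verification is not a finite inspection unless you first reduce it, as the paper does, to a uniqueness-of-grading statement (via Lemma \ref{lemmauniquegradinginPB}).
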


  \begin{remark}
  The algebra $QH^*(G/P)$ is equipped with a  natural $\mathbb{Z}$-grading:   a Schubert class $\sigma^w$ is of grading $\ell(w)$, and a quantum variable $q_{\alpha^\vee+Q^\vee_P}$ is of grading $\langle \sigma_{s_{\alpha}}, c_1({G/P})\rangle$. Once we show that $\Psi_{r+1}$ is an morphism of algebras, the   way of defining
   $\Psi_{r+1}$ automatically tells us that it preserves the $\mathbb{Z}$-grading as well.
  \end{remark}
  We will provide the proof in the next section, one point of which is to compute certain Gromov-Witten invariants explicitly. %In Proposition \ref{propinjectionofvectsp}, we  will also see   when the morphism $\Psi_{r+1}$ becomes an isomorphism.

In order  to define the grading map $gr$ in \cite{leungli33}, Leung and the author introduced an ordering on the subset $\Delta_P$ first. In our case when $\Delta_P$ is not of type $A$, such an ordering is equivalent to the assumption that  $\Delta_P=\{\alpha_1, \cdots, \alpha_r\}$ with all the possible  Dynkin diagrams $Dyn(\Delta)$ being listed in Table \ref{tabrelativeposi}. These are precisely the cases for which Theorem \ref{thmfirstmainthm} is not covered in \cite{leungli33}. In addition, Table \ref{tabrelativeposi} has exhausted all the possible cases of fiberations $G/B\to G/P$ such that $Dyn(\Delta_P)$ is connected but not of type $A$. Therein the cases are basically numbered according to those for  $Dyn(\{\alpha_1, \cdots, \alpha_{r-1}\})$ in Table 2 of \cite{leungli33}.

 \begin{table}[h]
  \caption{\label{tabrelativeposi}  % $(\Delta_P, \Upsilon)$
    }
   \begin{tabular}{|c|c|}
     \hline
     % after \\: \hline or \cline{col1-col2} \cline{col3-col4} ...
      & Dynkin diagram of $\Delta$   \\
     \hline\hline
               \vspace{-0.034cm}          &\\
      \raisebox{1.1ex}[0pt]{ $\mbox{C} 1B)$}      &  \begin{tabular}{l} \raisebox{-0.4ex}[0pt]{$  \bullet\!\;\!\cdots\!\bulletline\!\!\;\!\bulletline\!\;\!\!\circ\!\cdots  \circline\typebbb $}\\
                 \raisebox{1.1ex}[0pt]{${\hspace{0.9cm}\scriptstyle{\alpha_{r+1}}\hspace{0.15cm}\alpha_1\hspace{0.65cm}\alpha_{r-1}\hspace{0.1cm} \alpha_{r} } $}
                \end{tabular}         \\ \hline
         \vspace{-0.034cm}      &\\
       \raisebox{1.1ex}[0pt]{ $\mbox{C} 1C)$} &  \begin{tabular}{l} \raisebox{-0.4ex}[0pt]{$  \bullet\!\;\!\cdots\!\bulletline\!\!\;\!\bulletline\!\;\!\!\circ\!\cdots  \circline\typeccc $}\\
                 \raisebox{1.1ex}[0pt]{${\hspace{0.9cm}\scriptstyle{\alpha_{r+1}}\hspace{0.15cm}\alpha_1\hspace{0.65cm}\alpha_{r-1}\hspace{0.1cm} \alpha_{r} } $}
                \end{tabular}        \\ \hline
   { $\mbox{C} 2)$} &  \begin{tabular}{l}  \vspace{-0.15cm} $\hspace{2.769cm}\circ \scriptstyle{\,\alpha_{r}}$ \\
                     \vspace{-0.20cm}{ $\hspace{2.763cm}\!\!\;\!\big|$} \\
                   \raisebox{-0.4ex}[0pt]{$  \bullet\!\;\!\cdots\!\bulletline\!\!\;\!\bulletline\!\;\!\!\circ\!\cdots  \circline\circline\!\!\;\circ$}\\
                 \raisebox{1.1ex}[0pt]{${\hspace{0.9cm}\scriptstyle{\alpha_{r+1}}\hspace{0.15cm}\alpha_1\hspace{0.6cm}\alpha_{r-2}\hspace{0.04cm} \alpha_{r-1} }  $}
                \end{tabular}          \\ \hline
   &   \begin{tabular}{l}  \vspace{-0.168cm} $\hspace{2.15cm}\!\circ \scriptstyle{\,\alpha_{6}}$ \\
                     \vspace{-0.20cm}{ $\hspace{2.106cm}\!\!\;\!\!\;\!\big|$} \\
                   \raisebox{-0.4ex}[0pt]{$ \, \bulletline\!\!\;\!\bulletline\circline\circline\circline\circline\!\!\;\!\,\!\circ$}\\
                 \raisebox{1.1ex}[0pt]{${\hspace{0.45cm}\scriptstyle{\alpha_7}   \hspace{0.3cm} \alpha_1\hspace{0.2cm}\alpha_{2} \hspace{0.2cm}\alpha_{3} \hspace{0.2cm}\alpha_{4}\hspace{0.2cm}\alpha_{5}} $}
              \end{tabular}
                   \\ \cline{2-2}
  \raisebox{3.5ex}[0pt]{ $\mbox{C} 4)$} &   \begin{tabular}{l}  \vspace{-0.168cm} $\hspace{2.15cm}\!\circ \scriptstyle{\,\alpha_{7}}$ \\
                     \vspace{-0.20cm}{ $\hspace{2.106cm}\!\!\;\!\!\;\!\big|$} \\
                   \raisebox{-0.4ex}[0pt]{$ \, \bulletline \circline\circline\circline\circline\circline\!\!\;\!\,\!\circ$}\\
                 \raisebox{1.1ex}[0pt]{${\hspace{-0.01cm}\scriptstyle{\alpha_8}   \hspace{0.2cm} \alpha_1\hspace{0.2cm}\alpha_{2} \hspace{0.2cm}\alpha_{3} \hspace{0.2cm}\alpha_{4}\hspace{0.2cm}\alpha_{5} \hspace{0.2cm}\alpha_{6}} $}
              \end{tabular}
              \\ \hline
  { $\mbox{C} 5)$} &    \begin{tabular}{l}  \vspace{-0.166cm} $\hspace{2.06cm}\!\circ \scriptstyle{\,\alpha_{4}}$ \\
                     \vspace{-0.20cm}{ $\hspace{2.025cm}\!\!\;\!\!\;\!\big|$} \\
                   \raisebox{-0.4ex}[0pt]{$ \hspace{-0.025cm}\bulletline\!\;\!\!\bulletline\!\;\!\!\bulletline \circline\circline\circline\!\!\;\!\,\!\circ$}\\
                 \raisebox{1.1ex}[0pt]{${\hspace{0.981cm}\scriptstyle{  \alpha_6}\hspace{0.2cm}\alpha_{5} \hspace{0.2cm}\alpha_{3} \hspace{0.2cm}\alpha_{2}\hspace{0.21cm}\alpha_{1}} $}
              \end{tabular}
                 \\ \cline{1-2}
  \end{tabular}
 \begin{tabular}{|c|c|c|}
     \hline
     % after \\: \hline or \cline{col1-col2} \cline{col3-col4} ...
      & Dynkin diagram of $\Delta$   \\
     \hline\hline
        &    \begin{tabular}{l}  \vspace{-0.166cm} $\hspace{2.06cm}\!\circ \scriptstyle{\,\alpha_{3}}$ \\
                     \vspace{-0.20cm}{ $\hspace{2.025cm}\!\!\;\!\!\;\!\big|$} \\
                   \raisebox{-0.4ex}[0pt]{$ \hspace{-0.025cm}\bulletline\!\;\!\!\bulletline\!\;\!\!\bulletline \circline\circline\circline\!\!\;\!\,\!\bullet$}\\
                 \raisebox{1.1ex}[0pt]{${\hspace{0.981cm}\scriptstyle{  \alpha_6}\hspace{0.2cm}\alpha_{1} \hspace{0.2cm}\alpha_{2} \hspace{0.2cm}\alpha_{4}\hspace{0.21cm}\alpha_{5}} $}
              \end{tabular}
                  \\ \cline{2-2}
    &    \begin{tabular}{l}  \vspace{-0.166cm} $\hspace{2.06cm}\!\circ \scriptstyle{\,\alpha_{4}}$ \\
                     \vspace{-0.20cm}{ $\hspace{2.025cm}\!\!\;\!\!\;\!\big|$} \\
                   \raisebox{-0.4ex}[0pt]{$ \hspace{-0.025cm}\bulletline\!\;\!\!\bulletline\circline \circline\circline\circline\!\!\;\!\,\!\bullet$}\\
                 \raisebox{1.1ex}[0pt]{${\hspace{0.49cm}\scriptstyle{  \alpha_7}\hspace{0.2cm}\alpha_{1}\hspace{0.2cm}\alpha_{2} \hspace{0.2cm}\alpha_{3} \hspace{0.2cm}\alpha_{5}\hspace{0.21cm}\alpha_{6}} $}
              \end{tabular}
                  \\ \cline{2-2}
   \raisebox{3.8ex}[0pt]{ $\mbox{C} 7)$} &    \begin{tabular}{l}  \vspace{-0.166cm} $\hspace{2.06cm}\!\circ \scriptstyle{\,\alpha_{5}}$ \\
                     \vspace{-0.20cm}{ $\hspace{2.025cm}\!\!\;\!\!\;\!\big|$} \\
                   \raisebox{-0.4ex}[0pt]{$ \hspace{-0.025cm}\bulletline\circline\circline \circline\circline\circline\!\!\;\!\,\!\bullet$}\\
                 \raisebox{1.1ex}[0pt]{${\hspace{-0.01cm}\scriptstyle{  \alpha_8}\hspace{0.2cm}\alpha_{1}\hspace{0.2cm}\alpha_{2} \hspace{0.2cm}\alpha_{3} \hspace{0.2cm}\alpha_{4}\hspace{0.2cm}\alpha_{6}\hspace{0.21cm}\alpha_{7}} $}
              \end{tabular}
                   \\ \cline{2-2}
  &    \begin{tabular}{l}  \vspace{-0.168cm} $\hspace{2.081cm}\!\circ \scriptstyle{\,\alpha_{6}}$ \\
                     \vspace{-0.20cm}{ $\hspace{2.038cm}\!\!\;\!\!\;\!\big|$} \\
                   \raisebox{-0.4ex}[0pt]{$ \, \circline\circline\circline\circline\circline\circline\!\!\;\!\,\!\bullet$}\\
                 \raisebox{1.1ex}[0pt]{${\hspace{-0.01cm}\scriptstyle{\alpha_1}\hspace{0.2cm}\alpha_2 \hspace{0.2cm} \alpha_3\hspace{0.2cm}\alpha_{4} \hspace{0.2cm}\alpha_{5} \hspace{0.2cm}\alpha_{7}\hspace{0.21cm}\alpha_{8}} $}
              \end{tabular}
          \\ \hline
          &  \begin{tabular}{l} \raisebox{-0.4ex}[0pt]{$\bulletline\,\typebbb\!\!\;\!\!-\!\!\!-\!\bullet$}\\
                 \raisebox{1.1ex}[0pt]{${\hspace{-0.01cm}\scriptstyle{\alpha_4}\hspace{0.2cm}\alpha_1\hspace{0.25cm}\alpha_2\hspace{0.2cm}\alpha_{3}} $}
                 \end{tabular}
                  \\ \cline{2-2}
\raisebox{2.7ex}[0pt]{ $\mbox{C} 9)$} &  \begin{tabular}{l} \raisebox{-0.4ex}[0pt]{$\typeb\!\!\;\!\!-\!\!\!-\!\bullet$}\\
                 \raisebox{1.1ex}[0pt]{${\hspace{-0.01cm}\scriptstyle{\alpha_1}\hspace{0.2cm}\alpha_2\hspace{0.25cm}\alpha_3\hspace{0.2cm}\alpha_{4}} $}
                 \end{tabular}
                   \\ \hline
 { $\mbox{C} 10)$}  &     \begin{tabular}{l} \raisebox{-0.4ex}[0pt]{$\typec\!\!\;\!\!-\!\!\!-\!\bullet$}\\
                 \raisebox{1.1ex}[0pt]{${\hspace{-0.01cm}\scriptstyle{\alpha_1}\hspace{0.2cm}\alpha_2\hspace{0.25cm}\alpha_3\hspace{0.2cm}\alpha_{4}} $}
                 \end{tabular}
                    \\ \hline
          \end{tabular}
  \end{table}

 \begin{remark}
  In Table \ref{tabrelativeposi}, we have treated   bases of type $E_6$ and $E_7$ as  subsets of a base of type $E_8$ canonically.
  $Dyn(\Delta_P)$  is  always given by a unique  case in Table \ref{tabrelativeposi} except when $\Delta$ is of $E_6$-type together with $r=5$. In this
  exceptional  case,  both
             {\upshape C5)} and {\upshape C7)} occur and we can choose either of them.
   Note $2\leq r<n$.  The case of   $G_2$-type does not occur there.
 \end{remark}

 In \cite{leungli33}, the grading map $gr$ was defined recursively by using the Peterson-Woodward comparison formula together with the given ordering on $\Delta_P$. Here we will define $gr$ as below, following the suggestion of the referee of \cite{leungli33} (see also Remark 2.10 therein).

  \begin{defn}\label{defnofgrading} Let us choose the ordering of $\Delta_P$  as given in Table  \ref{tabrelativeposi}.
    For each $1\leq j\leq r$, we denote $\Delta_j:=\{\alpha_1, \cdots, \alpha_j\}$. Set $\Delta_0:=\emptyset$ and $\Delta_{r+1}:=\Delta$. Denote by $P_i:=P_{\Delta_i}$ the parabolic subgroup corresponding to $\Delta_i$ for all $0\leq i\leq r+1$.
      Recall that we have denoted by $\{\mathbf{e}_1, \cdots, \mathbf{e}_{r+1}\}$ the standard basis of $\mathbb{Z}^{r+1}$. Define a grading map $gr$ by
      {\upshape{\begin{align*}gr:\,\, &\,\, W\times Q^\vee \longrightarrow \mathbb{Z}^{r+1};\\
                               &\!\!\!(w, \lambda)\mapsto gr(w, \lambda)=
       \sum_{i=1}^{r+1} \Big(\big|\mbox{Inv}(w)\cap (R_{P_i}^+\setminus R_{P_{i-1}}^+)\big|+\sum_{\beta \in R_{P_i}^+\setminus R_{P_{i-1}}^+}\langle \beta, \lambda\rangle\Big) \mathbf{e}_i.
       \end{align*}
       }}
  Say $gr(u, \eta)=\sum_{i=1}^{r+1}a_i\mathbf{e}_i$. Let $1\leq j\leq k\leq r+1$.  As usual, we define
    $$|gr(u, \eta)|:=\sum_{i=1}^{r+1}a_i, \qquad gr_{[j, k]}(u, \eta):=\sum_{i=j}^{k}a_i\mathbf{e}_i.$$
      As a known fact, we have (see also the proof of Proposition \ref{propgradingcoincide} for detailed explanations) $$gr(w, 0)= \sum_{j=1}^{r+1}\ell(w_j)\mathbf{e}_j,$$
      where   $w_j  \in W_{P_j}^{P_{j-1}}$ are the unique elements such  that $w=w_{r+1}w_r\cdots w_1$.

 \end{defn}

 We will  show the next conjecture of the referee of \cite{leungli33}.
\begin{thm}\label{thmdefnscoincide}
   The two grading maps   by Definition \ref{defnofgrading} above and   by Definition 2.8 of \cite{leungli33} coincide with each other.
\end{thm}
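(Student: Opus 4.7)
I would proceed by induction on $r$, matching the two grading maps level by level along the chain $\Delta_0\subsetneq \Delta_1\subsetneq\cdots\subsetneq\Delta_{r+1}=\Delta$. The strategy is to first identify the $\mathbf{e}_{r+1}$-component and then reduce the remaining coordinates to the analogous statement for the shorter chain $\Delta_0\subsetneq\cdots\subsetneq\Delta_r$.

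\emph{Classical case $(\lambda=0)$.} Using the unique factorization $w=w_{r+1}\cdots w_1$ with $w_j\in W_{P_j}^{P_{j-1}}$, together with the elementary facts that $\text{Inv}(w_j)\subset R_{P_j}^+\setminus R_{P_{j-1}}^+$ and that $W_{P_{j-1}}$ permutes $R_{P_j}^+\setminus R_{P_{j-1}}^+$ (verified by inspecting the $\alpha_k$-coefficient for any $\alpha_k\in\Delta_{P_j}\setminus\Delta_{P_{j-1}}$), one obtains the disjoint decomposition
\[
\text{Inv}(w)\cap (R_{P_j}^+\setminus R_{P_{j-1}}^+) \;=\; (w_{j-1}\cdots w_1)^{-1}\text{Inv}(w_j),
\]
whose cardinality is $\ell(w_j)$. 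This matches the classical limit of the recursive grading map and settles the base of the induction.

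\emph{Top-level matching via a Key Lemma.} Write $w=uv$ with $u\in W^{P_r}$, $v\in W_{P_r}$, and let $\tilde\lambda\in Q^\vee$ denote the Peterson-Woodward lifting of $\lambda_{P_r}:=\lambda+Q^\vee_{P_r}$. A direct check gives $\text{Inv}(w)\cap(R^+\setminus R_{P_r}^+)=v^{-1}\text{Inv}(u)$, of cardinality $\ell(u)$. The Key Lemma states that for any $\mu\in Q^\vee_{P_r}$,
\[
\sum_{\beta\in R^+\setminus R_{P_r}^+}\langle \beta, \mu\rangle \;=\; 0,
\]
since $\sum_{\beta\in R^+\setminus R_{P_r}^+}\beta = 2(\rho-\rho_{P_r})$ and $\rho-\rho_{P_r}$ annihilates $Q^\vee_{P_r}$. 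Applied to $\mu=\lambda-\tilde\lambda$, this shows that the $\mathbf{e}_{r+1}$-component of $gr(w,\lambda)$ in Definition \ref{defnofgrading} equals $\ell(u)+\sum_{\beta\in R^+\setminus R_{P_r}^+}\langle \beta,\tilde\lambda\rangle$, which is precisely the $\mathbb{Z}$-degree of $q_{\lambda_{P_r}}\sigma^u\in QH^*(G/P_r)$. By construction, this coincides with the value the recursive definition of \cite{leungli33} assigns at the top step.

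\emph{Inductive step and main obstacle.} According to Definition 2.8 of \cite{leungli33}, after extracting $a_{r+1}\mathbf{e}_{r+1}$ one has $gr_{\text{rec}}(w,\lambda)-a_{r+1}\mathbf{e}_{r+1}=gr_{\text{rec}}^{\Delta_r}(w',\lambda-\tilde\lambda)$ for an explicit $w'=v\cdot w_{P_r}w_{P'}\in W_{P_r}$, with $\Delta_{P'}=\{\alpha\in\Delta_{P_r}:\langle\alpha,\tilde\lambda\rangle=0\}$ coming from the Peterson-Woodward formula. By the induction hypothesis for the chain $\Delta_0\subsetneq\cdots\subsetneq\Delta_r$, the right-hand side equals $gr_{\text{dir}}^{\Delta_r}(w',\lambda-\tilde\lambda)$. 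It then remains to verify the coordinate-wise identity
\[
gr_{\text{dir}}(w,\lambda)\big|_{\mathbf{e}_j} \;=\; gr_{\text{dir}}^{\Delta_r}(w',\lambda-\tilde\lambda)\big|_{\mathbf{e}_j}, \qquad 1\le j\le r.
\]
This reduces to a compensation identity: the substitution $v\mapsto v\cdot w_{P_r}w_{P'}$ redistributes inversions inside $R_{P_r}^+$ in exactly the way needed to absorb the difference between $\sum_{\beta}\langle\beta,\lambda\rangle$ and $\sum_{\beta}\langle\beta,\lambda-\tilde\lambda\rangle$ over each layer $R_{P_j}^+\setminus R_{P_{j-1}}^+$, the slack being controlled by the Peterson-Woodward constraint $\langle\alpha,\tilde\lambda\rangle\in\{0,-1\}$ on $R_{P_r}^+$. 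This compensation identity is the main obstacle: for $r\ge 2$ one must track how the product $w_{P_r}w_{P'}$ acts layer by layer on the filtered root system, and this is exactly where the case-by-case Dynkin analysis encoded in Table \ref{tabrelativeposi} enters, since the combinatorics of $w_{P_r}w_{P'}$ depends on $Dyn(\Delta_{P_r})$ and on the pattern of vanishing values $\langle\alpha,\tilde\lambda\rangle=0$ on $\Delta_{P_r}$. Once the identity is checked in each case of the table, the induction closes.
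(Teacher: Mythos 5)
Your base case and your identification of the $\mathbf{e}_{r+1}$-component are fine (the latter still needs the short computation $\ell(w_{P_r}w_{P'})+2+2\sum_i a_i=\langle 2\rho-2\rho_{P_r},\tilde\lambda\rangle$ to see that the recursive top coefficient really is the degree of $q_{\lambda_{P_r}}$, but that is routine). The gap is in your inductive step. The identity you attribute to Definition 2.8 of \cite{leungli33}, namely $gr_{\mathrm{rec}}(w,\lambda)-a_{r+1}\mathbf{e}_{r+1}=gr_{\mathrm{rec}}^{\Delta_r}(v\,w_{P_r}w_{P'},\lambda-\tilde\lambda)$, is not what that definition says, and it is false as stated: the recursion (Definition \ref{defnofLeungLi33}) subtracts $gr'(w_{P_r}w_{P'},0)$ and the correction terms inside the grading of each individual simple coroot outside $\Delta_P$; it never multiplies $w$ by $w_{P_r}w_{P'}$, and the Peterson--Woodward lifting is not additive over the monomial $q_\lambda$. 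Concretely, for $G=SL(3,\mathbb{C})$, $\Delta_P=\{\alpha_1\}$, $(w,\lambda)=(\mathrm{id},\alpha_2^\vee)$, one has $\tilde\lambda=\alpha_2^\vee$, $w_{P_1}w_{P'}=s_1$, so your right-hand side is $+\mathbf{e}_1$, whereas both grading maps have first component $-1$ (cf.\ $gr(q_2)=3\mathbf{e}_2-\mathbf{e}_1$ in Example \ref{exampfiltrforA2}). Moreover, even granting a corrected reduction, you leave the essential content --- your ``compensation identity'' --- unproved, deferring it to a case-by-case verification over Table \ref{tabrelativeposi} that you do not carry out; the argument is therefore incomplete exactly where the work lies.

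It is also the wrong place to look for case analysis: the paper's proof (Proposition \ref{propgradingcoincide}) is uniform in $\Delta_P$. Since both $gr$ and $gr'$ satisfy $gr(w,\lambda)=gr(w,0)+gr(\mathrm{id},\lambda)$ with $gr(\mathrm{id},\cdot)$ additive in $\lambda$, the theorem reduces to $gr(w,0)=gr'(w,0)$ (your classical case) together with $gr(\mathrm{id},\alpha^\vee)=gr'(\mathrm{id},\alpha^\vee)$ for simple $\alpha$, and the latter is proved by induction along the chain: for $\alpha\in\Delta_{k+1}\setminus\Delta_k$ one uses $\mathrm{Inv}(w_{P_k}w_{P_k'})=R_{P_k}^+\setminus R_{P_k'}^+$ and the Peterson--Woodward constraint $\langle\gamma,\lambda\rangle\in\{0,-1\}$ (with value $-1$ exactly off $R_{P_k'}^+$) to identify, layer by layer for $1\le i\le k$, the quantity $-\langle 2\rho_i-2\rho_{i-1},\lambda\rangle$ with $|gr_{[i,i]}(w_{P_k}w_{P_k'},0)|$; the $(k+1)$-st component is then forced by $|gr(\mathrm{id},\alpha^\vee)|=2=|gr'(\mathrm{id},\alpha^\vee)|$ and the vanishing of the higher components. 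No appeal to Table \ref{tabrelativeposi} is needed (the table only fixes the ordering of $\Delta_P$), and the disconnected case is then handled by restricting to the components $\Delta^{(k)}$ (Lemma \ref{lemmarestriction}) plus the same total-degree argument.
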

  Because of  the coincidence, Proposition \ref{propmainthm} holds with respect to the grading map $gr$. Namely for any Schubert classes $\sigma^u, \sigma^v$ of $QH^*(G/B)$, if $q_\lambda \sigma^w$ occurs in the quantum multiplication $\sigma^u\star \sigma^v$, then
    $$gr(w, \lambda)\leq gr(u, 0)+gr(v, 0).$$
  The proof of Theorem \ref{thmdefnscoincide} will be given in section \ref{sectiongeneralcase}, which is completely independent of section 3.
   Due to the coincidence,  the proofs of several main results in \cite{leungli33} may be simplified substantially.
   We can describe the explicit gradings of all the simple coroots as follows, which were obtained by direct calculations using Definition \ref{defnofLeungLi33} of the grading map $gr'$.  (See section 3.5 of  \cite{leungli33}   for more details on the calculations.)

  \begin{prop}\label{propgradingofeachQQQ}
 Let $\alpha\in \Delta$. We simply denote $gr(\alpha^\vee):=gr(\mbox{id}, \alpha^\vee)$.
  \begin{enumerate}
    \item  $gr({\alpha^\vee})=2\mathbf{e}_{r+1}, \mbox{if } Dyn(\{\alpha\}\cup\Delta_P)$ { is disconnected}.
    \item  $gr({\alpha^\vee})=(1+j)\mathbf{e}_{j}+(1-j)\mathbf{e}_{j-1}$,   if $\alpha=\alpha_j$   with $j\leq r-1$   where  $0\cdot \mathbf{e}_0:=\mathbf{0}$.
    \item $gr({\alpha^\vee})$ is given in Table \ref{tabgradingQQ},  if $\alpha=\alpha_r  \mbox{ or } \alpha_{r+1}$.
%\begin{center}
{\upshape     \begin{table}[h]
  \caption{\label{tabgradingQQ}   %$(\Delta_P, \Upsilon)$
    }
\hspace{-0cm}  \begin{tabular}{|c|c|r|}
     \hline
     % after \\: \hline or \cline{col1-col2} \cline{col3-col4} ...
     &$gr({\alpha_r^\vee})$   & $gr({\alpha_{r+1}^\vee})$\hspace{1.2cm} \\
     \hline\hline
       $\mbox{C} 1B)$         &  $2r\mathbf{e}_r-(2r-2)\mathbf{e}_{r-1}$ &   $(2r+1)\mathbf{e}_{r+1}-r\mathbf{e}_r-\sum\limits_{j=1}^{r-1}\mathbf{e}_j$ \\ \cline{1-3}
      { $\mbox{C} 1C)$}      &   $(r+1)\mathbf{e}_r-(r-1)\mathbf{e}_{r-1}$ &  $(2r+2)\mathbf{e}_{r+1}-(r+1)\mathbf{e}_r-\sum\limits_{j=1}^{r-1}\mathbf{e}_j$ \\ \hline
   { $\mbox{C} 2)$}      &   $ 2(r-1)\mathbf{e}_{r}+(2-r)(\mathbf{e}_{r-1}+\mathbf{e}_{r-2})$ &   $2r\mathbf{e}_{r+1}+(1-r)\mathbf{e}_r-\sum\limits_{j=1}^{r-1}\mathbf{e}_j$  \\ \hline
       &    & $(\mbox{for } r=6)\,\,\,  18\mathbf{e}_7-11\mathbf{e}_6-\sum\limits_{j=1}^{5}\mathbf{e}_j$ \\ \cline{3-3}
    \raisebox{3.5ex}[0pt] { $\mbox{C} 4)$}  &   \raisebox{3.5ex}[0pt] {{\upshape $(3r-7)\mathbf{e}_{r}+(3-r)\sum\limits_{j=r-3}^{r-1}\mathbf{e}_{j}$} } &$(\mbox{for } r=7)\,\,\,  29\mathbf{e}_8-21\mathbf{e}_7-\sum\limits_{j=1}^{6}\mathbf{e}_j$   \\ \hline
  { $\mbox{C} 5)$}
               &      &\\ \cline{1-1}
        { $\mbox{C} 7)$}        & \raisebox{1.5ex}[0pt]{$ 2(r-1)\mathbf{e}_{r}+(2-r)(\mathbf{e}_{r-1}+\mathbf{e}_{r-2})$} & \raisebox{1.5ex}[0pt]{$ {\textstyle ({r^2-r\over 2}+2)\mathbf{e}_{r+1}-{r^2-r\over 2}\mathbf{e}_r}$ \hspace{0.4cm}}    \\ \cline{1-3}
   { $\mbox{C} 9)$}
               &  $2r\mathbf{e}_r-(2r-2)\mathbf{e}_{r-1}$ &  $(r^2+2)\mathbf{e}_{r+1}-r^2\mathbf{e}_r$ \hspace{1.0cm} \\ \hline
 { $\mbox{C} 10)$}
               & {  $4\mathbf{e}_3-2\mathbf{e}_{2}$ } &   $ 8\mathbf{e}_{4}-6\mathbf{e}_3$ \hspace{1.4cm}  \\ \hline

          \end{tabular}
   \end{table}
}
%\end{center}
    \item The remaining cases happen when   there are   two nodes adjacent to $Dyn(\Delta_P)$,  namely the  node   $\alpha_{r+1}$ and the other node, say    $\alpha_{r+2}$.  Then we have  either of the followings.
     \begin{enumerate}
       \item  $gr({\alpha_{r+2}^\vee})= 2r\mathbf{e}_{r+1}+(1-r)\mathbf{e}_r-\sum_{j=1}^{r-1}\mathbf{e}_j$, which holds  if {\upshape $\mbox{C} 7)$} occurs and $r\leq 6$;
        \item $gr({\alpha_{r+2}^\vee})= 5\mathbf{e}_3-2\mathbf{e}_2-\mathbf{e}_1$, which holds  if {\upshape $\mbox{C} 9)$} occurs and $r=2$.
     \end{enumerate}
    \end{enumerate}
  In particular, we have $|gr({\alpha^\vee})|=2$ for any $\alpha\in \Delta$.
 \end{prop}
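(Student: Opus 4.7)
The plan is to compute $gr(\alpha^\vee) = gr(\mathrm{id}, \alpha^\vee)$ directly from Definition \ref{defnofgrading}. Since $\mathrm{Inv}(\mathrm{id}) = \emptyset$, the inversion contributions vanish and we need only evaluate
$$c_i(\alpha) := \sum_{\beta \in R_{P_i}^+ \setminus R_{P_{i-1}}^+} \langle \beta, \alpha^\vee\rangle, \qquad 1 \leq i \leq r+1,$$
so that $gr(\alpha^\vee) = \sum_{i=1}^{r+1} c_i(\alpha)\mathbf{e}_i$. By Theorem \ref{thmdefnscoincide}, it is equivalent to use this definition rather than the recursive one of \cite{leungli33}, so we may work entirely with the cleaner formulation above. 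As a consistency check at the outset, one has $\sum_{i=1}^{r+1} c_i(\alpha) = \sum_{\beta \in R^+} \langle \beta, \alpha^\vee \rangle = \langle 2\rho, \alpha^\vee\rangle = 2$, which immediately yields the final assertion $|gr(\alpha^\vee)| = 2$ and serves as a sanity check throughout.

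First I would dispose of the easy generic cases. For part (1), the hypothesis that $Dyn(\{\alpha\}\cup \Delta_P)$ is disconnected forces $\langle \beta, \alpha^\vee\rangle = 0$ for every $\beta \in R_P^+ = R_{P_r}^+$, so $c_i(\alpha) = 0$ for $i \leq r$, and the identity $\sum_i c_i(\alpha) = 2$ forces $c_{r+1}(\alpha) = 2$. For part (2), namely $\alpha = \alpha_j$ with $j \leq r-1$, I would exploit the fact that the given ordering of $\Delta_P$ in Table \ref{tabrelativeposi} makes $\Delta_{j}$ into a Dynkin subdiagram of type $A_j$ in every row of the table. Inside a type $A_j$ root system the positive roots are the intervals $\alpha_a + \alpha_{a+1} + \cdots + \alpha_b$, and pairings with $\alpha_j^\vee$ are easy to enumerate: the only nonzero contributions occur at levels $i = j-1$ and $i = j$, yielding $c_{j-1}(\alpha_j) = 1 - j$ and $c_j(\alpha_j) = 1 + j$, matching the stated formula.

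Next I would treat parts (3) and (4) row by row of Table \ref{tabrelativeposi}. For each case $\Delta_i = \{\alpha_1, \ldots, \alpha_i\}$ is read off from the displayed diagram, and $R_{P_i}^+ \setminus R_{P_{i-1}}^+$ is precisely the set of positive roots in $R_{P_i}$ whose expansion in simple roots uses $\alpha_i$ with positive coefficient. For the classical cases C1B), C1C), C2) one may parametrize these roots using the standard realizations of the type $B$, $C$, $D$ root systems, so each $c_i(\alpha_r)$ and $c_i(\alpha_{r+1})$ reduces to a short arithmetic series. For the exceptional cases C4), C5), C7), C9), C10), the relevant root subsystems embed in $D_n$, $E_6$, $E_7$, $E_8$, $F_4$, and one can enumerate the positive roots directly using the explicit realizations of these root systems; several subcases recycle the type $A$ or type $D$ computations already carried out.

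The main obstacle is the sheer bookkeeping volume of the exceptional cases, in particular C4) and C7) where the ambient diagram is of type $E_7$ or $E_8$ and the root systems are large. A useful shortcut I would apply throughout is that $c_{r+1}(\alpha)$ may be computed as $\langle 2\rho - 2\rho_P,\, \alpha^\vee\rangle$, where $\rho_P := \tfrac{1}{2}\sum_{\beta \in R_P^+}\beta$, so the top component follows once all earlier ones are in place; combined with the master identity $\sum_i c_i(\alpha) = 2$ this cuts the computations roughly in half. After these reductions, each remaining case is a short bookkeeping exercise whose output matches the entries of Table \ref{tabgradingQQ}.
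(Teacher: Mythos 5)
Your proposal is correct in substance, but it takes a genuinely different route from the paper. The paper does not compute these gradings from Definition \ref{defnofgrading} at all: it records that the table was obtained ``by direct calculations using Definition \ref{defnofLeungLi33} of the grading map $gr'$'' (referring to section 3.5 of \cite{leungli33} for details), i.e.\ by working through the recursive definition, which requires computing the Peterson--Woodward liftings $\psi_{\Delta_{j+1},\Delta_j}(\mbox{id},\alpha^\vee+Q^\vee_{P_j})$ and the lengths $\ell(w_{P_j}w_{P_j'})$ case by case, and then implicitly invoking the coincidence $gr=gr'$ (Theorem \ref{thmdefnscoincide}) to transfer the values to $gr$. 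You instead evaluate $gr(\mbox{id},\alpha^\vee)=\sum_i\bigl(\sum_{\beta\in R_{P_i}^+\setminus R_{P_{i-1}}^+}\langle\beta,\alpha^\vee\rangle\bigr)\mathbf{e}_i$ directly, using $\langle 2\rho,\alpha^\vee\rangle=2$ and $c_{r+1}(\alpha)=\langle 2\rho-2\rho_P,\alpha^\vee\rangle$ as shortcuts. This is legitimate and arguably cleaner here: the proposition is a statement about $gr$ as defined in Definition \ref{defnofgrading}, so your route needs no lifting data and no appeal to Theorem \ref{thmdefnscoincide} (your invocation of it is superfluous rather than harmful, since that theorem is proved independently in section 4). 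What the paper's route buys is re-use of computations already carried out in \cite{leungli33}; what yours buys is self-containedness and shorter arithmetic. Like the paper, you leave the exceptional-case arithmetic as bookkeeping, which is in the nature of this computational statement.

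One small point to tighten in part (2): enumerating the interval roots of the $A_j$-chain only controls the components $c_i(\alpha_j)$ for $i\leq j$; the vanishing of $c_i(\alpha_j)$ for $i>j$ is not term-by-term (e.g.\ $\alpha_{j+1}$ and $\alpha_j+\alpha_{j+1}$ pair nontrivially with $\alpha_j^\vee$) but follows from $c_i(\alpha_j)=\langle 2\rho_{P_i}-2\rho_{P_{i-1}},\alpha_j^\vee\rangle=2-2=0$, since $\langle\rho_{P_k},\alpha_j^\vee\rangle=1$ whenever $\alpha_j\in\Delta_k$. This is exactly the paper's Lemma \ref{lemmagradingJJJ} and is the same $\rho$-trick you already use for the total $|gr(\alpha^\vee)|=2$, so it is a one-line fix rather than a genuine gap.
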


\section{Proof of Theorem \ref{thmfirstmainthm}}

 %The whole of this section is devoted to a proof of Theorem \ref{thmfirstmainthm}.

 %\subsection{Outline of the proof}
 %The proof of Theorem \ref{thmfirstmainthm} will be divided   into several steps.
% Here we describe the outline of the proof.
Recall that we have defined a grading map $gr: W\times Q^\vee\rightarrow \mathbb{Z}^{r+1}$. For convenience, for any   $q_\lambda\sigma^w\in QH^*(G/B)[q_1^{-1}, \cdots, q_n^{-1}]$, we will also use the following notation $$gr(q_\lambda \sigma^w):=gr(w, \lambda).$$
 The injective map $\psi_{\Delta, \Delta_P}: QH^*(G/P)\rightarrow QH^*(G/B)$ induces a  natural map
    $QH^*(G/P)\rightarrow Gr^{\mathcal{F}}(QH^*(G/B))$. That is, $q_{\lambda_P}\sigma^w\mapsto \overline{\psi_{\Delta, \Delta_P}(q_{\lambda_P}\sigma^w)}\in Gr_{\mathbf{a}}^\mathcal{F}\subset Gr^{\mathcal{F}}(QH^*(G/B))$, where $\mathbf{a}=gr(\psi_{\Delta, \Delta_P}(q_{\lambda_P}\sigma^w))$.
 We state the next proposition, which extends  Proposition 3.24 of \cite{leungli33} in  the case of parabolic subgroups $P$ such that $\Delta_P$ is not of type $A$.
 \begin{prop}\label{propwelldefinedgrading}
    For any $q_{\lambda_P}\sigma^w\in QH^*(G/P)$, we have $$gr_{[1, r]}(\psi_{\Delta, \Delta_P}(q_{\lambda_P}\sigma^w))=\mathbf{0}.$$
 \end{prop}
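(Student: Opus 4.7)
The plan is to unpack the $i$-th component of $gr_{[1,r]}(\psi_{\Delta,\Delta_P}(q_{\lambda_P}\sigma^w))$ and observe a termwise cancellation over the partition $R_P^+ = \bigsqcup_{i=1}^r A_i$, where $A_i := R_{P_i}^+ \setminus R_{P_{i-1}}^+$. Writing $\psi_{\Delta,\Delta_P}(q_{\lambda_P}\sigma^w) = q_{\lambda_B}\sigma^u$ with $u = w w_P w_{P'}$, and noting that $P_i \subseteq P_r = P$ for $i\leq r$ so $A_i \subseteq R_P^+$, the $i$-th component equals $|\mbox{Inv}(u)\cap A_i| + \sum_{\beta\in A_i}\langle\beta,\lambda_B\rangle$. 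It therefore suffices to prove that for every $\beta\in R_P^+$ the equivalences
\[
\beta\in\mbox{Inv}(u) \iff \langle\beta,\lambda_B\rangle=-1 \iff \beta\in R_P^+\setminus R_{P'}^+
\]
hold, since summing over $A_i$ then gives $|\mbox{Inv}(u)\cap A_i| = |A_i\setminus R_{P'}^+| = -\sum_{\beta\in A_i}\langle\beta,\lambda_B\rangle$.

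First I would identify $\mbox{Inv}(u)\cap R_P^+ = R_P^+\setminus R_{P'}^+$. Using the standard characterization that $w\in W^P$ sends $R_P^+$ into $R^+$ (hence $-R_P^+$ into $-R^+$), the sign of $u(\beta)=w(w_Pw_{P'}(\beta))$ for $\beta\in R_P^+$ agrees with the sign of $w_Pw_{P'}(\beta)\in R_P$. Since $w_{P'}$, the longest element of $W_{P'}$, sends $R_{P'}^+$ to $-R_{P'}^+$ and permutes $R_P^+\setminus R_{P'}^+$ (acting trivially on $\mathfrak{h}^*/\mathrm{span}(\Delta_{P'})$), while $w_P$ flips the sign on all of $R_P$, a direct case split gives $\mbox{Inv}(w_Pw_{P'})\cap R_P^+=R_P^+\setminus R_{P'}^+$, and the identification follows.

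Next I would pin down $\langle\beta,\lambda_B\rangle$ on $R_P^+$ using the Peterson-Woodward constraint in Proposition~\ref{propcomparison}. For $\beta\in R_{P'}^+$ one expands $\beta$ in simple roots of $\Delta_{P'}$, which pair to $0$ with $\lambda_B$, yielding $\langle\beta,\lambda_B\rangle=0$. For $\beta\in R_P^+\setminus R_{P'}^+$, at least one simple root $\alpha_j\in\Delta_P\setminus\Delta_{P'}$ occurs in $\beta$ with positive coefficient $c_j$; since $\langle\alpha_j,\lambda_B\rangle=-1$ and all simple-root pairings with $\lambda_B$ lie in $\{0,-1\}$, the expansion gives $\langle\beta,\lambda_B\rangle\leq -c_j\leq -1$, which combined with the Peterson-Woodward restriction $\langle\beta,\lambda_B\rangle\in\{0,-1\}$ forces $\langle\beta,\lambda_B\rangle=-1$.

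I do not anticipate a genuine obstacle: the argument is essentially combinatorial, and the critical feature is that both contributions to the $i$-th component are supported on the same subset $(R_P^+\setminus R_{P'}^+)\cap A_i$ with opposite signs, a cancellation engineered entirely by the Peterson-Woodward normalization of $\lambda_B$. The only mildly delicate point is verifying that the constraint $\langle \beta, \lambda_B \rangle \in \{0,-1\}$ for all (not merely simple) $\beta \in R_P^+$ does propagate to the dichotomy above, but this is immediate from the nonnegativity of the simple-root coefficients.
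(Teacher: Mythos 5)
Your proof is correct, and it takes a genuinely different route from the one in the paper. You argue directly and uniformly: writing $\psi_{\Delta,\Delta_P}(q_{\lambda_P}\sigma^w)=q_{\lambda_B}\sigma^{ww_Pw_{P'}}$ with $w\in W^P$, you reduce everything to the two facts that $\mbox{Inv}(ww_Pw_{P'})\cap R_P^+=\mbox{Inv}(w_Pw_{P'})=R_P^+\setminus R_{P'}^+$ and that the Peterson--Woodward normalization forces $\langle\beta,\lambda_B\rangle=-1$ exactly on $R_P^+\setminus R_{P'}^+$ and $0$ on $R_{P'}^+$, so the two contributions to each of the first $r$ components of $gr$ cancel root by root; all steps check out (the dichotomy for non-simple $\beta$ does need, as you note, the constraint $\langle\beta,\lambda_B\rangle\in\{0,-1\}$ on all of $R_P^+$ from Proposition~\ref{propcomparison}, not just on simple roots, and the identity $\mbox{Inv}(w_Pw_{P'})=R_P^+\setminus R_{P'}^+$ is the same fact the paper uses elsewhere, e.g.\ in the proof of Lemma~\ref{lemmagradingreduction11}). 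The paper instead proceeds in two stages: it first uses the virtual null coroot lattice (Lemma~\ref{lemmagrading33cases} and Proposition~\ref{propvirtualnullroot}) to reduce to the classes $q_{\alpha^\vee+Q_P^\vee}$ with $\alpha\in\partial\Delta_P$, and then identifies $w_P^{P'}t_{-\omega_{i,P}^\vee}$ with a length-zero element $\tau_i$ of the extended affine Weyl group, invoking $\ell(wt_\lambda)=\|gr_{[1,r]}(w,\lambda)\|$ from \cite{CMP-affinesymm}. What your approach buys is self-containedness and generality at once: no reduction to boundary roots, no case tables, no affine Weyl group input, and the Schubert factor $\sigma^w$ is handled explicitly rather than implicitly; what the paper's approach buys is the conceptual link to the stabilizer of the fundamental alcove ($\ell(\tau_i)=0$), which situates the vanishing inside the broader affine picture of $QH^*(G/P)$ used in \cite{lamshi} and \cite{CMP-affinesymm}.
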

   \noindent Hence, $\mathbf{a}\in\mathbb{Z}\mathbf{e}_{r+1}$. That is,  the map $\Psi_{r+1}$ as   in Proposition \ref{propanothermainthm} is well-defined.  We can further show
 \begin{prop}\label{propinjectionofvectsp}
      $\Psi_{r+1}$ is an injective map  of vector spaces. Furthermore, $\Psi_{r+1}$ is surjective if and only if
      $r=2$ and either case {\upshape C1$B$)} or case {\upshape C9)} occurs.
 \end{prop}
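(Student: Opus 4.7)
The plan is to split the argument in two: the injectivity of $\Psi_{r+1}$ as a linear map, and the characterization of when $\Psi_{r+1}$ is onto. For the former, recall that $\psi_{\Delta,\Delta_P}$ sends the standard basis element $q_{\lambda_P}\sigma^w$ of $QH^*(G/P)$ (with $w\in W^P$) to the standard basis element $q_{\lambda_B}\sigma^{ww_Pw_{P'}}$ of $QH^*(G/B)$. Since $\psi_{\Delta,\Delta_P}$ is injective, distinct source basis elements produce distinct basis images. By Proposition \ref{propwelldefinedgrading} each such image lies in some $Gr^{\mathcal{F}}_{a\mathbf{e}_{r+1}}\subset Gr^{\mathcal{F}}_{(r+1)}$; and by construction of $\mathcal{F}$, basis elements of $QH^*(G/B)$ sharing the same grading $\mathbf{a}$ descend to linearly independent vectors in $Gr^{\mathcal{F}}_{\mathbf{a}}$. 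Grouping a hypothetical kernel element by grading piece and applying these two facts forces it to vanish, giving the injectivity of $\Psi_{r+1}$.

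For the ``if and only if'' part, I would first identify $Gr^{\mathcal{F}}_{(r+1)}$ explicitly as the $\mathbb{Q}$-span of the classes $\overline{q_\eta\sigma^v}$ indexed by pairs $(v,\eta)$ with $q_\eta\sigma^v\in QH^*(G/B)$ and $gr_{[1,r]}(v,\eta)=\mathbf{0}$; meanwhile the image of $\Psi_{r+1}$ is the span of the classes $\overline{q_{\lambda_B}\sigma^{ww_Pw_{P'}}}$ as $(w,\lambda_P)$ ranges over $W^P$ and the effective part of $Q^\vee/Q^\vee_P$. Surjectivity becomes equivalent to the statement that every such $(v,\eta)$ arises as $(ww_Pw_{P'},\lambda_B)$ from some $(w,\lambda_P)$. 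Decomposing $v=v_{r+1}v_r\cdots v_1$ via $v_j\in W_{P_j}^{P_{j-1}}$ and writing $\eta=\sum_{\alpha\in\Delta}a_\alpha\alpha^\vee$, the constraint $gr_{[1,r]}(v,\eta)=\mathbf{0}$ turns into $r$ explicit linear equations to be solved with the help of Table \ref{tabgradingQQ} and Proposition \ref{propgradingofeachQQQ}. On the image side, for $\lambda_P=\sum_{\alpha_j\notin\Delta_P}b_j(\alpha_j^\vee+Q^\vee_P)$ the Peterson-Woodward lifting has the form $\lambda_B=\sum_j b_j\alpha_j^\vee+\xi$ with $\xi\in Q^\vee_P$ determined uniquely by $\langle\alpha,\lambda_B\rangle\in\{0,-1\}$ for all $\alpha\in R^+_P$; this renders the image equally explicit.

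The principal obstacle will be the case analysis against Table \ref{tabrelativeposi}. When $r=2$ and $\Delta_P$ is of rank-two non-$A$ type (cases {\upshape C1$B$)} and {\upshape C9)}), the root subsystem $R_P$ is small enough that the Peterson-Woodward liftings exhaust every effective $\eta$ compatible with the grading constraint, and the resulting correspondence between bases is a bijection. In every other case one must instead exhibit explicit $(v,\eta)$ satisfying $gr_{[1,r]}(v,\eta)=\mathbf{0}$ that cannot be written as $(ww_Pw_{P'},\lambda_B)$; this is done either by finding $\eta\in Q^\vee$ whose $Q^\vee_P$-component is incompatible with the Peterson-Woodward condition, or by finding $v$ with $gr_{[1,r]}(v,0)=\mathbf{0}$ that fails to factor as $ww_Pw_{P'}$ for $w\in W^P$. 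Carrying this out uniformly across the remaining cases {\upshape C1$C$)}, {\upshape C2)}, {\upshape C4)}, {\upshape C5)}, {\upshape C7)}, {\upshape C9)} with $r\geq 3$, and {\upshape C10)} would complete the argument.
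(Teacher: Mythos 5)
Your injectivity argument is fine and is essentially the paper's (the paper simply notes that $\psi_{\Delta,\Delta_P}$ sends distinct basis elements to distinct basis elements of $QH^*(G/B)$, which stay independent in the associated graded because each $F_{\mathbf{a}}$ is a span of basis elements). The reduction of surjectivity to a comparison of bases of $Gr^{\mathcal{F}}_{(r+1)}$ with the set of pairs $(ww_Pw_{P'},\lambda_B)$ is also the right framework. However, both directions of the ``if and only if'' are only announced, not proved, and this is where the substance of the proposition lies. For the positive direction ($r=2$ with C1$B$) or C9)), the needed ingredient is a uniqueness statement: writing a general pair $(v,\eta)$ with $gr_{[1,r]}(v,\eta)=\mathbf{0}$ as $v=v'u$, $\eta=\mu'+\mu''$ with $v'\in W^P$, $u\in W_P$, $\mu''\in Q^\vee_P$, one first checks $gr_{[1,r]}(q_{\mu''}\sigma^u)\in\bigoplus_i\mathbb{Z}_{\geq0}\mathbf{e}_i$ and then that this grading determines $(u,\mu'')$ uniquely; the paper gets this from $gr(q_1)=2\mathbf{e}_1$, $gr(q_2)=4\mathbf{e}_2-2\mathbf{e}_1$ and the fact that length pins down elements of $W_{P_1}$ and $W_P^{P_1}$, and then identifies $(u,\mu'')$ with the pair $(w_Pw_{P'},\lambda'')$ coming from the Peterson--Woodward lifting. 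Your ``linear equations'' remark gestures at this but the decomposition, the sign argument, and the uniqueness are exactly the steps that need to be written down.

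For the negative direction there is a genuine gap: you exhibit no non-image class in any case, and one of your two proposed mechanisms cannot work. If $gr_{[1,r]}(v,0)=\mathbf{0}$ then $v$ has no inversions in $R^+_P$, so $v\in W^P$ and $\sigma^v=\psi_{\Delta,\Delta_P}(\sigma^v)$ (here $\lambda_P=0$, $P'=P$, $w_Pw_{P'}=\mathrm{id}$), i.e.\ such a $v$ is always in the image; the obstruction must come from the pair $(v,\eta)$ with $\eta\neq0$. The paper's device is concrete: using Table \ref{tabgradingQQ} one checks $|gr_{[r,r]}(q_r)|\leq\ell(w_Pw_{P_{r-1}})$, chooses $u_r\in W_P^{P_{r-1}}$ of the matching length, and corrects the $[1,r-1]$-grading by Lemma \ref{lemmauniquegradinginPB}, producing a basis element (e.g.\ $q_{r+1}\sigma^{u_r\cdots u_2s_1}$ for C1$C$), $q_{r+1}^2q_1\sigma^{u_ru_{r-1}\cdots u_2}$ for C1$B$) with $r\geq3$) of the same grading as the image element $q_{\mu_B}$ but not itself of the form $\psi_{\Delta,\Delta_P}(q_{\lambda_P}\sigma^w)$, because its coroot is not a Peterson--Woodward lifting. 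Finally, your enumeration of the cases to be excluded omits C1$B$) with $r\geq3$, which is precisely one of the cases the paper treats explicitly; as written, the proposal would not establish the ``only if'' half of the statement.
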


\noindent We shall also show
\begin{prop}\label{propmorphismofalgebras}
   $\Psi_{r+1}$ is a morphism of algebras. That is,    for any $q_{\lambda_P}, q_{\mu_P}, \sigma^{v'}, \sigma^{v''}$ in $QH^*(G/P)$, we have
  \begin{enumerate}
    \item   $\Psi_{r+1}(\sigma^{v'}\star_P\sigma^{v''})=\Psi_{r+1}(\sigma^{v'})\star  \Psi_{r+1}(\sigma^{v''})$;
     \item        $\Psi_{r+1}(q_{\lambda_P} \star_P \sigma^{v'})
                  = \Psi_{r+1}(q_{\lambda_P})\star \Psi_{r+1}(\sigma^{v'})$;
      \item   $\Psi_{r+1}(q_{\lambda_P}\star_Pq_{\mu_P})=\Psi_{r+1}(q_{\lambda_P})\star  \Psi_{r+1}(q_{\mu_P})$.
  \end{enumerate}
 \end{prop}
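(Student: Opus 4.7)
My plan is to reduce each of the three identities to a leading-term comparison in the filtration $\mathcal{F}$, and then to verify the needed vanishing and non-vanishing of certain genus-zero three-pointed Gromov-Witten invariants of $G/B$. Throughout, Proposition \ref{propwelldefinedgrading} guarantees that each $\Psi_{r+1}(q_{\lambda_P}\sigma^w)=\overline{q_{\lambda_B}\sigma^{ww_Pw_{P'}}}$ lies in $Gr^{\mathcal{F}}_{(r+1)}$, so both sides of every identity live unambiguously in this graded piece.

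For Part (1), with $v',v''\in W^P$, the Peterson-Woodward formula (Proposition \ref{propcomparison}) guarantees that the Peterson-Woodward lift $\psi_{\Delta,\Delta_P}(\sigma^{v'}\star_P\sigma^{v''})$ is itself a sub-sum of $\sigma^{v'}\star\sigma^{v''}$ in $QH^*(G/B)$, and by Proposition \ref{propwelldefinedgrading} its terms all have grading in $\mathbb{Z}\mathbf{e}_{r+1}$. By Proposition \ref{propmainthm}, every summand of $\sigma^{v'}\star\sigma^{v''}$ has grading $\leq gr(v',0)+gr(v'',0)\in\mathbb{Z}\mathbf{e}_{r+1}$. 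Any non-lifted summand $q_\mu\sigma^u$ with $gr_{[1,r]}(q_\mu\sigma^u)\neq\mathbf{0}$ automatically has strictly smaller grading in the lexicographic order (its first nonzero coordinate being negative) and therefore dies in $Gr^{\mathcal{F}}_{(r+1)}$. The crux is consequently to rule out non-lifted summands with $gr_{[1,r]}=\mathbf{0}$, i.e., to show $N_{v',v''}^{u,\mu}=0$ whenever $q_\mu\sigma^u$ has grading purely in $\mathbb{Z}\mathbf{e}_{r+1}$ but is not a Peterson-Woodward lift.

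For Parts (2) and (3), multiplication by a quantum variable in $QH^*(G/P)$ is just scaling, so each identity reduces to isolating the $\mathbb{Z}\mathbf{e}_{r+1}$-top piece of a specific quantum product in $QH^*(G/B)$, namely $q_{\lambda_B}\sigma^{w_Pw_{P'_\lambda}}\star\sigma^{v'}$ for Part (2) and $q_{\lambda_B}\sigma^{w_Pw_{P'_\lambda}}\star q_{\mu_B}\sigma^{w_Pw_{P'_\mu}}$ for Part (3). The discrepancy $(\lambda_P+\mu_P)_B-\lambda_B-\mu_B\in Q^\vee_P$ is absorbed by a quantum correction $q_\nu$ with $\nu\in Q^\vee_P$, and the change among $w_{P'_\lambda}$, $w_{P'_\mu}$, $w_{P'_{\lambda+\mu}}$ must also be tracked. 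In each case, the claim becomes that essentially one Gromov-Witten invariant of $G/B$ of the relevant degree equals $1$ while every other invariant of that degree with grading purely in $\mathbb{Z}\mathbf{e}_{r+1}$ vanishes.

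The main obstacle is this collection of Gromov-Witten calculations, to be carried out for all the Dynkin configurations in Table \ref{tabrelativeposi}. A direct application of the combinatorial formula of \cite{leungli22} is infeasible for the larger cases, in particular those of $E$-type, so the strategy is to invoke the \emph{``quantum to classical''} principle of \cite{leungliQtoC}: each relevant Gromov-Witten invariant of $G/B$ is identified with a classical Schubert intersection number on an auxiliary flag variety, where the value $0$ or $1$ can be read off from cell-incidence considerations. The full case-by-case verification will then be carried out in subsequent sections, with the heaviest cases deferred to section 5.
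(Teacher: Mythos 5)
Your reduction is the right one and matches the paper's opening moves: for (1) you correctly observe that the Peterson--Woodward lift of $\sigma^{v'}\star_P\sigma^{v''}$ is a sub-sum of $\sigma^{v'}\star\sigma^{v''}$, that terms with $gr_{[1,r]}\neq\mathbf{0}$ die in $Gr^{\mathcal{F}}_{(r+1)}$, and that everything hinges on killing the coefficients $N_{v',v''}^{w,\lambda}$ of terms with grading in $\mathbb{Z}\mathbf{e}_{r+1}$ that are not Peterson--Woodward lifts; and you correctly name the ``quantum to classical'' principle of \cite{leungliQtoC} as the computational engine. But the proposal stops exactly where the proof begins: the vanishing and the evaluations ``$=1$'' are asserted as a plan (``will be carried out in subsequent sections'', ``read off from cell-incidence considerations'') without the ideas that actually make them provable. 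Concretely, what is missing for (1) is the intermediate fibration $G/B\to G/\tilde P$ with $\Delta_{\tilde P}=\{\alpha_1,\dots,\alpha_{r-1}\}$ of type $A$, which lets one invoke the already-established results of \cite{leungli33} to confine the dangerous terms to $\psi_{\Delta,\Delta_{\tilde P}}(QH^*(G/\tilde P))\setminus\psi_{\Delta,\Delta_P}(QH^*(G/P))$ (Lemma \ref{lemmagradingreduction11}), together with the two-pronged analysis: the case $\lambda=\lambda_B$ is eliminated by Corollary \ref{corcoefforprop11}(2), and the case $\lambda\neq\lambda_B$ by the case-by-case estimate $gr_{[r,r]}(q_\lambda)<(|R^+_{\tilde P}\cup R^+_{\hat P}|-|R^+_P|)\mathbf{e}_r$ of Lemma \ref{lemmacoealpharrrnegative}. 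Without some such mechanism, ``show all these invariants vanish'' is not a proof step.

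For (2) your framing is loose but repairable (there is no $q_\nu$-discrepancy in (2): $q_{\lambda_P}\star_P\sigma^{v'}$ lifts to $q_{\lambda_B}\sigma^{v'w_Pw_{P'}}$ with the same $\lambda_B$ and $P'$); the real content is Proposition \ref{propfor22uv}, that $\sigma^{v}\star\sigma^{u}=\sigma^{vu}+(\text{strictly lower terms})$ for $v\in W^P$, $u\in W_P$, whose proof requires the virtual-null-coroot Lemma \ref{lemmavvuuvirtual}, the lifting comparison Lemmas \ref{lem-combinationPWlifting}--\ref{lem-vanishfornegativecombination}, and repeated use of Proposition \ref{propQtoCthm} -- none of which appears in your plan. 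For (3), checking ``one invariant is $1$, the rest vanish'' for arbitrary pairs $q_{\lambda_P},q_{\mu_P}$ is an unbounded family of computations; the paper first reduces modulo the lattice $\mathcal{L}$ of virtual null coroots, using the finite group $(Q^\vee/Q^\vee_P)/\mathcal{L}$ and the explicit representatives of Tables \ref{tabvirtualnullcoroot}--\ref{tabNONvirtualnullcoroot}, to finitely many products such as $q_\lambda\sigma^u\star q_\lambda\sigma^u=q_{\mu_B}+\cdots$ (Proposition \ref{propuuforC1BC2}), and the decisive vanishing there is the classical statement $\sigma^{\tilde u}\cup\sigma^{\tilde u}=0$ in $H^*(P/B)$, proved by Bruhat-order combinatorics of the reduced word $(s_1\cdots s_r)^{h/2}$ for $w_P$. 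Your proposal contains neither the lattice reduction nor this classical vanishing, so as written it is a correct strategy outline with the essential arguments still to be supplied.
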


 \noindent To achieve the above proposition, we will need to show the vanishing of a lot of Gromov-Witten invariants occurring in certain quantum products  in $QH^*(G/B)$, and will need to
      calculate  certain   Gromov-Witten invariants, which   turn out to be equal to 1.

 Clearly,    Theorem \ref{thmfirstmainthm} follows immediately  from the combination of the above propositions.
   The rest of this section is devoted to the proofs of these propositions. Here  we would like to remind our readers of the following notation conventions:
  \begin{enumerate}
    \item[(a)]{\itshape Whenever referring to an element $\lambda_P$ in $Q^\vee/Q_P^\vee$, by   $\lambda_B$ we always mean the unique Peterson-Woodward lifting in $Q^\vee$ defined in  Proposition \ref{propcomparison}.} Namely, $\lambda_B\in Q^\vee$ is the unique element  that satisfies $\lambda_P=\lambda_B+Q_P^\vee$ and
                $\langle \alpha, \lambda_B\rangle  \in \{0, -1\}$ for all $\alpha\in R^+_P$.
    \item[(b)] {\itshape We simply denote  $\tilde P:=P_{r-1}$.} Namely, we have $\Delta_{\tilde P}:=\{\alpha_1, \cdots, \alpha_{r-1}\}$.

    \item[(c)] {\itshape Whenever an element in $\lambda\in Q^\vee$  is given first, we always denote $\lambda_P:=\lambda+Q^\vee_P\in Q^\vee/Q^\vee_P$
             and $\tilde \lambda_{\tilde P}:=\lambda+ Q^\vee_{\tilde P}\in Q^\vee/Q^\vee_{\tilde P}$. } Note that   the three elements $\lambda, \lambda_B$ and $\tilde \lambda_B$ (which is the Peterson-Woodward lifting of $\tilde \lambda_{\tilde P}$) are all  in $Q^\vee$, and they may be distinct with each other in general.
  \end{enumerate}

\subsection{Proofs of Proposition \ref{propwelldefinedgrading} and Proposition \ref{propinjectionofvectsp}}\label{subsecproofpropvec}
In analogy with \cite{leungli33}, we introduce the following notion with respect to the given pair $(\Delta, \Delta_P)$.
\begin{defn}
 An element $\lambda\in Q^\vee$ is called a \textbf{virtual null coroot}, if  $\langle\alpha, \lambda\rangle=0$ for all $\alpha\in \Delta_P$. An element $\mu_P$ in $Q^\vee/Q^\vee_P$ is called a  \textbf{virtual null coroot}, if   its Peterson-Woodward lifting $\mu_B\in Q^\vee$ is a virtual null coroot.
\end{defn}
By the definition of $gr$, every virtual null coroot $\lambda$ satisfies
$$gr_{[1, r]}(q_\lambda)=\mathbf{0}.$$

%%%Recall that for any $\lambda_P\in Q^\vee/Q^\vee_P$, we have defined $\psi_{\Delta, \Delta_P}(q_{\lambda_P})=w_Pw_{P'}q_{\lambda_B}$, where both $w_{P'}$ and ${\lambda_B}$ depend only on $\lambda_P$.
\begin{example}\label{exampnullcoroot}
   Suppose   $\alpha\in \Delta$ satisfies  that $Dyn(\{\alpha\}\cup\Delta_P)$    is disconnected. Then $\alpha\in\Delta\setminus \Delta_P$, and $\alpha^\vee$ is a virtual null coroot. Furthermore, for $\lambda_P:=\alpha^\vee+Q^\vee_P\in Q^\vee/Q^\vee_P$, we have
    $\lambda_B=\alpha^\vee$. Therefore $\lambda_P$ is also a   virtual null coroot.
\end{example}

 \begin{lemma}\label{lemmanullcoroot}
  Given  $\lambda_P, \mu_P\in Q^\vee/Q_P^\vee$, we denote $\kappa_P:=\lambda_P+\mu_P$.
    If $\mu_P$ is a virtual null coroot, then
       we have   $\kappa_B= \lambda_B+ \mu_B$. Consequently,   $$\mathcal{L}:=\{\eta_P\in Q^\vee/Q^\vee_P~|~ \eta_P \mbox{ is a virtual null coroot}\}$$ is   a sublattice of $Q^\vee/Q^\vee_P$.

        \end{lemma}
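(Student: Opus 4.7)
The plan is to argue directly from the uniqueness in the Peterson-Woodward comparison formula (Proposition \ref{propcomparison}). The key observation is that the defining constraint on the lifting, namely $\langle \alpha, \lambda_B\rangle \in \{0,-1\}$ for $\alpha \in R_P^+$, is additive in the second coordinate whenever one of the summands kills $R_P^+$.

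First I would note that, although a virtual null coroot $\mu_B$ is only required to vanish against $\Delta_P$, this forces $\langle \beta,\mu_B\rangle = 0$ for every $\beta \in R_P^+$, since $R_P^+ \subset \bigoplus_{\alpha\in\Delta_P}\mathbb{Z}_{\ge 0}\alpha$ and the pairing is linear in the first slot. Consequently, for any $\alpha\in R_P^+$,
\[
\langle \alpha,\lambda_B+\mu_B\rangle=\langle\alpha,\lambda_B\rangle+\langle\alpha,\mu_B\rangle=\langle\alpha,\lambda_B\rangle\in\{0,-1\}.
\]
Since moreover $(\lambda_B+\mu_B)+Q_P^\vee=\lambda_P+\mu_P=\kappa_P$, the element $\lambda_B+\mu_B$ satisfies both conditions characterizing the Peterson-Woodward lifting of $\kappa_P$. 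By the uniqueness statement in Proposition \ref{propcomparison}, this forces $\kappa_B=\lambda_B+\mu_B$, which is the first assertion.

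For the consequence, I would verify the three sublattice axioms. Clearly $0\in\mathcal{L}$, since $0_B=0$ pairs trivially with $\Delta_P$. Given $\eta_P,\eta_P'\in\mathcal{L}$, the first part (applied with $\mu_P=\eta_P'$) gives $(\eta_P+\eta_P')_B=\eta_B+\eta_B'$, so for every $\alpha\in\Delta_P$ we have $\langle\alpha,(\eta_P+\eta_P')_B\rangle=0$, showing $\eta_P+\eta_P'\in\mathcal{L}$. Finally, if $\eta_P\in\mathcal{L}$ then $-\eta_B\in Q^\vee$ satisfies $-\eta_B+Q_P^\vee=-\eta_P$ and $\langle\alpha,-\eta_B\rangle=0\in\{0,-1\}$ for all $\alpha\in R_P^+$, so again by uniqueness $(-\eta_P)_B=-\eta_B$ is a virtual null coroot; hence $\mathcal{L}$ is closed under negation.

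The argument is essentially bookkeeping around the uniqueness clause, so there is no real obstacle; the only subtle point worth highlighting in the write-up is the passage from vanishing on $\Delta_P$ to vanishing on the larger set $R_P^+$, which is precisely what allows the condition $\langle\alpha,\lambda_B\rangle\in\{0,-1\}$ to be preserved under adding a virtual null coroot.
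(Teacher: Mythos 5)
Your proof is correct and follows essentially the same route as the paper: one checks that $\lambda_B+\mu_B$ represents $\kappa_P$ and still satisfies $\langle\alpha,\cdot\rangle\in\{0,-1\}$ on $R_P^+$ (using that a virtual null coroot annihilates all of $R_P^+$, not just $\Delta_P$), and then invokes the uniqueness of the Peterson--Woodward lifting; the sublattice claim is the same immediate consequence, which you merely spell out in more detail than the paper does.
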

\begin{proof}
   Clearly, $\kappa_P=\kappa_B+Q^\vee_P$, and  we have
                $\langle \alpha, \kappa_B\rangle=\langle \alpha, \lambda_B\rangle  \in \{0, -1\}$ for all $\alpha\in R^+_P$. Thus the  statement follows from the uniqueness of the lifting.
  \end{proof}

We will let $\mathcal{L}_B$ denote the set of virtual null coroots in $Q^\vee$:
  $$\mathcal{L}_B:=\{\lambda\in Q^\vee~|~ \langle \alpha, \lambda\rangle=0, \forall \alpha\in\Delta_P\}.$$
Denote by $\Lambda^\vee$ the set of coweights of $G$ and by $\Lambda_P^\vee$ the set of coweights of the derived subgroup $(L, L)$ of the Levi factor $L$ of $P$. Denote by $\{\omega_{1, P}^\vee, \cdots, \omega_{r, P}^\vee\}$ the fundament coweights in $\Lambda_P^\vee$ dual to $\{\alpha_1, \cdots, \alpha_r\}$.
 Denote by $\partial \Delta_P$ the simple roots in $\Delta\setminus\Delta_P$ which are adjacent to $\Delta_P$.
 The next uniform description of the quotient  $\big(Q^\vee/Q^\vee_P\big)/\mathcal{L}$ is provided by the referee.
 \begin{prop}\label{propvirtualnullroot}
   The   quotient    $\big(Q^\vee/Q^\vee_P\big)/\mathcal{L}$ is isomorphic to the  subgroup of $\Lambda_P^\vee/Q_P^\vee$ generated by
     $$\{\omega_{i, P}^\vee~|~ \alpha_i \mbox{ is adjacent to }\partial \Delta_P\}.$$
\end{prop}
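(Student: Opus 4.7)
The plan is to realize the claimed isomorphism explicitly via a natural homomorphism. Define $\phi:Q^\vee\to\Lambda_P^\vee$ by $\phi(\lambda):=\sum_{i=1}^{r}\langle \alpha_i,\lambda\rangle\,\omega_{i,P}^\vee$. The first key observation is that the restriction $\phi|_{Q_P^\vee}$ is the inclusion $Q_P^\vee\hookrightarrow\Lambda_P^\vee$: this is just the standard identity $\alpha_j^\vee=\sum_{i=1}^{r}\langle\alpha_i,\alpha_j^\vee\rangle\omega_{i,P}^\vee$ for $\alpha_j\in\Delta_P$. Consequently $\phi$ descends to a homomorphism $\bar\phi:Q^\vee/Q_P^\vee\to\Lambda_P^\vee/Q_P^\vee$.

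I would next identify $\ker\bar\phi=\mathcal{L}$. By construction $\ker\phi=\mathcal{L}_B$, so $\mathcal{L}\subset\ker\bar\phi$. Conversely, if $\bar\phi(\lambda+Q_P^\vee)=0$ then $\phi(\lambda)\in Q_P^\vee$; using $\phi|_{Q_P^\vee}=\mathrm{id}$, write $\phi(\lambda)=\phi(\mu)$ with $\mu\in Q_P^\vee$, so that $\lambda-\mu\in\ker\phi=\mathcal{L}_B$ and therefore $\lambda+Q_P^\vee\in(\mathcal{L}_B+Q_P^\vee)/Q_P^\vee=\mathcal{L}$ (using Lemma \ref{lemmanullcoroot} to describe $\mathcal{L}$ in terms of $\mathcal{L}_B$). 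This yields $(Q^\vee/Q_P^\vee)/\mathcal{L}\cong\mathrm{Im}(\bar\phi)$ as abstract groups, reducing the problem to computing $\mathrm{Im}(\bar\phi)$.

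The image is generated by $\bar\phi(\alpha_k^\vee+Q_P^\vee)$ for $k=1,\dots,n$. For $\alpha_k\in\Delta_P$ the image is zero, and for $\alpha_k\in\Delta\setminus(\Delta_P\cup\partial\Delta_P)$ one has $\phi(\alpha_k^\vee)=0$. For $\alpha_k\in\partial\Delta_P$ there is a \emph{unique} $\alpha_{i_k}\in\Delta_P$ adjacent to $\alpha_k$: two such adjacencies would combine with the (unique) path inside the connected diagram $Dyn(\Delta_P)$ joining them to produce a cycle in $Dyn(\Delta)$, contradicting the tree structure of the Dynkin diagram of a simple Lie algebra. Hence $\phi(\alpha_k^\vee)=\langle\alpha_{i_k},\alpha_k^\vee\rangle\,\omega_{i_k,P}^\vee$.

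The main obstacle, and the step that forces a diagrammatic case check, is to conclude that $\langle\alpha_{i_k},\alpha_k^\vee\rangle=-1$ for every $\alpha_k\in\partial\Delta_P$, i.e.\ that no boundary bond between $\Delta_P$ and $\Delta\setminus\Delta_P$ is multiple. A direct inspection of the diagrams C1B), C1C), C2), C4), C5), C7), C9), C10) in Table \ref{tabrelativeposi} confirms that every multiple bond appearing in $Dyn(\Delta)$ lies entirely inside $Dyn(\Delta_P)$ (the type-$A$ case, not covered by Table \ref{tabrelativeposi}, having been treated in \cite{leungli33}). Granted this, each generator equals $\bar\phi(\alpha_k^\vee+Q_P^\vee)=-\omega_{i_k,P}^\vee+Q_P^\vee$, and so $\mathrm{Im}(\bar\phi)$ coincides with the subgroup of $\Lambda_P^\vee/Q_P^\vee$ generated by $\{\omega_{i,P}^\vee\mid\alpha_i\in\Delta_P\text{ adjacent to }\partial\Delta_P\}$, completing the proof.
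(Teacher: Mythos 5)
Your proof is correct and is essentially the paper's own argument: your $\phi(\lambda)=\sum_i\langle\alpha_i,\lambda\rangle\,\omega_{i,P}^\vee$ is exactly the restriction morphism $R(\lambda)=\lambda|_{Q_P^\vee}$ used in the paper, and the steps ($\phi|_{Q_P^\vee}=\mathrm{id}$, kernel of the induced map on $Q^\vee/Q_P^\vee$ equals $\mathcal{L}$ via $\mathcal{L}_B$, image generated by $R(\alpha^\vee)=-\omega_{i,P}^\vee$ for $\alpha\in\partial\Delta_P$) coincide with the paper's. The only difference is that you make explicit two points the paper leaves implicit — the uniqueness of the node of $\Delta_P$ adjacent to a given boundary root (tree structure of the Dynkin diagram) and the fact that no multiple bond crosses $\partial\Delta_P$, so that $\langle\alpha_i,\alpha^\vee\rangle=-1$ — which is a welcome clarification rather than a different approach.
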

\begin{proof}
  Recall that $\Lambda_P^\vee$ is the set of integral valued linear form on $Q^\vee_P$. In particular, there is a natural morphism $R:Q^\vee\to \Lambda^\vee_P$
  obtained by restriction: $R(\lambda)=\lambda|_{Q^\vee_P}$. We have $R(Q^\vee_P)=Q^\vee_P$.

  Furthermore, this map factors through the quotient $Q^\vee/\mathcal{L}_B$ and the induced map is injective. In particular, the quotient
   $\big(Q^\vee/Q^\vee_P\big)/\mathcal{L}$ is isomorphic to the image $R(Q^\vee/Q_P^\vee)$. Since for $\alpha\in \Delta$ with $\Delta_P\cup \{\alpha\}$ disconnected we have $\alpha^\vee\in \mathcal{L}_B$, it follows that $R(Q^\vee/Q_P^\vee)$ is the subgroup of $\Lambda_P^\vee/Q_P^\vee$ generated by
    $R(\alpha^\vee)$ for $\alpha\in \partial \Delta_P$. But $R(\alpha^\vee)=-\omega_{i, P}^\vee$ for $\alpha_i$ adjacent to $\alpha$ and the result follows.
\end{proof}
\begin{remark}
  The group $\Lambda_P^\vee/Q_P^\vee$ is a finite abelian group. It  is the center of simply-connected cover of $(L, L)$, and is generated by the cominuscule coweights. One recovers this way the groups $\big(Q^\vee/Q^\vee_P\big)/\mathcal{L}$ in Table \ref{tabvirtualnullcoroot}.
\end{remark}

\renewcommand{\arraystretch}{1.2}
   % \begin{center}
   {\upshape     \begin{table}[h]
  \caption{\label{tabvirtualnullcoroot}   %$(\Delta_P, \Upsilon)$
    }
\hspace{-0cm}
    \begin{tabular}{|c|c||c|c|}
     \hline
     % after \\: \hline or \cline{col1-col2} \cline{col3-col4} ...
     \multicolumn{2}{|c||}{}  & $\mu_B$ & $\big(Q^\vee/Q^\vee_P\big)/\mathcal{L}$ \\  \hline \hline
      \multicolumn{2}{|c||}{\upshape C1$B$)} &$2\alpha_{r+1}^\vee+\big(\alpha_r^\vee+2\sum_{j=1}^{r-1}\alpha_j^\vee\big)$ &$\mathbb{Z}/2\mathbb{Z}$\\ \hline
         \multicolumn{2}{|c||}{\upshape C1$C$)} &$\alpha_{r+1}^\vee+\big(\sum_{j=1}^{r}\alpha_j^\vee\big)$ &$\{\mbox{id}\}$\\  \hline
      \multicolumn{2}{|c||}{\upshape C2)} & $2\alpha_{r+1}^\vee+\big(\alpha_r^\vee+\alpha_{r-1}^\vee+2\sum_{j=1}^{r-2}\alpha_j^\vee\big)$& $\mathbb{Z}/2\mathbb{Z}$\\ \hline
        & $r=6$ &   $3\alpha_7^\vee+\big(4\alpha_1^\vee+5\alpha_2^\vee+6\alpha_3^\vee+4\alpha_4^\vee+2\alpha_5^\vee+3\alpha_6^\vee\big)$ &  $\mathbb{Z}/3\mathbb{Z}$ \\ \cline{2-4}
         \raisebox{1.5ex}[0pt]{\upshape C4)}& $r=7$ &   $2\alpha_{8}^\vee+\big(3\alpha_1^\vee+4\alpha_2^\vee+5\alpha_3^\vee+6\alpha_4^\vee+4\alpha_5^\vee+2\alpha_6^\vee+3\alpha_7^\vee\big)$ &  $\mathbb{Z}/2\mathbb{Z}$ \\  \hline
        \multicolumn{2}{|c||}{\upshape C5)} & $4\alpha_6^\vee+\big(5\alpha_5^\vee+6\alpha_3^\vee+4\alpha_2^\vee+2\alpha_1^\vee+3\alpha_4^\vee\big)$& $\mathbb{Z}/4\mathbb{Z}$\\ \hline
       &  &   $2\alpha_5^\vee+\big(\alpha_1^\vee+2\alpha_2^\vee+ \alpha_3^\vee+ 2\alpha_4^\vee\big)$ &   \\ \cline{3-3}
        & \raisebox{1.5ex}[0pt]{$r=4$} &   $2\alpha_6^\vee+\big(2\alpha_1^\vee+2\alpha_2^\vee+ \alpha_3^\vee+ \alpha_4^\vee\big)$ & \raisebox{1.5ex}[0pt]{$\mathbb{Z}/2\mathbb{Z}\times \mathbb{Z}/2\mathbb{Z}$} \\ \cline{2-4}
          &  & $2\alpha_6^\vee+\alpha_7^\vee+\big(2\alpha_1^\vee+3\alpha_2^\vee+ 4\alpha_3^\vee+ 2\alpha_4^\vee+3\alpha_5^\vee\big)$  &   \\ \cline{3-3}
        & \raisebox{1.5ex}[0pt]{$r=5$} &    $2\alpha_7^\vee+\big(2\alpha_1^\vee+2\alpha_2^\vee+ 2\alpha_3^\vee+ \alpha_4^\vee+ \alpha_5^\vee\big)$ &  \raisebox{1.5ex}[0pt]{$\mathbb{Z}/4\mathbb{Z}$} \\ \cline{2-4}
       \raisebox{1.5ex}[0pt] {\upshape C7)}   &  &   $2\alpha_7^\vee+\big(\alpha_1^\vee+2\alpha_2^\vee+ 3\alpha_3^\vee+ 4\alpha_4^\vee+2\alpha_5^\vee+3\alpha^\vee_6\big)$ &   \\ \cline{3-3}
        & \raisebox{1.5ex}[0pt]{$r=6$} &   $2\alpha_8^\vee+\big(2\alpha_1^\vee+2\alpha_2^\vee+ 2\alpha_3^\vee+ 2\alpha_4^\vee+ \alpha_5^\vee+ \alpha^\vee_6\big)$ & \raisebox{1.5ex}[0pt]{$\mathbb{Z}/2\mathbb{Z}\times \mathbb{Z}/2\mathbb{Z}$} \\ \cline{2-4}
     &  {$r=7$} &   $4\alpha_8^\vee+\big(2\alpha_1^\vee+4\alpha_2^\vee+ 6\alpha_3^\vee+ 8\alpha_4^\vee+10\alpha_5^\vee+5\alpha^\vee_6+7\alpha_7^\vee\big)$ & {$\mathbb{Z}/4\mathbb{Z}$} \\ \hline
         &  &    $2\alpha_4^\vee+\big(2\alpha_1^\vee+ \alpha_2^\vee\big)$&  \\ \cline{3-3}
          & \raisebox{1.5ex}[0pt]{$r=2$} &  $\alpha_3^\vee+\big(\alpha_1^\vee+\alpha_2^\vee\big)$  &   \\ \cline{2-3}
    \raisebox{1.5ex}[0pt]{\upshape C9)}& $r=3$ &   $2\alpha_4^\vee+\big(2\alpha_1^\vee+4\alpha_2^\vee+ 3\alpha_3^\vee\big)$ & \raisebox{1.5ex}[0pt]{$\mathbb{Z}/2\mathbb{Z}$} \\  \cline{1-3}
         \multicolumn{2}{|c||}{\upshape C10)} &$2\alpha_4^\vee+\big(\alpha_1^\vee+2\alpha_2^\vee+ 3\alpha_3^\vee\big)$ &  \\ \hline
  \end{tabular}
   \end{table}
  }
%\end{center}
  Recall that  each   monomial $q_{\lambda}=q_1^{a_1}\cdots q_n^{a_n}$ corresponds to a coroot $\lambda=\sum_{j=1}^na_j\alpha_j^\vee$.
Given a sequence $I=[i_1, i_2, \cdots, i_m]$,     we   simply   denote  by $s_{i_1i_2\cdots i_m}$  or  $s_I$  the product $s_{i_1}s_{i_2}\cdots s_{i_m}$, and define   $|I|:= m$.
\begin{prop}\label{propNONvirtualnullcoroot}
The virtual coroot lattice $\mathcal{L}_B$ is generated by the virtual null roots   $\mu_B\in Q^\vee$ given in Table \ref{tabvirtualnullcoroot}.

For each case in Table \ref{tabNONvirtualnullcoroot}, the corresponding   coroot $\lambda$ satisfies $\langle \alpha_k, \lambda\rangle =-1$ for the given  number $k$ in the table, and  $\langle \alpha_j, \lambda\rangle =0$ for all $j\in \{1,\cdots, r\}\setminus\{k\}$. % As a consequence, $\lambda=\lambda_B$
 %is the Peterson-Woodward lifting of  $\lambda_P:=\lambda+Q_P^\vee$, and

  Furthermore, %for $\lambda_P:=\lambda+Q_P^\vee$,
  we have    $\psi_{\Delta, \Delta_P}(q_{\lambda_P})=q_{\lambda}\sigma^{u}$ with $q_\lambda$ and $u$ being shown in Table \ref{tabNONvirtualnullcoroot} as well  (which implies $\lambda=\lambda_B$).
 In  particular,   each  $u$   is of the form $s_Is_{r-1}s_{r-2}\cdots s_1$,  $s_Is_J$ or  $s_I$  where   $I$ (resp. $J$) is a sequence of  integers ending with $r$ (resp. $r-1$) in the table.  The grading $gr(\sigma^u)$ is then given by $|I|\mathbf{e}_r+\sum_{i=1}^{r-1}\mathbf{e}_i$, $|I|\mathbf{e}_r+|J|\mathbf{e}_{r-1}$ and   $|J|\mathbf{e}_r$ respectively.

 \end{prop}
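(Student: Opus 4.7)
The first assertion on generators of $\mathcal{L}_B$ is a consequence of Proposition \ref{propvirtualnullroot}. By Example \ref{exampnullcoroot}, every $\alpha^\vee$ with $\Delta_P \cup \{\alpha\}$ disconnected already lies in $\mathcal{L}_B$, so the remaining generators must correspond to generators of the finite subgroup of $\Lambda_P^\vee/Q_P^\vee$ cut out by the coweights $\omega_{i,P}^\vee$ for $\alpha_i$ adjacent to $\partial \Delta_P$. For each Dynkin type in Table \ref{tabrelativeposi}, the structure of this finite abelian subgroup can be read off from the known centers of semisimple groups of types $B,C,D,E,F$, and matched with the orders displayed in the last column of Table \ref{tabvirtualnullcoroot}. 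To exhibit an explicit virtual null coroot $\mu_B\in Q^\vee$ realizing each cyclic factor, I would take the minimal positive multiple of the relevant $\omega_{i,P}^\vee$ that lifts to an integral combination of coroots from $\partial\Delta_P$, and then add a correction in $Q_P^\vee$; the correction is determined uniquely by demanding $\langle\alpha,\mu_B\rangle=0$ for every $\alpha\in\Delta_P$, i.e.\ by solving a small linear system whose matrix is the Cartan matrix of $\Delta_P$.

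For the remaining assertions, each row of Table \ref{tabNONvirtualnullcoroot} is verified directly using the Peterson--Woodward comparison formula. For the chosen $k\in\{1,\ldots,r\}$, I take $\lambda\in Q^\vee$ to be an integral coroot combination whose pairings with $\Delta_P$ are exactly $-\delta_{jk}$; uniqueness modulo $\mathcal{L}_B$ follows from the non-degeneracy of the Cartan matrix on $\Delta_P$. Next, I check that $\langle\alpha,\lambda\rangle\in\{0,-1\}$ already holds for every $\alpha\in R_P^+$, so that $\lambda=\lambda_B$ is the Peterson--Woodward lifting; this reduces to verifying that the coefficient of $\alpha_k$ in every positive root of $R_P^+$ does not exceed $1$, a minuscule-type condition readable from the highest root of the irreducible component of $\Delta_P$ containing $\alpha_k$, which holds for each case after possibly adjusting $\lambda$ by an element of $Q_P^\vee$. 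Proposition \ref{propcomparison} then yields $\Delta_{P'}=\Delta_P\setminus\{\alpha_k\}$ and $\psi_{\Delta,\Delta_P}(q_{\lambda_P})=q_\lambda\sigma^{w_Pw_{P'}}$.

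To pin down the reduced expression for $u=w_Pw_{P'}$, I use that $u$ is the longest element of its right coset $W_{P'}w_P$, or equivalently the reverse of the longest minimal representative of $W_{P'}\backslash W_P$. Starting from a reduced word for $w_P$ whose tail is a reduced word for $w_{P'}$ and cancelling the tail produces a reduced expression for $u$ of one of the three advertised shapes: a single block $s_I$ ending in $s_r$, a concatenation $s_Is_J$ with $J$ ending in $s_{r-1}$, or the full form $s_Is_{r-1}s_{r-2}\cdots s_1$. The grading statement is then immediate from Definition \ref{defnofgrading}: since $s_I$ (resp.\ $s_J$) is a reduced word for an element of $W_{P_r}^{P_{r-1}}$ (resp.\ $W_{P_{r-1}}^{P_{r-2}}$), its length contributes $|I|$ to the $\mathbf{e}_r$-coordinate (resp.\ $|J|$ to the $\mathbf{e}_{r-1}$-coordinate), while the optional tail $s_{r-1}s_{r-2}\cdots s_1$ contributes $\sum_{i=1}^{r-1}\mathbf{e}_i$.

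The principal obstacle is the sheer case-by-case bookkeeping across Table \ref{tabrelativeposi}. Cases \textup{C4}, \textup{C7}, and parts of \textup{C9} are the most delicate: the quotient $(Q^\vee/Q_P^\vee)/\mathcal{L}$ is not cyclic or has order exceeding $2$ there, so several classes must be treated separately (producing multiple rows in Table \ref{tabNONvirtualnullcoroot}), and the minuscule-type condition on $\alpha_k$ must be re-verified each time. Extracting the reduced words for $w_Pw_{P'}$ in the exceptional types $E_6$, $E_7$, $E_8$, $F_4$ likewise demands careful scrutiny, but each case is finite and mechanical once the choice of $k$ and the structure of $W_{P'}$ have been fixed.
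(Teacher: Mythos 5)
Your proposal is correct and takes essentially the same route as the paper's own proof: a case-by-case verification that the tabulated $\mu_B$ are virtual null coroots which generate $\mathcal{L}$ (by comparing the quotient they generate with $(Q^\vee/Q_P^\vee)/\mathcal{L}$ from Proposition \ref{propvirtualnullroot}), that the tabulated $\lambda$ pairs with $\Delta_P$ as $-\delta_{jk}$ while $\alpha_k$ has coefficient at most $1$ in every root of $R_P^+$ (so $\lambda=\lambda_B$ and $\Delta_{P'}=\Delta_P\setminus\{\alpha_k\}$), followed by identifying the displayed word with $w_Pw_{P'}$ and reading off the grading from Definition \ref{defnofgrading}. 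The one slip is your first characterization of $u$: $w_Pw_{P'}$ is in general not an element of (let alone the longest element of) the right coset $W_{P'}w_P$ — it is the longest element of $W_P^{P'}$, equivalently the minimal-length representative of the left coset $w_PW_{P'}$ — but your equivalent recipe of cancelling a reduced tail for $w_{P'}$ from a reduced word of $w_P$, like the paper's criterion (the displayed word is reduced of length $|R_P^+|-|R_{P'}^+|$ and sends $\Delta_{P'}$ into $R^+$), handles this correctly.
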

 \renewcommand{\arraystretch}{1.2}
  {\upshape     \begin{table}[h]
  \caption{\label{tabNONvirtualnullcoroot}   %$(\Delta_P, \Upsilon)$
    }
\hspace{-0.cm}
%  \begin{center}
    \begin{tabular}{|c|c||c|c|c|}
     \hline
     % after \\: \hline or \cline{col1-col2} \cline{col3-col4} ...
     \multicolumn{2}{|c||}{}  & $q_{\lambda}$ &  $u$ & %$gr(\sigma^u)$
       $k$ \\ \hline \hline
      \multicolumn{2}{|c||}{\upshape C1$B$)} & $q_{r+1}$ &$s_{12\cdots r}s_{r-1}s_{r-2}\cdots s_1$& %$r\mathbf{e}_r+\sum_{j=1}^{r-1}\mathbf{e}_{j}$
       $1$ \\ \hline
         \multicolumn{2}{|c||}{\upshape C2)} & $q_{r+1}$ & $s_{12\cdots (r-2)r}s_{r-1}s_{r-2}\cdots s_1$& %$(r-1)\mathbf{e}_r+\sum_{j=1}^{r-1}\mathbf{e}_{j}$
         $1$\\ \hline
            &  &$q_{7}$ &  $s_{54362132436}s_5s_4\cdots s_1$ & %$11\mathbf{e}_6+\sum_{j=1}^{5}\mathbf{e}_j$
            $1$ \\ \cline{3-5}
      {\upshape C4)}   & \raisebox{1.5ex}[0pt]{$r=6$} &$q_{7}^2q_1^2q_2^2q_3^2q_4q_6$ & $s_{12346325436}s_{12345}$& %$11\mathbf{e}_6+5\mathbf{e}_{5}$
        $5$\\ \cline{2-5}
       & $r=7$ & $q_{8}$ & $s_{123475436547234512347}s_{6}s_5s_4s_3s_2s_1$  &  % $21\mathbf{e}_7+\sum_{j=1}^{6}\mathbf{e}_j$
        $1$ \\ \hline
         \multicolumn{2}{|c||}{}  &$q_{6}$ &  $s_{4352132435}$ & %$10\mathbf{e}_5$
         $5$\\ \cline{3-5}
      \multicolumn{2}{|c||}{\upshape C5)}    &$q_{6}^2q_5^2q_3^2q_4q_2$ & $s_{1235}s_{4}s_3s_2 s_1$& %$4\mathbf{e}_5+\sum_{j=1}^{4}\mathbf{e}_j$
       $1$\\ \cline{3-5}
       \multicolumn{2}{|c||}{}  & $q_{6}^3q_5^3q_3^3q_4q_2^2q_1$ & $s_{532435}s_{1234}$  & %  $6\mathbf{e}_5+4\mathbf{e}_4$
       $4$\\ \hline
         &  & $q_5$ &  $s_{423124}$ &   %$6\mathbf{e}_4$
         $4$\\ \cline{3-5}
        &  {$r=4$} & $q_6$ & $s_{124}s_3s_2s_1$ &  %$3\mathbf{e}_4+\sum_{j=1}^{3}\mathbf{e}_j$
        $1$\\ \cline{3-5}
       &  & $q_5q_6q_1q_2q_4$ & $s_{324}s_{123}$ & % $3\mathbf{e}_4+3\mathbf{e}_3$
       $3$ \\ \cline{2-5}
     &  & $q_6$ &  $s_{4352134235}$ &  % $10\mathbf{e}_5$
     $5$ \\ \cline{3-5}
        &  {$r=5$} & $q_7$ & $s_{1235}s_4s_3s_2s_1$ & %  $4\mathbf{e}_4+\sum_{j=1}^{4}\mathbf{e}_j$
        $1$\\ \cline{3-5}
       &  & $q_6q_7q_1q_2q_3q_5$ & $s_{534235}s_{1234}$ &  %$6\mathbf{e}_5+4\mathbf{e}_4$
       $4$  \\ \cline{2-5}
 \raisebox{1.5ex}[0pt]{\upshape C7)}&  & $q_7$ &  $s_{645342132643546}$ &  % $15\mathbf{e}_6$
 $6$ \\ \cline{3-5}
        &  {$r=6$} & $q_8$ & $s_{12346}s_5s_4s_3s_2s_1$ & % $5\mathbf{e}_6+\sum_{j=1}^{5}\mathbf{e}_j$
        $1$ \\ \cline{3-5}
       &  & $q_7q_8q_1q_2q_3q_4q_6$ & $s_{5463243546}s_{12345}$ &  %$10\mathbf{e}_6+5\mathbf{e}_5$
       $5$\\ \cline{2-5}
 &  & $q_8$ &  $s_{657456345723456123457}$ &   %$21\mathbf{e}_7$
 $7$ \\ \cline{3-5}
        &  {$r=7$} & $q_8^2q_2q_3^2q_4^3q_5^4q_6^2q_7^3$ & $s_{123457}s_6s_5s_4s_3s_2s_1$ &%  $6\mathbf{e}_7+\sum_{j=1}^{6}\mathbf{e}_j$
        $1$\\ \cline{3-5}
       &  & $q_8^3q_1q_2^2q_3^3q_4^4q_5^5q_6^2q_7^4$ & $ s_{756457345623457}s_{123456}$ & % $15\mathbf{e}_7+6\mathbf{e}_6$
       $6$ \\ \hline
            &  {$r=2$} & $q_4$ &   $s_{12}s_1$ & % $2\mathbf{e}_2+\mathbf{e}_1$
            $1$  \\ \cline{2-5}
    \raisebox{1.5ex}[0pt]{\upshape C9)}& $r=3$ &$q_4q_2q_3$ & $s_{123}s_2s_1$ &  %$3\mathbf{e}_3+\mathbf{e}_2+ \mathbf{e}_1 $
    $1$  \\  \cline{1-5}
         \multicolumn{2}{|c||}{\upshape C10)} & $q_4$ &$s_{323123}$ &%  $6\mathbf{e}_3$
         $3$\\ \hline
 \end{tabular}
   % \end{center}
   \end{table}
 }

 \begin{proof}
     Assume that case   {\upshape C1$B)$} occurs, then   we have a unique  $\mu_B=2\alpha_{r+1}^\vee+(\alpha_r^\vee+2\sum_{j=1}^{r-1}\alpha_j^\vee)$ and a unique $\lambda=\alpha_{r+1}^\vee$ in the tables. Clearly, $\langle \alpha, \mu_B\rangle=0$ for all $\alpha\in \Delta_P$.  Thus $\mu_B$ is a virtual null coroot, and it is
      the   expected Peterson-Woodward lifting of $\mu_P:=\mu_B+Q^\vee_P=2\alpha_{r+1}^\vee+Q^\vee_P$. Hence, $\mu_P$ is a virtual null coroot in $Q^\vee/Q_P^\vee$ by definition.
   It follows from
       Example \ref{exampnullcoroot} that all the elements in the sublattice $\mathcal{L}'$ generated by $\{\mu_P\}\cup\{\alpha\in \Delta~|~ Dyn(\{\alpha\}\cup \Delta_P) \mbox{ is disconnected}\}$ are  virtual null coroots.
       Clearly, $\big(Q^\vee/Q^\vee_P\big)/\mathcal{L}'\cong \mathbb{Z}/2\mathbb{Z}$.
       Since $\mathcal{L}'\subset\mathcal{L}\subset Q^\vee/Q^\vee_P$, we have a surjective morphism
         $\big(Q^\vee/Q^\vee_P\big)/\mathcal{L}'\to   \big(Q^\vee/Q^\vee_P\big)/\mathcal{L}\cong \mathbb{Z}/2\mathbb{Z}$. Hence, this is an isomorphism, and    $\mathcal{L}=\mathcal{L}'$.

   It is clear that for $k=1$, we have $\langle \alpha_k, \lambda\rangle=-1$ and $\langle \alpha_j, \lambda\rangle =0$ for all $j\in\{1, \cdots, r\}\setminus\{k\}$. Note that $\Delta_P$ is of $B_r$-type, and that any positive root $\gamma\in R_P^+$ is of the from $\varepsilon\alpha_1+\sum_{j=2}^rc_j\alpha_j$ where $\varepsilon\in\{0, 1\}$ (see e.g. \cite{bour}). Thus $\langle \gamma, \lambda\rangle \in\{0, -1\}$.
   Hence, $\lambda=\lambda_B$ is the Peterson-Woodward lifting of $\lambda_P$. Consequently, $\lambda_P$ is not a virtual null coroot, as $\lambda_B$ is not. That is, our claim holds.

    By definition,   $\psi_{\Delta, \Delta_P}(q_{\lambda_P})=q_{\lambda}\sigma^{w_Pw_{P'}}$ where $\Delta_{P'}=\Delta_P\setminus \{\alpha_k\}$ in this case. Note that
  $w_Pw_{P'}$ is the unique element of maximal length in $W^{P'}_P$, whose length is equal to $|R_P^+|-|R_{P'}^+|$.  In order to show $u=s_1s_2\cdots s_r \cdot s_{r-1}s_{r-2}\cdots s_1$ coincides with $w_Pw_{P'}$, it suffices to show: (1) the above expression of $u$ is reduced of expected length; (2) $u\in W^{P'}_P$, i.e., $u(\alpha)\in R^+$ for all $\alpha\in \Delta_{P'}$. %; (3) $us_1\cdots s_{r-1}\in W^{P_{r-1}}_P$ (which implies $gr(\sigma^u)$ immediately).
 Indeed, in the case of  C1$B$),   $\Delta_{P'}=\{\alpha_2, \cdots, \alpha_r\}$ is of $B_{r-1}$-type.   For $1\leq j\leq r$,   $s_1\cdots s_{j-1}(\alpha_j)=\alpha_1+\cdots+\alpha_j\in R^+$. For
   $r-1\geq i\geq 1$,  $s_1\cdots s_{r}s_{r-1}\cdots s_{i+1}(\alpha_i)=\alpha_1+\cdots+\alpha_i+2\alpha_{i+1}+\cdots +2\alpha_r\in R^+$. Thus the expression of $u$ is reduced, and $\ell(u)=r+r-1= r^2-(r-1)^2= |R_P^+|-|R_{P'}^+|$.  For all $2\leq j\leq r$, we note $u(\alpha_j)=\alpha_j\in R^+$. Therefore both (1) and (2) hold. %Note $\Delta_{P_{r-1}}=\{\alpha_1,\cdots, \alpha_{r-1}\}$.  Obviously, $s_1\cdots s_r\in W^{P_{r-1}}_P$. Thus (3) holds, and $gr(\sigma^u)= r\mathbf{e}_r+\sum_{j=1}^{r-1}\mathbf{e}_{j}$, following immediately from the definition of $gr$.

The expression   $u=s_Is_{r-1}s_{r-2}\cdots s_1$, where   $I=[1, 2, \cdots, r]$,    is reduced. Thus the subexpression $s_I=s_1\cdots s_r$ is also reduced.  Clearly, $s_j\in W^{P_{j-1}}_{P_j}$, and $s_I(\alpha_j)\in R^+$ for all $\alpha_j\in \Delta_{P_{r-1}}=\{\alpha_1, \cdots, \alpha_{r-1}\}$, which implies $s_I\in   W^{P_{r-1}}_P$.
 Hence,  $gr(\sigma^u)=\ell(s_I)\mathbf{e}_r+\sum_{i=1}^{r-1}\ell(s_i)\mathbf{e}_i=|I|\mathbf{e}_r+\sum_{i=1}^{r-1}\mathbf{e}_i$.

The arguments for the remaining cases are all the same.
  \end{proof}

\begin{remark}
  We obtain both tables using case by case analysis, which gives an alternative  proof of Proposition \ref{propvirtualnullroot}  by studying the quotient $(Q^\vee/Q^\vee_P)/\mathcal{L}'$ first.
\end{remark}
 %Note $\psi_{\Delta, \Delta_P}(q_{\mu_P})=q_{\mu_B}$ for any null virtual coroot $\mu_P$.  As a direct consequence, we have

\begin{lemma} \label{lemmagrading33cases}
 Let  $\lambda_P, \mu_P\in Q^\vee/Q_P^\vee$.  Write   $\psi_{\Delta, \Delta_P}(q_{\lambda_P})=q_{\lambda_B}\sigma^{u}$.
 If $\mu_P$ is a virtual null coroot, then we have
   $$\psi_{\Delta, \Delta_P}(q_{\mu_P})=q_{\mu_B} \quad\mbox{and}\quad \psi_{\Delta, \Delta_P}(q_{\lambda_P+\mu_P})=q_{\lambda_B+\mu_B}\sigma^{u}.$$
Consequently, we have
  $$gr_{[1, r]}(\psi_{\Delta, \Delta_P}(q_{\mu_P}))=\mathbf{0}\quad \mbox{and}\quad gr_{[1, r]}(\psi_{\Delta, \Delta_P}(q_{\lambda_P+\mu_P}))= gr_{[1, r]}(\psi_{\Delta, \Delta_P}(q_{\lambda_P})).$$
\end{lemma}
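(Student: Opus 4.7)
The plan is to unwind the Peterson--Woodward formula directly, using the hypothesis that $\mu_P$ is virtual null to force the ``corrector'' element $w_Pw_{P'}$ to be trivial, and then combine this with Lemma \ref{lemmanullcoroot} for the second identity.

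First, for the claim $\psi_{\Delta,\Delta_P}(q_{\mu_P}) = q_{\mu_B}$: by definition of virtual null, $\langle \alpha, \mu_B\rangle = 0$ for every $\alpha \in \Delta_P$. Thus in the Peterson--Woodward data (Proposition \ref{propcomparison}) associated to $\mu_B$, the parabolic subset is
$$\Delta_{P'_\mu} := \{\alpha \in \Delta_P : \langle \alpha, \mu_B\rangle = 0\} = \Delta_P,$$
so $P'_\mu = P$ and $w_Pw_{P'_\mu} = w_P \cdot w_P = \mathrm{id}$. Applying the definition of $\psi_{\Delta,\Delta_P}$ to $q_{\mu_P} = q_{\mu_P}\sigma^{\mathrm{id}}$ yields $\psi_{\Delta,\Delta_P}(q_{\mu_P}) = q_{\mu_B}\sigma^{\mathrm{id}} = q_{\mu_B}$, as desired.

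Next, set $\kappa_P := \lambda_P + \mu_P$. By Lemma \ref{lemmanullcoroot}, the Peterson--Woodward lifting of $\kappa_P$ is $\kappa_B = \lambda_B + \mu_B$. For each $\alpha \in \Delta_P$,
$$\langle \alpha, \kappa_B\rangle = \langle \alpha, \lambda_B\rangle + \langle \alpha, \mu_B\rangle = \langle \alpha, \lambda_B\rangle,$$
so the parabolic subsets coincide: $\Delta_{P'_\kappa} = \Delta_{P'_\lambda}$, and therefore $w_Pw_{P'_\kappa} = w_Pw_{P'_\lambda}$. Since the hypothesis $\psi_{\Delta,\Delta_P}(q_{\lambda_P}) = q_{\lambda_B}\sigma^u$ forces $u = w_Pw_{P'_\lambda}$, the Peterson--Woodward formula applied to $q_{\kappa_P}$ gives
$$\psi_{\Delta,\Delta_P}(q_{\lambda_P + \mu_P}) = q_{\kappa_B}\sigma^{w_Pw_{P'_\kappa}} = q_{\lambda_B + \mu_B}\sigma^u.$$

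Finally, for the grading consequences: for any $\nu \in Q^\vee$, Definition \ref{defnofgrading} gives
$$gr_{[1,r]}(\mathrm{id},\nu) = \sum_{i=1}^{r}\Big(\sum_{\beta \in R^+_{P_i}\setminus R^+_{P_{i-1}}}\langle \beta, \nu\rangle\Big)\mathbf{e}_i.$$
Since $R^+_P = \bigsqcup_{i=1}^r (R^+_{P_i}\setminus R^+_{P_{i-1}})$ and $\langle \alpha, \mu_B\rangle = 0$ for every $\alpha \in \Delta_P$, linearity of the pairing in the first slot gives $\langle \beta, \mu_B\rangle = 0$ for every $\beta \in R^+_P$. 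Hence $gr_{[1,r]}(\psi_{\Delta,\Delta_P}(q_{\mu_P})) = gr_{[1,r]}(q_{\mu_B}) = \mathbf{0}$. The second grading identity then follows from the same additivity argument applied to $\lambda_B + \mu_B$. No serious obstacle arises here; the argument is a matter of carefully matching the two Peterson--Woodward subsets $\Delta_{P'_\kappa}$ and $\Delta_{P'_\lambda}$, which is where the virtual null hypothesis enters decisively.
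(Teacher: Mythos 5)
Your argument is correct and is essentially the paper's own proof: identify $\Delta_{P'}=\Delta_P$ for the virtual null coroot $\mu_P$ so that $w_Pw_{P'}=\mathrm{id}$, use Lemma \ref{lemmanullcoroot} to get $\kappa_B=\lambda_B+\mu_B$ and the equality of the Peterson--Woodward subsets for $\kappa_P$ and $\lambda_P$, and then read off the gradings from $\langle\beta,\mu_B\rangle=0$ for all $\beta\in R_P^+$. The only difference is that you spell out the grading computation that the paper dismisses as a direct consequence, which is harmless.
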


\begin{proof}
  For $\kappa_P\in Q^\vee/Q^\vee_P$, by definition we have  $\psi_{\Delta, \Delta_P}(q_{\kappa_P})=q_{\kappa_B}\sigma^{w_Pw_{P'}}$ with $\Delta_{P'}=\{\alpha\in \Delta_P~|~\langle \alpha, \kappa_B\rangle=0\}$.
  If $\kappa_P=\mu_P$, then   $\Delta_{P'}=\Delta_{P}$ since $\mu_P$ is a virtual null coroot. Thus $w_Pw_{P'}=\mbox{id}$ and consequently   $\psi_{\Delta, \Delta_P}(q_{\mu_P})=q_{\mu_B}$.
   If $\kappa_P=\lambda_P+\mu_P$, then $\kappa_B=\lambda_B+\mu_B$. Write
    $\Delta_{P'}=$ $\{\alpha\in \Delta_P~|~\langle \alpha, \kappa_B\rangle=0\}=\{\alpha\in \Delta_P~|~\langle \alpha, \lambda_B\rangle=0\}$. That is, we have  $u=w_Pw_{P'}$ and
        $\psi_{\Delta, \Delta_P}(q_{\kappa_P})=q_{\kappa_B}\sigma^{u}$. The two identities on the gradings are then a direct consequence.
 \end{proof}

 \bigskip

 \begin{proof}[Proof of Proposition \ref{propwelldefinedgrading}]
 The initial proof used case by case analysis with Table \ref{tabNONvirtualnullcoroot}.
 Here we provide a uniform proof from the referee.

  To prove the statement using Lemma \ref{lemmagrading33cases}, we only need to prove that
 $$gr_{[1, r]}(\psi_{\Delta,\Delta_P}(q_{\alpha^\vee+Q^\vee_P}))=\mathbf{0}$$
 for $\alpha\in \partial \Delta_P$.

 Let $\alpha\in\partial \Delta_P$ and let $\alpha_i$ be the unique element in $\Delta_P$ adjacent to $\alpha$. We have $\langle \alpha_i, \alpha^\vee\rangle\neq 0$. By definition we have $gr_{[1, r]}(\psi_{\Delta,\Delta_P}(q_{\alpha^\vee+Q^\vee_P}))=gr_{[1, r]}(w_P^{P'}, \alpha^\vee)$ and
 $$gr_{[1, r]}(w_P^{P'}, \alpha^\vee)=\sum_{j=1}^r\Big(|\mbox{Inv}(w_P^{P'})\cap (R_j^+\setminus R_{j-1}^+)|+\sum_{\beta\in R_j^+\setminus R_{j-1}^+}\langle \beta, \alpha^\vee\rangle\Big)\mathbf{e}_j.$$
We first remark that the above grading does only depend on the restriction of $\alpha^\vee$ to $\Delta_P$ so on $R(\alpha^\vee)=-\omega_{i, P}^\vee$ as defined in the proof of Proposition \ref{propvirtualnullroot}. For $w\in W_P$ and $\lambda\in \Lambda^\vee_P$ we define
 $$gr_{[1, r]}(w, \lambda)=\sum_{j=1}^r\Big(|\mbox{Inv}(w_P^{P'})\cap (R_j^+\setminus R_{j-1}^+)|+\sum_{\beta\in R_j^+\setminus R_{j-1}^+}\langle \beta, \lambda\rangle\Big)\mathbf{e}_j.$$
For $\mathbf{a}=\sum_{j=1}^r a_j\mathbf{e}_j$, define $$\|\mathbf{a}\|:=\sum_{j=1}^r|a_j|.$$
Next remak (see Corollary 3.13 of \cite{CMP-affinesymm}) that for $w\in W_P$ and $\lambda\in \Lambda_P^\vee$, we have
   $$\ell(wt_\lambda)=\|gr_{[1, r]}(w, \lambda)\|$$
where $\ell$ denotes the length function on $\widetilde{W}_{\scriptsize\mbox{aff}}$ the extended affine Weyl group and where we consider the element $wt_\lambda$ as an element of the extended affine Weyl group $\widetilde{W}_{\scriptsize\mbox{aff}}$ (see Definition 3.9 of \cite{CMP-affinesymm}).

Now for $P'$ defined by $\Delta_{P'}=\{\beta\in \Delta_P~|~ \langle \beta, \omega_{i, P}^\vee\rangle =0\}$, the element
  $$\tau_i:=w_P^{P'}t_{-\omega_{i, P}^\vee}$$
is the element $\tau_i$ defined on page 9 of \cite{CMP-affinesymm}. In particular this element satisfies $\ell(\tau_i)=0$ (since this element is in the stabiliser of the fundamental alcove, see also page 5 of \cite{lamshi}). As a consequence we  get
$$\|gr_{[1, r]}(\psi_{\Delta,\Delta_P}(q_{\alpha^\vee+Q^\vee_P}))\|=0\quad\mbox{and}\quad gr_{[1, r]}(\psi_{\Delta,\Delta_P}(q_{\alpha^\vee+Q^\vee_P}))=\mathbf{0}.$$
 \end{proof}

 To prove  Proposition \ref{propinjectionofvectsp}, we  need the next lemma. % that works for all the cases in Table \ref{tabrelativeposi}.
\begin{lemma}[Lemma 4.1 (1) of \cite{leungli33}]\label{lemmauniquegradinginPB} For any $\mathbf{d}=\sum_{i=1}^{r-1}d_i\mathbf{e}_i\in \mathbb{Z}^{r-1}\times \{(0, 0)\}\subset\mathbb{Z}^{r+1}$, there exists a unique $(w, \eta)\in W\times Q^\vee$ such that $gr(q_{\eta}\sigma^w)=\mathbf{d}$.
\end{lemma}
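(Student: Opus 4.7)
The plan is to reduce the problem to computations inside the type $A_{r-1}$ parabolic $\tilde P=P_{r-1}$ and then solve a triangular system coming from Proposition~\ref{propgradingofeachQQQ}.

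First I would check that the restriction of $gr$ to $W_{\tilde P}\times Q^\vee_{\tilde P}$ already lands in $\mathbb{Z}^{r-1}\times\{(0,0)\}$: by Proposition~\ref{propgradingofeachQQQ}(2) each $gr(\alpha_j^\vee)=(j+1)\mathbf{e}_j+(1-j)\mathbf{e}_{j-1}$ for $j\leq r-1$ has zero $r$-th and $(r+1)$-th components, and the decomposition $w=w_{r-1}\cdots w_1$ with $w_j\in W_{P_j}^{P_{j-1}}$ gives $gr(w,0)=\sum_{j=1}^{r-1}\ell(w_j)\mathbf{e}_j$. The crucial reduction step would then establish the converse: any $(w,\eta)\in W\times Q^\vee$ with $gr(w,\eta)\in\mathbb{Z}^{r-1}\times\{(0,0)\}$ lies automatically in $W_{\tilde P}\times Q^\vee_{\tilde P}$. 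For this I would decompose $w=w_{r+1}w_rw'$ with $w'\in W_{\tilde P}$ and write $\eta=\eta'+c_r\alpha_r^\vee+\sum_{\alpha\in\Delta\setminus\Delta_P}c_\alpha\alpha^\vee$ with $\eta'\in Q^\vee_{\tilde P}$, and then read off the $(r+1)$-th and $r$-th coordinates of $gr(w,\eta)$ via Proposition~\ref{propgradingofeachQQQ}(3),(4). The $(r+1)$-th coordinate equals $\ell(w_{r+1})$ plus $2c_\alpha$ for each $\alpha\in\Delta\setminus\Delta_P$ with $Dyn(\{\alpha\}\cup\Delta_P)$ disconnected, plus the strictly positive leading coefficient in Table~\ref{tabgradingQQ} times $c_{\alpha_{r+1}}$ (and similarly $c_{\alpha_{r+2}}$ in Cases~4(a),(b)); the $r$-th coordinate involves $\ell(w_r)$ plus the positive leading contribution of $c_r$ partially canceled by a negative multiple of $c_{\alpha_{r+1}}$. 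Setting both coordinates to zero and using $\ell(w_r),\ell(w_{r+1})\geq 0$ together with the explicit positivity in Table~\ref{tabgradingQQ} should pin down $c_r=c_{\alpha_{r+1}}=c_\alpha=0$ for all exterior $\alpha$, and consequently $\ell(w_r)=\ell(w_{r+1})=0$.

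With the reduction in hand, the problem on $W_{\tilde P}\times Q^\vee_{\tilde P}$ becomes explicit: for $\eta=\sum_{j=1}^{r-1}a_j\alpha_j^\vee$ one has
\[
gr(w,\eta)=\sum_{j=1}^{r-1}\bigl(\ell(w_j)+(j+1)a_j-j\,a_{j+1}\bigr)\mathbf{e}_j\qquad(a_r:=0),
\]
and the identity $\mathbf{d}=gr(w,\eta)$ becomes a triangular system in $(a_j,\ell(w_j))$. I would solve it inductively from $j=r-1$ downwards: at the top, $d_{r-1}=\ell(w_{r-1})+r\,a_{r-1}$ has a unique solution with $\ell(w_{r-1})\in\{0,\ldots,r-1\}$ by Euclidean division by $r$; since $\Delta_{\tilde P}$ is of type $A_{r-1}$ for every case in Table~\ref{tabrelativeposi}, $W_{P_{r-1}}^{P_{r-2}}$ contains a unique element of each length $0,1,\ldots,r-1$, so $w_{r-1}$ is uniquely determined. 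Continuing inductively, each equation $d_j+j\,a_{j+1}=\ell(w_j)+(j+1)a_j$ with $\ell(w_j)\in\{0,\ldots,j\}$ determines the pair $(a_j,w_j)$ uniquely.

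The main obstacle is the reduction step. It must be verified uniformly across Cases~\mbox{C}$1B)$ through \mbox{C}$10)$, and in the most intricate Cases~\mbox{C}$4)$, \mbox{C}$5)$, \mbox{C}$7)$, where the coefficients of $gr(\alpha_r^\vee)$ and $gr(\alpha_{r+1}^\vee)$ are larger and involve more entries, one needs the explicit numbers in Table~\ref{tabgradingQQ} to rule out any nonobvious cancellation between $\ell(w_r),\ell(w_{r+1})$ and the contributions of $c_r,c_{\alpha_{r+1}},c_{\alpha_{r+2}}$ in the $r$-th and $(r+1)$-th coordinates. The positivity of the top coefficients of $gr(\alpha_r^\vee)$ and $gr(\alpha_{r+1}^\vee)$ and the nonnegativity of the length contributions make this a bounded combinatorial verification, case by case.
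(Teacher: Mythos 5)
Your second step is sound: once $(w,\eta)$ is known to lie in $W_{P_{r-1}}\times Q^\vee_{P_{r-1}}$, the identity $d_j=\ell(w_j)+(j+1)a_j-j\,a_{j+1}$ (with $a_r:=0$), Euclidean division by $j+1$, and the fact that the type $A$ chain $\Delta_{\tilde P}$ makes each $W_{P_j}^{P_{j-1}}$ contain exactly one element of every length $0,\dots,j$, give existence and uniqueness by downward induction on $j$. This is essentially the argument behind Lemma 4.1(1) of \cite{leungli33}, which the present paper imports without reproof, and note that every place the paper invokes the lemma the relevant pairs do lie in $W_{P_{r-1}}\times Q^\vee_{P_{r-1}}$.

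Your ``crucial reduction step'', however, is a genuine gap: it is not merely a bounded verification left undone, it is false. Because $\eta$ ranges over all of $Q^\vee$, the coefficients $c_r$, $c_{\alpha_{r+1}}$, $c_\alpha$ may be negative, and their negative contributions to the $r$-th and $(r+1)$-th coordinates can be cancelled by $\ell(w_r),\ell(w_{r+1})\geq 0$; the positivity of the leading entries of Table \ref{tabgradingQQ} yields no contradiction. Concretely, in case {\upshape C1$C$)} with $n\geq r+2$ take $\eta=-\alpha_r^\vee-\alpha_{r+1}^\vee$ and any $w\in W^{P}$ with $\ell(w)=2r+2$ (such $w$ exists since $\dim G/P=n^{2}-r^{2}\geq 4r+4$). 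Table \ref{tabgradingQQ} gives $gr(\mbox{id},\eta)=-(2r+2)\mathbf{e}_{r+1}+(r-1)\mathbf{e}_{r-1}+\sum_{j=1}^{r-1}\mathbf{e}_j$, while $gr(w,0)=(2r+2)\mathbf{e}_{r+1}$, so $gr(q_\eta\sigma^{w})=\sum_{j=1}^{r-2}\mathbf{e}_j+r\,\mathbf{e}_{r-1}\in\mathbb{Z}^{r-1}\times\{(0,0)\}$ although $(w,\eta)\notin W_{P_{r-1}}\times Q^\vee_{P_{r-1}}$; moreover this grading coincides with $gr(q_{\alpha_1^\vee+\cdots+\alpha_{r-1}^\vee}\,\sigma^{\mbox{\scriptsize id}})$, so uniqueness over all of $W\times Q^\vee$ fails outright. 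Hence the reduction cannot be repaired; the lemma must be read, as in \cite{leungli33} and as it is actually applied in this paper, as a statement about pairs $(w,\eta)\in W_{P_{r-1}}\times Q^\vee_{P_{r-1}}$, and on that domain your triangular argument already constitutes a complete proof.
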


\bigskip
\begin{proof}[Proof of Proposition \ref{propinjectionofvectsp}]
  Since $\psi_{\Delta, \Delta_P} $ is injective, so is $\Psi_{r+1}$.

 For a nonzero element $\overline{q_\mu\sigma^w}\in Gr_{(r+1)}^{\mathcal{F}}$, we write $w=vu$ where $v\in W^P$ and $u\in W_P$, and write  $\mu=\mu'+\mu''$, where $\mu'\in \bigoplus_{i=r+1}^n\mathbb{Z}_{\geq 0}\alpha_i^\vee$ and $\mu''\in \bigoplus_{i=1}^r\mathbb{Z}_{\geq 0}\alpha_i^\vee$. Note $gr_{[1, r]}(q_{\mu'}\sigma^v)\in \bigoplus_{i=1}^r\mathbb{Z}_{\leq 0}\mathbf{e}_i$ and  $\mathbf{0}=gr_{[1, r]}(q_\mu \sigma^w)=gr_{[1, r]}(q_{\mu''}\sigma^u)+gr_{[1, r]}(q_{\mu'}\sigma^v)$. Thus  we have $gr_{[1, r]}(q_{\mu''}\sigma^u)\in \bigoplus_{i=1}^r\mathbb{Z}_{\geq 0}\mathbf{e}_i$.
  Setting
    $\lambda_P:=\mu+Q^\vee_P$, we have   $\psi_{\Delta, \Delta_P}(q_{\lambda_P}\sigma^v)=q_{\lambda_B}\sigma^{vw_Pw_{P'}}$ with $\lambda_B=\mu'+\lambda''$ for some $\lambda''\in   \bigoplus_{i=1}^r\mathbb{Z}_{\geq 0}\alpha_i^\vee$.
    Note  $gr_{[1, r]}(q_{\lambda_B} \sigma^{vw_Pw_{P'}})=gr_{[1, r]}(q_{\lambda''}\sigma^{w_Pw_{P'}})+gr_{[1, r]}(q_{\mu'}\sigma^v)$, and it is equal to $\mathbf{0}$ by Proposition \ref{propwelldefinedgrading}. Hence,  $\mathbf{d}:=gr_{[1, r]}(q_{\lambda''}\sigma^{w_Pw_{P'}})=gr_{[1, r]}(q_{\mu''}\sigma^u)\in   \bigoplus_{i=1}^r\mathbb{Z}_{\geq 0}\mathbf{e}_i$. Thus the map $\Psi_{r+1}$ is surjective as soon as there is a unique element
    $q_{\mu''}\sigma^u$ of grading $\mathbf{d}$.
     %%   $Gr_\mathbf{d}^\mathcal{F}$ is one-dimensional for all such $\mathbf{d}$. (Namely $\mathbf{d}$ determines a unique element in $W_P\times Q^\vee_P$, which is part of Lemma 4.1 of \cite{leungli33} when $\Delta_P$ is of $A$-type.)

  Suppose  $r=2$ and either case C1$B$) or case C9) occurs. Note   $gr(q_1)=2\mathbf{e}_1$, $gr(q_2)=4\mathbf{e}_2-2\mathbf{e}_1$, and
       $u=u_2u_1$ for a unique $u_1\in W_{P_1}=\{1, s_1\}$ and $u_2\in  W_P^{P_1}=\{1, s_2, s_1s_2, s_2s_1s_2\}$.
    Note for given $d_1, d_2\geq 0$, the next equalities  $$d_1\mathbf{e}_1+d_2\mathbf{e}_2=gr_{[1, r]}(q_{a_1\alpha_1^\vee+a_2\alpha_2^\vee}\sigma^u)=a_1 \cdot gr(q_1)+a_2\cdot gr(q_2)+ \ell(u_1) \mathbf{e}_1+\ell(u_2)\mathbf{e}_2$$
    determine  a unique $(a_1, a_2, \ell(u_1), \ell(u_2))\in \mathbb{Z}_{\geq 0}\times \mathbb{Z}_{\geq 0}\times \{0, 1\}\times \{0, 1, 2, 3\}$. The pair $(\ell(u_1), \ell(u_2))$ further determines a unique $(u_1, u_2)\in W_{P_1}\times W^{P_1}_{P}$. Hence, $q_{\mu'' } \sigma^u=q_{\lambda''}\sigma^{w_Pw_{P'}}$ follows from the uniqueness.

   In order to show $\Psi_{r+1}$ is not surjective for the remaining cases, it suffices to consider the virtual null roots $\mu_P$ in Proposition \ref{propvirtualnullroot}, for which we note $\Psi_{r+1}(q_{\mu_P})=\overline{q_{\mu_B}}$. The point is to show $gr_{[r, r]}(q_r)\leq  \ell(w_Pw_{P_{r-1}})\mathbf{e}_r$. Once this is done, we show the existence of $q_{r}^{a_r-1}\sigma^{u_r}$ satisfying $gr_{[r, r]}(q_{r}^{a_r-1}\sigma^{u_r})=gr_{[r, r]}(q_{\lambda_B}\sigma^{w_Pw_{P'}})$, where $u_r\in W_{P}^{P_{r-1}}$, and $a_r$ denotes the power of $q_r$ in the monomial $q_{\lambda_B}$. Then we apply Lemma \ref{lemmauniquegradinginPB} to construct  an element in $W_{P_{r-1}}\times \bigoplus_{i=1}^{r-1}\mathbb{Z}_{\geq 0}\alpha_{i}^\vee$ with  grading   $gr_{[1, r-1]}(q_{\lambda_B}\sigma^{w_Pw_{P'}})-gr_{[1, r-1]}(q_{r}^{a_r-1}\sigma^{u_r})$. In this way, we obtain an element of the  same grading as $gr(q_{\lambda_B}\sigma^{w_Pw_{P'}})$ that is not in the image of  $ \Psi_{r+1}$. Precise arguments are given as follows.

     For case C1$C$), we have  $\mu_P=\alpha_{r+1}^\vee+Q_P^\vee$ and $\mu_B=\alpha_1^\vee+\cdots +\alpha_{r+1}^\vee$. Note
       $gr_{[r, r]}(q_{r})=(r+1)\mathbf{e}_r$, and $w_Pw_{P_{r-1}}$  is the longest element in $W^{P_{r-1}}_{P}$, which is of length $\ell(w_Pw_{P_{r-1}})=|R_{P}^+|-|R_{P_{r-1}}^+|=r^2- {(r-1)r\over2}={r^2+r\over 2}\geq r+1$. Hence, there exists $u_r\in W^{P_{r-1}}_{P}$ of length $r+1$.
       Note for each $1\leq j\leq r-1$, $u_j:=s_j\in W_{P_j}^{P_{j-1}}$. Thus
        $gr(q_{r+1}\sigma^{u_r\cdots u_2s_1})=(2r+2)\mathbf{e}_{r+1}=gr(q_1\cdots q_{r+1})$. However, $q_{r+1}\sigma^{u_r\cdots u_2u_1}\not\in \psi_{\Delta, \Delta_P}(QH^*(G/P))$.

     For case C1$B$) with $r\geq 3$, we have  $\mu_P=2\alpha_{r+1}^\vee+Q_P^\vee$ and $\mu_B=2\alpha_{r+1}^\vee+\alpha_r^\vee+2\sum_{j=1}^{r-1}\alpha_j^\vee$.  Note
       $gr_{[r, r]}(q_{r})=2r\mathbf{e}_r$, and  $\ell(w_Pw_{P_{r-1}})= {r(r+1)\over 2}\geq 2r$. Hence, there exists $u_r\in W^{P_{r-1}}_{P}$ of length $2r$.  Set $u_j=s_{j-1}s_j$ for $2\leq j\leq r-1$.  Then
      $gr_{[1, r]}(q_{r+1}^2q_1\sigma^{u_ru_{r-1}\cdots u_2})=\mathbf{0}$ so that $\overline{q_{r+1}^2q_1\sigma^{u_ru_{r-1}\cdots u_2}}\in Gr_{(r+1)}^\mathcal{F}$.
    However, $q_{r+1}^2q_1\sigma^{u_ru_{r-1}\cdots u_2}\not\in \psi_{\Delta, \Delta_P}(QH^*(G/P))$.

    The arguments for the remaining cases are also easy and similar.
\end{proof}

\subsection{Proof of Proposition \ref{propmorphismofalgebras} (1)}\label{subsectprop111} For $v', v''\in W^P$,    we note
$$\Psi_{r+1}(\sigma^{v'})\star \Psi_{r+1}(\sigma^{v''})-\Psi_{r+1}(\sigma^{v'}\star_P\sigma^{v''})=
  \sum N_{v', v''}^{w, \lambda} \overline{q_{\lambda}\sigma^{w}},$$ the summation over those $q_{\lambda}\sigma^w\in QH^*(G/B)$ satisfying
     $gr_{[1, r]}(q_\lambda \sigma^w)=\mathbf{0}$ and ${q_{\lambda}\sigma^{w}}\not\in \psi_{\Delta, \Delta_P}(QH^*(G/P))$. It suffices  to show the vanishing of all the coefficients $N_{v', v''}^{w, \lambda}$   (if any). In particular, it is already done,  if $\Psi_{r+1}$ is an isomorphism of vector spaces.
     Therefore,   if $r=2$, then  both      C1$B$) and C9) could be excluded in  the rest of this subsection.

To do this, we will  use the same idea occurring in section 3.5 of \cite{leungli33}. Namely, we consider the fibration $G/B\rightarrow G/\tilde{P}$ where    $\Delta_{\tilde P}=\{\alpha_1, \cdots, \alpha_{r-1}\}$.
Set $\varsigma:=r-1$, and note that $\Delta_{\tilde P}$ is of $A$-type satisfying the assumption on the ordering as in \cite{leungli33}.    Using Definition \ref{defnofgrading} with respect to $(\Delta,  \Delta_{\tilde P})$, we have a grading map     $$\tilde {gr}: W\times Q^\vee\longrightarrow \mathbb{Z}^{\varsigma+1}=\bigoplus\nolimits_{i=1}^r\mathbb{Z}\mathbf{e}_i\hookrightarrow \mathbb{Z}^{r+1},$$
which satisfies the next obvious property  $$\tilde{gr}_{[1, r-1]}={gr}_{[1, r-1]}.$$
  Consequently, we obtain a filtration $\tilde{\mathcal{F}}$ on  $QH^*(G/B)$ and a (well-defined) induced map $\tilde \Psi_{\varsigma+1}: QH^*(G/\tilde P)\rightarrow Gr^{\tilde{\mathcal{F}}}_{(\varsigma+1)}\subset  Gr^{\tilde {\mathcal{F}}}(QH^*(G/B))$ as well. Furthermore, all the results of \cite{leungli33} hold  with respect to the fibration $G/B\rightarrow G/\tilde{P}$. In particular, we have the next proposition (which follows immediately from Theorem 1.6 of \cite{leungli33}).

\begin{prop}\label{propreductiontor-1}
 Let $\tilde u, \tilde v\in W^{\tilde P}$ and  $\tilde w\in W_{\tilde P}$. In $Gr^{\tilde {\mathcal{F}}}(QH^*(G/B))$, we have
   {\upshape $$(1) \,\,\, \overline{\sigma^{\tilde u}}\star \overline{\sigma^{\tilde v}}=\overline{\psi_{\Delta, \Delta_{\tilde P}}(\sigma^{\tilde u}\star_{\tilde P}\sigma^{\tilde v})}\,\,;\qquad (2)\,\,\,
         \overline{\sigma^{\tilde u}}\star  \overline{\sigma^{\tilde w}}=\overline{\sigma^{\tilde u\tilde w}}.$$
}
 \end{prop}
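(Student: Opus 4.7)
The plan is to reduce both statements directly to Theorem~1.6 of \cite{leungli33} applied to the pair $(\Delta, \Delta_{\tilde P})$. The hypothesis of that theorem -- that the relevant subset of simple roots is of $A$-type with the ordering required in \cite{leungli33} -- is satisfied by $\Delta_{\tilde P}=\{\alpha_1,\ldots,\alpha_{r-1}\}$ by inspection of Table~\ref{tabrelativeposi}, where in every case the ``lower part'' of the Dynkin diagram through $\alpha_{r-1}$ is of type $A_{r-1}$ with the required linear ordering. Consequently, the $\tilde P$-version of Proposition~\ref{propanothermainthm} gives that $\tilde\Psi_{\varsigma+1}$ is an isomorphism of graded algebras from $QH^*(G/\tilde P)$ onto $Gr^{\tilde{\mathcal{F}}}_{(\varsigma+1)}$.

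For part (1), I would first observe that for any $\tilde u\in W^{\tilde P}$, the Peterson-Woodward lifting of $0\in Q^\vee/Q^\vee_{\tilde P}$ is $0\in Q^\vee$ itself, with associated subset $\Delta_{\tilde P'}=\Delta_{\tilde P}$. Hence $w_{\tilde P}w_{\tilde P'}=\mathrm{id}$ and $\psi_{\Delta,\Delta_{\tilde P}}(\sigma^{\tilde u})=\sigma^{\tilde u}$, so $\tilde\Psi_{\varsigma+1}(\sigma^{\tilde u})=\overline{\sigma^{\tilde u}}$, and similarly for $\tilde v$. Applying the algebra morphism $\tilde\Psi_{\varsigma+1}$ to the product $\sigma^{\tilde u}\star_{\tilde P}\sigma^{\tilde v}$ then yields the claimed identity in a single step.

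For part (2), the approach is to use the grading computation from Definition~\ref{defnofgrading} together with the classical Leray-Serre decomposition of $H^*(G/B)$. Since $\tilde u\in W^{\tilde P}=W^{\tilde P_\varsigma}_{\tilde P_{\varsigma+1}}$, one has $\tilde{gr}(\sigma^{\tilde u})=\ell(\tilde u)\mathbf{e}_{\varsigma+1}$; since $\tilde w\in W_{\tilde P}$ factors uniquely through the chain $B\subsetneq \tilde P_1\subsetneq \cdots \subsetneq \tilde P_\varsigma$, one has $\tilde{gr}(\sigma^{\tilde w})\in\bigoplus_{j=1}^{\varsigma}\mathbb{Z}_{\geq 0}\mathbf{e}_j$; and by uniqueness of the full Bruhat-type factorization of $\tilde u\tilde w$, these gradings sum to $\tilde{gr}(\sigma^{\tilde u\tilde w})$. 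In $H^*(G/B)$ the classical cup product gives $\sigma^{\tilde u}\cup\sigma^{\tilde w}=\sigma^{\tilde u\tilde w}$ (standard Leray-Serre), so the top-filtration term of $\sigma^{\tilde u}\star\sigma^{\tilde w}$ is exactly $\sigma^{\tilde u\tilde w}$, and all remaining quantum contributions lie in strictly smaller $\tilde{gr}$-degree. Passing to $Gr^{\tilde{\mathcal{F}}}$ kills those corrections and yields the identity.

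The main potential obstacle is the vanishing of quantum contributions \emph{at the top grading} in part~(2): a priori, the filtration property in Proposition~\ref{propmainthm} only guarantees $\tilde{gr}(q_\lambda\sigma^w)\leq \tilde{gr}(\sigma^{\tilde u})+\tilde{gr}(\sigma^{\tilde w})$ rather than strict inequality. Ruling out equality for nontrivial $\lambda$ is precisely the substantive content packaged into the $A$-type case of Theorem~1.6 of \cite{leungli33}, and in particular is a consequence of the tensor-decomposition statement (the $\tilde P$-analogue of Theorem~\ref{thmgenthm11}(4)), which is available here because every $\tilde P_j/\tilde P_{j-1}$ is a projective space in the $A$-type setting.
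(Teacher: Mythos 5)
Your proposal is correct and follows essentially the same route as the paper: the paper likewise observes that $\Delta_{\tilde P}=\{\alpha_1,\cdots,\alpha_{r-1}\}$ is of $A$-type with the ordering required in \cite{leungli33} and then obtains both identities directly by invoking the results of \cite{leungli33} (Theorem 1.6 there) for the pair $(\Delta, \Delta_{\tilde P})$, exactly the reduction you make. The only cosmetic caveat is that $\sigma^{\tilde u}\cup\sigma^{\tilde w}=\sigma^{\tilde u\tilde w}$ holds in $H^*(G/B)$ only modulo lower-filtration terms, but since you use it only to identify the leading term this does not affect your argument.
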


\begin{lemma}\label{lemmagradingreduction11}
   For any $u, v\in W^P$, we have in $QH^*(G/B)$ that
       $$\sigma^{u}\star \sigma^v=\sum N_{u, v}^{w, \lambda}q_\lambda\sigma^{w}+\sum N_{u, v}^{w', \lambda'}q_{\lambda'}\sigma^{w'}+ \sum N_{u, v}^{w'', \lambda''}q_{\lambda''}\sigma^{w''},$$
     where the first summation is over those $q_\lambda\sigma^w\in \psi_{\Delta, \Delta_P}(QH^*(G/P))$,  the second summation is over those
     $q_{\lambda'}\sigma^{w'}\in \psi_{\Delta, \Delta_{\tilde P}}(QH^*(G/\tilde P))\setminus \psi_{\Delta, \Delta_P}(QH^*(G/P))$, and the third summation is over those $q_{\lambda''}\sigma^{w''}$ satisfying  ${gr_{[1, r-1]}(q_{\lambda''}\sigma^{w''})<\mathbf{0}}$.
\end{lemma}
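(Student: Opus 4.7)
The plan is to classify each quantum term $q_\mu\sigma^x$ appearing in $\sigma^u\star\sigma^v$ by means of its grading, exploiting the fact that $u,v\in W^P\subset W^{\tilde P}$ and that $\Delta_{\tilde P}$ is of type $A$. First, since $u\in W^P$ the decomposition $u=u_{r+1}u_r\cdots u_1$ with $u_j\in W^{P_{j-1}}_{P_j}$ forces $u_1=\cdots=u_r=\mathrm{id}$ and $u_{r+1}=u$, so $gr(\sigma^u)=\ell(u)\mathbf{e}_{r+1}$; similarly $gr(\sigma^v)=\ell(v)\mathbf{e}_{r+1}$. By the filtration property (Proposition \ref{propmainthm}), every term $q_\mu\sigma^x$ occurring in $\sigma^u\star\sigma^v$ satisfies $gr(q_\mu\sigma^x)\leq (\ell(u)+\ell(v))\mathbf{e}_{r+1}$ in the lexicographic order on $\mathbb{Z}^{r+1}$. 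Projecting onto the first $r-1$ coordinates gives $gr_{[1,r-1]}(q_\mu\sigma^x)\leq \mathbf{0}$ lex in $\mathbb{Z}^{r-1}$.

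Now split into two cases. If $gr_{[1,r-1]}(q_\mu\sigma^x)<\mathbf{0}$, the term $N_{u,v}^{x,\mu}q_\mu\sigma^x$ goes directly into the third sum. Otherwise $gr_{[1,r-1]}(q_\mu\sigma^x)=\mathbf{0}$, and the key claim is that this forces $q_\mu\sigma^x\in \psi_{\Delta,\Delta_{\tilde P}}(QH^*(G/\tilde P))$. Granting this, the term goes into the first or second sum depending on whether it also lies in $\psi_{\Delta,\Delta_P}(QH^*(G/P))$ or not, which yields the stated trichotomy. Note that since the chain $B=P_0\subset P_1\subset\cdots\subset P_{r-1}=\tilde P$ is shared by both grading maps, one has $gr_{[1,r-1]}=\tilde{gr}_{[1,r-1]}$, so the hypothesis is equivalent to $\tilde{gr}_{[1,r-1]}(q_\mu\sigma^x)=\mathbf{0}$.

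The main (and really only) point is the characterization $\psi_{\Delta,\Delta_{\tilde P}}(QH^*(G/\tilde P))=\mathrm{span}_{\mathbb{Q}}\{q_\mu\sigma^x\mid \tilde{gr}_{[1,r-1]}(q_\mu\sigma^x)=\mathbf{0}\}$. The inclusion $\subseteq$ is Proposition \ref{propwelldefinedgrading} applied to the fibration $G/B\to G/\tilde P$, and the opposite inclusion is a dimension count in each graded piece: since $\Delta_{\tilde P}$ is of type $A$, Proposition \ref{propanothermainthm} gives that $\tilde\Psi_{\varsigma+1}:QH^*(G/\tilde P)\to Gr^{\tilde{\mathcal F}}_{(\varsigma+1)}$ is a graded vector-space isomorphism, so in each fixed $\mathbb{Z}$-degree the space $(QH^*(G/\tilde P))_i$ has the same (finite) dimension as $Gr^{\tilde{\mathcal F}}_{i\mathbf{e}_r}$, which in turn equals the number of standard basis elements with $\tilde{gr}=i\mathbf{e}_r$. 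The injective map $\psi_{\Delta,\Delta_{\tilde P}}$ sends the former into the span of the latter, and equality of dimensions forces surjectivity onto it. Thus the hard part is only conceptual: one must recognize that the type-$A$ case already established in \cite{leungli33} gives exactly the characterization of the image needed to run the trichotomy, after which the lemma follows formally.
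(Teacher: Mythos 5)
Your proposal is correct, but it runs along a slightly different track than the paper. The paper's proof applies Proposition \ref{propreductiontor-1} (1) (the multiplicative statement in $Gr^{\tilde{\mathcal{F}}}$, i.e.\ Theorem 1.6 of \cite{leungli33}) directly to $\sigma^u\star\sigma^v$, which at once splits the product into terms lying in $\psi_{\Delta,\Delta_{\tilde P}}(QH^*(G/\tilde P))$ and terms with $\tilde{gr}_{[1,r-1]}<\mathbf{0}$; it then spends most of its effort proving the inclusion $\psi_{\Delta,\Delta_P}(QH^*(G/P))\subseteq \psi_{\Delta,\Delta_{\tilde P}}(QH^*(G/\tilde P))$ by an explicit compatibility of Peterson--Woodward liftings ($\lambda_B=\tilde\lambda_B$ and the factorization $w_Pw_{P'}=w_2w_{\tilde P}w_{\tilde P'}$), so that the two-fold split refines into the stated trichotomy. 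You instead use only the filtration inequality of Proposition \ref{propmainthm} together with the vector-space surjectivity of $\tilde\Psi_{\varsigma+1}$ (the type-$A$ case, Proposition \ref{propanothermainthm}) to identify $\psi_{\Delta,\Delta_{\tilde P}}(QH^*(G/\tilde P))$ with the span of all basis monomials $q_\mu\sigma^x$ having $\tilde{gr}_{[1,r-1]}(q_\mu\sigma^x)=\mathbf{0}$ (the monomial nature of the filtration makes the basis-class argument work, so your dimension count is sound, indeed even more than is needed); since the classes are disjoint and, by your characterization, every term with vanishing $[1,r-1]$-grading lies in the image of $\psi_{\Delta,\Delta_{\tilde P}}$, the trichotomy follows without the lifting computation, and the inclusion of images drops out as a corollary of your characterization combined with Proposition \ref{propwelldefinedgrading}. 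What each approach buys: the paper's computation records the explicit lifting compatibility, which is the form of the statement reused afterwards (e.g.\ in analyzing elements of $\psi_{\Delta,\Delta_{\tilde P}}(QH^*(G/\tilde P))\setminus\psi_{\Delta,\Delta_P}(QH^*(G/P))$), whereas your argument is shorter and needs only the vector-space (not ring) content of the type-$A$ theorem. One small point of citation hygiene: Proposition \ref{propwelldefinedgrading} as stated concerns $P$ with $\Delta_P$ not of type $A$; for the fibration $G/B\to G/\tilde P$ the inclusion $\subseteq$ you want is the type-$A$ analogue (Proposition 3.24 of \cite{leungli33}, i.e.\ the well-definedness part of Proposition \ref{propanothermainthm}), which the paper justifies by noting that all results of \cite{leungli33} apply to $G/B\to G/\tilde P$.
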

\begin{proof}
    Since $\Delta_{\tilde P}\subset \Delta_P$, we have $u, v\in W^{\tilde P}$. By Proposition \ref{propreductiontor-1}(1), we have
    $$\sigma^{u}\star \sigma^v=\sum_{q_\lambda\sigma^w\in \psi_{\Delta, \Delta_{\tilde P}}(QH^*(G/\tilde P))} N_{u, v}^{w, \lambda}q_\lambda\sigma^{w}+ \sum_{\tilde{gr}_{[1, r-1]}(q_{\lambda''}\sigma^{w''})<\mathbf{0}} N_{u, v}^{w'', \lambda''}q_{\lambda''}\sigma^{w''}.$$
     If $q_\lambda\sigma^w\in \psi_{\Delta, \Delta_P}QH^*(G/P)$, then $\lambda=\lambda_B$ is the Peterson-Woodward lifting of $\lambda_P:=\lambda+Q^\vee_P$ and $w=w_1w_Pw_{P'}$ with $w_1$ being the minimal length representative of the coset $wW_P$. Since $R^+_{\tilde P}\subset R^+_P$, $\lambda_B=\tilde \lambda_B$ is also the lifting of $\tilde\lambda_{\tilde P}:=\lambda+Q^\vee_{\tilde P}$. Note that
       $\Delta_{\tilde P'}=\{\alpha\in \Delta_{\tilde P}~|~\langle \alpha, \tilde\lambda_B\rangle=0\}\subset \{\alpha\in \Delta_{P}~|~\langle \alpha, \lambda_B\rangle=0\}=\Delta_{P'}$.
        Thus $\mbox{Inv}(w_Pw_{P'})=R^+_P\setminus R^+_{P'}=\big(R_{\tilde P}^+\setminus R_{\tilde P'}^+\big)\bigsqcup \big((R_{P}^+\setminus R_{\tilde P}^+)\setminus (R_{P'}\setminus R^+_{\tilde P'})\big)$.
      Hence, we have $w_Pw_{P'}=w_2w_{\tilde P}w_{\tilde P'}$ where $w_2$ is the minimal length representative of the coset   $w_Pw_{P'}W_{\tilde P}$ \big(for which we have $\mbox{Inv}(w_2)=(R_{P}^+\setminus R_{\tilde P}^+)\setminus (R_{P'}\setminus R^+_{\tilde P'}))$. Note $w_1w_2\in W^{\tilde P}$. Thus we have $q_\lambda\sigma^w=\psi_{\Delta, \Delta_{\tilde P}}(q_{\tilde\lambda_{\tilde P}}\sigma^{w_1w_2})\in \psi_{\Delta, \Delta_{\tilde P}}(QH^*(G/\tilde P))$. Therefore the statement follows by noting   $\tilde {gr}_{[1, r-1]}=gr_{[1, r-1]}$.
\end{proof}

Due to the above lemma,  it remains to show  that  for any element  $q_{\lambda'}\sigma^{w'}$ in $\psi_{\Delta, \Delta_{\tilde P}}(QH^*(G/P_{r-1}))\setminus \psi_{\Delta, \Delta_P}(QH^*(G/P))$,
          either $N_{u, v}^{w', \lambda'}=0$ or $gr(q_{\lambda'}\sigma^{w'})<0$ holds. The latter claim could be further simplified as  $gr_{[r, r]}(q_{\lambda'}\sigma^{w'})<\mathbf{0}$, by noting $gr_{[1, r-1]}(q_{\lambda'}\sigma^{w'})=\mathbf{0}$. For this purpose, we need the next main result of \cite{leungliQtoC}, which is in fact an application of \cite{leungli33} in the special case of $P/B\cong \mathbb{P}^1$.
  For each $\alpha\in \Delta$, we
 define a map $\mbox{sgn}_\alpha: W\rightarrow \{0, 1\}$   by $\mbox{sgn}_\alpha(w):=1$ if
  $\ell(w)-\ell(ws_\alpha)>0$, and $0$ otherwise.

\begin{prop}[Theorem 1.1 of \cite{leungliQtoC}]\label{propQtoCthm}
   Given $u, v, w\in W$ and  $\lambda\in Q^\vee$, we have
   \begin{enumerate}
      \item   $N_{u, v}^{w, \lambda}=0$  unless  {\upshape $\mbox{sgn}_\alpha(w)+\langle \alpha, \lambda\rangle \leq \mbox{sgn}_\alpha(u)+\mbox{sgn}_\alpha(v)$} for all $\alpha\in \Delta.$
            %%%  $N_{u, v}^{w, \lambda}=0 \mbox{ unless } \mbox{sgn}_\alpha(w)+\langle \alpha, \lambda\rangle \leq \mbox{sgn}_\alpha(u)+\mbox{sgn}_\alpha(v) \mbox{ for all }  \alpha\in \Delta.$
     \item  Suppose    {\upshape $ \mbox{sgn}_\alpha(w)+\langle \alpha, \lambda\rangle =\mbox{sgn}_\alpha(u)+\mbox{sgn}_\alpha(v)=2$} for some  $\alpha\in \Delta$, then
           {\upshape  $$N_{u, v}^{w, \lambda}=N_{us_\alpha, vs_\alpha}^{w, \lambda-\alpha^\vee}=
                 \begin{cases} N_{u, vs_\alpha}^{ws_\alpha, \lambda-\alpha^\vee}, &   i\!f    \mbox{ sgn}_\alpha(w)=0 \\
                               \vspace{-0.3cm}   & \\
                         N_{u, vs_\alpha}^{ws_\alpha, \lambda}, & i\!f   \mbox{ sgn}_\alpha(w)=1 \,\, {}_{\displaystyle .}  \end{cases}$$
            }
   \end{enumerate}

\end{prop}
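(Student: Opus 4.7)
The plan is to specialize the entire filtration machinery developed in \cite{leungli33} to the minimal parabolic $P_\alpha \supsetneq B$ with $\Delta_{P_\alpha}=\{\alpha\}$, so that $P_\alpha/B \cong \mathbb{P}^1$ and hypothesis (Hypo1) is automatic. In this situation $r=1$, the grading map takes values in $\mathbb{Z}^2$, and Proposition~\ref{propanothermainthm} gives an honest isomorphism $\Psi_2^{(\alpha)}: QH^*(G/P_\alpha) \xrightarrow{\cong} Gr^{\mathcal{F}_\alpha}_{(2)}$. The first observation I would establish is a translation dictionary: writing each $w \in W$ uniquely as $w = w^\alpha \cdot s_\alpha^{\mathrm{sgn}_\alpha(w)}$ with $w^\alpha \in W^{P_\alpha}$, one checks directly from Definition~\ref{defnofgrading} that
\begin{equation*}
gr^{(\alpha)}(q_\lambda \sigma^w) = \bigl(\ell(w^\alpha) + \langle 2\rho - \alpha, \lambda\rangle\bigr)\mathbf{e}_2 + \bigl(\mathrm{sgn}_\alpha(w) + \langle \alpha, \lambda\rangle\bigr)\mathbf{e}_1,
\end{equation*}
so the quantity appearing in the hypothesis of the proposition is precisely the first component of the $\mathcal{F}_\alpha$-grading.

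For part (1), I would simply invoke the filtration-compatibility of the quantum product from Proposition~\ref{propmainthm} applied to $\mathcal{F}_\alpha$: since $\sigma^u \in F^{(\alpha)}_{\mathrm{sgn}_\alpha(u)\mathbf{e}_1 + *\mathbf{e}_2}$ and similarly for $\sigma^v$, any monomial $q_\lambda \sigma^w$ appearing in $\sigma^u \star \sigma^v$ must satisfy $\mathrm{sgn}_\alpha(w) + \langle \alpha, \lambda\rangle \leq \mathrm{sgn}_\alpha(u) + \mathrm{sgn}_\alpha(v)$ in the first coordinate. This gives the vanishing statement for free.

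For part (2), equality of the first coordinates means we are computing in the top graded piece $Gr^{\mathcal{F}_\alpha}_{(2)}$, which by $\Psi_2^{(\alpha)}$ is governed by $QH^*(G/P_\alpha)$. The first equality $N_{u,v}^{w,\lambda} = N_{us_\alpha, vs_\alpha}^{w, \lambda - \alpha^\vee}$ should then follow by interpreting both sides via the Peterson--Woodward comparison formula (Proposition~\ref{propcomparison}) with respect to $P_\alpha$: under the hypothesis, both $u$ and $v$ lie in the coset $W^{P_\alpha} s_\alpha$, and the Peterson--Woodward lifting of $\lambda_{P_\alpha}$ vs $(\lambda - \alpha^\vee)_{P_\alpha}$ absorbs the discrepancy through $w_{P_\alpha} w_{P_\alpha'}$, which for the rank-one Levi is either $\mathrm{id}$ or $s_\alpha$. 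The recursive identities splitting on whether $\mathrm{sgn}_\alpha(w)=0$ or $1$ then come from the two possible values of $w_{P_\alpha} w_{P_\alpha'}$ appearing when one re-expresses $N_{u,v}^{w,\lambda}$ as a GW invariant on $G/P_\alpha$ and pushes the $\sigma^v$ factor across the fibration $\pi_\alpha$ using the classical projection formula.

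The main obstacle, as I see it, is matching the bookkeeping on both sides of the recursion: the shift $\lambda \mapsto \lambda - \alpha^\vee$ changes the set $\Delta_{P'} = \{\beta \in \Delta_{P_\alpha} : \langle \beta, \lambda_B\rangle = 0\}$ that enters the Peterson--Woodward formula, so the Schubert indices produced by the two different liftings are not manifestly the same. Handling this cleanly will require verifying, case by case on $\langle \alpha, \lambda\rangle \in \{0, 1, 2\}$, that the classical intersection numbers on the relevant two-step flag $G/B \leftarrow G/B \to G/P_\alpha$ (i.e.\ the ``quantum to classical'' incarnation) produce the claimed coincidences of structure constants; the split into the two cases $\mathrm{sgn}_\alpha(w) = 0$ and $\mathrm{sgn}_\alpha(w) = 1$ reflects exactly whether the lifted degree $\lambda_B$ on $G/B$ equals $\lambda$ or $\lambda - \alpha^\vee$.
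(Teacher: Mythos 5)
First, note that the paper does not prove this proposition at all: it is imported verbatim as Theorem 1.1 of \cite{leungliQtoC}, which in turn is derived there from the $P/B\cong\mathbb{P}^1$ case of the filtration results of \cite{leungli33}. So your choice of framework (specialize the $\mathbb{Z}^2$-filtration for the minimal parabolic $P_\alpha$) matches the actual provenance of the result, and your part (1) is correct and complete: the dictionary $gr^{(\alpha)}_{[1,1]}(q_\lambda\sigma^w)=(\mbox{sgn}_\alpha(w)+\langle\alpha,\lambda\rangle)\mathbf{e}_1$ is right, and the lexicographic filtration property of Proposition \ref{propmainthm} (proved in \cite{leungli33} independently of the present statement, so no circularity) immediately forces the inequality in the first coordinate.

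Part (2), however, has a genuine gap. Equality of first coordinates does not put the computation ``in the top graded piece $Gr^{\mathcal{F}_\alpha}_{(2)}$'': that piece consists of gradings $i\mathbf{e}_2$, i.e.\ first coordinate $0$, whereas here the first coordinate is $2$. What the hypothesis actually gives (together with the dimension constraint $\ell(w)+\langle2\rho,\lambda\rangle=\ell(u)+\ell(v)$, which fixes the second coordinate) is that $q_\lambda\sigma^w$ attains the extremal grading $gr(\sigma^u)+gr(\sigma^v)$, so its coefficient survives in $Gr^{\mathcal{F}_\alpha}(QH^*(G/B))$. From there, the three identities do \emph{not} follow from the Peterson--Woodward comparison formula as you claim: Proposition \ref{propcomparison} only relates invariants of $G/P_\alpha$ indexed by $W^{P_\alpha}$ to a single $G/B$ invariant with the specific lifted degree $\lambda_B$, whereas both sides of your identities are $G/B$ invariants with arbitrary $u,v,w\in W$ and degrees differing by $\alpha^\vee$; also $\lambda+Q^\vee_{P_\alpha}$ and $(\lambda-\alpha^\vee)+Q^\vee_{P_\alpha}$ are the same class, with lift $\lambda-\alpha^\vee$ in both cases of the dichotomy, so the case split is not ``$\lambda_B=\lambda$ versus $\lambda-\alpha^\vee$'' (and under the hypothesis $\langle\alpha,\lambda\rangle\in\{1,2\}$, not $\{0,1,2\}$). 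What is really needed, and what you defer as ``bookkeeping'', is the multiplicative structure of the associated graded ring: the factorization $\overline{\sigma^{u}}=\overline{\sigma^{us_\alpha}}\star\overline{\sigma^{s_\alpha}}$ for $\mbox{sgn}_\alpha(u)=1$ (the $r=1$ analogue of Proposition \ref{propfor22uv}), the relation $\overline{\sigma^{s_\alpha}}\star\overline{\sigma^{s_\alpha}}=\overline{q_{\alpha^\vee}}$ coming from $QH^*(\mathbb{P}^1)$, and the (localized) tensor decomposition $Gr^{\mathcal{F}_\alpha}\cong QH^*(\mathbb{P}^1)\otimes Gr^{\mathcal{F}_\alpha}_{(2)}$ that lets you compare individual coefficients of $\overline{q_\lambda\sigma^w}$ on both sides, including why multiplication by $\overline{\sigma^{s_\alpha}}$ sends $\overline{q_\mu\sigma^{w'}}$ to $\overline{q_\mu\sigma^{w's_\alpha}}$ or $\overline{q_{\mu+\alpha^\vee}\sigma^{w's_\alpha}}$ according to $\mbox{sgn}_\alpha(w')$ --- this is exactly where the case split in (2) comes from. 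Without carrying out these steps the identities are asserted rather than proved.
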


\begin{cor}\label{corcoefforprop11}
  Let $u, v\in W^P$. Suppose    $N_{u, v}^{w, \lambda}\neq 0$ for some  $w\in W$ and $\lambda\in Q^\vee$.  Then we have
  \begin{enumerate}
    \item   $\langle\alpha, \lambda\rangle\leq 0$ for all $\alpha\in \Delta_P$;
    \item  Set $\lambda_P:=\lambda+Q^\vee_P$ and denote by $w_1$ the minimal length representative of the coset $wW_P$.    If $\lambda=\lambda_B$ and $gr_{[1, r]}(q_\lambda \sigma^w)=\mathbf{0}$, then $q_\lambda \sigma^w=\psi_{\Delta, \Delta_P}(q_{\lambda_P}\sigma^{w_1})$.
  \end{enumerate}
\end{cor}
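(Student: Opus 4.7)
For part (1), the plan is to invoke Proposition \ref{propQtoCthm}(1) directly. Since $u, v \in W^P$, for every $\alpha \in \Delta_P$ we have $u(\alpha), v(\alpha) \in R^+$, hence $\ell(us_\alpha) > \ell(u)$ and $\ell(vs_\alpha) > \ell(v)$, so $\mbox{sgn}_\alpha(u) = \mbox{sgn}_\alpha(v) = 0$. Proposition \ref{propQtoCthm}(1) then forces $\mbox{sgn}_\alpha(w) + \langle \alpha, \lambda \rangle \leq 0$, giving $\langle \alpha, \lambda \rangle \leq -\mbox{sgn}_\alpha(w) \leq 0$.

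For part (2), I would decompose $w = w_1 u_1$ with $w_1 \in W^P$ (which matches the $w_1$ in the statement) and $u_1 \in W_P$, and reduce the goal to proving $u_1 = w_P w_{P'}$. The first step is to show $u_1 \in W_P^{P'}$, i.e.\ $u_1(\alpha) \in R^+$ for every $\alpha \in \Delta_{P'}$. For such $\alpha$ we have $\langle \alpha, \lambda_B \rangle = 0$ by definition of $\Delta_{P'}$, so Proposition \ref{propQtoCthm}(1) forces $\mbox{sgn}_\alpha(w) = 0$, hence $w(\alpha) \in R^+$. Using the standard fact that $w_1 \in W^P$ satisfies $w_1(R_P^+) \subset R^+$, the sign of $w(\alpha) = w_1(u_1(\alpha))$ agrees with the sign of $u_1(\alpha) \in R_P$, so $u_1(\alpha) \in R^+$ as required.

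The second step is to extract $\ell(u_1)$ from the grading hypothesis. Summing the first $r$ components of $gr(q_\lambda \sigma^w) = \mathbf{0}$ yields
\[
|\mbox{Inv}(w) \cap R_P^+| + \sum_{\beta \in R_P^+} \langle \beta, \lambda_B \rangle = 0.
\]
Since $w_1 \in W^P$ preserves the sign of every $R_P$-root, the first term equals $|\mbox{Inv}(u_1)| = \ell(u_1)$. For the second term, the defining property $\langle \beta, \lambda_B \rangle \in \{0, -1\}$ for $\beta \in R_P^+$, combined with $\Delta_{P'} = \{\alpha \in \Delta_P : \langle \alpha, \lambda_B \rangle = 0\}$, implies $\langle \beta, \lambda_B \rangle = 0$ for $\beta \in R_{P'}^+$ and $\langle \beta, \lambda_B \rangle = -1$ otherwise; hence the sum equals $-(|R_P^+| - |R_{P'}^+|)$. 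Thus $\ell(u_1) = |R_P^+| - |R_{P'}^+| = \ell(w_P w_{P'})$, and since $W_P^{P'}$ has $w_P w_{P'}$ as its unique longest element of this length, we conclude $u_1 = w_P w_{P'}$. This yields $q_\lambda \sigma^w = q_{\lambda_B} \sigma^{w_1 w_P w_{P'}} = \psi_{\Delta,\Delta_P}(q_{\lambda_P} \sigma^{w_1})$, as desired.

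The main technical point is the evaluation of $\sum_{\beta \in R_P^+} \langle \beta, \lambda_B \rangle$ via the Peterson-Woodward constraint, coupled with the compatibility between the $W^P$-$W_P$ decomposition of $w$ and its inversion set on $R_P^+$; once these ingredients are in place, the equality $\ell(u_1) = \ell(w_P w_{P'})$ pins $u_1$ down uniquely.
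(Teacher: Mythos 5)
Your proposal is correct. Part (1) is exactly the paper's argument: $\mathrm{sgn}_\alpha(u)=\mathrm{sgn}_\alpha(v)=0$ for $\alpha\in\Delta_P$ plus Proposition \ref{propQtoCthm}(1). For part (2) your skeleton also matches the paper — use Proposition \ref{propQtoCthm}(1) with $\langle\alpha,\lambda_B\rangle=0$ to get $\mathrm{sgn}_\alpha(w)=0$ for $\alpha\in\Delta_{P'}$, hence the $W_P$-part $u_1$ of $w$ lies in $W_P^{P'}$, and then pin $u_1$ down as $w_Pw_{P'}$ by a length count and the uniqueness of the longest element of $W_P^{P'}$ — but your length count is obtained differently. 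The paper gets $\ell(w_2)=\ell(w_Pw_{P'})$ by comparing with the already-established Proposition \ref{propwelldefinedgrading} ($gr_{[1,r]}\circ\psi_{\Delta,\Delta_P}=\mathbf{0}$, whose proof is itself nontrivial) together with the identity $\ell=|gr|$ on $W_P$; you instead sum the first $r$ grading components and evaluate $\sum_{\beta\in R_P^+}\langle\beta,\lambda_B\rangle=-(|R_P^+|-|R_{P'}^+|)$ directly from the Peterson--Woodward constraint $\langle\beta,\lambda_B\rangle\in\{0,-1\}$ (zero precisely on $R_{P'}^+$), which yields $\ell(u_1)=|R_P^+|-|R_{P'}^+|$ with no appeal to Proposition \ref{propwelldefinedgrading}. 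Your route is thus more self-contained (it in effect re-derives the one consequence of that proposition that is needed here, namely the vanishing of the total of the first $r$ components), while the paper's is shorter given that the proposition is already in hand; both computations of $|\mathrm{Inv}(w)\cap R_P^+|=\ell(u_1)$ and of the pairing sum are sound.
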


\begin{proof}
  Assume $\langle\alpha, \lambda\rangle>0 $ for some $\alpha\in \Delta_P$, then we have
    $\mbox{sgn}_\alpha(u)+\mbox{sgn}_\alpha(v)=0<\langle\alpha, \lambda\rangle\leq \mbox{sgn}_\alpha(w)+\langle\alpha, \lambda\rangle$. Thus $N_{u, v}^{w, \lambda}=0$ by Proposition \ref{propQtoCthm} (1), contradicting with the hypothesis.

     Since $N_{u, v}^{w, \lambda}\neq 0$,    we have $\mbox{sgn}_\alpha(w)=0$ for any $\alpha\in \Delta_{P'}=\{\beta\in \Delta_P~|~\langle \beta, \lambda_B\rangle=0\}$, following from  Proposition \ref{propQtoCthm} (1) again; that is, $w(\alpha)\in R^+$. Thus $w\in W^{P'}$ and consequently $w=w_1w_2$ for a
     unique $w_2\in W^{P'}_{P}$. Since $gr_{[1, r]}(q_\lambda\sigma^{w_Pw_{P'}})=gr_{[1, r]}(\psi_{\Delta, \Delta_P}(q_{\lambda_P}))=\mathbf{0}=gr_{[1, r]}(q_\lambda\sigma^{w_1w_2})=gr_{[1, r]}(q_\lambda\sigma^{w_2})$, we have $gr_{[1, r]}(w_Pw_{P'})$ $=gr_{[1, r]}(w_2)$. Since $w_2, w_{P}w_{P'}\in W_P$,  $gr_{[r+1, r+1]}(w_Pw_{P'})=\mathbf{0}=gr_{[r+1, r+1]}(w_2)$. Therefore $\ell(w_2)=|gr(w_2)|=|gr(w_Pw_{P'})|=\ell(w_Pw_{P'})$. Hence,  $w_2=w_Pw_{P'}$ by the uniqueness of elements of maximal length in $W^{P'}_P$. Thus the statement follows.
\end{proof}

\begin{lemma}\label{lemmacoealpharrrnegative}
 Let $u, v\in W^P$. Suppose $N_{u, v}^{w, \lambda}\neq 0$ for some  $w\in W$ and $\lambda\in Q^\vee$.
 Assume
 $gr_{[1, r]}(q_\lambda\sigma^w)=\mathbf{0}$ and $\lambda\neq \lambda_B$ where $\lambda_P:=\lambda+Q^\vee_P$. Then we have
     $$gr_{[r, r]}(q_\lambda)<(|R^+_{\tilde P}\cup R^+_{\hat P}|-|R^+_P|)\mathbf{e}_r$$ where
  $\Delta_{\hat P}:=\{\alpha\in \Delta_P~|~ \langle \alpha, \lambda\rangle=0\}$.

%% \begin{enumerate}
%%   \item If $\langle  \alpha_r,\lambda\rangle <0$, then   we have $gr_{[r, r]}(q_\lambda)<(|R^+_{\tilde P}|-|R^+_P|)\mathbf{e}_r$.
%%  \item  If $\langle\alpha_r, \lambda \rangle=0$,  then we have  $gr_{[r, r]}(q_\lambda)<(|R^+_{\tilde P}\cup R^+_{\hat P}|-|R^+_P|)\mathbf{e}_r$ where
%%  $\Delta_{\hat P}:=\{\alpha\in \Delta_P~|~ \langle \alpha, \lambda\rangle=0\}$.
%%\end{enumerate}
\end{lemma}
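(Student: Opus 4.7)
The plan is to show that the four hypotheses of the lemma are jointly inconsistent, so that the conclusion holds vacuously; equivalently, I would derive $\lambda = \lambda_B$ from the other three hypotheses, contradicting $\lambda \neq \lambda_B$. Pleasantly, this can be carried out uniformly, without invoking the case-by-case structure of Table \ref{tabrelativeposi}.

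First I would extract two structural consequences from $N_{u,v}^{w,\lambda} \neq 0$ combined with $u, v \in W^P$. Corollary \ref{corcoefforprop11}(1) gives $\langle \alpha_j, \lambda \rangle \leq 0$ for every $\alpha_j \in \Delta_P$; taking non-negative combinations yields $\langle \beta, \lambda \rangle \leq 0$ for every $\beta \in R_P^+$, and in particular $\langle \beta, \lambda \rangle = 0$ whenever $\beta \in R_{\hat P}^+$. Proposition \ref{propQtoCthm}(1), applied to each $\alpha \in \Delta_{\hat P}$ (using $\mbox{sgn}_\alpha(u) = \mbox{sgn}_\alpha(v) = 0$ from $u, v \in W^P$), forces $\mbox{sgn}_\alpha(w) = 0$, i.e.\ $w(\alpha) \in R^+$. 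Taking positive combinations once more, this extends to $\mbox{Inv}(w) \cap R_{\hat P}^+ = \emptyset$.

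Next I would unpack the componentwise zero condition from $gr_{[1,r]}(q_\lambda \sigma^w) = \mathbf{0}$ using Definition \ref{defnofgrading}. For each $1 \leq i \leq r$ this reads
\[
|\mbox{Inv}(w) \cap (R_{P_i}^+ \setminus R_{P_{i-1}}^+)| = \sum_{\beta \in (R_{P_i}^+ \setminus R_{P_{i-1}}^+) \setminus R_{\hat P}^+} (-\langle \beta, \lambda \rangle),
\]
since terms with $\beta \in R_{\hat P}^+$ contribute $0$. The left-hand side is at most $|(R_{P_i}^+ \setminus R_{P_{i-1}}^+) \setminus R_{\hat P}^+|$ by the preceding conclusion that $\mbox{Inv}(w) \cap R_{\hat P}^+ = \emptyset$. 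Conversely, for any $\beta \in R_P^+ \setminus R_{\hat P}^+$, the expansion $\beta = \sum_j c_j \alpha_j$ must contain some $\alpha_j \in \Delta_P \setminus \Delta_{\hat P}$ with $c_j \geq 1$ and $\langle \alpha_j, \lambda \rangle \leq -1$, so $-\langle \beta, \lambda \rangle \geq 1$; hence the right-hand side is at least the same count. All three quantities therefore coincide, which forces $\langle \beta, \lambda \rangle = -1$ for every $\beta \in (R_{P_i}^+ \setminus R_{P_{i-1}}^+) \setminus R_{\hat P}^+$ and every $i = 1, \ldots, r$.

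Summing over $i$ gives $\langle \beta, \lambda \rangle = -1$ for all $\beta \in R_P^+ \setminus R_{\hat P}^+$, and together with $\langle \beta, \lambda \rangle = 0$ on $R_{\hat P}^+$ we conclude $\langle \beta, \lambda \rangle \in \{0, -1\}$ for every $\beta \in R_P^+$. By the uniqueness clause of the Peterson-Woodward formula (Proposition \ref{propcomparison}), this is precisely the characterization of $\lambda_B$, giving $\lambda = \lambda_B$ and contradicting the final hypothesis. The main point of care is to ensure the sandwich above closes with no slack: the upper bound is a Weyl-combinatorial fact about $W^{\hat P}$, the lower bound is an integrality argument on antidominant coroots, and one must verify that neither can be strict on $(R_{P_i}^+ \setminus R_{P_{i-1}}^+) \setminus R_{\hat P}^+$ so that the derived $\{0,-1\}$-condition genuinely pins down $\lambda$ as $\lambda_B$.
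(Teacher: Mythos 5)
Your argument is correct, and it is genuinely different from the paper's. You exploit the full hypothesis $gr_{[1,r]}(q_\lambda\sigma^w)=\mathbf{0}$ componentwise: the sgn-constraints from Proposition \ref{propQtoCthm}(1) (with $u,v\in W^P$ and $\langle\alpha,\lambda\rangle=0$ on $\Delta_{\hat P}$) force $\mbox{Inv}(w)\cap R^+_{\hat P}=\emptyset$, and your sandwich $|\mbox{Inv}(w)\cap(R_{P_i}^+\setminus R_{P_{i-1}}^+)|\le M_i\le\sum(-\langle\beta,\lambda\rangle)$ closes with no slack because the two sides are equal by the vanishing of the $i$-th grading component; this pins $\langle\beta,\lambda\rangle\in\{0,-1\}$ on all of $R_P^+$ and hence $\lambda=\lambda_B$ by the uniqueness clause of Proposition \ref{propcomparison}, so the hypotheses are jointly inconsistent and the lemma holds vacuously. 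The paper does something different: its proof never uses the $r$-th component of the hypothesis. It only uses $gr_{[1,r-1]}(q_\lambda\sigma^w)=\mathbf{0}$ to identify $\lambda$ as the Peterson--Woodward lifting for $\tilde P$ (via the type-$A$ results for $\Delta_{\tilde P}$), and then proves the quantitative bound on $gr_{[r,r]}(q_\lambda)$ by a case-by-case analysis over Table \ref{tabrelativeposi} (continued in section \ref{subsecAPPlemmaproof}); the contradiction with the $r$-th component being zero is only drawn afterwards, in the application inside the proof of Proposition \ref{propmorphismofalgebras}(1), where the separate bound $gr_{[r,r]}(\sigma^{w_2})\le(|R_P^+|-|R^+_{\hat P}\cup R^+_{\tilde P}|)\mathbf{e}_r$ is combined with the lemma. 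So the trade-off is: your route proves exactly the stated lemma (vacuously) and in fact directly yields the vanishing needed in case (a) of that application, bypassing the case-by-case analysis and the appendix tables; the paper's route proves a non-vacuous refinement (the inequality under the weaker hypothesis $gr_{[1,r-1]}(q_\lambda\sigma^w)=\mathbf{0}$ only), which is the form in which the lemma carries independent content, at the cost of the lengthy case analysis. Since the lemma is invoked only in that one application, your shorter argument would suffice for the paper's purposes.
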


\begin{proof}  Write  $\lambda=\sum_{j=1}^na_j\alpha_j^\vee$,   $gr_{[r, r]}(q_{r})=x\mathbf{e}_r$ and    $gr_{[r, r]}(q_{r+1})=y\mathbf{e}_r$. Whenever $r+2\leq n$, we denote    $gr_{[r, r]}(q_{r+2})=z\mathbf{e}_r$. Note $gr_{[r, r]}(q_\lambda)=(xa_r+ya_{r+1}+za_{r+2})\mathbf{e}_r$ (where $z=0$ unless case C7) occurs with $r\leq 6$). Let  $\varepsilon_j=-\langle \alpha_j, \lambda\rangle, j=1, \cdots, r$.

 Note that Proposition \ref{propmorphismofalgebras} holds with respect to $QH^*(G/\tilde P)$ and $\tilde{gr}_{[1, \varsigma]}(q_\lambda\sigma^w)=gr_{[1, r-1]}(q_\lambda\sigma^w)=\mathbf{0}$. Hence, $\lambda$ is the unique Peterson-Woodward lifting of $\lambda+Q^\vee_{\tilde P}\in Q^\vee/Q_{\tilde P}^\vee$ to $Q^\vee$.
       Thus $\langle \gamma, \lambda\rangle \in \{0, -1\}$ for all $\gamma\in R^+_{\tilde P}$. Consequently,
         $\varepsilon_j= 0$ for all $j$ in $\{1, \cdots, r-1\}$ with at most one exception, and if there exists such an exception, say $k$, then $\varepsilon_k=1$. Furthermore, we have $\varepsilon_r\geq 0$, by  noting
 $N_{u, v}^{w, \lambda}\neq 0$ and using Corollary \ref{corcoefforprop11}.

 Assume case C1$B$) (resp. case C1$C$)) occurs, then we have $-2y=x=2r$ (resp. $-y=x=r+1$) and $z=0$. In this case, we note $a_{r+1}+{x\over y} a_r=\sum_{j=1}^r j\varepsilon_j$ (resp. ${r\over 2}\varepsilon_r+\sum_{j=1}^{r-1}j\varepsilon_j$ where $\varepsilon_r=2a_{r-1}-2a_r$ is even). If $\varepsilon_r>0$, then we have $-(ya_{r+1}+xa_r)\geq -y r>{r(r+1)\over 2}=r^2-{(r-1)r\over 2}=|R^+_P|-|R^+_{\tilde P}|\geq |R^+_P|-|R^+_{\tilde P}\cup R^+_{\hat P}|$.
 If $\varepsilon_r=0$, then there exists such an exception $k$ with $2\leq k \leq r-1$ (resp. $1\leq k\leq r-1$).  \big(For case C1$B$), each positive root in $R_P^+$ is of the form $\gamma=\varepsilon \alpha_1+\sum_{j=2}^rb_j\alpha_j$ where $\varepsilon=0$ or $1$. If $k=1$, it would imply that $\lambda=\lambda_B$ is the Peterson-Woodward lifting of $\lambda_P$, contradicting with the hypothesis.\big)
  Consequently, we have
   $|R^+_P|-|R^+_{\tilde P}\cup R^+_{\hat P}| =|R^+_P|-|R^+_{\tilde P}|-|R^+_{P_{\Delta_P\setminus\{\alpha_k\}}}|+ |R^+_{P_{\Delta_{\tilde P}\setminus\{\alpha_k\}}}|=r^2-{(r-1)r\over 2}-\big({(k-1)k\over 2}+(r-k)^2\big)+\big({(k-1)k\over 2}+{(r-k-1)(r-k)\over2}\big)=kr-{k(k-1)\over 2}<-yk=-(ya_{r+1}+xa_r)$.

   Assume case C2) occurs, then we have $-2y=x=2(r-1)$ and $z=0$. Note $a_{r+1}+{x\over y} a_r={r\over 2}\varepsilon_r+{r-2\over 2}\varepsilon_{r-1}+\sum_{j=1}^{r-2} j\varepsilon_j$, and $\varepsilon_r-\varepsilon_{r-1}=2a_{r-1}-2a_r\equiv 0 (\mod 2)$.
         If $\varepsilon_r>0$, then   $-(ya_{r+1}+xa_r)\geq (r-1)({r\over 2}\varepsilon_r+{r-2\over 2}\varepsilon_{r-1}+0)\geq (r-1)^2>{r(r-1)\over 2}=|R^+_P|-|R^+_{\tilde P}|\geq |R^+_P|-|R^+_{\tilde P}\cup R^+_{\hat P}|$.
 If $\varepsilon_r=0$, then   there exists such an exception $k$ with $2\leq k \leq r-2$ (since $\lambda\neq \lambda_B$).
  Consequently, we have
   $|R^+_P|-|R^+_{\tilde P}\cup R^+_{\hat P}| =kr-{k(k+1)\over 2}<k(r-1)=-(ya_{r+1}+xa_r)$.

For the remaining cases, the arguments  are all similar, and the details will be given in section \ref{subsecAPPlemmaproof}.
\end{proof}

\bigskip

\begin{proof}[Proof of Proposition \ref{propmorphismofalgebras} (1)] Since $QH^*(G/B)$ is an $S$-filtered algebra,   we have $gr_{[1, r]}(q_\lambda\sigma^w)\leq gr_{[1, r]}(\sigma^{v'})+gr_{[1, r]}(\sigma^{v''})=\mathbf{0}$ if $N_{v', v''}^{w, \lambda}\neq 0$.
  Due to Lemma \ref{lemmagradingreduction11}, it is sufficient  to show $gr_{[1, r]}(q_{\lambda}\sigma^w)<\mathbf{0}$ whenever both $N_{v', v''}^{w, \lambda}\neq 0$ and $q_\lambda\sigma^w\in \psi_{\Delta, \Delta_{\tilde P}}(QH^*(G/\tilde P))\setminus \psi_{\Delta, \Delta_P}(QH^*(G/P))$ hold. For the  latter hypothesis, we only need to check that   either of the following holds:  (a) $gr_{[1, r]}(q_\lambda\sigma^w)=\mathbf{0}$,  $\lambda\neq \lambda_B$;  (b)  $gr_{[1, r]}(q_\lambda\sigma^w)=\mathbf{0}$,  $\lambda= \lambda_B$, $w\neq w_1w_Pw_{P'}$ where $w_1$ is the minimal length representative of the coset $wW_P$. If (b) holds, then it is done by Corollary \ref{corcoefforprop11} (2). Write $w=w_1w_2$ where $w_1\in W^P$ and  $w_2\in W_P$. By Proposition \ref{propQtoCthm} (1), we conclude $w_2(\alpha)\in R^+$ whenever $\alpha\in \Delta_{\hat P}=\{\beta\in \Delta_P~|~ \langle \beta, \lambda\rangle=0\}$. Thus $w_2\in W^{\hat P}_{P}$. Hence, $gr_{[r, r]}(\sigma^{w_2})=
    |\mbox{Inv}(w_2)\cap (R_P^+\setminus R_{\tilde P}^+)|\mathbf{e}_r\leq |(R_P^+\setminus R_{\hat P}^+)\cap (R_P^+\setminus R_{\tilde P}^+)|\mathbf{e}_r=(|R_P^+|-|R_{\hat P}^+\cup R_{\tilde P}^+|)\mathbf{e}_r$. Thus if (a) holds, then the statement follows as well,   by noting
  $gr_{[r, r]}(q_\lambda\sigma^w)=gr_{[r, r]}(q_\lambda)+gr_{[r, r]}(\sigma^{w_2})$ and using
   Lemma \ref{lemmacoealpharrrnegative}.
\end{proof}

\subsection{Proof of Proposition \ref{propmorphismofalgebras} (2)}

 The statement to prove is a direct consequence of the next proposition.

 \begin{prop}\label{propfor22uv}
   Let     $u\in W_P$ and    $v\in W^P$. In $QH^*(G/B)$, we have
     $$\sigma^v\star \sigma^{u}=\sigma^{v u}+\sum_{w, \lambda}b_{w, \lambda}q_\lambda\sigma^w$$
   with $gr(q_\lambda\sigma^w)<gr(\sigma^{v u})$ whenever $b_{w, \lambda}\neq 0$.
 \end{prop}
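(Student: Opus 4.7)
The plan is to argue, in analogy with the proof of Proposition \ref{propmorphismofalgebras}(1), by combining the filtered algebra structure of Proposition \ref{propmainthm} with the fibration reduction of Proposition \ref{propreductiontor-1} and the quantum-to-classical constraints of Proposition \ref{propQtoCthm}.

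The first step is to verify directly from Definition \ref{defnofgrading} that $gr(\sigma^{vu}) = gr(\sigma^v) + gr(\sigma^u)$. Since $v \in W^P = W_{P_{r+1}}^{P_r}$, writing $u$ in its canonical form $u = u_r u_{r-1} \cdots u_1$ with $u_j \in W_{P_j}^{P_{j-1}}$ immediately yields the canonical decomposition $vu = v \cdot u_r \cdots u_1$ of $vu$ along the chain $B = P_0 \subsetneq P_1 \subsetneq \cdots \subsetneq P_{r+1} = G$. This gives $gr(\sigma^{vu}) = \ell(v)\mathbf{e}_{r+1} + \sum_{j=1}^r \ell(u_j)\mathbf{e}_j = gr(\sigma^v) + gr(\sigma^u)$, and Proposition \ref{propmainthm} then forces every $(w, \lambda)$ with $b_{w,\lambda} \neq 0$ to satisfy $gr(q_\lambda \sigma^w) \leq gr(\sigma^{vu})$ in the lexicographic order. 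Moreover, the coefficient of $\sigma^{vu}$ itself equals the classical intersection number $c_{v,u}^{vu}$ in $H^*(G/B)$, which is $1$ by the standard subword property of Schubert classes (using that $\ell(vu) = \ell(v) + \ell(u)$ when $v \in W^P$ and $u \in W_P$).

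It then remains to show that the inequality is strict for every other term. Assuming $gr(q_\lambda \sigma^w) = gr(\sigma^{vu})$, I would argue that $(w, \lambda) = (vu, 0)$. The strategy is to reduce to the fibration $G/B \to G/\tilde P$: since $\Delta_{\tilde P}$ is of type $A$, Proposition \ref{propreductiontor-1} together with Lemma \ref{lemmauniquegradinginPB} pin down the components of $w$ living in $W_{\tilde P}$ and the components of $\lambda$ in $Q^\vee_{\tilde P}$ from equality in the first $r-1$ coordinates of the grading. The sign constraints from Proposition \ref{propQtoCthm}(1), namely $\mbox{sgn}_\alpha(w) + \langle \alpha, \lambda\rangle \leq \mbox{sgn}_\alpha(v) + \mbox{sgn}_\alpha(u)$ for $\alpha \in \Delta$, combined with $\mbox{sgn}_\alpha(v) = 0$ for $\alpha \in \Delta_P$, then exploit the remaining equalities in the $\mathbf{e}_r$ and $\mathbf{e}_{r+1}$ coordinates to force $\lambda = 0$ and $w = vu$.

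The hard part will be this final uniqueness step. Unlike the situation in Corollary \ref{corcoefforprop11}(1), where both input classes lie in $W^P$, here $u \in W_P$, so $\mbox{sgn}_\alpha(u)$ for $\alpha \in \Delta_P$ may equal $1$ and we cannot immediately rule out $\langle \alpha, \lambda\rangle \neq 0$. Handling the residual freedom will likely require a case-by-case examination in the spirit of Lemma \ref{lemmacoealpharrrnegative}, invoking the explicit gradings of Proposition \ref{propgradingofeachQQQ} and the Peterson–Woodward normalization to eliminate non-trivial coroot contributions $\lambda$ from the top-grading stratum, and using the A-type uniqueness inside each $W_{P_j}^{P_{j-1}}$ for $j \leq r-1$ to propagate matching of lengths to matching of elements.
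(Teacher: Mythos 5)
Your first step is fine: for $v\in W^P$ and $u\in W_P$ the decomposition $vu=v\cdot u_r\cdots u_1$ along the chain $P_0\subsetneq\cdots\subsetneq P_{r+1}$ does give $gr(\sigma^{vu})=gr(\sigma^v)+gr(\sigma^u)$, and Proposition \ref{propmainthm} gives the inequality $gr(q_\lambda\sigma^w)\leq gr(\sigma^{vu})$ for every term with nonzero coefficient. But the entire content of the proposition is the strictness, i.e.\ the vanishing of $N_{v,u}^{w,\lambda}$ for every $(w,\lambda)\neq(vu,0)$ attaining the top grading, and this you do not prove; you only sketch a hoped-for strategy (``would argue'', ``likely require''). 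Note also that your formulation ``grading equality forces $(w,\lambda)=(vu,0)$'' is false as a statement about gradings: many basis elements share the top grading (Lemma \ref{lemmauniquegradinginPB} gives uniqueness only for gradings supported in the first $r-1$ coordinates, so it cannot ``pin down'' $(w,\lambda)$ once the $\mathbf{e}_r$- or $\mathbf{e}_{r+1}$-components are involved); what must be shown is that their coefficients vanish.

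Concretely, two families are left unhandled. First, terms $q_\lambda\sigma^w$ with $\lambda$ a nonzero virtual null coroot have $gr_{[1,r]}(q_\lambda)=\mathbf{0}$ and $\langle\alpha,\lambda\rangle=0$ for all $\alpha\in\Delta_P$, so the sign constraints of Proposition \ref{propQtoCthm}(1) give no contradiction at all; the paper disposes of these (and simultaneously gets the leading coefficient $1$) by Lemma \ref{lemmavvuuvirtual}, which peels off a reduced word of the $W_P$-part of $w$ using Proposition \ref{propQtoCthm}(2) and reduces to $N_{v,\mathrm{id}}^{w_1,\lambda}$. Second, for $\lambda$ not a virtual null coroot the paper's argument is genuinely delicate: it compares $\lambda$ with the Peterson--Woodward lifting $\lambda_B$ of $\lambda+Q_P^\vee$ (Lemma \ref{lem-combinationPWlifting}: $\lambda-\lambda_B$ has coefficients of one sign and $c_r\neq0$), kills the case $c_r<0$ by Lemma \ref{lem-vanishfornegativecombination}, and for $c_r>0$ runs an induction trading the simple coroots $\beta_i^\vee$ in $\lambda-\lambda_B$ against reflections on $u$ and $w$ via Proposition \ref{propQtoCthm}(2), finishing with a length/grading count that produces $\alpha\in\Delta_{P'}$ with $\mathrm{sgn}_\alpha(ws_{\beta_1}\cdots s_{\beta_t})=1$ and reduces to a classical-type invariant that vanishes because $\lambda_B\neq0$. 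Your proposed tools do not substitute for this: Corollary \ref{corcoefforprop11} and Lemma \ref{lemmacoealpharrrnegative} require both inputs in $W^P$, whereas here $u\in W_P$ (precisely the difficulty you yourself flag), and ``a case-by-case examination in the spirit of Lemma \ref{lemmacoealpharrrnegative}'' is not carried out. So the proposal establishes only the easy half and leaves the essential vanishing step as a gap.
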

 \begin{remark} Proposition \ref{propfor22uv} here    extends    Proposition 3.23 of \cite{leungli33} in the case of parabolic subgroups $P$ such that  $\Delta_P$ is not of type $A$.
In Proposition 3.23 of \cite{leungli33}, the same property for   $\sigma^v\star\sigma^{s_j}$ was discussed, under the assumptions that $\Delta_P$ is    of type $A$, $s_j\in W_P$ and $v\in W^P$.
     By modifying the proof therein slightly, the assumption ``$v\in W^P$" could be generalized  to  ``$v\in W$ with $gr_{[j, j]}(v)<j\mathbf{e}_j$".
 \end{remark}

\bigskip

 \begin{proof}[Proof of Proposition \ref{propmorphismofalgebras} (2)]
  This follows immediately from Proposition \ref{propfor22uv}:
     $$\Psi_{r+1}(q_{\lambda_P}\star_P\sigma^{v'})= \overline{q_{\lambda_B}\sigma^{v'w_Pw_{P'}}}=\overline{q_{\lambda_B}}\star \overline{\sigma^{v'}}\star\overline{\sigma^{w_Pw_{P'}}}
=\Psi_{r+1}(q_{\lambda_P})\star \Psi_{r+1}(\sigma^{v'}).$$
\end{proof}

To show Proposition \ref{propfor22uv}, we prove some lemmas first.
 \begin{lemma}\label{lemmavvuuvirtual}
   Let $v\in W^P$ and  $u\in W_P$. Take any   $w\in W$ and $\lambda\in Q^\vee$ satisfying $gr(q_\lambda\sigma^w)=gr(\sigma^u)+gr(\sigma^v)$.
     If $\lambda$ is a virtual null coroot, then we have
     $$N_{v, u}^{w, \lambda}=\begin{cases}
        1,&\mbox{ if }(w, \lambda)=(vu, 0)\\ 0,&\mbox{ otherwise}
     \end{cases}.$$
\end{lemma}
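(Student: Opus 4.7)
The plan is to split the argument into three steps: (i) translate the grading constraint using the virtual null coroot hypothesis, (ii) rule out a non-zero $\lambda$, and (iii) identify $w=vu$ in the classical case. For (i), write $w=w'w''$ uniquely with $w'\in W^P$ and $w''\in W_P$, and decompose $u=u_r\cdots u_1$ and $w''=w''_r\cdots w''_1$ along the $P$-tower so that $u_j,w''_j\in W_{P_j}^{P_{j-1}}$. Since $\lambda$ is a virtual null coroot, $gr_{[1,r]}(q_\lambda)=\mathbf{0}$; matching $gr(q_\lambda\sigma^w)=gr(\sigma^{vu})$ component by component then forces $\ell(w''_j)=\ell(u_j)$ for $1\le j\le r$ and $\ell(w')=\ell(v)-2|\lambda|_\Delta$, using $|gr(q_\lambda)|=2|\lambda|_\Delta$ from Proposition~\ref{propgradingofeachQQQ} (with $|\lambda|_\Delta$ denoting the sum of simple-coroot coefficients of $\lambda$).

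For step (ii), suppose $\lambda\ne 0$ and apply Proposition~\ref{propQtoCthm}(1) to the hypothetical non-vanishing $N_{v,u}^{w,\lambda}$: for $\alpha\in\Delta\setminus\Delta_P$, one has $\mbox{sgn}_\alpha(u)=0$ since $W_P$ preserves the simple-root coefficient of $\alpha$ in $u(\alpha)$, giving $\mbox{sgn}_\alpha(w)+\langle\alpha,\lambda\rangle\le\mbox{sgn}_\alpha(v)\le 1$. The key claim is that every non-zero $\lambda\in\mathcal{L}_B\cap\bigoplus_{i=1}^n\mathbb{Z}_{\ge 0}\alpha_i^\vee$ satisfies $\langle\alpha,\lambda\rangle\ge 2$ for some $\alpha\in\Delta\setminus\Delta_P$, producing the contradiction $\mbox{sgn}_\alpha(w)\le -1$. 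This is verified via direct Cartan computations using the generators listed in Proposition~\ref{propvirtualnullroot} and Table~\ref{tabvirtualnullcoroot}; in most cases (C1$B$), C2), C4), C5), C7), C9), C10)) the bound $\langle\alpha_{r+1},\mu_B\rangle\ge 2$ is immediate from the displayed expression for $\mu_B$, while for the delicate case C1$C$) (where $\langle\alpha_{r+1},\mu_B\rangle=1$) a refined case-by-case analysis in the style of Section~\ref{subsecAPPlemmaproof} is required.

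For step (iii), with $\lambda=0$, the grading condition simplifies to $\ell(w')=\ell(v)$ and $\ell(w''_j)=\ell(u_j)$. For $1\le j\le r-1$, $\Delta_{P_j}$ is of $A_j$-type, so $W_{P_j}^{P_{j-1}}$ has a unique element of each length, forcing $w''_j=u_j$. For $j=r$, combining the descent bound $\mbox{sgn}_{\alpha_r}(w'')\le\mbox{sgn}_{\alpha_r}(u)$ from Proposition~\ref{propQtoCthm}(1) with the graded multiplicativity in $Gr^{\tilde{\mathcal{F}}}$ from Proposition~\ref{propreductiontor-1}(2) --- via the factorization $u=u_r u'$ with $u'=u_{r-1}\cdots u_1\in W_{\tilde P}$ --- yields $w''_r=u_r$, hence $w''=u$ and $w=vu$. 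The coefficient $N_{v,u}^{vu,0}=1$ then follows from the classical Leray--Hirsch identification and the unique decomposition $W=W^P\cdot W_P$. The principal obstacle is the combinatorial content of step (ii), especially the handling of case C1$C$), where the immediate QtoC bound is too weak and the sharpening requires genuine case analysis.
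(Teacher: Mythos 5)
The central gap is in your step (ii). The key claim there --- that every nonzero effective virtual null coroot $\lambda$ satisfies $\langle\alpha,\lambda\rangle\geq 2$ for some $\alpha\in\Delta\setminus\Delta_P$ --- is false, and verifying it ``on the generators'' of $\mathcal{L}_B$ cannot repair it, since the property is not preserved under sums. Concretely, in case C1$B$) (or C2), C4), \dots) with $n\geq r+3$, take adjacent simple roots $\beta_1,\beta_2\in\Delta\setminus\Delta_P$ neither of which is adjacent to $\Delta_P$; then $\lambda=\beta_1^\vee+\beta_2^\vee$ is a nonzero effective virtual null coroot with $\langle\beta_1,\lambda\rangle=\langle\beta_2,\lambda\rangle=1$ and $\langle\alpha,\lambda\rangle\leq 0$ for all other simple roots, so Proposition \ref{propQtoCthm}(1) gives no contradiction and your argument stalls (your own remark about C1$C$) already shows the bound can fail even on a generator). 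Ruling out such $\lambda$ would require an inductive stripping of remote simple roots with pairing exactly $1$, in the spirit of the proof of Lemma \ref{lem-vanishfornegativecombination}, i.e.\ genuinely more work than a Cartan-matrix check. Step (iii) is also under-argued at the $j=r$ component: when $\Delta_P$ is not of type $A$, the quotient $W_{P_r}^{P_{r-1}}$ is not multiplicity-free in length, and Proposition \ref{propreductiontor-1}(2) applies to pairs in $W^{\tilde P}\times W_{\tilde P}$, not to $(v,u)$ with $u\in W_P\setminus W_{\tilde P}$, so ``yields $w''_r=u_r$'' (and likewise $w'=v$) is an assertion rather than a proof.

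The paper's proof avoids both issues by a single uniform reduction. Write $w=w_1w_2$ with $w_1\in W^P$, $w_2\in W_P$, and fix a reduced word for $w_2$. Since $v\in W^P$ and $\lambda$ is a virtual null coroot, one has $\mbox{sgn}_\alpha(v)=0=\langle\alpha,\lambda\rangle$ for every $\alpha\in\Delta_P$, which is exactly what allows Proposition \ref{propQtoCthm}(2) to be applied repeatedly to trade the simple reflections of $w_2$ from the superscript onto the index $u$: this gives $N_{v,u}^{w,\lambda}=N_{v,\,uw_2^{-1}}^{w_1,\lambda}$ if $\ell(uw_2^{-1})=\ell(u)-\ell(w_2)$, and $N_{v,u}^{w,\lambda}=0$ otherwise. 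The grading hypothesis enters only through $gr_{[1,r]}$: it forces $\ell(w_2)=\ell(u)$, so in the nonvanishing case $uw_2^{-1}=\mbox{id}$, i.e.\ $w_2=u$, and one is left with $N_{v,\mbox{\scriptsize id}}^{w_1,\lambda}$, which equals $1$ precisely when $(w_1,\lambda)=(v,0)$ and $0$ otherwise. This disposes of $\lambda\neq 0$ and identifies $w=vu$ in one stroke, with no case-by-case analysis, no tower decomposition, and no estimate on pairings of virtual null coroots; if you want to salvage your outline, you should replace steps (ii)--(iii) by this peeling argument.
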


\begin{proof}
Write $w=w_1w_2$ where $w_1\in W^P$ and $w_2\in W_P$. Take a reduced expression $w_2=s_{i_1}\cdots s_{i_m}$ (i.e., $\ell(w_2)=m$).
Since $v\in W^P$ and $\lambda$ is a virtual null coroot, we have $\mbox{sgn}_{\alpha}(v)=0=\langle \alpha, \lambda\rangle$ for all $\alpha\in \Delta_P$. Note $\alpha_{i_j}\in \Delta_P$  and  $\mbox{sgn}_{\alpha_{i_j}}(w_1s_{i_1}\cdots s_{i_j})=1$ for all $1\leq j\leq m$.  Applying the tuple $(u, v, w, \lambda, \alpha)$ of Proposition \ref{propQtoCthm} (2) to the case $(us_{i_m}\cdots s_{i_{j+1}}, vs_{i_j}, w_1s_{i_1}\cdots s_{i_j}, \lambda+\alpha_{i_j}^\vee, \alpha_{i_j})$,
 we have $N_{v, us_{i_m}\cdots s_{i_{j+1}}}^{w_1s_{i_1}\cdots s_{i_j}, \lambda}=N_{v, us_{i_m}\cdots s_{i_{j+1}}s_{i_j}}^{w_1s_{i_1}\cdots s_{i_{j-1}}, \lambda}$ if $\ell(us_{i_m}\cdots s_{i_{j+1}}s_{i_j})=\ell(us_{i_m}\cdots s_{i_{j+1}})-1$, or $0$ otherwise. Hence, we have
  $N_{v, u}^{w, \lambda}=N_{v, u\cdot w_2^{-1}}^{w_1, \lambda}$ if $\ell(u\cdot w_2^{-1})=\ell(u)-\ell(w_2)$, or $0$ otherwise.
  Note $\ell(u)=|gr(u)|=|gr_{[1, r]}(u)+gr_{[1, r]}(v)|=|gr_{[1, r]}(w)|=|gr(w_2)|=\ell(w_2)$. When  $\ell(u\cdot w_2^{-1})=0$, we have
    $u\cdot w_2^{-1}=$id, and consequently  $N_{v, {\scriptsize\mbox{id}}}^{w_1, \lambda}=1$ if $(w_1, \lambda)=(v, 0)$, or 0 otherwise. Thus the statement follows.
 \end{proof}

 Recall $\Delta_{\tilde P}=\{\alpha_1, \cdots, \alpha_{r-1}\}$, whose Dynkin diagram is of type $A_{r-1}$.
It is easy to see the  next combinatorial  fact (see e.g. Lemma 2.8 and Remark 2.9 of \cite{leungliQtoC}).
\begin{lemma}\label{lem-typeAcoroot}
   Let $\lambda\in Q^\vee$ be a nonzero effective coroot, i.e.,   $\lambda=\sum_{i=1}^na_i\alpha^\vee\neq 0$ satisfies $a_i\geq 0$ for all $i$.
   Then there exists   $\alpha\in \Delta$ such that $\langle \alpha, \lambda\rangle>0$. Furthermore if $a_i=0$ for $i=r, r+1, \cdots, n$, and if there exists only one such $\alpha$, then $\langle \alpha, \lambda\rangle>1$.
\end{lemma}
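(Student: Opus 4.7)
The plan is to treat the two assertions separately and elementarily.

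For the first assertion, I would fix a $W$-invariant positive-definite bilinear form $(\cdot,\cdot)$ on $\mathfrak{h}$ (for instance coming from the Killing form). Under this form $\alpha_i^\vee$ is a positive multiple of the vector dual to $\alpha_i$, so $\langle \alpha_i, \mu\rangle$ and $(\alpha_i^\vee, \mu)$ share the same sign for every $\mu$. Now expand
\[
0 < (\lambda, \lambda) = \sum_{i=1}^n a_i\, (\alpha_i^\vee, \lambda).
\]
Since every $a_i \geq 0$, at least one summand is strictly positive, and the corresponding $\alpha_i$ satisfies $\langle \alpha_i, \lambda\rangle > 0$.

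For the second assertion, observe that the support of $\lambda$ lies in the type $A_{r-1}$ subdiagram on $\{\alpha_1, \dots, \alpha_{r-1}\}$. For any $j \geq r$, one has $\langle \alpha_j, \lambda\rangle = \sum_{i < r} a_i \langle \alpha_j, \alpha_i^\vee\rangle \leq 0$, because every off-diagonal Cartan integer is nonpositive. So the unique simple root with positive pairing must be some $\alpha_j$ with $1 \leq j \leq r-1$, and within the $A_{r-1}$ subsystem one has $\langle \alpha_k, \lambda\rangle = -a_{k-1} + 2a_k - a_{k+1}$ for $1 \leq k \leq r-1$, using the convention $a_0 = a_r = 0$.

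Introduce the integer differences $d_k := a_k - a_{k-1}$ for $1 \leq k \leq r$. The assumption that $\langle \alpha_k, \lambda\rangle \leq 0$ for every $k \neq j$ translates exactly to $(d_k)$ being nondecreasing on the two blocks $[1, j]$ and $[j+1, r]$, while $\langle \alpha_j, \lambda\rangle = d_j - d_{j+1}$ measures the single downward jump between them. Arguing by contradiction, suppose $m := d_j - d_{j+1} = 1$ and set $M := d_j$. Combining $\sum_k d_k = a_r - a_0 = 0$ with the effectiveness constraints $a_j = \sum_{k \leq j} d_k \geq 0$ and $a_j = -\sum_{k > j} d_k \geq 0$, the monotonicity within each block gives $a_j \leq jM$ and $a_j \leq -(r-j)(M-1)$, which together force $M \in \{0, 1\}$. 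Each subcase I expect to dispatch by a short bookkeeping argument: when $M = 1$ the differences $d_k$ with $k \leq j$ are forced into $\{0, 1\}$ with sum zero and last term $1$, an impossibility; when $M = 0$ all $d_k$ with $k \leq j$ must vanish, and then the run of $-1$'s beginning at $d_{j+1}$ drives some interior partial sum strictly negative, contradicting $a_k \geq 0$. Hence $m \geq 2$, i.e., $\langle \alpha_j, \lambda\rangle > 1$.

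The main obstacle is carrying out this final two-case analysis cleanly, since one has to track monotonicity of the two halves of $(d_k)$, the global sum-to-zero constraint, and the effectiveness of every partial sum simultaneously. Once the differences $d_k$ are introduced, however, the arguments reduce to elementary integer inequalities and no deeper input beyond the type $A$ shape of the relevant subdiagram is needed.
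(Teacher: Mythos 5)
Your argument is correct, and in fact the paper does not prove this lemma at all: it is dismissed as an easy combinatorial fact with a pointer to Lemma 2.8 and Remark 2.9 of \cite{leungliQtoC}, so your self-contained derivation is a genuinely independent route. The first assertion is the standard positivity argument with a $W$-invariant form and is fine. For the second assertion you correctly isolate the one piece of context that is actually needed, namely that $\{\alpha_1,\dots,\alpha_{r-1}\}=\Delta_{\tilde P}$ is a type $A_{r-1}$ chain in the consecutive ordering (the claim is false for a non-simply-laced support: $\lambda=\alpha_1^\vee+\alpha_2^\vee$ in $B_2$ has a unique positive pairing, equal to $1$), and the difference trick $d_k=a_k-a_{k-1}$ does close the argument. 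Three small points to tighten in the write-up: (i) the containment $d_k\in\{0,1\}$ in the $M=1$ subcase uses $d_1=a_1\ge 0$ together with the monotonicity of the first block, so state that; (ii) in the $M=0$ subcase, when $j=r-1$ the partial sum that becomes negative is $a_r$ itself, so the contradiction there is with the convention $a_r=0$ (equivalently with $\sum_k d_k=0$), not with effectiveness of an interior coefficient; (iii) you can avoid the case split on $M$ altogether: $\langle\alpha_j,\lambda\rangle=2a_j-a_{j-1}-a_{j+1}>0$ forces $a_j\ge 1$, so the nondecreasing second block has sum $-a_j<0$, hence $d_{j+1}\le -1$; if the jump were $1$ then $d_j=d_{j+1}+1\le 0$, so $d_k\le 0$ for all $k\le j$ and $a_j\le 0$, a contradiction — giving $\langle\alpha_j,\lambda\rangle\ge 2$ in two lines.
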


\begin{lemma}\label{lem-combinationPWlifting}
    Let $\lambda\in Q^\vee$, and $\lambda_B$ be the Peterson-Woodward lifting of $\lambda+Q_{P}^\vee$. If $\lambda$
     is the Peterson-Woodward lifting of $\lambda+Q_{\tilde P}^\vee$, then
     either    $\lambda-\lambda_B$ or $\lambda_B-\lambda$ is effective.
   Furthermore if   $\lambda-\lambda_B=\sum_{j=1}^rc_j\alpha_j^\vee\neq 0$, then  the coefficient   $c_r\neq 0$.
\end{lemma}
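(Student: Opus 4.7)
The plan is to exploit uniqueness of Peterson--Woodward liftings together with the Dynkin-diagram structure of $\Delta_P$ recorded in Table~\ref{tabrelativeposi}. Write $\lambda - \lambda_B = \sum_{j=1}^r c_j \alpha_j^\vee$, which automatically lies in $Q_P^\vee$. For every $\alpha \in R_{\tilde P}^+ \subseteq R_P^+$ we have $\langle \alpha, \lambda\rangle, \langle \alpha, \lambda_B\rangle \in \{0,-1\}$ (the former by the hypothesis that $\lambda$ is the PW lifting of $\lambda + Q_{\tilde P}^\vee$, the latter from the PW conditions for $\lambda_B$), hence $\langle \alpha, \lambda - \lambda_B\rangle \in \{-1, 0, 1\}$ for all $\alpha \in R_{\tilde P}^+$.

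The ``furthermore'' part is quick. Assume for contradiction that $\lambda - \lambda_B \neq 0$ and $c_r = 0$; then $\lambda - \lambda_B \in Q_{\tilde P}^\vee$, so $\lambda_B$ lies in the same $Q_{\tilde P}^\vee$-coset as $\lambda$. Since the Peterson--Woodward conditions for $\lambda_B$ on $R_P^+$ restrict to the same conditions on $R_{\tilde P}^+$, the element $\lambda_B$ is itself a Peterson--Woodward lifting of $\lambda + Q_{\tilde P}^\vee$; by uniqueness together with the hypothesis on $\lambda$, we conclude $\lambda_B = \lambda$, a contradiction. Hence $c_r \neq 0$ whenever $\lambda - \lambda_B \neq 0$.

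For the effectivity statement, by the symmetry $\sum c_j \alpha_j^\vee \leftrightarrow -\sum c_j \alpha_j^\vee$ it suffices to treat the case $c_r > 0$ and prove $c_j \geq 0$ for all $j < r$. Inspection of Table~\ref{tabrelativeposi} shows that in every case $\alpha_r$ is a leaf of $\mathrm{Dyn}(\Delta_P)$, joined to a unique neighbor $\alpha_{j_0} \in \Delta_{\tilde P}$ by an edge with Cartan integer $a_{j_0,r} \in \{-1,-2\}$. Writing $\mu_{\tilde P} := \sum_{j<r} c_j \alpha_j^\vee$ and $e_j := \langle \alpha_j, \mu_{\tilde P}\rangle$, the constraint above converts into $e_j \in \{-1,0,1\}$ for $j \neq j_0$ and $e_{j_0} \geq c_r\,|a_{j_0,r}| - 1 \geq 0$. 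I would then recover each $c_k$ through the positive inverse Cartan matrix $(C_{A_{r-1}}^{-1})_{kj} = \min(k,j)\bigl(r - \max(k,j)\bigr)/r$ of $\Delta_{\tilde P}$, and combine this with the integrality of $c_k$ and the stronger partial-sum constraints $\sum_{k=i}^j e_k \in \{-1,0,1\}$ on intervals $[i,j]$ not containing $j_0$ (coming from the $R_{\tilde P}^+$-wide PW constraint applied to the non-simple positive roots in $R_{\tilde P}^+$) to force $c_k \geq 0$.

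The main obstacle will be this final concluding step, since the neighbor $j_0$ varies with the cases: $j_0 = r-1$ in the $B/C$-type cases $\mathrm{C}1B$, $\mathrm{C}1C$, $\mathrm{C}9$, $\mathrm{C}10$; $j_0 = r-2$ in the $D$-type case $\mathrm{C}2$; and $j_0$ sits at an interior node of the $A_{r-1}$-chain in the $E$-type cases $\mathrm{C}4$, $\mathrm{C}5$, $\mathrm{C}7$. I expect a short case-by-case verification organized by the position of $j_0$ in $\mathrm{Dyn}(\Delta_{\tilde P}) = A_{r-1}$ will suffice, with the type-$A$ integrality constraint $\sum_j j\, e_j \equiv 0 \pmod{r}$ playing a decisive role in eliminating potential mixed-sign configurations of $(e_j)$.
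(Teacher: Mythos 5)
Your argument for the ``furthermore'' part is correct and in fact simpler than the paper's: since $\langle\alpha,\lambda_B\rangle\in\{0,-1\}$ for all $\alpha\in R_P^+\supseteq R_{\tilde P}^+$, if $c_r=0$ then $\lambda_B$ is itself a Peterson--Woodward lifting of $\lambda+Q^\vee_{\tilde P}$, and uniqueness forces $\lambda_B=\lambda$. (The paper instead obtains $c_r\neq 0$ from the type-$A$ Lemma~\ref{lem-typeAcoroot}, after the effectivity statement is already available.) The genuine gap is in the effectivity claim, which is the heart of the lemma and which you only outline; worse, the constraint system you propose to use is provably too weak to force $c_j\geq 0$ for $j<r$. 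Concretely, in case C4) with $r=7$ (so $\Delta_{\tilde P}$ is an $A_6$-chain, $j_0=4$, $|a_{j_0,r}|=1$), take $c_r=1$ and $(e_1,\dots,e_6)=(-1,1,-1,0,-1,0)$. Every condition in your list holds: $e_j\in\{-1,0,1\}$ for $j\neq 4$, $e_4=0\geq c_r|a_{j_0,r}|-1$, all partial sums over intervals contained in $[1,3]$ or in $[5,6]$ lie in $\{-1,0,1\}$, and $\sum_j je_j=-7\equiv 0\pmod 7$; yet the corresponding coefficients are $(c_1,\dots,c_6)=(-1,-1,-2,-2,-2,-1)$, i.e.\ mixed signs with $c_r>0$. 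So no case-by-case analysis based only on those constraints can succeed. What excludes this configuration in reality are precisely the constraints you discarded: the pairings of $\lambda-\lambda_B$ with positive roots of $R_{\tilde P}^+$ whose support contains $j_0$ (here $\langle\alpha_3+\alpha_4+\alpha_5,\lambda-\lambda_B\rangle=-3$), equivalently the fact that $\langle\alpha_j,\lambda\rangle=-1$ for at most one $j$ and $\langle\alpha_j,\lambda_B\rangle=-1$ for at most one $j$, so that $\langle\alpha_j,\lambda-\lambda_B\rangle$ takes the value $-1$ (resp.\ $+1$) at most once among $j\leq r-1$.

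For comparison, the paper's proof of effectivity normalizes the sign by $\langle\alpha_r,\lambda-\lambda_B\rangle\geq 0$ (not by $c_r>0$), uses exactly the ``at most one exception'' structure above to write $\lambda-\lambda_B$, in fundamental coweights of the Levi, as a dominant coweight minus at most one fundamental coweight $\omega_{k',P}^\vee$, and then invokes the fact (verified from the inverse Cartan matrices, Table 1 of \cite{hum}) that such an element of $Q^\vee_P$ is either a nonnegative or a nonpositive combination of the $\alpha_i^\vee$. If you want to salvage your route you must feed in the one-exception information or the root constraints through $j_0$ (which carry $c_r$); at that point you are effectively redoing the paper's computation, and doing it over all of $\Delta_P$ at once, rather than only in the $A_{r-1}$-part, is the cleaner option.
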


\begin{proof}
  If follows from the definition of a  Peterson-Woodward lifting that  $\langle \alpha, \lambda_B\rangle=0$ (resp. $\langle \alpha, \lambda\rangle=0$) for all
  $\alpha\in \Delta_P$ (resp. $\Delta_{\tilde P}$)  with at most one exception, and if such exception $\alpha_k$ (resp. $\alpha_{k'}$) exists, then
    $\langle \alpha, \lambda_B\rangle=-1$  (resp. $\langle \alpha_{k'}, \lambda\rangle=1$).
  When there does not exist such exception, we denote $k=k'=n+1$ for notation conventions.

We may assume  $\langle \alpha_r, \lambda-\lambda_B\rangle\geq 0$ (otherwise we consider $\lambda_B-\lambda$).
Then  $\lambda-\lambda_B\in Q^\vee_P$ is given by the  difference between a  dominate coweights and a fundamental coweight in $\Lambda_P^\vee$.   Therefore it is well known that $\lambda-\lambda_B$ is either a nonpositive combination or a nonnegative combination of $\alpha_1^\vee, \cdots, \alpha_{r}^\vee$. (For instance, we can prove this by direct calculations using Table 1 of \cite{hum}).

Now we assume $c_j\geq 0$ for all $j$ (otherwise we consider $\lambda_B-\lambda$).
Since $\langle \alpha_i, \lambda-\lambda_B\rangle$, $i=1, \cdots, r-1$, are all nonpositive with at most an exception of value $1$, we conclude
 $c_r>0$. Otherwise, it would make a contradiction with the second half of the statement of Lemma \ref{lem-typeAcoroot}.
\end{proof}

Recall that $\partial \Delta_P$ denotes the set of simple roots in $\Delta\setminus \Delta_P$ which are adjacent to $\Delta_P$.
\begin{lemma}\label{lem-vanishfornegativecombination}
    Let $v\in W^P, u\in W_P$ and  $w\in W$. Let $\lambda\in Q^\vee$ be effective,  and  $\lambda_B$ be the Peterson-Woodward lifting of $\lambda+Q_{P}^\vee$.
    If      $\lambda-\lambda_B=\sum_{j=1}^rc_j\alpha_j^\vee$ satisfies $c_r<0$ and $c_j\leq 0$ for all $j$, then
     $N_{v, u}^{w, \lambda}=0$.
\end{lemma}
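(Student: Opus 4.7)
The plan is to argue by contradiction using the vanishing criterion of Proposition \ref{propQtoCthm}(1). Assume $N_{v,u}^{w,\lambda}\neq 0$. Because $v\in W^P$ forces $\mbox{sgn}_\alpha(v)=0$ for each $\alpha\in\Delta_P$, and $u\in W_P$ forces $\mbox{sgn}_\alpha(u)=0$ for each $\alpha\in\Delta\setminus\Delta_P$, we obtain $\mbox{sgn}_\alpha(u)+\mbox{sgn}_\alpha(v)\leq 1$ for every simple root $\alpha\in\Delta$. Proposition \ref{propQtoCthm}(1) then yields $\langle\alpha,\lambda\rangle\leq 1$ for every $\alpha\in\Delta$, and it will suffice to exhibit one simple root $\alpha\in\Delta$ with $\langle\alpha,\lambda\rangle\geq 2$.

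To produce such an $\alpha$, I would first extract structure from $\mu:=\lambda_B-\lambda=\sum_{j=1}^r(-c_j)\alpha_j^\vee\in Q^\vee_P$, a nonzero effective coroot in $Q^\vee_P$ whose $\alpha_r^\vee$-coefficient $-c_r$ is at least one. Both $\lambda$ and $\lambda_B=\lambda+\mu$ are effective, so the $\alpha_r^\vee$-coefficient $b_r$ of $\lambda_B$ satisfies $b_r\geq -c_r\geq 1$; the PW-lifting constraint $\langle\alpha,\lambda_B\rangle\in\{0,-1\}$ for every $\alpha\in R^+_P$ then propagates lower bounds on the remaining coefficients $b_1,\ldots,b_{r-1}$ along the Dynkin diagram of $\Delta_P$, in the pattern visible in the explicit formulas of Table \ref{tabNONvirtualnullcoroot}. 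For any $\alpha\in\partial\Delta_P$ adjacent to some $\alpha_t\in\Delta_P$, the formula
\[
\langle\alpha,\lambda\rangle=\langle\alpha,\lambda_B\rangle+(-c_t)(-\langle\alpha,\alpha_t^\vee\rangle)
\]
has a non-negative second summand, strictly positive as soon as $c_t<0$. Running through the configurations of Table \ref{tabrelativeposi}, taking $\alpha=\alpha_{r+1}$ in the bulk of the cases and selecting one of the two simple roots of $\partial\Delta_P$ in the exceptional cases \mbox{C7)} for $r\leq 6$ and \mbox{C9)} for $r=2$, the combination of the growth bounds on the $b_i$'s with the positivity of $-c_r\geq 1$ should force $\langle\alpha,\lambda\rangle\geq 2$, contradicting the bound above and giving the desired vanishing.

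The hard part will be the case-by-case nature of step three: the reduction to producing an $\alpha$ with $\langle\alpha,\lambda\rangle\geq 2$ is uniform, but the actual estimate depends on the valency and arrow direction at $\alpha_r$ within $\Delta\cup\{\alpha\}$, and the verification will closely mirror the computation of $gr_{[r,r]}(q_\lambda)$ in the proof of Lemma \ref{lemmacoealpharrrnegative}. In particular, in the configurations where only $-c_r=1$ and $\langle\alpha,\alpha_r^\vee\rangle=-1$ are available (short roots adjacent to $\alpha_r$), the correction $(-c_t)(-\langle\alpha,\alpha_t^\vee\rangle)$ contributes only $1$, and one must in addition invoke the growth of $\lambda_B$ along the chain $\alpha_r,\alpha_{r-1},\ldots,\alpha_1$ to ensure $\langle\alpha,\lambda_B\rangle\geq 1$.
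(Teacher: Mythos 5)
There is a genuine gap: your whole plan rests on the claim that, under the hypotheses of the lemma, some simple root $\alpha\in\Delta$ (in fact some $\alpha\in\partial\Delta_P$) must satisfy $\langle\alpha,\lambda\rangle\geq 2$, so that a single application of Proposition \ref{propQtoCthm}(1) kills $N_{v,u}^{w,\lambda}$. This is false. Take case \mbox{C1$C$)} with $r=3$ (so $\Delta$ of type $C_n$, $n\geq 4$, with chain $\alpha_4-\alpha_1-\alpha_2\Leftarrow\alpha_3$ and $\alpha_3$ long), and let $\lambda=\alpha_1^\vee+\alpha_4^\vee$. Its class is $\alpha_4^\vee+Q_P^\vee$, whose Peterson--Woodward lifting is the virtual null coroot $\lambda_B=\mu_B=\alpha_4^\vee+\alpha_1^\vee+\alpha_2^\vee+\alpha_3^\vee$ of Table \ref{tabvirtualnullcoroot}, so $\lambda-\lambda_B=-\alpha_2^\vee-\alpha_3^\vee$ satisfies $c_3<0$ and $c_j\leq 0$, and $\lambda$ is effective. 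But $\langle\alpha_1,\lambda\rangle=1$, $\langle\alpha_2,\lambda\rangle=-1$, $\langle\alpha_3,\lambda\rangle=0$, $\langle\alpha_4,\lambda\rangle=1$, and all other pairings are $\leq 0$: no simple root pairs with $\lambda$ to a value $\geq 2$ (here the correction term at the unique boundary node $\alpha_4$ vanishes because $c_1=0$, and $\langle\alpha_4,\lambda_B\rangle=1$ only). So the one-shot argument via Proposition \ref{propQtoCthm}(1) cannot conclude anything for, say, $w=\mbox{id}$, even though the lemma asserts vanishing.

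The paper's proof is not a direct estimate but a recursion built on Proposition \ref{propQtoCthm}(2), which changes the data $(u,v,w,\lambda)$: first it strips off positive pairings at roots in $\Delta\setminus(\Delta_P\cup\partial\Delta_P)$ (replacing $v$ by $vs_\alpha$, $\lambda$ by $\lambda-\alpha^\vee$), which is also needed before one can even locate, via Lemma \ref{lem-typeAcoroot} applied to the effective coroot $\lambda_B$, a boundary node with $\langle\alpha,\lambda_B\rangle\geq 1$; then, in the branch where the boundary node is adjacent to $\alpha_1$ rather than $\alpha_r$ (cases \mbox{C1$B$)}, \mbox{C1$C$)}, \mbox{C2)}, \mbox{C4)}, which is exactly where the example above lives), it repeatedly replaces $u\mapsto us_{\alpha_m}$, $\lambda\mapsto\lambda-\alpha_m^\vee$ at a suitable $\alpha_m\in\Delta_P$ with $\langle\alpha_m,\lambda\rangle=1$ until $c_1<0$, and only then does the pairing at $\alpha_{r+1}$ reach $2$. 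Your proposal contains neither reduction (and only names, without treating, the two-node boundary cases \mbox{C7)} with $r\leq 6$ and \mbox{C9)} with $r=2$), so as written it cannot be completed; the final "pairing $\geq 2$" step is the same as the paper's, but it is only valid after these modifications of $u$, $v$, $w$ and $\lambda$.
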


\begin{proof}
  Let $\alpha\in \Delta\setminus (\Delta_P\cup \partial \Delta_P)$. Then $\mbox{sgn}_\alpha(u)=0$ and $vs_\alpha\in W^P$.
  If $\langle \alpha, \lambda\rangle>0$, then the coefficient of $\alpha^\vee$ in $\lambda$ must be positive. By Proposition (1),
   we have $N_{v, u}^{w, \lambda}=0$ unless $\mbox{sgn}_\alpha(v)=\langle \alpha, \lambda\rangle=1-\mbox{sgn}_\alpha(w)=1$. Furthermore when this holds,
    we have $N_{v, u}^{w, \lambda}=N_{vs_\alpha, u}^{ws_\alpha, \lambda-\alpha^\vee}$ by Proposition \ref{propQtoCthm} (2).
    Clearly, $\lambda-\lambda_B=(\lambda-\alpha^\vee)-(\lambda-\alpha^\vee)_B$. Therefore by induction, we can assume  $\langle \alpha, \lambda\rangle\leq 0$ for all $\alpha\in \Delta\setminus (\Delta_P\cup \partial \Delta_P)$.

 The boundary $\partial \Delta_P$ consists one or two nodes. We assume $\partial \Delta_P=\{\alpha_{r+1}\}$ first.
 Then by Lemma \ref{lem-typeAcoroot}, we have $\langle \alpha_{r+1}, \lambda_B\rangle \geq 1$.

 Assume that  $\alpha_r$ is adjacent to $\alpha_{r+1}$, which happens in cases C5), C7) with $r=7$, C9) with $r=3$, and C10). Then $\langle \alpha_{r+1}, \lambda\rangle= \langle \alpha_{r+1}, \lambda_B\rangle+c_r \langle \alpha_{r+1}, \alpha_r^\vee\rangle\geq 2$. Since $\mbox{sgn}_{\alpha_{r+1}}(u)=0$, we have
  $N_{v, u}^{w, \lambda}=0$ by Proposition \ref{propQtoCthm} (1).

  Assume that $\alpha_1$ is adjacent to $\alpha_{r+1}$. This happens in cases C1B), C1C), C2) and C4). If $\lambda_B$ is of the form $a\mu_B+\alpha_{r+1}^\vee+\sum_{j={r+2}}^na_j\alpha_j^\vee$, then by the hypotheses $\langle \alpha_j, \lambda_B\rangle \leq 0$ for all $j\geq r+2$, and the precise description of $\mu_B$ in Table \ref{tabvirtualnullcoroot}, we can easily conclude that $\langle\alpha_{r+1}, \lambda_B-\alpha_{r+1}^\vee\rangle \geq 0$. Hence, we obtain  $N_{v, u}^{w, \lambda}=0$ again by the same arguments above.
  If $\lambda_B$ is not of the aforementioned form, then $\lambda_B$ is the  combination of a virtual null coroot and a non-simple coroot in Table   \ref{tabvirtualnullcoroot} (or zero coroot). Such $\lambda_B$ satisfies $\langle \alpha_i, \lambda_B\rangle =0$ for all $1\leq i\leq r_a$, where $\alpha_{r_a}$ is the simple root adjacent to $\alpha_r$. Let $m$ be the minimum of the set $\{i~|~ 1\leq i\leq r_a, c_{i+1}<0\}$ if nonempty, or $m:=r_a$ otherwise.
  Then $\langle \alpha_m, \lambda\rangle = \langle \alpha_m, \sum_{i=1}^rc_i\alpha_i^\vee\rangle>0$.
  Since $\mbox{sgn}_{\alpha_m}(v)=0$,  $N_{v, u}^{w, \lambda}=0$ unless $\mbox{sgn}_{\alpha_m}(u)=\langle \alpha_m, \lambda\rangle=1- \mbox{sgn}_{\alpha_m}(w)=1$. When this holds, we have $N_{v, u}^{w, \lambda}=N_{v, us_{m}}^{ws_m, \lambda-\alpha_m^\vee}$ with $us_m\in W_P$ and
 $ \lambda-\alpha_m^\vee=\lambda_B+(-1)\cdot \alpha_m^\vee+\sum_{j=1}^rc_j\alpha_j^\vee$, by Proposition \ref{propQtoCthm}.
 Hence, by reduction, we can assume $c_1<0$. Consequently, we have $\langle \alpha_{r+1}, \lambda\rangle \geq 2$, and then obtain  $N_{v, u}^{w, \lambda}=0$.

 Now we assume $\partial \Delta_P=\{\alpha_{r+1}, \alpha_{r+2}\}$. That is, case C7) with $4\leq r\leq 6$, or case C9) with $r=2$ occurs.
If $\langle \alpha_{r+1}, \lambda_B\rangle>0$, then we are done by the same arguments as above, since  $\alpha_r$ is adjacent to $\alpha_{r+1}^\vee$ and $c_r<0$.
 If $\langle \alpha_{r+1}, \lambda_B\rangle\leq 0$, then
 $\langle \alpha_{r+2}, \lambda_B\rangle > 0$.
  If $\lambda_B=\tau+\alpha_{r+2}^\vee$ with $\tau$ a virtual null coroot, then we    conclude
   $\langle \alpha_{r+2}, \tau\rangle \geq  0$. (For instance when case 7) with $r=6$ occurs, we have
    $\tau= a  \mu_B^{(1)}+b\mu_B^{(2)}$, where $\mu_B^{(1)}, \mu_B^{(2)}$ denote the corresponding two coroots in Table \ref{tabvirtualnullcoroot}, and $a, b\geq 0$. We have  $\langle \alpha_{7}, \lambda_B\rangle=a-b\leq 0$ and $\langle \alpha_{8}, \lambda_B\rangle=2b-a+2> 0$. This implies $2b-a\geq0$. The arguments for the remaining cases are similar.)
  If $\lambda_B$ is not of the aforementioned form, then by Table \ref{tabNONvirtualnullcoroot} we conclude that  $\langle \alpha_i, \lambda_B\rangle =0$ for all $1\leq i\leq r_a$, where $\alpha_{r_a}$ is the simple root of $\Delta_P$ adjacent to $\alpha_r$. Therefore, we are done by the same arguments as above.
\end{proof}

\bigskip

\begin{proof}[Proof of Proposition \ref{propfor22uv}]
    Since $QH^*(G/B)$ is an $S$-filtered algebra,  by Lemma \ref{lemmavvuuvirtual}, we have
   $$\sigma^{v}\star \sigma^{  u}=\sigma^{v  u}+\sum N_{v,   u}^{w, \lambda}q_\lambda\sigma^w+ \sum b_{w', \lambda'}q_{\lambda'}\sigma^{w'}.$$ Here
     $gr(q_{\lambda'}\sigma^{w'})<gr(\sigma^v)+gr(\sigma^{\tilde u})$. The first summation is over those $q_\lambda\sigma^w$ satisfying both
      $$\mbox{(i) } \lambda=\sum\nolimits_{j=1}^na_j\alpha_j^\vee  \mbox{ is not a virtual null coroot}, % (with respect to }(\Delta, \Delta_P)),
                 \mbox{ where } a_j\geq 0\mbox{ for all }j,$$
       and $\mbox{(ii) } gr(q_\lambda\sigma^w)=gr(\sigma^v)+gr(\sigma^{ u})$. The hypothesis (ii)  is     equivalent to
       $$\mbox{(ii)}' \,\, gr_{[1, r]}(q_\lambda\sigma^w)=gr_{[1, r]}(\sigma^{  u}),$$ following from the dimension constraint of Gromov-Witten invariants $N_{v, u}^{w, \lambda}$ (see also Lemma 3.11 of \cite{leungli33}) and the assumption that $v\in W^P$.

       By Proposition \ref{propreductiontor-1} (1), we conclude that elements in the first summation also satisfy
       $$\mbox{(iii) }\sigma^wq_\lambda=\psi_{\Delta, \Delta_{\tilde P}}(q_{\tilde {\lambda}_{\tilde P}})  \mbox{ where }\tilde {\lambda}_{\tilde P}:=\lambda+Q^\vee_{\tilde P}.$$
          Therefore, it suffices to show $N_{v, \tilde u}^{w, \lambda}=0$ whenever all (i), (ii)$'$ and (iii) hold.

 Let $\lambda_B$ denote the Peterson-Woodward lifting of $\lambda+Q_P^\vee$. By Lemma \ref{lem-combinationPWlifting},
 the coefficients $c_j$ of   $\lambda-\lambda_B=\sum_{j=1}^rc_j\alpha_j^\vee$ are all nonpositive or all nonnegative, and $c_r\neq 0$ due to (i).
    If $c_r<0$, then we are done by Lemma \ref{lem-vanishfornegativecombination}. Therefore we assume $c_r>0$ in the following.

      Since all $c_j\geq 0$, we  write $\lambda = \lambda_B+\sum_{i=1}^t\beta_i^\vee$.
      The set $\Delta_{P'}=\{\alpha\in \Delta_P~|~ \langle \alpha, \lambda_B\rangle =0\}$ coincides with either $\Delta_P$ or
         $\Delta_P\setminus\{\alpha_k\}$ for a unique $\alpha_k\in \Delta_P$ with $\langle \alpha_k, \lambda_B\rangle=-1$. Therefore we can further assume
   that    $\beta_i\in \Delta_P$,    $i=1, 2, \cdots, t$, satisfy $\langle \beta_j, \lambda_B\rangle +\langle \beta_j,  \sum_{i=j}^{t}\beta_i^\vee\rangle >0$ for all $1\leq j\leq t$. (This can be done: if $\langle \alpha, \lambda-\lambda_B\rangle >0$ holds for some  $\alpha$ in $\Delta_P$ distinct from $\alpha_k$, then we simply choose $\beta_1=\alpha$. If not, then   $\alpha=\alpha_k$, and the coefficient of $\alpha_k$ in the highest root of the root  subsystem $R_P$ is equal to 1. Hence we  conclude   $\langle \alpha, \lambda-\lambda_B\rangle \geq 2$.)

   Since $v\in W^P$,  $\mbox{sgn}_{\alpha}(v)=0$ for all $\alpha\in \Delta_P$.
    For each $1\leq j\leq t$, we have $N_{  v,   u s_{\beta_1}\cdots s_{\beta_{j-1}}}^{ws_{\beta_1}\cdots s_{\beta_{j-1}},\lambda_B+\sum_{i=j}^t\beta_j^\vee}=0$ unless  $\ell(  u s_{\beta_1}\cdots s_{\beta_j})=\ell(  u s_{\beta_1}\cdots s_{\beta_{j-1}})-1$,
      $\ell(w s_{\beta_1}\cdots  s_{\beta_j})=\ell( u s_{\beta_1}\cdots s_{\beta_{j-1}})+1$ and  $\langle \beta_j, \lambda_B\rangle +\langle \beta_j, \sum_{i=j}^{t}\beta_i^\vee\rangle=1$ all hold, by Proposition \ref{propQtoCthm} (1). Furthermore when all these hypotheses hold, we have
     $$N_{  v,   u s_{\beta_1}\cdots s_{\beta_{j-1}}}^{ws_{\beta_1}\cdots s_{\beta_{j-1}},\lambda_B+\sum_{i=j}^t\beta_j^\vee}=N_{  v,   u s_{\beta_1}\cdots s_{\beta_j}}^{ws_{\beta_1}\cdots s_{\beta_{j-1}}s_{\beta_j}, \lambda_B+\sum_{i=j+1}^t\beta_j^\vee}$$
      by
    applying the tuple $(u, v, w, \lambda, \alpha)$ of
   Proposition \ref{propQtoCthm} (2) to the tuple \\$(\tilde u s_{\beta_1}\cdots s_{\beta_{j-1}}, vs_{\beta_j},  w s_{\beta_1}\cdots s_{\beta_{j-1}}s_{\beta_j}, \lambda_B+\sum_{i=j}^t\beta_j^\vee, \beta_j)$.
Denote $u':=  u s_{\beta_1}\cdots s_{\beta_t}$. Combining all these, we have
 $$N_{v,   u}^{w, \lambda}=N_{v, u'}^{ws_{\beta_1}\cdots s_{\beta_t},  \lambda_B}$$ if all the hypotheses ($\ddagger$) hold:
  $$\ell(ws_{\beta_1}\cdots s_{\beta_t})=\ell(w)+t, \ell(u')=\ell(  u)-t, \langle \beta_j, \lambda_B\rangle +\langle \beta_j, \sum_{i=j}^{t}\beta_i^\vee\rangle=1, j=1, \cdots, t,$$
   or 0 otherwise. In particular if $\lambda_B=0$, then we are done since the   hypotheses on the step $j=t$ cannot hold.

   It suffices to show   $N_{v, u'}^{ws_{\beta_1}\cdots s_{\beta_t},  \lambda_B}=0$ under the hypotheses $(\ddagger)$ and $\lambda_B\neq 0$.
   If $\ell(u')=0$, then $u'=\mbox{id}$, and we are  done.  Assume $\ell(u')>0$ now.
% Write $w=w_1w_2$ where $w_1\in W^P$ and    $w_2\in W_P$.
For any $\eta\in Q_P^\vee$, we have
  $|gr_{[1, r]}(q_\eta)|=|gr(q_\eta)|=\langle 2\rho, \eta\rangle$, following from the definition.
 Due to (ii)$'$,
        $$|gr_{[1, r]}(w)|+|gr_{[1, r]}(q_{\lambda_B})|+2t=|gr_{[1, r]}(q_\lambda \sigma^w)|=|gr_{[1, r]}( u)|=\ell( u).$$
By
Proposition \ref{propwelldefinedgrading},   $-|gr_{[1, r]}(q_{\lambda_B})|=|gr_{[1, r]}(w_Pw_{P'})|=\ell(w_Pw_{P'})= |R^+_P|-|R^+_{ P'}|$. Combining both,  we have
     $$|gr_{[1, r]}(ws_{\beta_1}\cdots s_{\beta_t})|=|gr_{[1, r]}(w)|+t= \ell(u') -|gr_{[1, r]}(q_{\lambda_B})|= \ell(u')+ |R^+_P|-|R^+_{ P'}|.$$
 Thus there is $\alpha\in \Delta_{P'}$ such that $\mbox{sgn}_\alpha(ws_{\beta_1}\cdots s_{\beta_t})=1$ (otherwise, $ws_{\beta_1}\cdots s_{\beta_t}(\alpha)\in R^+$ for all $\alpha\in \Delta_{P'}$, which would imply
      $|gr_{[1, r]}(ws_{\beta_1}\cdots s_{\beta_t})|\leq |R^+_P|-|R^+_{ P'}|$).
  Since $v\in W^P$, $\mbox{sgn}_\alpha(v)=0$.   By Proposition \ref{propQtoCthm} (1), we have
      $N_{v, u'}^{ws_{\beta_1}\cdots s_{\beta_t},  \lambda_B}=0$ unless $\mbox{sgn}_\alpha(u')=1$, i.e., $\ell(u's_\alpha)=\ell(u')-1$. Furthermore when this holds,
       we have  $N_{v, u'}^{ws_{\beta_1}\cdots s_{\beta_t},  \lambda_B}=N_{v, u's_\alpha}^{ws_{\beta_1}\cdots s_{\beta_t}s_\alpha,  \lambda_B}$
       (by applying the tuple $(u, v, w, \lambda, \alpha)$ of Proposition \ref{propQtoCthm} (2) to $(vs_\alpha, u', ws_{\beta_1}\cdots s_{\beta_t}s_\alpha, \lambda_B+\alpha^\vee, \alpha)$).
      By induction, we conclude
           $N_{v, u'}^{ws_{\beta_1}\cdots s_{\beta_t},  \lambda_B}=0$ unless   both $u'\in W_{P'}$ and $\ell(ws_{\beta_1}\cdots s_{\beta_t}u'^{-1})=
           \ell(ws_{\beta_1}\cdots s_{\beta_t})-\ell(u')$ hold. Furthermore when both hypotheses hold, we have
            $N_{v, u'}^{ws_{\beta_1}\cdots s_{\beta_t},  \lambda_B}=N_{v, \scriptsize\mbox{id}}^{ws_{\beta_1}\cdots s_{\beta_t}u'^{-1},  \lambda_B}=0$ since $\lambda_B\neq 0$.
\end{proof}

\subsection{Proof of Proposition \ref{propmorphismofalgebras} (3)}\label{subsecproofforprop33}
The statement tells us that   the elements $\Psi_{r+1}(q_{\lambda_P})$  in $Gr^{\mathcal{F}}_{(r+1)}$ do behave like monomials. Due to Lemma \ref{lemmagrading33cases}, it suffices to show those $q_{\lambda}\sigma^u$ behave like the non-identity elements of the finite abelian group $(Q^\vee/Q^\vee_P)/\mathcal{L}$ as in Table \ref{tabNONvirtualnullcoroot}.
 For any one of the  cases C1$B$), C1$C$), C2) and C9), we use  the first virtual null coroot  $\mu_B$ in Table  \ref{tabvirtualnullcoroot} and the unique element $q_\lambda \sigma^u$ in Table \ref{tabNONvirtualnullcoroot}. Namely for the only exceptional case when C9) with $r=2$ occurs, there are two virtual null coroots, and we will use the one  $\mu_B= 2\alpha_4^\vee+2\alpha_1^\vee+ \alpha_2^\vee$.
%We prove the statement for case C1$B$), C1$C$), C2) and C9) here, and leave the arguments for the remaining cases in section \ref{subsecAPPprop33uu}.
%By $\Delta_P$ of type $B_r$ (resp. $C_r$, $D_r$), we mean case C1$B$) or C9) (resp. C1$C$), C2)) throughout this subsection.
% For  all these   cases, there is a  unique virtual null coroot  $\mu_B$ in Table \ref{tabvirtualnullcoroot} except for case C9) with $r=2$, for which we use the one $\mu_B= 2\alpha_4^\vee+2\alpha_1^\vee+ \alpha_2^\vee$.  In addition, for each of the above cases, there is a unique   $q_\lambda\sigma^u$  in Table \ref{tabNONvirtualnullcoroot}.
For any one of these cases, we only need to use check one quantum multiplication as in the next proposition, which we assume first. The remaining cases require verifications of more quantum multiplications, which will be discussed in section 5.3.

\begin{prop}\label{propuuforC1BC2}
  Assume   {\upshape C1$B$)},   {\upshape C2)} or     {\upshape C9)} occurs.    In $QH^*(G/B)$, we have
    $$q_\lambda\sigma^u\star q_\lambda\sigma^u=q_{\mu_B}+\sum b_{w', \lambda'}q_{\lambda'}\sigma^{w'}$$
    with $gr(q_{\lambda'}\sigma^{w'})<gr(q_{\mu_B})$ whenever $b_{w', \lambda'}\neq 0$.
\end{prop}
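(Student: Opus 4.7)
The plan is to leverage the $S$-filtered algebra structure of $QH^*(G/B)$ in order to bound the grading of $q_\lambda\sigma^u\star q_\lambda\sigma^u$ from above, identify precisely which monomials can attain the top grading, and then compute their coefficients directly.

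As a first step, using the explicit gradings from Propositions \ref{propgradingofeachQQQ} and \ref{propNONvirtualnullcoroot} together with Table \ref{tabvirtualnullcoroot}, one verifies in each of cases C1$B$), C2), C9) that $2\,gr(q_\lambda\sigma^u)=gr(q_{\mu_B})$. Illustrating with case C1$B$): we have $gr(q_\lambda)=(2r+1)\mathbf{e}_{r+1}-r\mathbf{e}_r-\sum_{j=1}^{r-1}\mathbf{e}_j$ and $gr(\sigma^u)=r\mathbf{e}_r+\sum_{j=1}^{r-1}\mathbf{e}_j$ since $u=s_Is_{r-1}\cdots s_1$ with $|I|=r$, whence $gr(q_\lambda\sigma^u)=(2r+1)\mathbf{e}_{r+1}$ and $2\,gr(q_\lambda\sigma^u)=(4r+2)\mathbf{e}_{r+1}=gr(q_{\mu_B})$ (after a short telescoping computation using the formula for $gr(q_j)$ with $j\leq r-1$). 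By Proposition \ref{propmainthm} this forces $q_\lambda\sigma^u\star q_\lambda\sigma^u\in F_{gr(q_{\mu_B})}$, so every monomial in the product has grading at most $gr(q_{\mu_B})$.

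Next, I would argue that the only monomial $q_\nu\sigma^w$ of grading exactly $gr(q_{\mu_B})$ that can appear with a nonzero coefficient is $q_{\mu_B}\sigma^{\mbox{id}}$. The coefficient of $q_\nu\sigma^w$ is the Gromov-Witten invariant $N_{u,u}^{w,\nu-2\lambda}$. Since $u$ is the longest element in $W_P^{P'}$ with $\Delta_{P'}=\Delta_P\setminus\{\alpha_k\}$ and $k$ as in Table \ref{tabNONvirtualnullcoroot}, its right descent set equals $\{\alpha_k\}$. Applying Proposition \ref{propQtoCthm}(1) then yields the constraints $\langle \alpha_i,\nu-2\lambda\rangle\leq 0$ for $\alpha_i\neq\alpha_k$ and $\langle \alpha_k,\nu-2\lambda\rangle\leq 2$. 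Combining these inequalities with the effectiveness of $\nu-2\lambda$, the grading equation, and the length bounds on the factors of $w$ in each $W_{P_j}^{P_{j-1}}$, a short case analysis isolates the pair $(\mbox{id},\mu_B)$ together with possibly a handful of other candidates; for each extraneous candidate, one further application of the quantum-to-classical principle in Proposition \ref{propQtoCthm}(2) reduces the structure constant either to zero or to a classical triple intersection that vanishes by a straightforward Schubert calculus check.

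Finally, I would verify $N_{u,u}^{\mbox{id},\mu_B-2\lambda}=1$ by iterated use of Proposition \ref{propQtoCthm}(2). At each step one picks a simple root $\alpha\in\Delta_P$ with $\mbox{sgn}_\alpha(u)=1$ for which the equality $\mbox{sgn}_\alpha(w)+\langle\alpha,\eta\rangle=2\,\mbox{sgn}_\alpha(u)=2$ holds for the current $(w,\eta)$, peels off one copy of $\alpha^\vee$ from the quantum degree, and shortens both copies of $u$. After exhausting $\mu_B-2\lambda$, the remaining degree is $0$ and the invariant collapses to a classical Schubert triple product on $G/B$, which evaluates to $1$ by a direct Poincar\'e duality check. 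The main obstacle is this third step: verifying that the sign hypothesis in Proposition \ref{propQtoCthm}(2) is met at every intermediate stage and that the chain of reductions lands on a classical structure constant equal to $1$. The combinatorial data provided by the explicit reduced expression of $u$ in Proposition \ref{propNONvirtualnullcoroot} together with the form of $\mu_B$ in Table \ref{tabvirtualnullcoroot} make each reduction tractable, with case C2) requiring slightly more care because the sequence $I=[1,2,\ldots,r-2,r]$ skips $\alpha_{r-1}$.
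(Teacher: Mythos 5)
Your overall architecture matches the paper's: use the filtration to bound the product by $2\,gr(q_\lambda\sigma^u)=gr(q_{\mu_B})$, isolate the top-graded candidates, and evaluate $N_{u,u}^{\scriptsize\mbox{id}, \mu_B-2\lambda}=1$ by peeling off simple coroots with Proposition \ref{propQtoCthm}(2) until a classical invariant equal to $1$ remains (the paper lands on $N_{u'',v''}^{v''u'',0}=1$ via Lemma \ref{lemmavvuuvirtual}). Your first step and your third step are essentially the paper's, modulo the sign-condition bookkeeping you yourself flag.

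The genuine gap is in your second step. Writing the top-graded terms as $N_{u,u}^{w,\eta}q_{\eta+2\lambda}\sigma^w$ with $\eta=\sum_i b_i\alpha_i^\vee$, the grading equation in the $\mathbf{e}_r$-slot forces $\ell(w_1)+b_rh=h$ (with $w=w_1w_2$, $w_1\in W_P^{\tilde P}$, $w_2\in W_{\tilde P}$, $h$ the Coxeter number), so besides $(\mbox{id},\mu_B-2\lambda)$ there is a whole family of candidates with $b_r=0$: by Lemma \ref{lemmauniquegradinginPB} these are exactly $q_{\alpha_1^\vee}\sigma^{w_1v_{r-1}\cdots v_2}$ with $w_1\in W_P^{\tilde P}$ of length $h$, and their number grows with $r$ (cases C1$B$) and C2) have unbounded $r$), so they are not ``a handful'' to be dispatched case by case. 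Your constraints from Proposition \ref{propQtoCthm}(1) do not kill them: for $\eta=\alpha_1^\vee$ one has $\langle\alpha_1,\eta\rangle=2$, which is allowed, and one application of Proposition \ref{propQtoCthm}(2) only converts the invariant into a classical structure constant; showing that this classical coefficient vanishes for \emph{every} admissible $w_1$ and \emph{every} $r$ is precisely the hard point, and calling it ``a straightforward Schubert calculus check'' papers over it. The paper handles this uniformly by passing to the $\tilde{\mathcal F}$-graded algebra (Proposition \ref{propreductiontor-1}) to reduce the question to $N_{\tilde u,\tilde u}^{w_1,0}$, i.e.\ to the vanishing $\sigma^{\tilde u}\cup\sigma^{\tilde u}=0$ in $H^*(P/B)$ (Corollary \ref{corvanishingtildeu2}), which in turn needs the specific reduced expression $w_P=(s_1\cdots s_r)^{h/2}$ (Lemma \ref{lemmalongestWforBD}), the subword criterion for $\tilde u^{-1}\leqslant v$ (Lemma \ref{lemmatildeUUnotinVV}), and the BGG vanishing criterion (Lemma \ref{lemmavanishingofcupproductwv}). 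Without this ingredient, or a substitute valid for all $r$, your argument does not establish that the only surviving top-graded term is $q_{\mu_B}$.
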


\bigskip

\begin{proof}[Proof of Proposition \ref{propmorphismofalgebras} (3)] Let  $q_{\kappa_P}, q_{\kappa_P'}\in QH^*(G/P)$. If   case C1$C$) occurs, then we note
  $\psi_{\Delta, \Delta_P}(q_{\kappa_P})\star \psi_{\Delta, \Delta_P}(q_{\kappa_P'})=q_{\kappa_B}\star q_{\kappa_B'}=q_{\kappa_B+\kappa_B'}=\psi_{\Delta, \Delta_P}(q_{\kappa_P+\kappa_P'})$. Therefore,
$\Psi_{r+1}(q_{\kappa_P})\star \Psi_{r+1}(q_{\kappa_P'})=\Psi_{r+1}(q_{\kappa_P+\kappa_P'})$.
Assume that case C1$B$), C9) or C2) occurs now. If either $\kappa_P$ or $\kappa_P'$ is a virtual null coroot, then we are done, by using Lemma \ref{lemmagrading33cases}. Otherwise, by Proposition \ref{propNONvirtualnullcoroot} we have $\kappa_P=\tau_P+\lambda_P$ and $\kappa_P'=\tau_P'+\lambda_P$ for some virtual null coroots $\tau_P, \tau_P'$, and consequently $\kappa_P+\kappa_{P}'=\tau_P+\tau_P'+(\mu_B+Q^\vee_P)$. Here $\mu_B$ and   $\psi_{\Delta, \Delta_P}(q_{\lambda_P})=q_\lambda\sigma^u$ are  given in Table \ref{tabvirtualnullcoroot} and Table \ref{tabNONvirtualnullcoroot} respectively.   Hence, we have $\Psi_{r+1}(q_{\kappa_P})= \Psi_{r+1}(q_{\tau_P})\star \Psi_{r+1}(q_{\lambda_P})$ and $\Psi_{r+1}(q_{\kappa_P'})= \Psi_{r+1}(q_{\tau_P'})\star \Psi_{r+1}(q_{\lambda_P})$, by Lemma \ref{lemmagrading33cases}.
Using Proposition \ref{propuuforC1BC2}, we have
   $\Psi_{r+1}(q_{\lambda_P})\star \Psi_{r+1}(q_{\lambda_P})=\overline{q_\lambda\sigma^u}\star \overline{q_\lambda\sigma^u}=\overline{q_{\mu_P}}$.
   Hence, we have $\Psi_{r+1}(q_{\kappa_P})\star \Psi_{r+1}(q_{\kappa_P'})=\overline{q_{\tau_B}}\star\overline{q_{\tau_P'}}\star\overline{q_{\mu_B}}=\overline{q_{\tau_B+\tau_B'+\mu_B}}=\Psi_{r+1}(q_{\kappa_P+\kappa_P'})$.
   For the remaining cases in Table \ref{tabrelativeposi}, the statements follows from the arguments given in  section \ref{subsecAPPprop33uu}. Thus we are done.
\end{proof}

Now we prepare some lemmas in order to prove Proposition \ref{propuuforC1BC2}. The reduced expressions of the longest element $w_P$ in $W_P$ are not unique. There is a conceptual approach to construct $w_P$ of  the form  $w^{h\over 2}$ whenever $h$ is even (see e.g. Chapter 3 of \cite{humr}). Here $h$ denotes the Coxeter number of $W_P$, and it is equal to $2r$ (resp. $2r-2$) for $\Delta_P$ of type $B_r$   (resp. $D_r$).  The next lemma provides a special choice of the above $w\in W_P$.

\begin{lemma}\label{lemmalongestWforBD}
  For $\Delta_P$ of type $B_r$   or  $D_r$,   $(s_1\cdots s_r)^{h\over 2}$  is a reduced expression of the longest element $w_P$.
\end{lemma}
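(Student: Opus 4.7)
The plan is to set $c := s_1 s_2 \cdots s_r$, which is a Coxeter element of $W_P$ in the labeling used in Table~\ref{tabrelativeposi}, and to verify the lemma by (i) computing the action of $c$ on a Euclidean realization of $R_P$, (ii) identifying $c^{h/2}$ with $w_P$ as a linear map, and (iii) counting the letters in the word $(s_1\cdots s_r)^{h/2}$.

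For type $B_r$ I would take the standard realization with $\alpha_i = e_i - e_{i+1}$ for $1 \leq i < r$ and $\alpha_r = e_r$. A direct pass-through computation gives that $c$ permutes the short roots $\{\pm e_i\}$ as the $2r$-cycle $e_1 \mapsto e_2 \mapsto \cdots \mapsto e_r \mapsto -e_1 \mapsto \cdots \mapsto -e_r \mapsto e_1$; hence $c$ has order $h = 2r$ and $c^{h/2} = c^r$ negates every $e_i$, so $c^{h/2} = -\operatorname{id} = w_P$. For type $D_r$, in the paper's labeling (with $\alpha_{r-1}$ and $\alpha_r$ being the two branches attached to $\alpha_{r-2}$), I would take $\alpha_i = e_i - e_{i+1}$ for $i < r-1$, $\alpha_{r-1} = e_{r-1} - e_r$, $\alpha_r = e_{r-1} + e_r$. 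Since $s_{r-1}$ and $s_r$ commute in this labeling and act only on the $\{e_{r-1}, e_r\}$ plane, an analogous computation shows that $c$ acts as the $2(r-1)$-cycle $e_1 \mapsto e_2 \mapsto \cdots \mapsto e_{r-1} \mapsto -e_1 \mapsto \cdots \mapsto -e_{r-1} \mapsto e_1$ on $\operatorname{span}\{\pm e_1,\ldots,\pm e_{r-1}\}$ while swapping $e_r \leftrightarrow -e_r$. Thus $c$ has order $\operatorname{lcm}(2(r-1),2) = h = 2(r-1)$, and $c^{h/2}(e_i) = -e_i$ for $i < r$ while $c^{h/2}(e_r) = (-1)^{r-1} e_r$. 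This coincides with $w_P$ in both parities of $r$: it is $-\operatorname{id}$ when $r$ is even, and it is $-\operatorname{id}$ composed with the nontrivial diagram automorphism $\alpha_{r-1} \leftrightarrow \alpha_r$ when $r$ is odd, matching the well-known description of $w_P$ in type $D_r$.

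Finally, I would verify reducedness by a length count. The expression $(s_1 s_2 \cdots s_r)^{h/2}$ has $r\cdot (h/2)$ letters, which equals $r^2 = |R^+(B_r)| = \ell(w_P)$ in type $B_r$ and $r(r-1) = |R^+(D_r)| = \ell(w_P)$ in type $D_r$. Since we have already shown that this product equals $w_P$, whose length is exactly $r\cdot h/2$, the word must be reduced. The main subtlety lies in the type $D_r$ case with $r$ odd, where $w_P \neq -\operatorname{id}$; there the identification $c^{h/2} = w_P$ requires tracking the action on $e_r$ separately and invoking the fact that $w_P$ in odd-rank $D$ implements the diagram automorphism on top of $-\operatorname{id}$. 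The remaining verifications are routine signed-permutation arithmetic.
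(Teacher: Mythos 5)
Your proof is correct and takes essentially the same route as the paper: identify $(s_1\cdots s_r)^{h/2}$ with $w_P$ by direct computation in the standard realization of $B_r$ resp.\ $D_r$, then deduce reducedness from the letter count $r\cdot\frac{h}{2}=r^2$ resp.\ $r(r-1)=\ell(w_P)$. The only cosmetic difference is that the paper verifies the identification by checking that the element sends every simple root of $\Delta_P$ to a negative root, which sidesteps your explicit case discussion of $w_P=-\mathrm{id}$ versus $-\mathrm{id}$ composed with the diagram automorphism in odd-rank $D_r$.
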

\begin{proof}
  It is easy to check that the given element maps all simple roots in $\Delta_P$ to negative roots, and note $\ell(w_P)=r^2$ (resp. $r(r-1)$). Thus the statement follows.
\end{proof}

Recall that for   $u$ in Table \ref{tabNONvirtualnullcoroot},   $\tilde u$ denotes  the minimal length representative of $uW_{\tilde P}$.

\begin{lemma}\label{lemmatildeUUnotinVV}
  Let $v=s_{\beta_1}\cdots s_{\beta_p}\in W_P$ be a reduced expression. Assume  {\upshape C1$B$)},  {\upshape C2)}  or {\upshape C9)}  occurs, then
     $\tilde u^{-1}\leqslant v$ if and only if there exists a subsequence $[i_1, \cdots, i_{{h\over 2}}]$  of $[1, \cdots, p]$ such that
      $[\beta_{i_1},\cdots, \beta_{i_{h\over 2}}]=[\alpha_r, \alpha_{{h\over 2}-1}, \cdots, \alpha_2, \alpha_1]$.
\end{lemma}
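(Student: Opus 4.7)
The plan is to reduce the statement to the Subword Property of Bruhat order, which says that for a fixed reduced expression $v = s_{\beta_1}\cdots s_{\beta_p}$, one has $w \leqslant v$ if and only if some subword $s_{\beta_{i_1}}\cdots s_{\beta_{i_k}}$ (with $i_1<\cdots<i_k$) is itself a reduced expression for $w$. So I only need to identify $\tilde u^{-1}$ explicitly and verify that it admits a \emph{unique} reduced word, which will turn out to be precisely the one appearing in the lemma.

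First I would read off $\tilde u$ from the expression of $u$ in Proposition \ref{propNONvirtualnullcoroot}. There $u = u_1\cdot(s_{r-1}\cdots s_1)$ is shown to be a reduced product with $u_1 \in W^{\tilde P}$ and $s_{r-1}\cdots s_1 \in W_{\tilde P}$, so $\tilde u = u_1$. Explicitly, $u_1 = s_1s_2\cdots s_r$ in cases C1$B$) and C9), and $u_1 = s_1s_2\cdots s_{r-2}s_r$ in case C2). Taking inverses yields the reduced expression
\[
\tilde u^{-1} = s_r\, s_{\frac{h}{2}-1}\, s_{\frac{h}{2}-2}\cdots s_2\, s_1,
\]
of length $h/2$, whose sequence of indices is exactly $[\alpha_r,\alpha_{h/2-1},\ldots,\alpha_2,\alpha_1]$ as in the statement (in case C1$B$) this uses $h/2-1=r-1$; in case C2) this uses $h/2=r-1$; in C9) the check is immediate for $r=2,3$).

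Next I would show that this is the unique reduced word for $\tilde u^{-1}$. By Matsumoto's theorem, any two reduced expressions are connected by a sequence of braid and commutation moves. In the displayed word all letters are distinct, so no triple of consecutive letters has the form $s_is_js_i$, ruling out every braid move of type $s_is_js_i = s_js_is_j$ (or higher); similarly there is no block of the form $s_is_js_is_j$, ruling out the length-$4$ braid relation in types $B,C$. Moreover, each consecutive pair in the word corresponds to an edge in the Dynkin diagram of $\Delta_P$ and therefore does not commute: in C1$B$) the pairs $(s_j,s_{j-1})$ form the type-$B_r$ chain; in C2) the pair $(s_r,s_{r-2})$ is the double end of the $D_r$ fork while the remaining pairs form the type-$A$ tail; in C9) the check is immediate. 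So no commutation move is available either, and the reduced word is unique.

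Combining these observations, the lemma becomes a direct application of the Subword Property: a subsequence $[i_1,\ldots,i_{h/2}]$ of $[1,\ldots,p]$ with $[\beta_{i_1},\ldots,\beta_{i_{h/2}}] = [\alpha_r,\alpha_{h/2-1},\ldots,\alpha_1]$ is exactly the data of \emph{the} reduced word of $\tilde u^{-1}$ sitting inside the reduced expression $v = s_{\beta_1}\cdots s_{\beta_p}$, and this is equivalent to $\tilde u^{-1}\leqslant v$. I expect the uniqueness step in case C2) to be the only mildly delicate point, as it hinges on the precise position of $\alpha_r$ in the $D_r$ diagram (joined to $\alpha_{r-2}$ rather than to $\alpha_{r-1}$); in all other cases the rigidity is immediate.
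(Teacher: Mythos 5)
Your proposal is correct and follows essentially the same route as the paper: both arguments reduce the lemma to the subword characterization of Bruhat order together with the rigidity of the length-$\frac{h}{2}$ expression $\tilde u^{-1}=s_rs_{\frac{h}{2}-1}\cdots s_1$, whose letters are pairwise distinct. The only (minor) difference is how the forced ordering is justified: you invoke Tits/Matsumoto connectivity of reduced words and rule out all braid and commutation moves (checking, as you note, that $\alpha_r$ is attached to $\alpha_{r-2}$ in case C2)), whereas the paper deduces the same uniqueness from the support being fixed plus the observation that $s_rs_{\frac{h}{2}-1}\cdots s_{j+1}s_j(\alpha_j)\in -R^+$ for all $j$.
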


\begin{proof}
     Note $\ell(\tilde u^{-1})={h\over 2}$. It is a general fact that $\tilde u^{-1}\leqslant v$ if and only if there exists a subsequence $[i_1, \cdots, i_{{h\over 2}}]$  of $[1, \cdots, p]$ such that
        $\tilde u^{-1}=s_{\beta_{i_1}}\cdots s_{\beta_{i_{h\over 2}}}$.  Since the simple reflections in $\tilde u^{-1}=s_r s_{{h\over 2}-1}\cdots s_2s_1$ are distinct,  we conclude that the two sets $\{\alpha_r, \alpha_{{h\over 2}-1}, \cdots, \alpha_{2}, \alpha_1\}$ and $\{\beta_{i_1}, \cdots, \beta_{i_{h\over 2}}\}$ coincide with each other. Then  the coincidence of the corresponding two ordered sequences follows immediately from the
       obvious observation that $s_r s_{{h\over 2}-1}\cdots s_{j+1}s_j(\alpha_j)\in -R^+$ for all $j$.
\end{proof}

The next well-known fact works for arbitrary $\Delta_P$   (see e.g.  Theorem 3.17 (iv) of \cite{bgg}\footnote{The Schubert cohomology classes $\sigma^v$ are denoted as $P_{v^{-1}}$ in \cite{bgg}.}).

\begin{lemma}\label{lemmavanishingofcupproductwv}
   Let $w, v\in W_P$. If $w^{-1}\not\leqslant v^{-1}w_P$, then   $\sigma^w\cup\sigma^v=0$ in $H^*(P/B)$.
\end{lemma}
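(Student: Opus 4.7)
The plan is to reduce the vanishing of $\sigma^w\cup\sigma^v$ to the vanishing of all its Schubert coefficients $c^u_{w,v}$ in the expansion $\sigma^w\cup\sigma^v=\sum_{u\in W_P}c^u_{w,v}\sigma^u$. By Poincar\'e duality on the flag variety $P/B$, which gives the pairing $\int_{P/B}\sigma^a\cup\sigma^b=\delta_{b,w_Pa}$ for $a,b\in W_P$, the Schubert coefficient is expressed as a triple intersection number
\[
c^u_{w,v}=\int_{P/B}\sigma^w\cup\sigma^v\cup\sigma^{w_Pu}.
\]
It therefore suffices to prove that this triple integral vanishes for every $u\in W_P$ under the hypothesis $w^{-1}\not\leqslant v^{-1}w_P$.

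The first step is to translate the hypothesis into a form suited to the product. Since the Bruhat order is preserved by inversion and reversed by left multiplication by $w_P$, a short verification shows that $w^{-1}\leqslant v^{-1}w_P$ is equivalent to $v\leqslant w_Pw$. So the hypothesis reads $v\not\leqslant w_Pw$.

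The key ingredient I will invoke is the Bruhat-order monotonicity of the Schubert product: in any expansion $\sigma^a\cup\sigma^b=\sum_y d_y\sigma^y$ in $H^*(P/B)$, one has $d_y\neq 0$ only if $y\geqslant a$ and $y\geqslant b$. This is classical and follows either from Chevalley's formula by inducting over reduced expressions using the BGG divided difference operators, or geometrically from the fact that $\sigma^a$ and $\sigma^b$ are represented by opposite Schubert cycles whose intersection lies inside $\{u\in W_P:u\geqslant a,\ u\geqslant b\}$.

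Applying this to the internal product, write $\sigma^v\cup\sigma^{w_Pu}=\sum_y d_y\sigma^y$ with $d_y\neq 0\Rightarrow y\geqslant v$. Pairing with $\sigma^w$ and using Poincar\'e duality collapses the sum to a single term:
\[
c^u_{w,v}=\int_{P/B}\sigma^w\cup\sigma^v\cup\sigma^{w_Pu}=\sum_y d_y\int_{P/B}\sigma^w\cup\sigma^y=d_{w_Pw}.
\]
But monotonicity forces $d_{w_Pw}=0$ unless $w_Pw\geqslant v$, i.e.\ unless $v\leqslant w_Pw$, contradicting the translated hypothesis. Hence $c^u_{w,v}=0$ for every $u\in W_P$, proving $\sigma^w\cup\sigma^v=0$. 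The only real substance in the argument is the Bruhat monotonicity lemma; everything else is bookkeeping about how $w_P$ interacts with inversion and with Bruhat order. This explains why the statement can be invoked as a well-known consequence of the BGG theory.
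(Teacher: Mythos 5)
Your argument is correct. The paper does not actually prove this lemma: it is invoked as a well-known fact and cited to Theorem 3.17(iv) of Bernstein--Gel'fand--Gel'fand (with the translation $\sigma^v = P_{v^{-1}}$), so there is no "paper proof" to match step by step. What you supply is a complete standard derivation, and each ingredient checks out: the Poincar\'e pairing $\int_{P/B}\sigma^a\cup\sigma^b=\delta_{b,w_Pa}$ is the usual duality for the full flag variety $P/B$ of the Levi; the identity $c^u_{w,v}=\int_{P/B}\sigma^w\cup\sigma^v\cup\sigma^{w_Pu}$ follows from it; the translation $w^{-1}\leqslant v^{-1}w_P \Leftrightarrow w\leqslant w_Pv \Leftrightarrow v\leqslant w_Pw$ is right, since inversion preserves Bruhat order and left multiplication by the longest element $w_P$ of $W_P$ reverses it; and the collapse $c^u_{w,v}=d_{w_Pw}$, where $d_y$ are the coefficients of $\sigma^v\cup\sigma^{w_Pu}$, combined with the monotonicity $d_y\neq 0\Rightarrow y\geqslant v$, forces $c^u_{w,v}=0$ for every $u$ under $v\not\leqslant w_Pw$. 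The one place where the real content sits is exactly the monotonicity of Schubert structure constants ($c^y_{a,b}\neq 0\Rightarrow y\geqslant a,\,b$), which you quote as classical with two admissible proof sketches (Chevalley/BGG induction, or the support argument using that $X^a\cap X^b$ is a union of opposite Schubert cells $C^c$ with $c\geqslant a,b$); this fact is itself of the same vintage as the BGG statement the paper cites, so your proof is best viewed as a reduction of the lemma to that standard fact rather than a from-scratch argument --- which is perfectly adequate here, and arguably more informative than the bare citation in the paper.
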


\begin{cor}\label{corvanishingtildeu2}
   For case {\upshape C1$B$)}, {\upshape C2)} or {\upshape C9)}, we have $\sigma^{\tilde u}\cup\sigma^{\tilde u}=0$ in $H^*(P/B)$.
\end{cor}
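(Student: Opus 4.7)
The plan is to invoke Lemma \ref{lemmavanishingofcupproductwv} with $w=v=\tilde u$, which reduces the claim to verifying
$$\tilde u^{-1}\not\leq \tilde u^{-1}w_P$$
in the Bruhat order on $W_P$. I will then pin down an explicit reduced expression for $\tilde u^{-1}w_P$ and rule out the required subword by appealing to the characterization in Lemma \ref{lemmatildeUUnotinVV}.

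For the reduced expression, note that $w_P=(s_1s_2\cdots s_r)^{h/2}$ by Lemma \ref{lemmalongestWforBD}. In cases {\upshape C1$B$)} and {\upshape C9)}, Table \ref{tabNONvirtualnullcoroot} gives $\tilde u=s_1s_2\cdots s_r$, so one reads off at once
$$\tilde u^{-1}w_P=(s_1s_2\cdots s_r)^{h/2-1},$$
which is reduced by a length count. Case {\upshape C2)} is more delicate because $\tilde u=s_1\cdots s_{r-2}s_r$ is not the full Coxeter factor; here Lemma \ref{lemmatildeUUnotinVV} gives $\tilde u^{-1}=s_rs_{r-2}s_{r-3}\cdots s_1$. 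A short telescoping cancellation, using repeatedly $s_is_i=1$ together with the single commutation $s_rs_{r-1}=s_{r-1}s_r$ that holds in type $D_r$, yields
$$\tilde u^{-1}\cdot(s_1s_2\cdots s_r)=s_rs_{r-2}\cdots s_2\cdot s_2s_3\cdots s_r=\cdots=s_rs_{r-1}s_r=s_{r-1},$$
and consequently
$$\tilde u^{-1}w_P=s_{r-1}\cdot(s_1s_2\cdots s_r)^{h/2-1},$$
again reduced by comparing lengths with $\ell(w_P)-\ell(\tilde u)=(r-1)^2$.

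The subword required by Lemma \ref{lemmatildeUUnotinVV} is $[\alpha_r,\alpha_{h/2-1},\ldots,\alpha_2,\alpha_1]$, of length $h/2$. Within each block $s_1s_2\cdots s_r$ of the reduced expression above, the simple-root indices appear in strictly increasing order $1,2,\ldots,r$, whereas the target subword starts at index $r$ and all subsequent indices are strictly smaller than $r$; a single block can therefore contribute at most one of the needed entries, occurring in strictly increasing block indices. In case {\upshape C2)} the leading $s_{r-1}$ cannot contribute either, since the index $r-1$ does not occur in the target. Either way one is asked to extract $h/2$ letters from only $h/2-1$ usable blocks of $s_1\cdots s_r$, which is impossible. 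Hence $\tilde u^{-1}\not\leq \tilde u^{-1}w_P$, and the corollary follows. The one technical subtlety is the telescoping identification in case {\upshape C2)}; once $\tilde u^{-1}w_P$ is pinned down the block-counting argument is uniform across the three cases.
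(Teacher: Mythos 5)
Your proof is correct and takes essentially the same route as the paper's: you reduce via Lemma \ref{lemmavanishingofcupproductwv} to showing $\tilde u^{-1}\not\leqslant \tilde u^{-1}w_P$, identify $\tilde u^{-1}w_P$ as $(s_1\cdots s_r)^{h/2-1}$ (resp. $s_{r-1}(s_1\cdots s_r)^{h/2-1}$ in case C2), using $s_rs_{r-1}=s_{r-1}s_r$) from Lemma \ref{lemmalongestWforBD}, and exclude the subsequence required by Lemma \ref{lemmatildeUUnotinVV}. Your block-counting argument (each increasing block $s_1\cdots s_r$ can supply at most one letter of the strictly decreasing target word, so $h/2$ letters cannot be found in $h/2-1$ usable blocks) simply makes explicit the step the paper treats as immediate, so nothing needs to change.
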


\begin{proof}
 By Lemma \ref{lemmalongestWforBD},
   $\tilde u^{-1}w_P$ is equal to $(s_1 s_{2}\cdots s_r)^{r-1}$ if case {\upshape C1$B$)} or C9) occurs, or equal to
       $s_{r-1}(s_1 s_{2}\cdots s_r)^{r-2}$   if case  {\upshape C2)} occurs (since $s_rs_{r-1}=s_{r-1}s_r$). Clearly, there does not exist  a subsequence $[i_1, \cdots, i_{h\over 2}]$   satisfying
    $[\alpha_{i_1}, \cdots, \alpha_{i_{h\over 2}}]=[\alpha_r, \alpha_{{h\over 2}-1},\cdots, \alpha_2, \alpha_1]$. Thus $\tilde u^{-1}\not \leqslant\tilde u^{-1}w_P$ by Lemma \ref{lemmatildeUUnotinVV}. Hence, the statement follows from Lemma \ref{lemmavanishingofcupproductwv}.
\end{proof}

\bigskip

\begin{proof}[Proof of Proposition \ref{propuuforC1BC2}]
 Due to the filtered algebra structure of $QH^*(G/B)$, we have
  $q_\lambda\sigma^u\star q_\lambda\sigma^u=\sum_{w, \eta}N_{u,u}^{w, \eta}q_{\eta+2\lambda}\sigma^w +\sum b_{w', \lambda'}q_{\lambda'}\sigma^{w'}$, where
              $gr(q_{\eta+2\lambda}\sigma^w%%
              )=2gr(q_{\lambda}\sigma^u)$ and   $gr(q_{\lambda'}\sigma^{w'})<2gr(q_{\lambda}\sigma^u)$.
         Since $gr_{[r+1, r+1]}(\sigma^u)=\mathbf{0}$, we conclude $w\in W_P$ and $\eta=\sum_{i=1}^rb_i\alpha_i^\vee$ where $b_i\geq 0$.
        Note   $gr_{[r, r]}(q_r)=h\mathbf{e}_r$ by Table \ref{tabgradingQQ}.
          Write $w=w_1w_2$ where $w_1\in W^{\tilde P}_P$ and $w_2\in W_{\tilde P}$. Using $gr_{[r, r]}$, we conclude
           $\ell(w_1)+b_rh=h.$
           If $b_r=0$, then
           $gr(q_{\eta}\sigma^{w_2})=gr_{[1, r-1]}(q_{\eta}\sigma^{w_1w_2})=2gr_{[1, r-1]}(u)=2\sum_{i=1}^{r-1}\mathbf{e}_i=gr(q_1\sigma^{v_{r-1}\cdots v_2})$
  where $v_j:=s_{j-1}s_j$ for each $2\leq j\leq r-1$. Thus we have $q_{\eta}\sigma^{w_2}=q_1\sigma^{v_{r-1}\cdots v_2}$ by Lemma \ref{lemmauniquegradinginPB}.
  In $Gr^{\tilde{\mathcal{F}}}(QH^*(G/B))$, we have $\overline{\sigma^{u}\star \sigma^u}=\overline{N_{u, u}^{w, \eta}q_1 \sigma^{w_1v_{r-1}\cdots v_1}+\mbox{ other   terms}}$. On the other hand, by Proposition \ref{propreductiontor-1}, we have
      $\overline{\sigma^{u}\star \sigma^u}=(\overline{\sigma^{\tilde u}}\star \overline{\sigma^{s_{r-1}\cdots s_1}})^2=\big(\overline{N_{\tilde u, \tilde u}^{w_1, 0}\sigma^{w_1} + \mbox{term 1}}\big)\star\big(\overline{\mbox{term }2}\big)$. Here term 1 is a nonnegative combination of the form $\psi_{\Delta, \Delta_{\tilde P}}(q_{\eta'_{\tilde P}}\sigma^{w_1'})$ with  either $\eta'_{\tilde P}\neq 0$ or $w_1\neq w_1'\in W^{\tilde P}$, and term 2 is a combination of elements $q_{\lambda''}\sigma^{v''}$ with $(v'', \lambda'')\in W_{\tilde P}\times Q^\vee_{\tilde P}$.
  Hence, $N_{u, u}^{w, \eta}\neq 0$ only if $N_{\tilde u, \tilde u}^{w_1, 0}\neq 0$, the latter of which is  the coefficient  of $\sigma^{w_1}$ in $\sigma^{\tilde u}\cup \sigma^{\tilde u}\in H^*(G/B)\subset QH^*(G/B)$.
  It is a general fact (following from the surjection $H^*(G/B)\rightarrow H^*(P/B)$) that $N_{\tilde u, \tilde u}^{w_1, 0}$ coincides with  the coefficient of $\sigma^{w_1}$ in $\sigma^{\tilde u}\cup \sigma^{\tilde u}\in H^*(P/B)$, and therefore it is equal to 0 by Corollary \ref{corvanishingtildeu2}.
     Thus $N_{u, u}^{w, \eta}=0$ whenever $b_r=0$. It remains to deal with the case $b_r=1$. By Lemma \ref{lemmauniquegradinginPB},  there
       is exactly one such term, which  turns out to be $q_\eta\sigma^{w}=q_{\mu_B-2\lambda}.$ Thus it suffices to show $N_{u, u}^{\scriptsize \mbox{id}, \mu_B-2\lambda}=1$.
       Using Proposition \ref{propQtoCthm} (2) repeatedly, we conclude the followings.
        \begin{enumerate}
             \item If case C1$B$) or C9) occurs, then $\eta=\mu_B-2\lambda=\alpha_r^\vee+2\sum_{i=1}^{r-1}\alpha_i^\vee$, and  we have
              $$N_{u, u}^{\scriptsize\mbox{id }, \eta}=N_{\tilde u, \tilde u s_{r-1}}^{s_1\cdots s_{r-1}s_1 \cdots s_{r-2}, q_{r-1}q_r}=N_{s_1\cdots s_{r-1}, s_1\cdots s_r s_{r-1}}^{s_1\cdots s_{r-1}s_1 \cdots s_{r-2}s_r, q_{r-1}}=N_{s_1\cdots s_{r-2}, s_1\cdots s_r}^{s_1\cdots s_{r-1}s_rs_1 \cdots s_{r-2}, 0}.$$
             \item If   C2) occurs, then $\eta=\mu_B-2\lambda=\alpha_r^\vee+\alpha_{r-1}^\vee+2\sum_{i=1}^{r-2}\alpha_i^\vee$, and  we have
              $$N_{u, u}^{\scriptsize\mbox{id }, \eta}=N_{\tilde us_{r-1}, \tilde u s_{r-1}s_{r-2}}^{s_1\cdots s_{r-2}s_1 \cdots s_{r-3}, q_{r-2}q_{r-1}q_r}=N_{s_1\cdots s_{r-2}, \tilde us_{r-1}s_{r-2}}^{s_1\cdots s_{r-2}s_1 \cdots s_{r-3}s_rs_{r-1}, q_{r-2}}=N_{s_1\cdots s_{r-3}, s_1\cdots s_r}^{s_1\cdots s_{r}s_1 \cdots s_{r-3}, 0}.$$
            \end{enumerate}
         Namely, we always have  $N_{u, u}^{\scriptsize\mbox{id }, \mu_B-2\lambda}=N_{u'', v''}^{v''u'', 0}$ with $u''=s_1\cdots s_{{h\over 2}-2}\in W_{P''}$ and $v''=s_1\cdots s_r\in W^{P''}$, where $\Delta_{P''}:=\{\alpha_1, \cdots, \alpha_{{h\over 2}-2}\}$. Thus it  is equal to  1 by Lemma \ref{lemmavvuuvirtual}.
     \end{proof}
\section{Conclusions for general $\Delta_P$} \label{sectiongeneralcase}
In this section, we allow $P/B$ to be reducible,  namely the Dynkin diagram $Dyn(\Delta_P)$ could be disconnected. We will first show the coincidence between  the grading map $gr$ defined in section 2.2 and the one introduced in \cite{leungli33}. Then we will refine the statement of Theorem 5.2 of \cite{leungli33}, and will sketch the proof of it.

%\subsection{Coincidence of various grading maps}\label{sectionproofofgrading}
Whenever referring to  the subset  $\Delta_P=\{\alpha_1, \cdots, \alpha_r\}$, in fact, we have already given an ordering on the $r$ simple roots in $\Delta_P$, in terms of $\alpha_i$'s.
As we can see in Definition \ref{defnofgrading}, the grading map $gr: W\times Q^\vee\rightarrow \mathbb{Z}^{r+1}$
depends only on such an ordering of $\alpha_i$'s  in $\Delta_P$, which has  nothing to do with the connectedness of $Dyn(\Delta_P)$. Therefore we can use the same definition even if $Dyn(\Delta_P)$ is disconnected. We want to show $gr$ coincides with the grading map given by Definition 2.8 (resp. 5.1) of \cite{leungli33} when $Dyn(\Delta_P)$ is connected (resp. disconnected).

Recall $\Delta_0:=\emptyset, \Delta_{r+1}:=\Delta$,  $\Delta_i:=\{\alpha_1, \cdots, \alpha_i\}$ for $1\leq i\leq r$, and $P_j:=P_{\Delta_j}$ for all $j$.
 Denote $\rho_j:={1\over 2}\sum_{\beta \in R^+_{P_j}} \beta$ where $\rho_0:=0$.
 Then for any $\lambda \in Q^\vee$, we have $gr(\mbox{id}, \lambda)=\sum_{j=1}^{r+1}\langle 2\rho_j-2\rho_{j-1}, \lambda\rangle\mathbf{e}_j$ by Definition \ref{defnofgrading}.

  \begin{lemma}\label{lemmagradingJJJ}
     For any $\alpha\in\Delta_j$, we have $gr_{[j+1, r+1]}(\mbox{id}, \alpha^\vee)=\mathbf{0}$ and $|gr(\mbox{id}, \alpha^\vee)|=2$.
  \end{lemma}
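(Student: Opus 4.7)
The plan is to unwind the definition of $gr$ and use standard invariance properties of the sum of positive roots under simple reflections. Since $\mbox{Inv}(\mbox{id})=\emptyset$, Definition \ref{defnofgrading} gives immediately
$$gr(\mbox{id},\alpha^\vee)=\sum_{i=1}^{r+1}\Big\langle\sum_{\beta\in R_{P_i}^+\setminus R_{P_{i-1}}^+}\beta,\,\alpha^\vee\Big\rangle\mathbf{e}_i=\sum_{i=1}^{r+1}\langle 2\rho_i-2\rho_{i-1},\alpha^\vee\rangle\mathbf{e}_i,$$
so the lemma reduces to two separate assertions about these inner products.

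For the first assertion, I would fix $i$ with $j+1\le i\le r+1$ and show that the finite set $R_{P_i}^+\setminus R_{P_{i-1}}^+$ is stable under the simple reflection $s_\alpha$; once this is established the sum of its elements is $s_\alpha$-invariant, hence orthogonal to $\alpha^\vee$. To check stability, recall that $s_\alpha$ permutes $R^+$ apart from sending $\alpha$ to $-\alpha$. When $i\le r$, we have $\alpha\in\Delta_j\subset\Delta_{i-1}$, so $\pm\alpha\in R_{P_{i-1}}$ and in particular $\pm\alpha$ does not lie in the difference set. For any other $\beta=\sum_{k=1}^{i}c_k\alpha_k\in R_{P_i}^+\setminus R_{P_{i-1}}^+$, the coefficient $c_i$ is positive, and since $s_\alpha(\beta)=\beta-\langle\beta,\alpha^\vee\rangle\alpha$ alters only the coefficient of $\alpha$ (an element of $\Delta_{i-1}$), the coefficient of $\alpha_i$ in $s_\alpha(\beta)$ remains $c_i>0$, so $s_\alpha(\beta)\in R_{P_i}^+\setminus R_{P_{i-1}}^+$. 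When $i=r+1$, the difference set is $R^+\setminus R_P^+$; an element of it must involve some $\alpha_m\notin\Delta_P$ with a positive coefficient, and since $\alpha\in\Delta_P$ the reflection $s_\alpha$ leaves that coefficient untouched, so the set is again stable. This gives $gr_{[j+1,r+1]}(\mbox{id},\alpha^\vee)=\mathbf{0}$.

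For the second assertion, I would just compute $|gr(\mbox{id},\alpha^\vee)|$ as a telescoping sum:
$$|gr(\mbox{id},\alpha^\vee)|=\sum_{i=1}^{r+1}\langle 2\rho_i-2\rho_{i-1},\alpha^\vee\rangle=\langle 2\rho_{r+1},\alpha^\vee\rangle=\langle 2\rho,\alpha^\vee\rangle,$$
since $\rho_0=0$ and $\rho_{r+1}=\rho$. The standard identity $\langle\rho,\alpha_k^\vee\rangle=1$ for each simple coroot (a direct consequence of $\rho=\sum_{i=1}^n\omega_i$) then yields $|gr(\mbox{id},\alpha^\vee)|=2$.

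The only nontrivial point is the stability of $R_{P_i}^+\setminus R_{P_{i-1}}^+$ under $s_\alpha$; the rest is a one-line telescoping argument. I do not expect any real obstacle, as this is pure root-system combinatorics with no appeal to the Dynkin-type casework in Table \ref{tabrelativeposi}, which explains why the lemma holds uniformly for any ordering of $\Delta_P$ (connected or not).
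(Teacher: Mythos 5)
Your proof is correct, and it reaches the same two identities as the paper but by a slightly different mechanism for the vanishing part. The paper's proof first notes that $\rho_k$ is the sum of the fundamental weights of the subsystem $R_{P_k}$, so $\langle \rho_k,\alpha^\vee\rangle=1$ for every $\alpha\in\Delta_k$; this gives $|gr_{[1,k]}(\mbox{id},\alpha^\vee)|=\langle 2\rho_k,\alpha^\vee\rangle=2$ simultaneously for all $k$ with $j\le k\le r+1$, and the vanishing of each component with index $i>j$ is then obtained by differencing consecutive partial sums, while $|gr(\mbox{id},\alpha^\vee)|=2$ is the case $k=r+1$. You instead prove the component-wise vanishing directly, by observing that for $i\ge j+1$ the layer $R_{P_i}^+\setminus R_{P_{i-1}}^+$ is stable under $s_\alpha$ (since $\pm\alpha$ lies in $R_{P_{i-1}}$ and reflection by $\alpha$ does not disturb the coefficient of $\alpha_i$, respectively of the simple roots outside $\Delta_P$ when $i=r+1$), so the sum of that layer pairs to zero with $\alpha^\vee$; you then get the total $2$ by telescoping and the identity $\langle\rho,\alpha^\vee\rangle=1$. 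Your reflection-stability argument is essentially the standard proof of the fact the paper quotes, so the two routes are of comparable length; yours is a bit more self-contained at each level $i$, whereas the paper's is shorter once the subsystem fundamental-weight identity is taken as known. Both arguments, as you note, are uniform in $\Delta_P$ and require none of the case analysis of Table \ref{tabrelativeposi}.
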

\begin{proof}
   It is well-known that $\rho_k$   equals  the sum of fundamental weights in the root subsystem $R_{P_k}$. That is, we have $\langle \rho_k, \alpha^\vee\rangle =1$ for any $\alpha\in \Delta_k$.
   Hence, for  $j\leq k\leq r+1$, we have $|gr_{[1, k]}(\mbox{id}, \alpha^\vee)|=\sum_{i=1}^{k}\langle 2\rho_i-2\rho_{i-1}, \alpha^\vee\rangle= \langle 2\rho_k, \alpha^\vee\rangle=2$. Thus if $i>j$, then $|gr_{[i, i]}(\mbox{id}, \alpha^\vee)|=|gr_{[1, i]}(\mbox{id}, \alpha^\vee)|-|gr_{[1, i-1]}(\mbox{id}, \alpha^\vee)|=2-2=0$.
 \end{proof}

  By abuse of notation, we still denote by $\psi_{\Delta_{j+1}, \Delta_j}$ the injective map
   $\psi_{\Delta_{j+1}, \Delta_j}: W_{P_{j+1}}^{P_j}\times Q_{P_{j+1}}^\vee/Q_{P_j}^\vee \longrightarrow  W\times Q^\vee
               $ induced from the Peterson-Woodward comparison formula. We recall  Definition 2.8   of \cite{leungli33} as follows.

\begin{defn}\label{defnofLeungLi33}
   Define a  {grading map}     $gr': W\times Q^\vee  \longrightarrow \mathbb{Z}^{r+1}$ associated to
            $\Delta_P=(\alpha_1, \cdots, \alpha_r)$ as follows.

  \begin{enumerate}
     \item   For $w\in W$, we take its (unique) decomposition $w=v_{r+1}\cdots v_1$ where $v_j\in W^{P_{j-1}}_{P_{j}}$. Then
     we define
          $gr'(w,  0)=\!\! \sum_{i=1}^{r+1}\ell(v_i)\mathbf{e}_{i}.$ % and simply denote  $gr(w):=gr(w, 0)$.
    \item For   $\alpha\in \Delta$,  we can define all $gr'(\mbox{id}, \alpha^\vee)$ recursively  in the following way.
    Define $gr'(\mbox{id}, \alpha_1^\vee)=2\mathbf{e}_1$; for any $\alpha\in \Delta_{j+1}\setminus \Delta_j$,  we define
                $${}\hspace{0.4cm}gr'(\mbox{id},  {\alpha^\vee})= \big( \ell(w_{P_j}w_{P_j'})+2+\sum\nolimits_{i=1}^j2a_i\big)\mathbf{e}_{j+1}-gr'(w_{P_j}w_{P_j'}, 0)-
                 \sum\nolimits_{i=1}^ja_igr'(\mbox{id}, \alpha_i^\vee),$$
     where  $w_{P_j}w_{P_j'}$ and  $a_i$'s
     are defined by the image
          $\psi_{\Delta_{j+1}, \Delta_j}(\mbox{id}, \alpha^\vee+Q_{P_j}^\vee)=(w_{P_j}w_{P_j'}, \alpha^\vee\!\!+\! \sum\limits_{i=1}^j a_i\alpha_i^\vee)$.

      \item In general,   we define  $gr'(w, \sum_{k=1}^n{b_k}\alpha_k^\vee)=gr'(w, 0)+\sum_{k=1}^nb_kgr'(\mbox{id}, \alpha_k^\vee)$.
  \end{enumerate}

\end{defn}

 One of the main results of \cite{leungli33}, i.e., Proposition \ref{propmainthm}, tells us that
  the grading $gr'$ respects the quantum multiplication. Precisely  for any Schubert classes $\sigma^u, \sigma^v$ of $QH^*(G/B)$, if $q_\lambda \sigma^w$ occurs in the quantum multiplication $\sigma^u\star \sigma^v$, then
    $$gr'(w, \lambda)\leq gr'(u, 0)+gr'(v, 0).$$

\begin{prop}\label{propgradingcoincide}
  If $Dyn(\Delta_P)$ is connected, then $gr=gr'$.
\end{prop}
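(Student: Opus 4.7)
My strategy is to reduce the comparison to a check on the additive generators of $W\times Q^\vee$. Both $gr$ and $gr'$ decompose as $gr(w,\lambda)=gr(w,0)+gr(\mathrm{id},\lambda)$ with $gr(\mathrm{id},\cdot)$ additive in the coroot (this is visible in Definition~\ref{defnofgrading} for $gr$, and is exactly Definition~\ref{defnofLeungLi33}(3) for $gr'$). Hence the claim reduces to two statements: (a) $gr(w,0)=gr'(w,0)$ for every $w\in W$, and (b) $gr(\mathrm{id},\alpha^\vee)=gr'(\mathrm{id},\alpha^\vee)$ for every $\alpha\in\Delta$.

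For (a), I would use the unique factorization $w=v_{r+1}v_r\cdots v_1$ with $v_j\in W^{P_{j-1}}_{P_j}$, which is reduced. Iterating the identity $\mathrm{Inv}(uv)=\mathrm{Inv}(v)\sqcup v^{-1}\mathrm{Inv}(u)$ yields $\mathrm{Inv}(w)=\bigsqcup_{i=1}^{r+1}(v_{i-1}\cdots v_1)^{-1}\mathrm{Inv}(v_i)$. Since $v_i\in W^{P_{i-1}}_{P_i}$ maps $R^+_{P_{i-1}}$ into $R^+$, one has $\mathrm{Inv}(v_i)\subset R^+_{P_i}\setminus R^+_{P_{i-1}}$; since $v_{i-1}\cdots v_1\in W_{P_{i-1}}\subset W_{P_i}$ preserves both $R_{P_i}$ and $R_{P_{i-1}}$, each piece $(v_{i-1}\cdots v_1)^{-1}\mathrm{Inv}(v_i)$ remains inside $R^+_{P_i}\setminus R^+_{P_{i-1}}$. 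Therefore $|\mathrm{Inv}(w)\cap(R^+_{P_i}\setminus R^+_{P_{i-1}})|=\ell(v_i)$, which is the common value computed by $gr(w,0)$ in Definition~\ref{defnofgrading} and by $gr'(w,0)$ in Definition~\ref{defnofLeungLi33}(1).

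For (b), I would induct on the index $j$ with $\alpha\in\Delta_{j+1}\setminus\Delta_j$. The base case $\alpha=\alpha_1$ gives $2\mathbf{e}_1$ on both sides. For the inductive step, let $(w_{P_j}w_{P'_j},\lambda)$ be the Peterson-Woodward image of $(\mathrm{id},\alpha^\vee+Q^\vee_{P_j})$ with $\lambda=\alpha^\vee+\sum_{i=1}^{j}a_i\alpha_i^\vee$. The crucial input is the vanishing
\[
gr_{[1,j]}(w_{P_j}w_{P'_j},\lambda)=\mathbf{0},
\]
which is Proposition~\ref{propwelldefinedgrading} applied to the sub-pair $(\Delta_{j+1},\Delta_j)$. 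Combining this identity with Lemma~\ref{lemmagradingJJJ}, which localizes the support of $gr(\mathrm{id},\alpha_i^\vee)$ (for $i\leq j$) to $[1,j]$ and of $gr(\mathrm{id},\alpha^\vee)$ to $[1,j+1]$, and with the normalization $|gr(\mathrm{id},\alpha^\vee)|=\langle 2\rho,\alpha^\vee\rangle=2$, a one-line bookkeeping yields
\[
gr(\mathrm{id},\alpha^\vee)=\bigl(\ell(w_{P_j}w_{P'_j})+2+2\sum_{i=1}^{j}a_i\bigr)\mathbf{e}_{j+1}-gr(w_{P_j}w_{P'_j},0)-\sum_{i=1}^{j}a_i\,gr(\mathrm{id},\alpha_i^\vee).
\]
This is precisely the recursion of Definition~\ref{defnofLeungLi33}(2), so by step (a) and the inductive hypothesis the right-hand side equals $gr'(\mathrm{id},\alpha^\vee)$, closing the induction.

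The only delicate point is the vanishing $gr_{[1,j]}(w_{P_j}w_{P'_j},\lambda)=\mathbf{0}$; everything else is a transparent unwinding of the definitions. Because this section is required to be independent of Section~3, I would reprove the identity in place via the extended affine Weyl group, exploiting the fact that the Peterson-Woodward lift corresponds to the zero-length element $\tau_i=w^{P'}_Pt_{-\omega^\vee_{i,P}}$ as in Corollary~3.13 of \cite{CMP-affinesymm}. The argument is uniform in the sub-pair, which is what makes the inductive step go through.
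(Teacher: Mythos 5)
Your skeleton is the same as the paper's: step (a) (the identity $\ell(v_i)=|\mbox{Inv}(w)\cap(R^+_{P_i}\setminus R^+_{P_{i-1}})|$ for the factorization $w=v_{r+1}\cdots v_1$) is exactly the paper's first paragraph, and your step (b) is the same induction on $j$, with the same bookkeeping via Lemma \ref{lemmagradingJJJ} and $|gr(\mbox{id},\alpha^\vee)|=2$; your reduction of everything to the single identity $gr_{[1,j]}(w_{P_j}w_{P_j'},\lambda)=\mathbf{0}$ is also exactly what the paper's computation amounts to. Where you diverge is in how that identity is justified. The paper does not invoke Proposition \ref{propwelldefinedgrading} at all: it proves the identity in two lines from the defining property of the Peterson--Woodward lift, namely $\mbox{Inv}(w_{P_j}w_{P_j'})=R^+_{P_j}\setminus R^+_{P_j'}$ while $\langle\gamma,\lambda\rangle=-1$ for $\gamma\in R^+_{P_j}\setminus R^+_{P_j'}$ and $=0$ for $\gamma\in R^+_{P_j'}$, so the two contributions to each coordinate $i\leq j$ cancel. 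This keeps Section 4 genuinely independent of Section 3 (which matters, since Section 3 uses the coincidence theorem to transfer Proposition \ref{propmainthm} to $gr$), and it handles the trivial case $Dyn(\{\alpha\}\cup\Delta_P)$ disconnected for free.

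Your proposed justification needs one repair. Citing Proposition \ref{propwelldefinedgrading} "for the sub-pair" is not literally available: for $j<r$ the relevant subsystem $\Delta_j$ is of type $A$, so what you would cite is the corresponding statement of \cite{leungli33}, and for $j=r$ you would be importing Section 3, which the paper deliberately avoids. Your fallback of rerunning the extended-affine-Weyl-group argument "uniformly in the sub-pair" is not quite uniform as stated: that argument uses $R(\alpha^\vee)=-\omega^\vee_{i,P}$, i.e.\ that $\alpha$ is attached to the subsystem by a single bond. This holds for the pairs $(\Delta,\Delta_P)$ of Table \ref{tabrelativeposi}, but fails for sub-pairs such as $(\Delta_r,\Delta_{r-1})$ in cases C1$B$), C1$C$), C9), C10), where $\langle\alpha_{r-1},\alpha_r^\vee\rangle=-2$ and the restriction is $-2\omega^\vee_{r-1,P_{r-1}}$, so the element in question is not one of the $\tau_i$'s. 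The conclusion is still true (the element $w_{P_j}w_{P_j'}t_{R(\lambda)}$ is the length-zero representative of its coset of the affine Weyl group in the extended one), but that requires an additional argument you have not supplied; the paper's direct root-theoretic computation sidesteps the issue entirely and is the cleaner route here.
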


\begin{proof} While it is a general fact that $gr|_{W\times \{0\}}=gr'|_{W\times \{0\}}$, we   illustrate a little bit details here.
   For each $j$,       $v_{j}\cdots v_1\in W_{P_{j}}$ preserves $R_{P_j}$, and $v_{r+1}v_r\cdots v_{j+1}\in W^{P_j}$ maps $R_{P_j}^+$ (resp. $-R_{P_j}^+$) to $R^+$ (resp. $-R^+$). Thus for any $\beta\in R^+_{P_j}$, $w(\beta)\in-R^+$ if and only if
    $v_{j}\cdots v_1(\beta)\in -R^+_{P_j}\subset -R^+$. That is, we have $\ell(v_j\cdots v_1)=|\mbox{Inv}(v_{j}\cdots v_1)|=|\mbox{Inv}(w)\cap  R_{P_{j}}^+|$.
     Hence,
     $\ell(v_j)=\ell(v_j\cdots v_1)-\ell(v_{j-1}\cdots v_1)=|\mbox{Inv}(w)\cap  R_{P_{j}}^+|-|\mbox{Inv}(w)\cap  R_{P_{j-1}}^+|=|\mbox{Inv}(w)\cap  (R_{P_{j}}^+\setminus R_{P_{j-1}}^+)|$.

Note $\Delta_1=\{\alpha_1\}$ and $gr(\mbox{id}, \alpha_1^\vee)=2\mathbf{e}_1$ (by Lemma \ref{lemmagradingJJJ}). Assume the statement follows for   simple roots in  $\Delta_k$. For   $\alpha\in \Delta_{k+1}\setminus \Delta_k$,  say
          $\psi_{\Delta_{k+1}, \Delta_k}(\mbox{id}, \alpha^\vee+Q_{P_k}^\vee)=(w_{P_k}w_{P_k'}, \lambda)$ where $\lambda=\alpha^\vee \!+\! \sum_{i=1}^k a_i\alpha_i^\vee$. Then
     $gr'_{[1,k]}(\mbox{id}, \alpha^\vee)=-gr'_{[1, k]}(w_{P_k}w_{P_k'}, 0)-
                 \sum\nolimits_{i=1}^ka_igr'_{[1, k]}(\mbox{id}, \alpha_i^\vee)=-gr'_{[1, k]}(w_{P_k}w_{P_k'}, 0)-
                 \sum\nolimits_{i=1}^ka_i\sum_{j=1}^k\langle 2\rho_j-2\rho_{j-1},\alpha_i^\vee\rangle \mathbf{e}_j=
                 -gr'_{[1, k]}(w_{P_k}w_{P_k'}, 0)+gr_{[1, k]}(\mbox{id}, \alpha^\vee)-
                 \sum_{j=1}^k\langle 2\rho_j-2\rho_{j-1},\lambda\rangle \mathbf{e}_j$.
   Note $\langle \gamma, \lambda\rangle\in\{0,,-1\}$ for any $\gamma\in R_{P_k}^+$, and $\Delta_{P_k'}=\{\beta\in \Delta_k~|~ \langle \beta, \lambda\rangle =0\}$.  Thus we have $\langle \gamma, \lambda\rangle=-1 $ if   $\gamma\in R_{P_k}^+\setminus R_{P_k'}^+$, or 0 if $\gamma\in  R_{P_k'}^+$.
   Hence, for any $1\leq i\leq k$, we have $-\langle 2\rho_i-2\rho_{i-1},\lambda\rangle=-\!\!\!\!\!\!\!\!\!\!\!\sum\limits_{\gamma\in   R_{P_i}^+\setminus R_{P_{i-1}}^+}\!\!\!\!\langle \gamma,\lambda\rangle=-\!\!\!\!\!\!\!\!\!\!\!\!\sum\limits_{\gamma\in  (R_{P_i}^+\setminus R_{P_{i-1}}^+)\bigcap (R_{P_k}^+\setminus R_{P_k'}^+)}\!\!\!\!\!\langle \gamma,\lambda\rangle=-\!\!\!\!\!\!\!\!\!\!\!\!
    \sum\limits_{\gamma\in   (R_{P_i}^+\setminus R_{P_{i-1}}^+)\bigcap (R_{P_k}^+\setminus R_{P_k'}^+)}\!\!\!\!-1=\big|  (R_{P_i}^+\setminus R_{P_{i-1}}^+)\bigcap  (R_{P_k}^+\setminus R_{P_k'}^+)\big|=\big|  (R_{P_i}^+\setminus R_{P_{i-1}}^+ )\bigcap \mbox{Inv}(w_{P_k}w_{P_k'})\big|=|gr_{[i, i]}(w_{P_k}w_{P_k'}, 0)|$. Hence, $gr_{[1, k]}(\mbox{id}, \alpha^\vee)=gr_{[1, k]}'(\mbox{id}, \alpha^\vee)$. Thus we have $gr(\mbox{id}, \alpha^\vee)=gr'(\mbox{id}, \alpha^\vee)$, by noting $gr_{[k+2, r+1]}(\mbox{id}, \alpha^\vee)=\mathbf{0}=gr_{[k+2, r+1]}'(\mbox{id}, \alpha^\vee)$ and $|gr(\mbox{id}, \alpha^\vee)|=2=|gr'(\mbox{id}, \alpha^\vee)|$. Hence, the statement follows by induction on $k$.
 \end{proof}

When $Dyn(\Delta_P)$ is not connected, we use the same ordering on $\Delta_P$ as in section 5 of \cite{leungli33}. Namely, we
   write  $\Delta_P=\bigsqcup_{k=1}^m\Delta^{(k)}$ such that each
                $Dyn(\Delta_{(k)})$ is a connected component of  $Dyn(\Delta_P)$.
                  Clearly, $\Delta^{(k)}$'s are all of   $A$-type with at most one exception, say           $\Delta^{(m)}$ if it exists.
                     We fix a canonical order on  $\Delta_P$.   Namely, we say   $\Delta_P=(\Delta^{(1)},\cdots, \Delta^{(m)})=(\alpha_1, \cdots, \alpha_r)$ such that
   for each $k$,
    $\Delta^{(k)}=\{\alpha_{k, 1}, \cdots, \alpha_{k, r_k}\}$  satisfying
    (1) if    $\Delta^{(k)}$ is  of $A$-type,  then $Dyn(\Delta^{(k)})$ is given by     \begin{tabular}{l} \raisebox{-0.4ex}[0pt]{$  \circline\!\;\!\!\circ\cdots\, \circline\!\!\!\;\circ $}\\
                 \raisebox{1.1ex}[0pt]{${\hspace{-0.2cm}\scriptstyle{\alpha_{k,1}}\hspace{0.03cm}\alpha_{k, 2}\hspace{0.65cm}\alpha_{k, r_k} } $}
  \end{tabular}\!
   together with the same way   of  denoting  an ending point (by $\alpha_{k, 1}$ or $\alpha_{k, r_k}$) as in section 2.4 of \cite{leungli33};
     (2)  if   $\Delta^{(k)}$ is not of $A$-type,   then   $Dyn(\Delta^{(k)})$ is given in the way of Table \ref{tabrelativeposi}.
  We also denote the   standard basis of $\mathbb{Z}^{r+1}$ as
                 $\{\mathbf{e}_{1, 1},\cdots, \mathbf{e}_{1, r_1}, \cdots, \mathbf{e}_{m, 1}, \cdots, \mathbf{e}_{m, r_m}, \mathbf{e}_{m+1, 1}\}$.
                 In order words,  we have
                  $\mathbf{e}_{k, i}=\mathbf{e}_{i+\sum_{t=1}^{k-1}r_t}$ and $\alpha_{k, i}=\alpha_{i+\sum_{t=1}^{k-1}r_t}$ in terms of our previous notations of $\mathbf{e}_j$'s and $\alpha_j$'s respectively.

 Using Definition \ref{defnofLeungLi33} (resp. \ref{defnofgrading}) with respect to $\Delta^{(k)}$, we obtain a  grading map
       $$gr_{{(k)}}': W\times Q^\vee \longrightarrow \mathbb{Z}^{r_k+1}=\bigoplus\nolimits_{i=1}^{r_k+1}\mathbb{Z}\mathbf{e}_{k, i} $$
  (resp.     $gr_{{(k)}}: W\times Q^\vee \longrightarrow \mathbb{Z}^{r_k+1}=\bigoplus\nolimits_{i=1}^{r_k+1}\mathbb{Z}\mathbf{e}_{k, i}$).
             Note $W_P=W_1\times \cdots\times W_m$ where each $W_k$ is the Weyl subgroup generated by simple reflections from    $\Delta^{(k)}$.
 In particular for any $(w, \lambda) \in W_k\times (\bigoplus_{\alpha\in \Delta^{(k)}}\mathbb{Z}\alpha^\vee)\subset W\times Q^\vee$,   we have
          $gr_{{(k)}}'(w, \lambda)\in \bigoplus_{i=1}^{r_k}\mathbb{Z}\mathbf{e}_{k, i}\hookrightarrow \mathbb{Z}^{r+1}$ which we treat as an element of $\mathbb{Z}^{r+1}$ via the  natural  inclusion.
 Now we recall Definition 5.1 of \cite{leungli33} for general $\Delta_P$ as follows.

 \begin{defn}\label{defgradingforgeneral}
           We define a  {grading map} as follows, say  again   $gr': W\times Q^\vee  \longrightarrow \mathbb{Z}^{r+1}$ by abuse of notation.
  \begin{enumerate}
     \item Write
          $w=v_{m+1}v_m\cdots    v_1$ (uniquely), in which $(v_1, \cdots, v_m, v_{m+1})\in W_1\times \cdots \times W_m\times W^P$.
          Then $gr'(w, 0):= \ell(v_{m+1})\mathbf{e}_{m+1, 1}+\sum_{k=1}^m  gr_{{(k)}}'(v_k, 0)$.  % $gr(w)\triangleq (i_1, \cdots, i_r, \ell(v))$.
     \item For each $\alpha_{k, i}\in \Delta^{(k)}$, $gr'(\mbox{id}, q_{\alpha_{k, i}^\vee}) :=  gr_{{(k)}}'(\mbox{id}, q_{\alpha_{k, i}^\vee})$.
             For   $\alpha\in \Delta\setminus\Delta_P$,
              we write    $\psi_{\Delta, \Delta_P}(q_{\alpha^\vee+Q^\vee_P})=w_Pw_{P'}q_{\alpha^\vee}\prod\limits_{k=1}^m\prod\limits_{i=1}^{r_k}q_{{\alpha_{k, i}^\vee}}^{a_{k,i}}$ and then define
                $$gr'(\mbox{id}, {\alpha^\vee})=\big( \ell(w_Pw_{P'})+2+\sum\limits_{k=1}^m\sum\limits_{i=1}^{r_k}2a_{k,i}\big)\mathbf{e}_{m+1, 1}-
                       gr'(w_Pw_{P'}, 0)-\sum\limits_{k=1}^m\sum\limits_{i=1}^{r_k}a_{k,i}gr'(\mbox{id},  {\alpha_{k,i}^\vee}).$$
                %  (See Proposition \ref{gradingforconn} for the explicit grading.)   %The remaining cases will be described explicitly in Lemma \ref{gradingforconn}
     \item In general,     $gr'(w, \sum_{\alpha\in \Delta} {b_\alpha}\alpha^\vee):= gr'(w, 0)+\sum_{\alpha\in \Delta}b_{\alpha}gr'(\mbox{id}, {\alpha^\vee})$.
  \end{enumerate}
\end{defn}
By abuse of notation, we denote $\pi_k$ for both of the natural projections
  $$\mathbb{Z}^{r_k+1}=\bigoplus\nolimits_{i=1}^{r_k+1}\mathbb{Z}\mathbf{e}_{k, i}\longrightarrow \bigoplus\nolimits_{i=1}^{r_k}\mathbb{Z}\mathbf{e}_{k, i}\quad
     \mbox{and}\quad  \mathbb{Z}^{r+1}=\bigoplus_{j=1}^{m+1}\bigoplus_{i=1}^{r_j}\mathbb{Z} \mathbf{e}_{j, i}\longrightarrow  \bigoplus_{i=1}^{r_k}\mathbb{Z} \mathbf{e}_{k, i}.$$

  \begin{lemma}\label{lemmarestriction}
     For   $1\leq k\leq m$, we have $\pi_k\circ gr=\pi_k \circ gr_{(k)}$ and $\pi_k\circ gr'=\pi_k \circ gr_{(k)}'$.
  \end{lemma}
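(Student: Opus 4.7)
The plan is to prove both identities by leveraging two structural features of the decomposition $\Delta_P = \bigsqcup_{k=1}^m \Delta^{(k)}$: the orthogonality $\langle \beta, \gamma^\vee\rangle = 0$ for $\beta \in R_{P_{k_1}}$ and $\gamma \in \Delta^{(k_2)}$ with $k_1 \neq k_2$ (since distinct components have no Cartan pairing), together with the resulting factorizations $W_P = W_1 \times \cdots \times W_m$ and $R_P^+ = \bigsqcup_k R_{P_k}^+$.

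The first identity $\pi_k \circ gr = \pi_k \circ gr_{(k)}$ should follow almost mechanically from Definition \ref{defnofgrading}. Fix $1 \leq i \leq r_k$ and let $j$ be the global index corresponding to the slot $\mathbf{e}_{k, i}$; then $R_{P_j}^+ \setminus R_{P_{j-1}}^+$ consists of positive roots containing $\alpha_{k, i}$ in their support, and by orthogonality these lie in $R_{P_k}^+$ and coincide with $R_{P_k^{(i)}}^+ \setminus R_{P_k^{(i-1)}}^+$, the set appearing in the $\mathbf{e}_{k, i}$-slot of $gr_{(k)}$. This identifies the $\lambda$-contribution at once by linearity in $\lambda$. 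For the inversion contribution, I will use the unique factorization $w = v_{m+1} v_m \cdots v_1$ with $v_{m+1} \in W^P$, $v_{k'} \in W_{k'}$: orthogonality implies $v_{k'}$ fixes $R_{P_k}$ pointwise for $k' \neq k$, while $v_{m+1} \in W^P$ maps $\pm R_P^+$ into $\pm R^+$. Together these force $\text{Inv}(w) \cap R_{P_k}^+ = \text{Inv}(v_k)$, and the inversion counts match slot by slot.

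For $\pi_k \circ gr' = \pi_k \circ gr'_{(k)}$ I will verify the three parts of Definition \ref{defgradingforgeneral} in turn. For $(w, 0)$: every summand in $gr'(w, 0) = \ell(v_{m+1}) \mathbf{e}_{m+1, 1} + \sum_{k'} gr'_{(k')}(v_{k'}, 0)$ with $k' \neq k$ is killed by $\pi_k$, so $\pi_k(gr'(w, 0)) = \pi_k(gr'_{(k)}(v_k, 0))$; this agrees with $\pi_k(gr'_{(k)}(w, 0))$ by combining Proposition \ref{propgradingcoincide} (which gives $gr_{(k)} = gr'_{(k)}$ on the connected component $\Delta^{(k)}$) with the inversion identity above applied to $gr_{(k)}$. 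For $(\text{id}, \alpha^\vee)$ with $\alpha \in \Delta_P$ the equality is immediate: either $\alpha \in \Delta^{(k)}$, in which case both sides equal $\pi_k(gr'_{(k)}(\text{id}, \alpha^\vee))$ by definition, or $\alpha \in \Delta^{(k')}$ for $k' \neq k$ and both sides vanish by orthogonality.

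The only nontrivial input is the case $\alpha \in \Delta \setminus \Delta_P$ of Definition \ref{defgradingforgeneral}(2), which hinges on the compatibility of the Peterson-Woodward lifting with the component decomposition. Writing $\lambda_B = \alpha^\vee + \sum_{k', i} a_{k', i} \alpha_{k', i}^\vee$ for the lifting of $\alpha^\vee + Q^\vee_P$, the condition $\langle \beta, \lambda_B\rangle \in \{0, -1\}$ for $\beta \in R_P^+$ decouples across components — each $\beta \in R_{P_{k'}}^+$ only sees the $\Delta^{(k')}$-part of $\lambda_B$ — so the coefficients $a_{k, i}$ agree with those of the Peterson-Woodward lifting computed using only $P_k$, and $\Delta_{P'} \cap \Delta^{(k)}$ agrees with the analogous subset for that single component. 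Substituting these equalities into the recursive formula and using that $\pi_k$ annihilates $gr'(\text{id}, \alpha_{k', i}^\vee)$ for $k' \neq k$ (by the case already settled), the $\pi_k$-image of $gr'(\text{id}, \alpha^\vee)$ collapses to exactly $\pi_k(gr'_{(k)}(\text{id}, \alpha^\vee))$. This componentwise behavior of the Peterson-Woodward lifting is the one substantive obstacle; the extension to all $(w, \lambda)$ is then immediate from part (3) of the definition.
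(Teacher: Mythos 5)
Your proposal is correct and takes essentially the same route as the paper: it reduces to the generators $(w,0)$, $\alpha^\vee$ with $\alpha\in\Delta_P$, and $\alpha^\vee$ with $\alpha\in\Delta\setminus\Delta_P$, and rests on the orthogonality of the components together with the key observation that the Peterson--Woodward lifting data $\big(w_Pw_{P'},(a_{k,i})\big)$ for $\Delta_P$ restricts componentwise to the lifting data $\big(w_{\check P}w_{\check P'},(a_{k,i})\big)$ for $\Delta^{(k)}$, which is precisely the paper's argument. Your use of Proposition \ref{propgradingcoincide} in the $(w,0)$ step is a harmless shortcut (that proposition is established independently of this lemma), where the paper instead argues directly from the definitions.
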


 \begin{proof}
   It follows immediately from the definition that   $\pi_k\circ gr'(w, \alpha^\vee)=\pi_k \circ gr_{(k)}'(w, \alpha^\vee)$ for  $(w, \alpha^\vee)\in W\times \Delta^{(k)}$. For    $\beta\in    \Delta^{(\tilde k)}$ where $\tilde k\neq k$, we note
  $\langle \alpha, \beta^\vee\rangle =0$.
  Thus     $\psi_{\Delta, \Delta^{(k)}}(q_{\beta^\vee+Q^\vee_{P_{\Delta^{(k)}}}})=q_{\beta^\vee}$; furthermore if
     $\psi_{\Delta, \Delta_P}(q_{\gamma^\vee+Q^\vee_P})=w_Pw_{P'}q_{\gamma^\vee}\prod\limits_{k=1}^m\prod\limits_{i=1}^{r_k}q_{{\alpha_{k, i}^\vee}}^{a_{k,i}}$ where $\gamma\in \Delta\setminus\Delta_P$,
        then  $\psi_{\Delta, \Delta^{(k)}}(q_{\gamma^\vee+Q^\vee_{P_{\Delta^{(k)}}}})=w_{\check P}w_{\check P'}q_{\gamma^\vee} \prod\limits_{i=1}^{r_k}q_{{\alpha_{k, i}^\vee}}^{a_{k,i}}$ with $w_{\check P}w_{\check P'}$ given by the $W_k$-component of $w_{ P}w_{P'}$, implying $\pi_k\circ gr'(w_{\check P}w_{\check P'})=\pi_k\circ gr'(w_{P}w_{P'})$.
    Thus we have  $gr_{(k)}'(\mbox{id}, \beta^\vee)=2\mathbf{e}_{k, r_k+1} $,  $\pi_k \circ gr'(\mbox{id}, \beta^\vee)= \pi_k \circ gr_{(\tilde k)}'(\mbox{id}, \beta^\vee)=\mathbf{0}=\pi_k\circ gr_{(k)}'(\mbox{id}, \beta^\vee)$, and consequently   $\pi_k \circ gr'(\mbox{id}, \gamma^\vee)=  \pi_k\circ gr_{(k)}'(\mbox{id}, \gamma^\vee)$. Hence, $\pi_k\circ gr'=\pi_k\circ gr_{(k)}'$.

  Due to our notation conventions, we have $\mathbf{e}_j=\mathbf{e}_{k, i}$ for $j=i+\sum_{t=1}^{k-1}r_t$.  Thus
  $\pi_k\circ gr=\pi_k\circ gr_{(k)}$ follows immediately, by noting
  $R_{P_j}^+=R_{P_{\Delta^{(k)}_i}}^+\bigsqcup \big(\bigsqcup_{t=1}^{k-1}R_{P_{\Delta^{(k)}_i}}^+\big)$. %$\rho_j-\rho_{j-1}=\rho_{k, i}-\rho_{k, i-1}$.
  \end{proof}

\bigskip

\begin{proof}[Proof of Theorem \ref{thmdefnscoincide}] %Due to Proposition \ref{propgradingcoincide}, we can assume $Dyn(\Delta_P)$ is not connected.
  For each $1\leq k\leq m$, we have  $gr_{(k)}=gr_{(k)}'$ by Proposition \ref{propgradingcoincide}. Thus
    $\pi_k\circ gr=\pi_k\circ gr'$ by Lemma \ref{lemmarestriction}. That is, we have
      $gr_{[1, r]}=gr'_{[1, r]}$. Note $|gr(w, 0)|=|gr'(w, 0)|=\ell(w)$ and $|gr(\mbox{id}, \alpha^\vee)|=|gr'(\mbox{id}, \alpha^\vee)|=2$ for any $\alpha\in\Delta$. Thus we have
       $|gr(w, \lambda)|=|gr'(w, \lambda)|$ for any $(w, \lambda)\in W\times Q^\vee$. Hence,  the statement follows.
\end{proof}

\bigskip

For general $\Delta_P$,   the subset
   $ \{gr(w, \lambda)~|~ q_{\lambda} {\sigma^w}\in QH^*(G/B)\}$ of   $\mathbb{Z}^{r+1}$, denoted as $S$   by abuse of notation, turns out again to be a  totally-ordered sub-semigroup of $\mathbb{Z}^{r+1}$. (The proof is similar to the one for Lemma 2.12 of \cite{leungli33} in the case when $Dyn(\Delta_P)$ is connected.)
In the same way as in section \ref{subsecMainResult},  we obtain
 an $S$-family    of subspaces of $QH^*(G/B)$;   it naturally extends to a $\mathbb{Z}^{r+1}$-family, and induces graded vector subspaces. Namely, by abuse of notation, we have
  $\mathcal{F}=\{F_{\mathbf{a}}\}$ with $ F_{\mathbf{a}}:= \bigoplus_{gr(w, {\lambda})\leq \mathbf{a}}\mathbb{Q}q_{\lambda}\sigma^w$;
   $Gr^{\mathcal{F}}(QH^*(G/B)):=\bigoplus_{\mathbf{a}\in \mathbb{Z}^{r+1}} Gr_\mathbf{a}^{\mathcal{F}}, \mbox{ where }
                Gr_{\mathbf{a}}^{\mathcal{F}}:=F_{\mathbf{a}}\big/\sum_{\mathbf{b}<\mathbf{a}}F_{\mathbf{b}}$; for each $1\leq j\leq r+1$,
     $Gr^{\mathcal{F}}_{(j)} :=\bigoplus_{i\in \mathbb{Z}}Gr_{i\mathbf{e}_{j}}^{\mathcal{F}}. $ In addition, we denote
       $$\mathcal{I}:=\bigoplus_{gr_{[r+1, r+1]}(w, \lambda)>\mathbf{0}}\mathbb{Q}q_\lambda\sigma^w\subset QH^*(G/B)$$
and        $$  \mathcal{A}:=\psi_{\Delta, \Delta_P}(QH^*(G/P))\oplus\mathcal{J} \quad \mbox{where}\quad\mathcal{J}:=F_{-\mathbf{e}_{r+1}}.$$
For each $1\leq j\leq r$, $X_j:=P_j/P_{j-1}$ is a Grassmannian (possibly of general type), and the quantum cohomology $QH^*(X_j)$ is therefore isomorphic to $H^*(X_j)\otimes \mathbb{Q}[t_j]$ as vector spaces.  Note $X_{r+1}:=P_{r+1}/P_r=G/P$.

Now we can restate Theorem \ref{thmgenthm11} in the introduction more precisely as follows.

\begin{thm}\label{thmgenthm22} $\mbox{}$
\begin{enumerate}
  \item  $QH^*(G/B)$ has an $S$-filtered algebra structure  with filtration $\mathcal{F}$, which naturally extends to a $\mathbb{Z}^{r+1}$-filtered algebra structure on         $QH^*(G/B)$.
   \item   $\mathcal{I}$ is an ideal  of $QH^*(G/B)$, and there is a canonical algebra isomorphism
      $$ QH^*(G/B)/\mathcal{I}  \overset{\simeq}{\longrightarrow}QH^*(P/B).$$
  \item   $\mathcal{A}$ is a subalgebra of $QH^*(G/B)$ and    $\mathcal{J}$ is an ideal of $\mathcal{A}$. Furthermore,
   there is a canonical algebra isomorphism (induced by $\psi_{\Delta, \Delta_P}$)
             $$ QH^*(G/P)\cong \mathcal{A}/\mathcal{J}.$$

  \item There is a canonical isomorphism of $\mathbb{Z}^{r}\times \mathbb{Z}_{\geq 0}$-graded algebras:
     $$Gr^{\mathcal{F}}(QH^*(G/B)[q_1^{-1}, \cdots, q_r^{-1}])\overset{\cong}{\longrightarrow}
    \big(\bigotimes_{j=1}^{r}QH^*(X_{j})[t_{j}^{-1}]\big)\bigotimes Gr^{\mathcal{F}}_{(r+1)}.$$
         There is also  an injective  morphism of graded algebras:
            $$\Psi_{r+1}:\,\,  \,\, QH^*(G/P)\hookrightarrow Gr^{\mathcal{F}}_{(r+1)},$$
           well defined by     $q_{\lambda_P}\sigma^w\mapsto \overline{\psi_{\Delta, \Delta_P}(q_{\lambda_P}\sigma^w)}$.
    Furthermore,  $\Psi_{r+1}$ is    an isomorphism if and only if either
            {\upshape (a) } $\Delta^{(k)}$'s are all  of $A$-type or {\upshape (b) } the only exception $\Delta^{(m)}$ is of $B_2$-type with $\alpha_r$ being a short simple root.
\end{enumerate}
\end{thm}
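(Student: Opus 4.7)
The plan is to reduce each of (1)--(3) to results already at hand and to carry out (4)---the only part requiring substantive new argument in the disconnected case---by iterating along a chain of parabolics and patching componentwise.

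For (1), Proposition \ref{propmainthm} suffices once I verify that its proof extends to arbitrary $\Delta_P$: Definition \ref{defnofgrading} of $gr$ makes sense verbatim for disconnected $Dyn(\Delta_P)$, Theorem \ref{thmdefnscoincide} continues to identify $gr$ with the grading $gr'$ of \cite{leungli33}, and the sub-semigroup property of $S$ (Lemma 2.12 of \cite{leungli33}) carries through without change; the $\mathbb{Z}^{r+1}$-extension is then formal. For (2), the map to construct sends $\sigma^w \mapsto \sigma^w$ when $w \in W_P$ (and to $0$ otherwise) and sends $q_{\alpha^\vee}$ to the matching quantum parameter of $QH^*(P/B)$ for $\alpha \in \Delta_P$ (to $0$ otherwise). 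The filtered structure from (1), combined with a basis check on the $gr_{[r+1,r+1]} = \mathbf{0}$ stratum (whose elements are exactly the $q_\mu \sigma^v$ with $v \in W_P$, $\mu \in Q^\vee_P$, matching the basis of $QH^*(P/B)$), should force this assignment to be a surjective algebra morphism with kernel $\mathcal{I}$. For (3), the subalgebra/ideal claim is immediate from restricting the filtration to the $\mathbf{e}_{r+1}$-direction, and $QH^*(G/P) \cong \mathcal{A}/\mathcal{J}$ is exactly what Propositions \ref{propwelldefinedgrading} and \ref{propmorphismofalgebras}, applied componentwise via Lemma \ref{lemmarestriction}, provide.

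The substance lies in (4). I would construct $\Psi_{r+1}$ and prove injectivity by iterating along the chain $B = P_0 \subsetneq \cdots \subsetneq P_r = P \subsetneq G$, invoking Proposition \ref{propanothermainthm} at each step for which the corresponding Grassmannian $X_j = P_j/P_{j-1}$ lies in a type-$A$ component of $\Delta_P$, and invoking Theorem \ref{thmfirstmainthm} for the unique possibly non-$A$ component $\Delta^{(m)}$. The tensor product decomposition after localization would then follow because inverting $q_1, \ldots, q_r$ splits the filtration along the $\mathbf{e}_j$-directions independently; the key is Lemma \ref{lemmarestriction}, which says that the $\mathbf{e}_j$-direction of $gr$ only sees the connected component containing $\alpha_j$, so that the different factors commute in the associated graded after localization.

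The hardest step will be the ``if and only if'' criterion for surjectivity of $\Psi_{r+1}$. I expect surjectivity to be equivalent, via the componentwise decomposition, to surjectivity of the analogous map for each $\Delta^{(k)}$ separately: type-$A$ components contribute surjectivity automatically by Proposition \ref{propanothermainthm}, while the single non-$A$ component $\Delta^{(m)}$ contributes surjectivity only in the cases identified by Theorem \ref{thmfirstmainthm}, namely C1$B$) or C9) with $r_m = 2$. A short inspection of Table \ref{tabrelativeposi} identifies both exceptional cases as $\Delta^{(m)}$ of type $B_2$ with $\alpha_r$ short, matching condition (b). Verifying that componentwise surjectivity patches to surjectivity of the global $\Psi_{r+1}$ should follow from the tensor product decomposition already established in the previous step.
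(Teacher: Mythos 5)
Your overall strategy is the one the paper follows: reduce everything to the connected components of $Dyn(\Delta_P)$ via Lemma \ref{lemmarestriction} (so that $\pi_k\circ gr=\pi_k\circ gr_{(k)}$), quote the type-$A$ results of \cite{leungli33} for the $A$-type components and Theorem \ref{thmfirstmainthm} for the single non-$A$ component $\Delta^{(m)}$, obtain (3) as a byproduct of (4), and identify the two exceptional connected cases (C1$B$) and C9) with $r=2$) as $B_2$ with $\alpha_r$ short. However, two steps are glossed in a way that leaves genuine gaps.

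First, the componentwise reduction only controls $gr_{[1,r]}$; it says nothing about the $\mathbf{e}_{r+1}$-coordinate, which is precisely the coordinate that matters for the lexicographic inequality in (1), for the well-definedness of $\Psi_{r+1}$ into $Gr^{\mathcal{F}}_{(r+1)}$, and for the subalgebra/ideal claims in (3). The paper closes this not by localization but by degree bookkeeping: whenever $N_{u,v}^{w,\lambda}\neq 0$ one has $|gr(w,\lambda)|=|gr(u,0)|+|gr(v,0)|$ (the dimension constraint together with $|gr(\mbox{id},\alpha^\vee)|=2$ for all $\alpha\in\Delta$), so equality in the first $r$ coordinates forces equality in the last one and the lex inequality follows. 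Your appeal to ``inverting $q_1,\dots,q_r$ splits the filtration'' cannot substitute for this: the tensor decomposition exists only after localization, while $\Psi_{r+1}$, $Gr^{\mathcal{F}}_{(r+1)}$, $\mathcal{A}$ and $\mathcal{J}$ live before localization. For the same reason your patching argument for the surjectivity criterion needs, as in Proposition \ref{propinjectionofvectsp}, the component-by-component uniqueness of elements of $W_P\times Q_P^\vee$ with prescribed $gr_{[1,r]}$-grading (Lemma \ref{lemmauniquegradinginPB} and its type-$A$ analogue in \cite{leungli33}), not the localized splitting. Second, in (2) a basis bijection on the $gr_{[r+1,r+1]}=\mathbf{0}$ stratum does not ``force'' an algebra isomorphism with $QH^*(P/B)$: one must show that the structure constants of the quotient (certain $N_{u,v}^{w,\lambda}$ of $G/B$ with $u,v,w\in W_P$, $\lambda\in Q_P^\vee$) agree with those of $P/B$, and the paper does this as in Theorem 1.3 of \cite{leungli33}, by proving that $QH^*(G/B)/\mathcal{I}$ is generated by $\overline{\sigma^{s_1}},\dots,\overline{\sigma^{s_r}}$ and satisfies the quantum Chevalley formula of $P/B$; this comparison of Gromov--Witten invariants is real content, not a formal consequence of matching bases.
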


  %%\begin{prop}[Quantum Chevalley Formula for $G/B$]\label{quanchevalley}
  %% For any $u\in  W, \alpha_i\in \Delta,$
   %%  $$ \sigma^u\star\sigma^{s_{\alpha_i}} =\sum_\gamma \langle \omega_i, \gamma^\vee\rangle \sigma^{us_\gamma}+\sum_\gamma \langle \omega_i, \gamma^\vee\rangle q_{\gamma^\vee}\sigma^{us_\gamma},$$
  %% where the first sum is over  roots $\gamma$ in $R^+$ for which $\ell(us_\gamma)=\ell(u)+1$,  and the second sum is over roots
   %%$\gamma$ in $R^+$ for which $ \ell(us_\gamma)=\ell(u)+1-\langle 2\rho, \gamma^\vee\rangle$.
  %%\end{prop}

\begin{remark}
    Say  $\alpha_j\in \Delta^{(k)}$, i.e.,  $j=i+\sum_{p=1}^{k-1}r_p$ for some $1\leq i\leq r_k$. Whenever $(k, i)\neq (m, r_m)$, we have $X_j\cong \mathbb{P}^i$. If $\Delta^{(m)}=\{\alpha_{r-1}, \alpha_r\}$ is of type $B_2$ and $\alpha_r$ is a short simple root,
 then we have $X_{r-1}\cong \mathbb{P}^1$ and $X_r\cong \mathbb{P}^3$. In other words, \textit{$\Psi_{r+1}$ is an isomorphism if and only if
    all $X_j$ ($1\leq j\leq r$) are projective spaces. }

\end{remark}

\bigskip

\begin{proof}[(Sketch) Proof of Theorem \ref{thmgenthm22}]
   The quantum cohomology ring $QH^*(G/B)$ is generated by the divisor Schubert classes $\{\sigma^{s_1}, \cdots, \sigma^{s_n}\}$.
   The well-known  quantum Chevalley formula (see \cite{fw}) tells us
    $ \sigma^u\star\sigma^{s_{i}} =\sum_\gamma \langle \omega_i, \gamma^\vee\rangle \sigma^{us_\gamma}+\sum_\gamma \langle \omega_i, \gamma^\vee\rangle q_{\gamma^\vee}\sigma^{us_\gamma}$ where $u\in W$ is arbitrary, and $\{\omega_1, \cdots, \omega_n\}$ denote the fundamental weights.

    To show (1), it suffices to use induction on $\ell(u)$ and the positivity of Gromov-Witten invariants $N_{u, v}^{w, \lambda}$, together with the Key Lemma of \cite{leungli33} for general $\Delta_P$. Namely,
    we need to show  $gr(us_\gamma, 0)\leq gr(u, 0)+gr(s_i, 0)$ (resp. $gr(us_\gamma, \gamma^\vee)\leq gr(u, 0)+gr(s_i, 0)$) whenever the corresponding coefficient $\langle\omega_i, \gamma^\vee\rangle \neq 0$. Under this hypothesis,    the expected inequality will hold if we replace ``$gr$" by ``$gr_{(k)}$",  due to  the Key Lemma of \cite{leungli33} which works for any $\Delta^{(k)}$. Therein the proof of the Key Lemma is most complicated part of the paper. We used the notion of virtual null coroot to do some reductions, but still had to do a big case by case analysis.
    Hence, the   expected inequality  holds   if we replace ``$gr$" by ``$\pi_k\circ gr$" (for any $1\leq k\leq m$), due to Lemma \ref{lemmarestriction}.  That is, it holds when we replace ``$gr$" by ``$gr_{[1, r]}$". Thus the expected inequality holds by noting that  $|gr(us_\gamma, 0)|$  (resp. $|gr(us_\gamma, \gamma^\vee)|$) is equal to $|gr(u, 0)|+|gr(s_i, 0)|$.

    The proof of (2) is exactly the same as the proof of Theorem 1.3 in \cite{leungli33}. The quotient  $P/B$ is again a complete flag variety, and therefore teh quantum cohomology $QH^(P/B)$ is generated by the special Schubert classes $\sigma^{s_i}$, $i=1, \cdots, r$. The prove is done by showing that
    $ QH^*(G/B)/\mathcal{I}$ is generated by $\overline{\sigma^{s_i}}$, $i=1, \cdots, r$, respecting the same quantum Chevalley formula.

      Statement (3) is in fact a consequence of (4).

      The proof of (4) is similar to the above one for (1). Namely we reduce $gr$ to $\pi_k\circ gr=\pi_k\circ gr_{(k)}$. The expected statement will hold with respect to $gr_{(k)}$, by using either the corresponding results of \cite{leungli33} for $\Delta^{(k)}$  of type $A$ or
      Theorem \ref{thmfirstmainthm} when $\Delta^{(k)}$ is not of type $A$. The proof of the former case is much simpler than the latter one, although the ideas are similar.    Here we also need to use the  same observation that
        $|gr(w, \lambda)|=|gr(u, 0)|+|gr(v, 0)|$ whenever the Gromov-Witten invariant $N_{u, v}^{w, \lambda}$ in the quantum product $\sigma^{u}\star \sigma^v$ is nonzero.
\end{proof}

\section{Appendix} \label{exceptional}
\subsection{Proof of Lemma \ref{lemmacoealpharrrnegative} (Continued)}\label{subsecAPPlemmaproof}
Recall   $\varepsilon_j=-\langle \alpha_j, \lambda\rangle\geq 0$, $j=1, \cdots, r$.
$gr_{[r, r]}(q_{r})=x\mathbf{e}_r, gr_{[r, r]}(q_{r+1})=y\mathbf{e}_r$,  $gr_{[r, r]}(q_{r+2})=z\mathbf{e}_r$.
Define $(c_1, \cdots, c_r)$ by $$\sum_{\beta\in R_{P_r}\setminus R_{P_{r-1}}}\beta=-y\sum_{i=1}^r c_i\alpha_i,$$
which are   described in Table \ref{tabtabforlemma} by direct calculations. Therein we recall that the case C9) with $r=2$ has been excluded from the discussion.
By definition,  we have   $$gr_{[r, r]}(q_\lambda)=(xa_r+ya_{r+1}+za_{r+2})\mathbf{e}_r= (y\sum_{j=1}^r c_j\varepsilon_j )\mathbf{e}_r.$$
 \renewcommand{\arraystretch}{1.2}

     \begin{table}[h]
  \caption{\label{tabtabforlemma}   %$(\Delta_P, \Upsilon)$
    }
  %\begin{center}
    \begin{tabular}{|c|c||c|c|c|c|}
     \hline
     % after \\: \hline or \cline{col1-col2} \cline{col3-col4} ...
        \multicolumn{2}{|c||}{ }   & $(c_1, \cdots, c_r)$   & $(x, y, z)$  & { $-yc_r$}          &$|R^+_P|-|R^+_{\tilde P}|$\\ \hline \hline
             & {$r=6$}   &$(1, 2, 3, 2, 1, 2)$ & $(11, -11, 0)$ & $22$& %$11\mathbf{e}_6+5\mathbf{e}_{5}$
        $21$\\ \cline{2-5}
       \raisebox{1.5ex}[0pt]{\upshape C4)}   & $r=7$   & $(1, 2, 3, 4, {8\over 3}, {4\over 3}, {7\over 3})$ &  $(14,-21,0)$ & $49$  &  % $21\mathbf{e}_7+\sum_{j=1}^{6}\mathbf{e}_j$
        $42$ \\ \hline
       \multicolumn{2}{|c||}{\upshape C5)}     & $({2\over 5}, {4\over 5}, {6\over 5}, {3\over 5}, 1)$  & &   &    \\ \cline{1-3}
         & {$r=4$} &  $({1\over 2}, 1, {1\over 2},  1)$   & &  & %$11\mathbf{e}_6+5\mathbf{e}_{5}$
        \\ \cline{2-3}
         & $r=5$ &    $({2\over 5}, {4\over 5}, {6\over 5}, {3\over 5}, 1)$ & $(2r-2, {(1-r)r\over 2}, 1-r)$ & ${r(r-1)\over 2}$  &  % $21\mathbf{e}_7+\sum_{j=1}^{6}\mathbf{e}_j$
       ${r(r-1)\over 2}$ \\ \cline{2-3}
     \raisebox{1.5ex}[0pt]{\upshape C7)}& {$r=6$} &  $({1\over 3}, {2\over 3}, 1, {4\over 3}, {2\over 3}, 1)$  &  & & %$11\mathbf{e}_6+5\mathbf{e}_{5}$
         \\ \cline{2-3}
         & $r=7$ &   $({2\over 7}, {4\over 7}, {6\over 7}, {8\over 7},{10\over 7}, {5\over 7}, 1)$  & & &  % $21\mathbf{e}_7+\sum_{j=1}^{6}\mathbf{e}_j$
          \\ \hline
       {\upshape C9)}& {$r=3$} &  $({1\over 3}, {2\over 3}, 1)$  & $(6, -9, 0)$ & $9$& %$11\mathbf{e}_6+5\mathbf{e}_{5}$
        $6$\\ \hline
        \multicolumn{2}{|c||}{\upshape C10)} &   $({2\over 3}, {4\over 3}, 1)$ &$(4, -6, 0)$ & $6$  &  % $21\mathbf{e}_7+\sum_{j=1}^{6}\mathbf{e}_j$
        $6$ \\ \hline
       \end{tabular}
       \end{table}
    %\end{center}

  Therefore, if $\varepsilon_r>0$, then we have
  $$-(xa_r+ya_{r+1}+za_{r+2})=(-y)\cdot \sum_{j=1}^rc_j\varepsilon_j\geq (-y)\cdot c_r\cdot 1\geq |R^+_P|-|R^+_{\tilde P}| \geq |R^+_P|-|R^+_{\tilde P}\cup R^+_{\hat P}| .$$
  If case C9) with $r=3$ occurs, we are done.
 If   $-gr_{[r, r]}(q_\lambda)= (|R^+_P|-|R^+_{\tilde P}\cup R^+_{\hat P}|)\mathbf{e}_r$  held,
   then C5) or  C7)   occurs and all the above inequalities are   equalities. This  implies that $\varepsilon_r=1$ and $ \varepsilon_j=0, j=1, \cdots, r-1$. Therefore we have   $\langle \gamma, \lambda\rangle\in \{0, -1\}$ for any   $\gamma\in R_P^+$, by noting that
            $\gamma=\varepsilon \alpha_r+\sum_{i=1}^{r-1}c_i\alpha_i$ (where $\varepsilon\in \{0, 1\}$) for all these three cases.  That is, $\lambda=\lambda_B$ is the Peterson-Woodward lifting of $\lambda_P$, contradicting with the hypothesis. Hence, the statement follows if  $\varepsilon_r>0$.

Assume now  $\varepsilon_r=0$. Since $\lambda\neq \lambda_B$, we have  $\varepsilon_j=0$ for all $\alpha_j\in \Delta_P$ but exactly one exception, say $\alpha_k$.   In addition,
we have  $$-(xa_r+ya_{r+1}+za_{r+2})=-y c_k\varepsilon_k=-yc_k,$$
together  with the property that   the coefficient  $\theta_k$ in the highest root $\theta=\sum_{i=1}^r\theta_i\alpha_i$ of $R^+_P$ is not equal to $1$. (Otherwise, $\lambda$ would be the Peterson-Woodward lifting of $\lambda_P$, contradicting with the hypothesis.) Thus if $c_k\geq c_r$, then we have
$$-(xa_r+ya_{r+1}+za_{r+2})=-yc_k\geq -yc_r\geq |R^+_P|-|R^+_{\tilde P}| > |R^+_P|-|R^+_{\tilde P}\cup R^+_{\hat P}|.$$ Here the last inequality holds since $\alpha_r\in  R^+_{\hat P}\setminus R^+_{\tilde P}$. If $c_k<c_r$, then all possible $k$, together with $-yc_k$ and
  the number  $|R^+_P|-|R^+_{\tilde P}\cup R^+_{\hat P}| =|R^+_P|-|R^+_{\tilde P}|-|R^+_{P_{\Delta_P\setminus\{\alpha_k\}}}|+ |R^+_{P_{\Delta_{\tilde P}\setminus\{\alpha_k\}}}|$, are precisely given in Table \ref{tabtabforlemma22} by direct calculations.
  In particular, we also have $-(xa_r+ya_{r+1}+za_{r+2})=-y c_k>|R^+_P|-|R^+_{\tilde P}\cup R^+_{\hat P}|$.

  \renewcommand{\arraystretch}{1.2}
  \begin{center}
   \begin{table}[h]
  \caption{\label{tabtabforlemma22}   %$(\Delta_P, \Upsilon)$
    }
    \begin{tabular}{|c|c||c|c|c|}
     \hline
     % after \\: \hline or \cline{col1-col2} \cline{col3-col4} ...
        \multicolumn{2}{|c||}{ }     &   $k$  & { $-yc_k$}          &$|R^+_P|-|R^+_{\tilde P}\cup R^+_{\hat P}|$\\ \hline \hline
             &   &$2$   & $42$& %$11\mathbf{e}_6+5\mathbf{e}_{5}$
        $32$\\ \cline{3-5}
       \raisebox{1.5ex}[0pt]{\upshape C4)}   &  \raisebox{1.5ex}[0pt]{$r=7$} & $6$   & $28$  &  % $21\mathbf{e}_7+\sum_{j=1}^{6}\mathbf{e}_j$
        $27$ \\ \hline
       \multicolumn{2}{|c||}{\upshape C5)}   & $2$  & 8   &  7  \\ \hline
         &  $r=5$ & $2$    & 8 & 7 %$11\mathbf{e}_6+5\mathbf{e}_{5}$
        \\ \cline{2-5}
         & $r=6$ & $2$  & 10  & 9 % $21\mathbf{e}_7+\sum_{j=1}^{6}\mathbf{e}_j$
         \\ \cline{2-5}
     \raisebox{1.5ex}[0pt]{\upshape C7)}&  & $2$    & 12 & 11 %$11\mathbf{e}_6+5\mathbf{e}_{5}$
         \\ \cline{3-5}
         &  \raisebox{1.5ex}[0pt]{$r=7$} & 3  & 18  & 15 % $21\mathbf{e}_7+\sum_{j=1}^{6}\mathbf{e}_j$
          \\ \hline
       {\upshape C9)}& {$r=3$} & $2$    & $6$& %$11\mathbf{e}_6+5\mathbf{e}_{5}$
        $5$\\ \hline
        \multicolumn{2}{|c||}{\upshape C10)}  & $1$  & $4$  &  % $21\mathbf{e}_7+\sum_{j=1}^{6}\mathbf{e}_j$
        $3$ \\ \hline
       \end{tabular}
     \end{table}
    \end{center}

\subsection{Proof of Proposition \ref{propmorphismofalgebras} (3) (Continued)}\label{subsecAPPprop33uu}

For each case,  we uniformly denote those $(q_\lambda, u)$  in Table \ref{tabNONvirtualnullcoroot} in order as $(q_{\lambda_i}, u_i)$'s, and denote by $\tilde u_i$   the minimal length representative of $u_iW_{\tilde P}$ as before (i.e., $\tilde u_i$ is given by a subexpression  $s_L$   of $u_i$ with the sequence ending with $r$). We also denote  those virtual null coroot(s)  $\mu_B$ in  Table \ref{tabvirtualnullcoroot} in order as $\mu_1, \mu_2$. Namely if there is a unique $\mu_B$, then we denote $\mu_1=\mu_2=\mu_B$ for convenience.

 Due to Lemma \ref{lemmagrading33cases} again,   it suffices to show all the equalities  in Table \ref{tabtabforprodUU}  hold in $Gr^{\mathcal{F}}(QH^*(G/B))$ for the corresponding cases.
  Note   $\overline{\sigma^{u_i}}\star \overline{\sigma^{u_j}}=\overline{\sum N_{u_i, u_j}^{w, \eta}q_{\eta}\sigma^{w}}$ where $gr(q_\eta\sigma^w)=gr(\sigma^{u_i})+gr(\sigma^{u_j})$. Consequently, we have $w\in W_P$, $\eta=\sum_{k=1}^rb_k \alpha_k^\vee$  and $\ell(\tilde u_i)+\ell(\tilde u_j)=b_r |gr_{[r, r]}(q_r)|+|gr_{[r, r]}(\sigma^w)|$. Since $0\leq |gr_{[r, r]}(\sigma^w)|\leq |R^+_P|-|R^+_{\tilde P}|$, we have $b^{\scriptsize\mbox{max}}\geq b_r\geq b^{\scriptsize\mbox{min}} \geq 0$ for certain integers $b^{\scriptsize\mbox{max}}, b^{\scriptsize\mbox{min}}$.
 Write $w=vw_2$ where $v\in W^{\tilde P}_P$ and $w_2\in W_{\tilde P}$.  Once $b_r$ is given,  both $\ell(v)$ and $(w_2, \eta)$ are fixed by the above equalities on gradings together with Lemma \ref{lemmauniquegradinginPB}.
    There is a unique term, say  $\overline{q_\vartheta\sigma^{\check w}}$, on the right hand side of each   expected identity in Table \ref{tabtabforprodUU},  where $\check w=\mbox{id}$, $u_2$  or $u_3$.
      It is easy to check that $q_\vartheta\sigma^{\check w}$ is of expected grading with $b_r(\vartheta)=b^{\scriptsize\mbox{max}}$.
       Thus if $b_r(\eta)= b^{\scriptsize\mbox{max}}$, then we have $\eta=\vartheta$ and $w=v\check w_2$ with $\ell(v)=\ell(\tilde v)$.  Here  $\tilde v\in W^{\tilde P}_P$ and $\check w_2\in W_{\tilde P}$ are given by  $\check w=\tilde v \check w_2$.
    In particular, we have $w=\mbox{id}$ if $\check w=\mbox{id}$.  Hence, in order to conclude the expected equality, it suffices to show
        \begin{enumerate}
          \item $N_{u_i, u_j}^{v\check w_2, \vartheta}=1$ if $v=\tilde v$, or 0 otherwise;
          \item  $N_{u_i, u_j}^{w, \eta}=0$ whenever $b^{\scriptsize\mbox{max}}>b_r(\eta)\geq b^{\scriptsize\mbox{min}}$. Similar to the proof in section \ref{subsecproofforprop33}, this claim follows from the next two:
              \begin{enumerate}
                \item $\sigma^{\tilde u_i}\cup \sigma^{\tilde u_j}=0$ in $H^*(P/B)$;
                \item $N_{u_i, u_j}^{w, \eta}=0$ whenever $b^{\scriptsize\mbox{max}}>b_r(\eta)\geq \max\{1, b^{\scriptsize\mbox{min}}\}$.
              \end{enumerate}
        \end{enumerate}

\renewcommand{\arraystretch}{1.3}

  \begin{table}[h]
  \caption{\label{tabtabforprodUU}   %$(\Delta_P, \Upsilon)$
    }
 %\begin{center}
    \begin{tabular}{|c|c||l|c|c|}
     \hline
       \multicolumn{2}{|c||}{ }     &   {}\hspace{1.1cm}Equalities  & {  $N_{u_i, u_j}^{v\check w_2, \vartheta}$ }          &$ q_{\eta}$\\ \hline \hline

     % after \\: \hline or \cline{col1-col2} \cline{col3-col4} ...
                   &     &   & &
                 $ q_7q_1^2q_2^2q_3^2q_4^2q_5$,  \\
       \raisebox{1.5ex}[0pt]{\upshape C4)\!\!}       &   \raisebox{1.5ex}[0pt]{$r=7$}  &   & &   $q_7^2q_1^2q_2^2q_3^2q_4^3q_5^2q_6
              $      \\ \cline{1-2}\cline{5-5}

            {\upshape C7)\!\!}    & $\!\!r\!=\!4;6\!\!\,{}$&   \raisebox{1.5ex}[0pt]{$\overline{\sigma^{u_1}}\star \overline{\sigma^{u_1}}=\overline{q_{\mu_1-2\lambda_1}}$}   &  \raisebox{1.5ex}[0pt]{$N_{u_1, {\scriptsize\mbox{id}}}^{u_1^{-1}, 0}$}  &   $q_4$;  $ q_6^2q_3q_4^2q_5 $          \\ \cline{1-2}\cline{5-5}

\multicolumn{2}{|c||}{\upshape C10)}   &   &       &  $ q_3^2q_2$   \\ \cline{1-5}

      \multicolumn{2}{|c||}{  }   & $\overline{\sigma^{u_2}}\star \overline{\sigma^{u_2}}=\overline{q_{\mu_2-2\lambda_2}}$  &  $N_{u_2, {\scriptsize\mbox{id}}}^{u_2^{-1}, 0}$ &          \\ \cline{3-4}
          \multicolumn{2}{|c||}{\raisebox{1.5ex}[0pt]{\upshape C5),\, C7)} } & $\overline{\sigma^{u_1}}\star \overline{\sigma^{u_2}}=\overline{q_{\lambda_3-\lambda_1-\lambda_2}\sigma^{u_3}}$ &  $N_{u_1s_rs_{r-2}\cdots s_2s_1,  s_{1}\cdots s_{r-1}}^{v\check w_2,\,\, 0}$ &   \raisebox{1.5ex}[0pt]{$\emptyset$}         \\ \cline{1-2} \cline{1-5}

           \multicolumn{2}{|c||}{{\upshape C5)\!\!} }   &   &   &   $ \emptyset $         \\ \cline{1-2}\cline{5-5}
        &  {$r=7$} &   \raisebox{1.5ex}[0pt]{$\overline{\sigma^{u_1}}\star   \overline{\sigma^{u_1}}=\overline{q_{\lambda_2-2\lambda_1}\sigma^{u_2}}$}  &    \raisebox{1.5ex}[0pt]{$N_{s_{r-1}\cdots s_1, s_{r-1}\cdots s_1}^{v\check w_2, 0}$} &  $q_7^2q_4q_5^2q_6$         \\ \cline{2-5}

          \raisebox{1.5ex}[0pt]{\upshape C7)\!\!}    &  {$r=5$}  & $\overline{\sigma^{u_1}}\star \overline{\sigma^{u_3}}=\overline{q_{\mu_1-\lambda_1-\lambda_3}}$   &  $N_{u_1, {\scriptsize\mbox{id}}}^{u_3^{-1}, 0}$  &     $q_5q_3q_4$       \\ \hline

               &     &  $\overline{\sigma^{u_1}}\star \overline{\sigma^{u_2}}=\overline{q_{\mu_1-\lambda_1-\lambda_2}}$    & $N_{u_1, {\scriptsize\mbox{id}}}^{u_2^{-1}, 0}$  &       $  q_6q_1q_2q_3q_4q_5$     \\ \cline{3-5}
      \raisebox{1.5ex}[0pt]{\upshape C4)\!\!}  &    \raisebox{1.5ex}[0pt]{$r=6$} & $\overline{\sigma^{u_1}}\star \overline{\sigma^{u_1}}=\overline{q_{\lambda_2-2\lambda_1}\sigma^{u_2}}$  & $N_{s_{54362345}, s_{54362345}}^{ v\check w_2, 0}$  &   $\emptyset$     \\ \cline{1-5}

          \end{tabular}
    %\end{center}
  \end{table}
%\vspace{-1cm}

 To show (1), we use Proposition \ref{propQtoCthm} (2) repeatedly. As a consequence,  we can conclude that $N_{u_i, u_j}^{v\check w_2, \vartheta}$ coincides with a classical intersection number given in Table \ref{tabtabforprodUU} as well, which is of the form either $N_{u_i, {\scriptsize\mbox{id}}}^{u_j^{-1}, 0}$ or  $N_{u', v'}^{v\check w_2, 0}$. The formal one is equal to 1 by checking $u_i=u_j^{-1}$ easily. Denote $\Delta_{\check P}:=\Delta_P\setminus\{\alpha_k\}$. For the latter one, it is easy to check that both $u',  v'$ are in $W^{\check P}_P$, where  $s_k$ denotes the last simple reflection in the reduced expression of  $\check w_2$. Thus  $N_{u', v'}^{v\check w_2, 0}=0$ unless $v\check w_2$ is in $W^{\check P}_P$ as well.  In addition, it is  easy to check that $\ell(u')+\ell(v')=\ell(v\check w_2)=\dim P/\check P$, and  that $u'$ is the minimal length representative of $W_{\check P}=w_P v'W_{\check P}$. Thus $u'$ is dual to $v'$ with respect to the canonical non-degenerated bilinear form on $H^*(P/\check P)$. Hence,  $N_{u', v'}^{v\check w_2, 0}=1$. (See e.g. section 3 of \cite{fw} for these well-known facts.) That is, (1) follows.  To illustrate the above reduction more clearly, we give a little bit more details for $N_{u_1, u_1}^{v\check w_2, \vartheta}$ in  case C4) with $r=6$. In this case, we have $u_1=s_{54362132436}s_5s_4s_3s_2s_1$ and $q_{\vartheta}=q_{\lambda_2-2\lambda_1}=q_1^2q_2^2q_3^2q_4q_6$.
  Proposition \ref{propQtoCthm} (2)  $u_1=s_{54362132436}s_5s_4s_3s_2s_1$ and $q_{\vartheta}=q_{\lambda_2-2\lambda_1}=q_1^2q_2^2q_3^2q_4q_6$.
Using Proposition \ref{propQtoCthm} (2), we can first deduce $N_{u_1, u_1}^{v\check w_2, \vartheta}=N_{u_1, s_{54362345}}^{v\check w_2 s_{12346321}, 0}$.
 Denote   $u':=s_{54362345}$. Note $\Delta_{\check P}=\{\alpha_1, \alpha_2, \alpha_3, \alpha_4, \alpha_6\}$, and $u'(\alpha)\in R^+$ for   all $\alpha\in \Delta_{\check P}$. Thus we can further deduce that
$N_{u_1, u'}^{v\check w_2 s_{12346321}, 0}=N_{u_1s_{12364321}, u'}^{v\check w_2, 0}$. That is, we have
   $N_{u_1, u_1}^{v\check w_2, \vartheta}=N_{u', u'}^{v s_{12345}, 0}$. Note $\dim P/\check P=36-20=16=\ell(\check w)$ and $\check w=s_{12346325436}s_{12345}\in W^{\check P}_P$. That is, $\check w$ is the (unique) longest element in
    $W^{\check P}_P$.
   Thus  $N_{u', u'}^{v s_{12345}, 0}=0$ unless $v\check w_2=\check w$. Note $w_{\check P}=s_{43621324361234123121}$, $\ell( u'w_{\check P} (u')^{-1})\leq 2\ell(u')+\ell(w_{\check P})=36=|R^+_P|$, and $u'w_{\check P} (u')^{-1}(\alpha)\in -R^+$ for all $\alpha\in \Delta_P$. Thus $w_P= u'w_{\check P} (u')^{-1}$. That is, $w_Pu'= u'w_{\check P}$. In other words, $\sigma^{u'}$ is dual to itself in $H^*(P/\check P)$. Hence, $N_{u', u'}^{\check w, 0}=1$.

   To show (a), we note $\tilde u_i\in W^{\tilde P}_P$. Thus in $H^*(P/B)$, we have $N_{\tilde u_i, \tilde u_j}^{w', 0}=0$ unless $w'\in W^{\tilde P}_P$. Consequently, we have $\sigma^{\tilde u_i}\cup \sigma^{\tilde u_j}=0$ in $H^*(P/B)$ if $\ell(\tilde u_i)+ \ell(\tilde u_j)>\dim P/\tilde P=|R^+_P|-|R^+_{\tilde P}|$. Hence, (a) follows immediately from either the above inequality or the combination of Lemma \ref{lemmavanishingofcupproductwv} and Corollary \ref{corvanishingtildeu2}, except for the case when C4) with $r=7$ occurs. In this exceptional case,
    we note $\ell(\tilde u_1)=21={1\over 2 }\dim P/\tilde P$. Hence, we have $\sigma^{\tilde u_1}\cup \sigma^{\tilde u_1}=N_{\tilde u_1, \tilde u_1}^{w', 0} \sigma^{w'}$ in $H^*(P/B)$, where $w'$ denotes the longest element in $W^{\tilde P}_P$. Equivalently, the same equality holds in $H^*(P/\tilde P)$, once we treat   the Schubert classes $\sigma^{\tilde u_1}, \sigma^{w'}$ as elements $H^*(P/\tilde P)$ canonically.  Then we have
     $N_{\tilde u_1, \tilde u_1}^{w', 0} \sigma^{w'}=\int_{[P/\tilde P]}\sigma^{\tilde u_1}\cup \sigma^{\tilde u_1}\cup \sigma^{\scriptsize\mbox{id}}=N_{\tilde u_1, {\scriptsize\mbox{id}}}^{u'', 0}=1$ if $\tilde u_1=u''$, or 0 otherwise. Here  $u''$ is the minimal length representative of the coset $w_P\tilde u_1W_{\tilde P}$. Note $\tilde u_1$ maps $R_{\tilde P}^+$ to $R^+_P$, and $w_P$ maps $R^+_P$ to $-R^+_P$. Hence, we have $w_P\tilde u_1=u''w_{\tilde P}$, i.e.,  $w_P=u''w_{\tilde P}(\tilde u_1)^{-1}$. By direct calculation,  we have     $\tilde u_1w_{\tilde P}(\tilde u_1)^{-1}(\alpha_1)=\alpha_1+\cdots+\alpha_5\not\in -R^+$.      Thus $\tilde u_1\neq u''$ and (a) follows.  Note $\tilde u_1\leqslant w_P\tilde u_1=u''w_{\tilde P}$ if and only if $\tilde u_1\leqslant u''$. Since $\ell(u'')=\ell(u''w_{\tilde P})-\ell(w_{\tilde P})=\ell(w_P\tilde u_1)-\ell(w_{\tilde P})=(63-21)-21=21=\ell(\tilde u_1)$ and $\tilde u_1\neq u''$, we can further conclude  $\tilde u_1\not\leqslant w_P\tilde u_1$.

     It remains to show (b). All the coroots $\eta$ satisfying the hypothesis of (b) are given   in terms of $q_\eta$ in the last column of Table \ref{tabtabforprodUU}   if it exists, or  ``$\emptyset$"   otherwise.
    \begin{enumerate}
      \item[1)] For case C4) with $r=7$, we  note $\mbox{sgn}_7(u_1)=0$. If $q_\eta=  q_7^2q_1^2q_2^2q_3^2q_4^3q_5^2q_6$, then we have $\langle \alpha_7, \eta\rangle=1$, and consequently $N_{u_1, u_1}^{w, \eta}=0$ by Proposition \ref{propQtoCthm} (1).
                Similarly, for   C4) with $r=6$ we have $N_{u_1, u_2}^{w, \eta}=0$ by considering $\mbox{sgn}_6$.

      \item [2)]   For case C4) with $r=7$ and  $q_\eta= q_7q_1^2q_2^2q_3^2q_4^2q_5$, we can first conclude $N_{u_1, u_1}^{w, \eta}=N_{u_1, u'}^{ws_{1234574321}, 0}$ by using Proposition \ref{propQtoCthm} (2) repeatedly. Here $u':=u_1s_{123474321}=s_{12347543654723456}$. Denote $\Delta_{\check P}:=\Delta_P\setminus \{\alpha_6\}$, and note $u'\in  W^{\check P}_P$. Thus we can further conclude $N_{u_1, u_1}^{w, \eta}=N_{u', u'}^{w, 0}$. Since $2\ell(u')=34>63-30=\dim P/\check P$, we have $N_{u', u'}^{w, 0}=0$. Similarly, (b) follows if  case C7)  occurs.

        \item [3)] For case C10), we can   conclude $N_{u_1, u_1}^{w, \eta}=N_{s_{321},s_{321}}^{w, 0}$. Therefore it is equal to 0, by noting $(s_{321})^{-1}w_P=s_{321}s_{321}\not\geqslant (s_{321})^{-1}$ and using Lemma \ref{lemmavanishingofcupproductwv}.
    \end{enumerate}
  Hence, (b) follows.

\section*{Acknowledgements}
The  author   is grateful to Naichung Conan Leung for valuable suggestions and constant encouragement.
He also thanks Ivan Chi-Ho Ip and Leonardo Constantin Mihalcea for useful discussions and advice. Especially, he  thanks the referee for his/her careful reading and very helpful comments. The author is partially supported by JSPS Grant-in-Aid for Young Scientists (B)  No. 25870175.
\bibliographystyle{amsplain}

\end{document}